\def\Adm{\mathcal{A}dm}
\def\bC{\mathbb{C}}
\def\cC{\mathcal{C}}
\def\cH{\mathcal{H}}
\def\cM{\mathcal{M}}
\def\cO{\mathcal{O}}
\def\bP{\mathbb{P}}
\def\bZ{\mathbb{Z}}
\def\barM{\overline{\cM}}
\def\wt{\widetilde}
\DeclareMathOperator{\id}{id}
\DeclareMathOperator{\pr}{pr}
\DeclareMathOperator{\Qmod}{Qmod}
\DeclareMathOperator{\Spec}{Spec}
\newtheorem{thm}{Theorem}[section]
\newtheorem{lem}[thm]{Lemma}
\newtheorem{cor}[thm]{Corollary}
\newtheorem{prop}[thm]{Proposition}
\newtheorem{conjecture}{Conjecture}
\theoremstyle{definition}
\newtheorem{rem}[thm]{Remark}
\newtheorem{defn}[thm]{Definition}
\g@addto@macro\bfseries{\boldmath} 
\begin{document}

\title{$d$-elliptic loci in genus 2 and 3}

\author{Carl Lian}
\address{Institut f\"{u}r Mathematik, Humboldt-Universit\"{a}t zu Berlin, Berlin, 10965, Germany}
\email{liancarl@hu-berlin.de}
\urladdr{\url{http://sites.google.com/view/carllian/}}

\date{\today}

\begin{abstract}
We consider the loci of curves of genus 2 and 3 admitting a $d$-to-1 map to a genus 1 curve. After compactifying these loci via admissible covers, we obtain formulas for their Chow classes, recovering results of Faber-Pagani and van Zelm when $d=2$. The answers exhibit quasimodularity properties similar to those in the Gromov-Witten theory of a fixed genus 1 curve; we conjecture that the quasimodularity persists in higher genus, and indicate a number of possible variants.
\end{abstract}

\maketitle


\section{Introduction}\label{intro}

\textbf{In the published version of this paper, there is an error in the proof of Proposition 5.1, which leads to mistakes in the formulas of Theorem 5.2, Proposition 6.6, and Theorem 1.4. These formulas have been corrected in this version. See the erratum \cite{erratum} for more details, linked on the author's webpage.}

Let $\cH_{g/1,d}$ denote the moduli space of degree $d$ covers $f:C\to E$, where $C$ is a smooth curve of genus $g$, $E$ is a smooth curve of genus 1, and $f$ is simply branched at marked points $x_1,\ldots,x_{2g-2}\in C$ mapping to distinct points $y_1,\ldots,y_{2g-2}\in E$. We then have a diagram
\begin{equation*}
\xymatrix{
\cH_{g/1,d} \ar[r]^{\pi_{g/1,d}} \ar[d]^{\psi_{g/1,d}} & \cM_{g} \\
\cM_{1,2g-2}
}
\end{equation*}
where the map $\pi_{g/1,d}$ remembers the source curve, and $\psi_{g/1,d}$ remembers the target curve with the branch points. The degree of $\psi_{g/1,d}$ is given by a \textit{Hurwitz number}, which counts monodromy actions of $\pi_1(E-\{y_1,\ldots,y_{2g-2}\},y)$ on the $d$-element set $f^{-1}(y)$. We have the following groundbreaking result of Dijkgraaf:

\begin{thm}[\cite{dijkgraaf}]\label{dijkgraaf}
For $g\ge2$, the generating series
\begin{equation*}
\sum_{d\ge1}\deg(\psi_{g/1,d})q^d
\end{equation*}
is a quasimodular form of weight $6g-6$.
\end{thm}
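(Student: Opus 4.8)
The plan is to reduce the computation of $\deg(\psi_{g/1,d})$ to the character theory of the symmetric group, to rewrite the generating series $F_g(q):=\sum_{d\ge1}\deg(\psi_{g/1,d})q^d$ as a $q$-bracket on the fermionic Fock space, and then to invoke the quasimodularity of such brackets in the style of Bloch--Okounkov (the same machinery underlies the proofs of Kaneko--Zagier and of Okounkov--Pandharipande). First, by the monodromy description of simply branched covers together with Frobenius's formula, the weighted count $\sum_{[f]}1/|\operatorname{Aut}(f)|$ of \emph{possibly disconnected} degree-$d$ covers of a fixed genus-$1$ curve $E$, simply branched over $2g-2$ ordered points, is $\sum_{\lambda\vdash d}\mathbf{f}_2(\lambda)^{2g-2}$, where $\mathbf{f}_2(\lambda)=\sum_{\square\in\lambda}c(\square)$ is the sum of the contents of $\lambda$ -- equivalently the normalized central character $\binom{d}{2}\chi^\lambda(\tau)/\dim\lambda$ of a transposition $\tau$ -- the point being that for a genus-$1$ base the ``handle'' contributes trivially in this basis, leaving only the branch-point insertions. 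Since $C$ is required smooth and connected and $\cH_{g/1,d}$ is a Deligne--Mumford stack, $\deg(\psi_{g/1,d})$ is the corresponding \emph{connected} count; and since the number of branch points is additive over connected components, the exponential formula gives, with $[u^{k}]$ denoting the coefficient of $u^{k}$,
\begin{equation*}
F_g(q)=(2g-2)!\,[u^{2g-2}]\log\left(\sum_{\lambda}e^{u\,\mathbf{f}_2(\lambda)}q^{|\lambda|}\right),
\end{equation*}
the sum running over all partitions; the automorphism factors (e.g.\ the deck involution present when $d=2$) do no harm here.

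Next, set $\langle F\rangle_q=(\sum_\lambda F(\lambda)q^{|\lambda|})/(\sum_\lambda q^{|\lambda|})$ and recall $\sum_\lambda q^{|\lambda|}=\prod_{n\ge1}(1-q^n)^{-1}$. The argument of the logarithm above equals $\prod_{n}(1-q^n)^{-1}\cdot\langle e^{u\mathbf{f}_2}\rangle_q$; the non-quasimodular prefactor is independent of $u$, so for $g\ge2$ it does not affect the coefficient of $u^{2g-2}$ in the logarithm, and
\begin{equation*}
F_g(q)=(2g-2)!\,[u^{2g-2}]\log\langle e^{u\,\mathbf{f}_2}\rangle_q .
\end{equation*}
(For $g=1$ one obtains instead $\log\prod_{n}(1-q^n)^{-1}$, which is not quasimodular -- consistent with the hypothesis $g\ge2$.) The right-hand side is precisely the connected $(2g-2)$-point cumulant of $\mathbf{f}_2$ for the measure $q^{|\lambda|}/\sum_\mu q^{|\mu|}$ on partitions; as a polynomial in the moments it involves $\langle\mathbf{f}_2^{2}\rangle_q,\dots,\langle\mathbf{f}_2^{2g-2}\rangle_q$ only, the odd moments vanishing because $\mathbf{f}_2(\lambda')=-\mathbf{f}_2(\lambda)$.

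Now $\mathbf{f}_2=\tfrac12\mathbf{p}_2$ with $\mathbf{p}_k(\lambda)=\sum_i\big[(\lambda_i-i+\tfrac12)^k-(-i+\tfrac12)^k\big]$, so $\mathbf{f}_2$ is a shifted symmetric function, homogeneous of weight $3$ in the Bloch--Okounkov grading ($\operatorname{wt}\mathbf{p}_k=k+1$), and moreover odd (whence $\langle\mathbf{f}_2^{\mathrm{odd}}\rangle_q=0$). By the Bloch--Okounkov theorem every $q$-bracket $\langle F\rangle_q$ of a shifted symmetric $F$ is quasimodular of weight $\le\operatorname{wt}F$, so $\langle\mathbf{f}_2^{2k}\rangle_q$ is quasimodular of weight $\le 6k$ and $F_g(q)$, being a universal polynomial in $\langle\mathbf{f}_2^{2}\rangle_q,\dots,\langle\mathbf{f}_2^{2g-2}\rangle_q$, is quasimodular of weight $\le 6g-6$. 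That it is in fact homogeneous of weight $6g-6$ is the refined assertion: being a \emph{connected} $(2g-2)$-point function (with $2g-2\ge2$), $F_g(q)$ is computed from the explicit Bloch--Okounkov $n$-point formula, whose Taylor coefficients are homogeneous quasimodular forms built from the (homogeneous, Eisenstein-type) Taylor coefficients of $\log\Theta(z;q)$; the lower-weight ``anomaly'' that can afflict one-point functions does not enter, since $\langle\mathbf{f}_2\rangle_q=0$, and the relevant coefficient comes out of weight $(2g-2)\cdot 3=6g-6$.

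The real work is the Bloch--Okounkov input. Its proof runs through the infinite-wedge (fermionic Fock) space $\Lambda^{\infty/2}V$, on which one has the operator $\mathcal{F}_2$ acting by $\mathbf{f}_2(\lambda)$ on the basis vector $v_\lambda$ and the energy operator $H$ with $Hv_\lambda=|\lambda|v_\lambda$, so that $\sum_\lambda e^{u\mathbf{f}_2(\lambda)}q^{|\lambda|}=\operatorname{tr}(q^{H}e^{u\mathcal{F}_2})$; via the boson--fermion correspondence this trace -- and more generally the $n$-point functions generating all $\langle\mathbf{p}_{k_1}\cdots\mathbf{p}_{k_n}\rangle_q$ -- is evaluated in closed form as an explicit ratio of translates of the Jacobi theta function $\Theta(z;q)$ and its logarithmic derivatives. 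Quasimodularity, including the appearance of the quasimodular (but not modular) form $E_2$ through $\partial_z\log\Theta$, is then read off from the $q$-expansion, and the weight bounds and homogeneity follow by tracking the scaling in that formula. I expect this theta-function evaluation, together with the attendant weight bookkeeping, to be the main obstacle; by contrast the two reduction steps are formal, and identifying $\deg(\psi_{g/1,d})$ with the connected Hurwitz number is standard. An approach avoiding the Fock space is that of Kaneko--Zagier: one deduces from the character formula a differential equation satisfied by a generating function of the $F_g(q)$ and checks that its solutions lie in the ring $\bQ[E_2,E_4,E_6]$ of quasimodular forms; alternatively, one can pass through the Gromov--Witten theory of $E$.
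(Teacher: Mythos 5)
The paper does not prove Theorem \ref{dijkgraaf}: it is quoted from Dijkgraaf's paper as background motivation, so there is no internal argument to compare yours against. What you have written is the standard proof in the literature (Dijkgraaf's heuristic made rigorous by Kaneko--Zagier and by Bloch--Okounkov/Eskin--Okounkov), and as a sketch it is essentially correct. The Frobenius-formula identity for the disconnected weighted count over a genus-1 base, $\sum_{\lambda\vdash d}\mathbf{f}_2(\lambda)^{2g-2}$, is right (the factor $(d!/\dim\lambda)^{2h-2}$ from the handles is trivial exactly when $h=1$); the exponential formula and the observation that the non-quasimodular prefactor $\prod_n(1-q^n)^{-1}$ only pollutes the $u^0$ coefficient are both correct, and your identification of $\deg(\psi_{g/1,d})$ with the automorphism-weighted connected count is legitimate since the marked ramification points carry no extra data for simple branching. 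The two places where you are importing unproved input are (i) the Bloch--Okounkov theorem itself, which you rightly flag as the real content, and (ii) the homogeneity claim that the weight is exactly $6g-6$ rather than $\le 6g-6$; your appeal to the $n$-point function is the thinnest part of the write-up. For (ii) the cleanest packaging is to note that $\mathbf{f}_2=\tfrac12\mathbf{p}_2$ is \emph{purely} of weight 3 in the Bloch--Okounkov grading (the regularization constant $(1-2^{-k})\zeta(-k)$ vanishes at $k=2$), that $\mathbf{f}_2^{2k}$ is therefore homogeneous of weight $6k$, and that the graded form of the Bloch--Okounkov theorem sends homogeneous elements to quasimodular forms of that exact weight; every monomial $\prod_i\langle\mathbf{f}_2^{j_i}\rangle_q$ with $\sum_i j_i=2g-2$ in the cumulant expansion then has weight $6g-6$, so connectedness is needed only to kill the prefactor, not for homogeneity. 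With those two citations made precise, the argument is complete and is a perfectly good proof of the quoted theorem.
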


Eskin-Okounkov \cite{eskin} generalized this result to covers of elliptic curves with arbitrary branching. In a slightly different direction, Okounkov-Pandharipande \cite{op} gave a generalization to the Gromov-Witten theory of an elliptic curve with arbitrary insertions, which was later upgraded to a cycle-theoretic statement by Oberdieck-Pixton \cite{obpix18}.

In this paper, we consider instead the enumerative properties of the map $\pi_{g/1,d}$. Here, the geometry is more subtle, as we no longer have a combinatorial model as in Hurwitz theory. For enumerative applications, one needs to compactify the moduli spaces involved; we pass to the Harris-Mumford stack $\Adm_{g/1,d}$ of admissible covers, see \cite{hm} or \S\ref{admissible_covers_prelim}. We have the diagram:
\begin{equation*}
\xymatrix{
\Adm_{g/1,d} \ar[r]^(0.58){\pi_{g/1,d}} \ar[d]^{\psi_{g/1,d}}& \barM_{g} \\
\barM_{1,2g-2}
}
\end{equation*}

We prove:

\begin{thm}\label{modularity_statement}
For $g=2,3$, we have:
\begin{equation*}
\sum_{d\ge1}[(\pi_{g/1,d})_{*}(1)]q^d\in A^{g-1}(\barM_g)\otimes\Qmod,
\end{equation*}
where $\Qmod$ is the ring of quasimodular forms.
\end{thm}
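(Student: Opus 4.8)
The plan is to compute $[(\pi_{g/1,d})_*(1)]$ explicitly as a tautological class on $\barM_g$ for $g=2,3$, and then recognize the $q$-series of coefficients as quasimodular forms. The starting point is the factorization through $\barM_{1,2g-2}$: since $\psi_{g/1,d}$ has degree equal to the Hurwitz number $N_{g/1,d} := \deg(\psi_{g/1,d})$ controlled by Dijkgraaf's Theorem~\ref{dijkgraaf}, one should think of $(\pi_{g/1,d})_*(1)$ as built from the geometry of the universal admissible cover over $\barM_{1,2g-2}$. Concretely, I would stratify $\Adm_{g/1,d}$ (equivalently, its image or the relevant fiber product) according to the boundary behavior: the interior contributes a multiple of the fundamental class of the $d$-elliptic locus, while boundary strata of $\Adm_{g/1,d}$ map into the boundary of $\barM_g$, contributing corrections supported on $\delta_0,\delta_1$ (for $g=2$) and on $\delta_0,\delta_1,\delta_{00},\delta_{01},\delta_{11},\ldots$ (for $g=3$). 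Each such boundary contribution is itself a pushforward from a product of smaller admissible cover spaces (for lower genus or fewer branch points), hence again governed by Hurwitz numbers and lower $d$-elliptic classes.

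The key computational steps, in order: (1) For $g=2$, express $A^1(\barM_2)$ in the standard basis $\{\lambda, \delta_0, \delta_1\}$ (or the $\kappa_1$-basis) and, using the theory of admissible covers and test curves, pin down the coefficients of $[(\pi_{2/1,d})_*(1)]$ as functions of $d$. The interior coefficient is a Hurwitz-type count; the $\delta_1$-coefficient records covers where the genus $2$ curve degenerates to two elliptic curves glued at a point, forcing the target to degenerate as well, so this reduces to a sum over ways of distributing the branch points and the degree, i.e. a convolution of lower Hurwitz data. (2) For $g=3$, do the same in $A^2(\barM_3)$, which is higher-dimensional: here one needs classes like $\lambda^2$, $\lambda\delta_i$, $\delta_i\delta_j$, $\kappa_1^2$, and the class of the hyperelliptic locus, and one must separately analyze the hyperelliptic versus non-hyperelliptic behavior of the source, as well as the codimension-$2$ boundary strata. (3) Assemble the generating series coefficient-by-coefficient in this basis and identify each with an element of $\Qmod$: the interior/leading term is handled directly by Dijkgraaf (Theorem~\ref{dijkgraaf}) combined with a Chern-class/excess-intersection computation for the forgetful-type map, while the boundary terms are convolutions of lower-weight quasimodular forms times simpler factors (powers of $q$, Eisenstein-type series), and $\Qmod$ is closed under such operations.

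The main obstacle I anticipate is step (2)–(3) in genus $3$: controlling the \emph{excess intersection / virtual contributions} along the boundary of $\Adm_{3/1,d}$, where the admissible cover space is singular and its various components can have the wrong dimension over strata of $\barM_3$. One must carefully compute the pushforward via normalization of $\Adm_{3/1,d}$ and keep track of automorphism factors (the stacky structure of admissible covers), and then check that the resulting boundary $q$-series are genuinely quasimodular rather than merely quasimodular-up-to-an-anomaly. I expect the cleanest route is to reduce every boundary contribution to a product of an honest Hurwitz number (quasimodular by Eskin–Okounkov's generalization of Dijkgraaf, allowing non-simple branching) and a tautological class computation that is independent of $d$, so that the $d$-dependence is entirely Hurwitz-theoretic and quasimodularity of each coefficient follows from the ring structure of $\Qmod$. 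Verifying that the $d=2$ specialization matches Faber–Pagani and van Zelm provides an essential consistency check on the boundary bookkeeping.
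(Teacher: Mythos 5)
Your overall strategy is the one the paper uses: reduce to intersection numbers with test classes, observe that the relevant intersections with the admissible locus are supported over boundary strata of the target, classify the degenerate admissible covers by topological type, express each type's contribution through lower-genus cover counts, and finish with the closure of $\Qmod$ under the convolutions $\sum_{d_1+\cdots+d_k=d}\sigma_1(d_1)\cdots\sigma_1(d_k)$ and under multiplication by polynomials in $d$ (the Ramanujan identities). The consistency check against Faber--Pagani and van Zelm at $d=2$ is also exactly what the paper does. That said, two points in your write-up are genuine gaps rather than omitted bookkeeping.

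First, your proposal never justifies why boundary test classes suffice. The paper's method works only because $A_1(\cM_2)=0$ (Mumford) and $A_2(\cM_3)=0$ (Faber), so that \emph{every} class of complementary dimension is a combination of cycles pushed forward from the boundary divisors $\Delta_0,\Delta_1$, and the perfect pairing then determines $(\pi_{g/1,d})_*(1)$ from boundary intersection numbers alone. Your alternative framing --- decomposing the cycle itself into ``an interior multiple of the fundamental class of the $d$-elliptic locus'' plus ``boundary corrections'' --- is not a computation: $(\pi_{g/1,d})_*(1)$ \emph{is} (a multiple of) the class of the $d$-elliptic locus, and there is no separate interior term to peel off. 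Without the vanishing of the interior Chow groups, intersecting with boundary cycles would not pin down the class.

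Second, Dijkgraaf's theorem is not an input to this computation, and the hope that every boundary contribution is ``an honest Hurwitz number times a $d$-independent tautological class'' is too optimistic. The quantity $\deg(\psi_{g/1,d})$ pairs $[\Adm_{g/1,d}]$ against a point class pulled back from $\barM_{1,2g-2}$ and contributes to none of the coefficients you need. What actually enters are counts of a different flavor: isogenies of elliptic curves ($\sigma_1(d)$), isogenies with a marked point of the kernel ($(d-1)\sigma_1(d)$), and covers of $\bP^1$ with two points of \emph{total} ramification ($2(d^2-1)$ in genus $1$, $48(d^4-1)$ in genus $2$) --- none of which are simply-branched Hurwitz numbers over a torus. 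Moreover the $d$-dependence is not separable from the intersection theory: at covers ramified over the node of the target, Proposition~\ref{adm_defos} forces local intersection multiplicities such as $ma^{m-1}$ (for $am=d$) and factors of $2$ from contracted rational bridges, and these must be divided by automorphism groups of order $a^{m-1}$ before summing over divisors. These multiplicity computations are where the formulas of Theorems~\ref{main_thm_g2} and \ref{main_thm_g3} actually come from, and your proposal would need to supply them. (A minor further point: in genus $3$ the hyperelliptic locus plays no role; the basis $\lambda^2,\lambda\delta_0,\lambda\delta_1,\delta_0^2,\delta_0\delta_1,\delta_1^2,\kappa_2$ of $A^2(\barM_3)$ paired against Faber's seven boundary surface classes is all that is needed.)
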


More precisely, we have the following formulas, where $\sigma_k(d)$ is the sum of the $k$-th powers of the divisors of $d$:

\begin{thm}\label{main_thm_g2}
The class of $(\pi_{2/1,d})_{*}(1)$ in $A^{1}(\barM_2)$ is:
\begin{equation*}
\left(2\sigma_3(d)-2d\sigma_1(d)\right)\delta_0+\left(4\sigma_3(d)-4\sigma_1(d)\right)\delta_1.
\end{equation*}
\end{thm}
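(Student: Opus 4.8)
The plan is to compute $(\pi_{2/1,d})_*(1)$ as an explicit cycle in $A^1(\barM_2)$. Since $A^1(\barM_2)$ is freely generated by the boundary classes $\delta_0$ and $\delta_1$ (the rational Picard group of $\barM_2$ is $2$-dimensional, as $\lambda$ is expressible in terms of $\delta_0,\delta_1$), it suffices to determine the coefficients of $\delta_0$ and $\delta_1$ in the pushforward. The source $\Adm_{2/1,d}$ is a proper Deligne--Mumford stack of dimension $2$ (the same as $\dim \barM_{1,2}=2$, via the finite map $\psi_{2/1,d}$), so $(\pi_{2/1,d})_*(1)$ is a well-defined element of $A^1(\barM_2)$. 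The strategy is therefore a two-step test-curve (intersection-theoretic) argument: intersect both sides with a basis of test curves in $\barM_2$ dual to $\delta_0$ and $\delta_1$, and match the numbers.

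First I would set up the degree-$q^d$ Hurwitz-number bookkeeping. Intersecting $(\pi_{2/1,d})_*(1)$ with a one-parameter family $B\to\barM_2$ of stable genus-$2$ curves amounts to counting (with multiplicity) the admissible covers $f\colon C\to E$ of degree $d$ whose source appears in the family, i.e.\ computing the degree of the map $\pi_{2/1,d}^{-1}(B)\to B$. For a \emph{generic} point of $\barM_2$ the fiber of $\pi_{2/1,d}$ has dimension $0$; its length is a Hurwitz-type count of degree-$d$ maps from a fixed genus-$2$ curve to \emph{some} genus-$1$ curve, simply branched at the $2g-2=2$ canonically-determined branch divisor — but this count is zero on the interior, consistent with the class being supported on the boundary. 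The real content is the boundary behavior: near $\delta_0$ (irreducible nodal, normalization genus $1$) and near $\delta_1$ (a genus-$1$ curve meeting a genus-$1$ curve at a point), the admissible-cover locus acquires components, and their multiplicities are governed by gluing/degeneration of covers of the nodal genus-$1$ target together with genus-$1$ Hurwitz numbers. I would enumerate the boundary strata of $\Adm_{2/1,d}$ lying over each of $\delta_0,\delta_1$: these are products of (compactified) spaces of covers of lower-dimensional/lower-genus targets, and the local structure of admissible covers at the nodes contributes the $d$-divisibility combinatorics that produce $\sigma_1$ and $\sigma_3$. The coefficient $\sigma_3(d)$ should emerge from the weight-$4$ Eisenstein-type count of connected genus-$1$ covers (a degree-$d$ cover $E'\to E$ of elliptic curves has $\sigma_1(d)$ or $\sigma_3(d)$-type multiplicities depending on the marked/branch data), and the $d\sigma_1(d)$ and $\sigma_1(d)$ corrections come from the nodal smoothing parameters and automorphisms.

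Concretely, the cleanest route is: (i) pull back to $\barM_{1,2}$ via $\psi_{2/1,d}$, where the relevant classes and the Hodge/boundary structure are well understood, and use Dijkgraaf's Theorem \ref{dijkgraaf} together with known genus-$1$ Hurwitz generating functions to control $\psi_{2/1,d}$; (ii) express the relation between the two pushforwards using the fact that a stable genus-$2$ curve and its "Abel--Jacobi" or branch data determine each other on an open locus, and track how boundary divisors correspond under $\pi$ and $\psi$; (iii) compute the two boundary coefficients by restricting to the two standard test curves in $\barM_2$ — e.g.\ a pencil of plane quartics... wait, genus $2$: a family obtained by attaching a varying elliptic tail to a fixed genus-$1$ curve (this meets $\delta_1$ and $\delta_0$ in controlled numbers), and a family of genus-$2$ curves degenerating to an irreducible nodal one. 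Evaluating the cover counts on these test families reduces everything to genus-$0$ and genus-$1$ Hurwitz numbers with at most a few branch points, which have closed-form quasimodular generating series; extracting the $q^d$-coefficient then yields the stated linear combination of $\sigma_3(d)$ and $\sigma_1(d)$, $d\sigma_1(d)$.

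The main obstacle I anticipate is step (ii)–(iii): correctly identifying \emph{which} boundary strata of $\Adm_{2/1,d}$ dominate each boundary divisor of $\barM_2$, and computing the \emph{local multiplicities} of $\pi_{2/1,d}$ along them. Admissible covers have nontrivial automorphisms and the map $\pi$ can be ramified along boundary strata with ramification index equal to the lcm of the local monodromies at the node; getting these factors of $d/\gcd$, and the resulting re-summation $\sum_{e\mid d}(\cdots)$ that converts raw cover-counts into $\sigma_k(d)$, exactly right is the delicate part. A secondary check — matching $d=2$ against the Faber--Pagani and van Zelm formulas — should be used throughout to catch normalization errors. Comparing: at $d=2$ the formula gives $(2\cdot 9 - 2\cdot 2\cdot 3)\delta_0 + (4\cdot 9 - 4\cdot 3)\delta_1 = 6\delta_0 + 24\delta_1$, which should agree with the bielliptic locus class in the literature.
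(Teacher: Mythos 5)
Your overall framework (pushforward is a divisor class, determine the two coefficients by intersecting with test curves, reduce the boundary intersections to degenerate admissible covers and lower-genus Hurwitz counts) is the same as the paper's, but as written the proposal has a genuine gap: everything that actually produces the numbers is deferred to the step you yourself flag as ``the delicate part.'' Concretely, the proof requires (a) a classification of the admissible covers $f:X\to Y$ with $Y$ one-nodal whose stabilized sources meet a general boundary test curve --- there are exactly four topological types, $(\Delta_0,\Delta_1)$, $(\Delta_1,\Delta_1)$, $(\Delta_0,\Delta_0)$, $(\Delta_{00},\Delta_0)$; (b) the local intersection multiplicities at each, computed from the Harris--Mumford local ring $\bC[[t,t_{i,j}]]/(t=t_{i,j}^{a_{i,j}})$ together with the contraction of unstable components under $\pi_{2/1,d}$ (e.g.\ a factor of $2$ from a contracted rational bridge, and a factor $ma^{m-1}$ divided by the automorphism group of order $a^{m-1}$ for chains of rational curves mapping by $x\mapsto x^a$); and (c) the auxiliary counts $\sigma_1(d)$ isogenies, $(d-1)\sigma_1(d)$ pointed isogenies, and $2(d^2-1)$ degree-$d$ maps $E\to\bP^1$ totally ramified at two points. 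None of these appear in your plan, and your guess for where $\sigma_3$ comes from (``weight-$4$ Eisenstein-type count of connected genus-$1$ covers'') is off: it enters through the Ramanujan convolution $\sum_{d_1+d_2=d}\sigma_1(d_1)\sigma_1(d_2)=(-\tfrac{1}{2}d+\tfrac{1}{12})\sigma_1(d)+\tfrac{5}{12}\sigma_3(d)$ applied to pairs of isogenies onto a common elliptic curve.

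Two further points. First, your remark that the vanishing of the generic fiber is ``consistent with the class being supported on the boundary'' is a misconception: the $d$-elliptic locus is a divisor meeting $\cM_2$; its class is merely \emph{expressible} in boundary classes because $A^1(\barM_2)$ is spanned by $\delta_0,\delta_1$. What the paper actually uses is the dual fact $A_1(\cM_2)=0$, which lets one choose both test \emph{curves} inside the boundary divisors (namely $\Delta_{00}$ and $\Delta_{01}$, with the known pairing matrix against $\Delta_0,\Delta_1$). This is essential: your proposed second test family, ``a family of genus-$2$ curves degenerating to an irreducible nodal one,'' if generically smooth, would force you to count $d$-elliptic curves in a moving family of smooth genus-$2$ curves, a much harder problem that the boundary-test-curve reduction is designed to avoid. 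Second, step (i) of your plan (controlling $\psi_{2/1,d}$ via Dijkgraaf's theorem) is not used in the paper's argument and does not obviously help here, since the degree of $\psi_{2/1,d}$ does not determine the pushforward under $\pi_{2/1,d}$. Your $d=2$ sanity check against Faber--Pagani (with the factor of $4$) is correct and worth keeping.
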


\begin{thm}\label{main_thm_g3}
The class of $(\pi_{3/1,d})_{*}(1)$ in $A^{2}(\barM_3)$ is:
\begin{align*}
&\left((-4464d^2+2880d-360)\sigma_1(d)+(4092d-2400)\sigma_3(d)+252\sigma_5(d)\right)\lambda^2\\
+&\left((864d^2-288d+36)\sigma_1(d)+(-852d+240)\sigma_3(d)\right)\lambda\delta_0\\
+&\left((1440d^2+864d-24)\sigma_1(d)+(-1320d-960)\sigma_3(d)\right)\lambda\delta_1\\
+&\left((-36d^2)\sigma_1(d)+(36d)\sigma_3(d)\right)\delta_0^2\\
+&\left((-144d^2-120d+12)\sigma_1(d)+(132d+120)\sigma_3(d)\right)\delta_0\delta_1\\
+&\left((-144d^2-288d+60)\sigma_1(d)+(132d+240)\sigma_3(d)\right)\delta_1^2\\
+&\left((144d^2-288d+36)\sigma_1(d)+(-132d+240)\sigma_3(d)\right)\kappa_2.
\end{align*}
\end{thm}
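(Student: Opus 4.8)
Since $\dim\barM_{1,4}=4$ and $\dim\barM_3=6$, the class $(\pi_{3/1,d})_*(1)$ has codimension $2$; as the Chow ring of $\barM_3$ is entirely tautological (Faber), it lies in $A^2(\barM_3)$, which has $\bQ$-basis the seven classes $\lambda^2,\lambda\delta_0,\lambda\delta_1,\delta_0^2,\delta_0\delta_1,\delta_1^2,\kappa_2$ appearing in the statement. So for each $d$ the task is to pin down seven rational numbers. The plan is to do this by intersecting against a spanning set of $A^4(\barM_3)$: the pairing $A^2(\barM_3)\times A^4(\barM_3)\to\bQ$ is perfect (Faber) and the top monomials in tautological classes on $\barM_3$ are known, while by the projection formula $\int_{\barM_3}(\pi_{3/1,d})_*(1)\cdot\beta=\int_{\Adm_{3/1,d}}\pi_{3/1,d}^*\beta$, so it suffices to compute these integrals over the admissible-cover space and solve a (consistent, over-determined) linear system.

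To evaluate $\int_{\Adm_{3/1,d}}\pi_{3/1,d}^*\beta$ I would use the two geometric facts linking the source curve to the target. First, Riemann--Hurwitz for the simply-branched cover $f\colon\cC\to\cE$ gives $\omega_{\cC/\Adm}\cong f^*\omega_{\cE/\Adm}(R)$, where $R=x_1+\cdots+x_4$ is the relative ramification divisor; applying Grothendieck--Riemann--Roch to $(\rho_{\cC})_*\omega_{\cC/\Adm}$ and to $(\rho_{\cC})_*\big(c_1(\omega_{\cC/\Adm})^{j+1}\big)$ rewrites $\pi_{3/1,d}^*\lambda$ and $\pi_{3/1,d}^*\kappa_j$ in terms of $\psi_{3/1,d}^*$ of tautological classes on $\barM_{1,4}$, of $\psi$-classes at the $x_i$, and of boundary corrections. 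Second, the degree of $\psi_{3/1,d}\colon\Adm_{3/1,d}\to\barM_{1,4}$ is a Hurwitz number, and more generally pushforwards along $\psi_{3/1,d}$ of monomials in the $\psi$-classes at the $x_i$ are Hurwitz--Hodge integrals over an elliptic curve, hence quasimodular of explicitly bounded weight by Theorem \ref{dijkgraaf} and its descendent refinements \cite{op,obpix18}. Substituting the first relation into $\pi_{3/1,d}^*\beta$ and integrating, every number we need becomes a combination of such elliptic Hurwitz--Hodge integrals and of contributions from the boundary of $\Adm_{3/1,d}$.

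For the boundary I would use that over $\partial\barM_{1,4}$, and over $\Delta_0,\Delta_1\subset\barM_3$, the admissible-cover stack breaks into fibre products of admissible-cover stacks of smaller numerical type and of genus-$0$ admissible-cover factors (finite over suitable $\barM_{0,n}$), with gluing maps whose degrees are products of smaller Hurwitz numbers; the genus-$2$ formula of Theorem \ref{main_thm_g2} enters precisely through the strata in which a genus-$2$ subcurve of the source carries a $d'$-elliptic admissible cover. Tracking weights through the Grothendieck--Riemann--Roch expansion and this recursion bounds the weight of each of the seven coefficient series by $6$ --- lower than the weight $12$ of the bare Hurwitz numbers, because one integrates against classes pulled back from $\barM_3$, which correspond to few descendent insertions on the elliptic side --- so each series is quasimodular of weight $\le 6$ with vanishing constant term, and thus lies in the space spanned by $\sum_d\sigma_1(d)q^d$, $\sum_d d\,\sigma_1(d)q^d$, $\sum_d d^2\sigma_1(d)q^d$, $\sum_d\sigma_3(d)q^d$, $\sum_d d\,\sigma_3(d)q^d$, $\sum_d\sigma_5(d)q^d$. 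This both explains the shape of the asserted formula and reduces the theorem to computing $[(\pi_{3/1,d})_*(1)]$ for $d$ up to roughly $7$ and interpolating; each such finite computation on $\Adm_{3/1,d}$ is accessible by degenerating the target elliptic curve and applying torus localization, by the boundary recursion (reducing to genus $\le 2$, using Theorem \ref{main_thm_g2}), or for small $d$ by direct geometry. One expects the $d=2$ specialization to recover the formulas of Faber--Pagani and van Zelm, a useful consistency check.

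The principal obstacle is the boundary bookkeeping: $\pi_{3/1,d}$ is very far from finite --- it contracts and multiply-covers boundary strata of $\barM_3$ --- so extracting the coefficients of $\delta_0^2,\delta_0\delta_1,\delta_1^2$ and especially of $\kappa_2$ requires a careful analysis of the excess intersections of $\pi_{3/1,d}$ with $\partial\barM_3$, tracking automorphisms of nodal admissible covers and the combinatorics of how the four branch points distribute over the components of the target. A secondary difficulty is making the weight estimate sharp enough ($\le 6$ rather than merely $\le 12$) that the final interpolation uses only a short list of explicit low-degree cases; this seems to require the precise form of the Grothendieck--Riemann--Roch expansion rather than crude dimension counts.
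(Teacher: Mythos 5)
Your overall framing is sound --- the class lives in $A^2(\barM_3)$, the pairing with $A^4(\barM_3)$ is perfect, and the answer should be determined by finitely many intersection numbers --- but the two load-bearing steps of your plan are both unestablished, and each is roughly as hard as the theorem itself. The first is the weight bound. You need the seven coefficient series to be quasimodular of weight $\le 6$ in order for your six-dimensional ansatz (spanned by $\sigma_1(d), d\sigma_1(d), d^2\sigma_1(d), \sigma_3(d), d\sigma_3(d), \sigma_5(d)$) to apply, but the only a priori bound available from Dijkgraaf/Eskin--Okounkov is weight $6g-6=12$, and ``tracking weights through the Grothendieck--Riemann--Roch expansion'' is not an argument: $\pi_{3/1,d}^*\kappa_2$ and $\pi_{3/1,d}^*\delta_i$ are not pullbacks from the target side, the comparison of $\omega$-classes across the stabilization map $\pi_{3/1,d}$ (which contracts rational components of the source of an admissible cover) requires exactly the excess-intersection and automorphism bookkeeping you defer to the end, and the paper explicitly notes that the route through Oberdieck--Pixton quasimodularity for elliptic fibrations is obstructed because the relevant spaces of stable maps fail to have expected dimension. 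The second gap is the initial data: even granting weight $\le 6$, interpolation requires the full class for $d=1,\dots,6$; only $d=1$ (zero) and $d=2$ (Faber--Pagani/van Zelm) are known, and ``torus localization'' does not directly apply since an elliptic curve carries no torus action --- degenerating the target to a nodal cubic to create one leads you straight back to the classification of admissible covers over one-nodal targets, i.e.\ to the paper's actual method. So as written the proposal is a plausible program, not a proof.

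For comparison, the paper proceeds quite differently and never needs an a priori weight bound. Since $A_2(\cM_3)=0$, it suffices to intersect $[\pi_{3/1,d}]$ with general test surfaces supported on the two boundary divisors $\barM_{2,2}\to\barM_3$ and $\barM_{2,1}\times\barM_{1,1}\to\barM_3$, chosen to represent a basis $\Delta_{[i]}$ of $A^4(\barM_3)$. A general such surface meets the admissible locus only in covers whose target has exactly one node; these fall into seven explicit topological types, and each type's contribution is expressed via previously computed classes in genus $\le 2$ (the $d$-elliptic classes on $\barM_2$ and $\barM_{2,1}$, covers of a fixed elliptic curve, counts of isogenies, and the doubly-totally-ramified pencil counts), with intersection multiplicities read off from the Harris--Mumford local description of $\Adm_{g/h,d}$. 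Quasimodularity then falls out of the convolution identities for $\sum_{d_1+d_2=d}\sigma_1(d_1)\sigma_1(d_2)$ and its variants, for every $d$ at once, rather than being imposed in advance and checked on small $d$.
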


As a check, both formulas become zero after substituting $d=1$. When $d=2$, we recover the main results of Faber-Pagani \cite{faber}. Indeed, the morphism $\pi_{2/1,2}$ is generically 4-to-1, where one factor of 2 comes from the complement $C\to E_2$ to any bielliptic map $C\to E_1$ (see \cite[\S 2]{kuhn}), and another comes from the labelling of the two ramification points. Thus, our answer differs from \cite[Proposition 2]{faber} by a factor of 4. In genus 3, the morphism $\pi_{3/1,2}$ has degree $4!$, coming from the ways of labelling the ramification points of a bielliptic cover, and our answer differs from the correction to \cite[Theorem 1]{faber} given by van Zelm \cite[(3.5)]{vanzelm_thesis} by a factor of 24.

We are then led to conjecture:

\begin{conjecture}\label{main_conj}
The statement of Theorem \ref{modularity_statement} holds for all $g\ge2$.
\end{conjecture}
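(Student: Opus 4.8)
The plan is to deduce the conjecture from the cycle-valued quasimodularity of the Gromov--Witten theory of a genus $1$ curve established by Oberdieck--Pixton \cite{obpix18}, together with the Gromov--Witten/Hurwitz correspondence of Okounkov--Pandharipande \cite{op}. The first reduction is to pass from the cycle statement to a family of numerical ones. Since $A^{g-1}(\barM_g)$ is finite-dimensional, to prove that $\sum_{d}[(\pi_{g/1,d})_{*}(1)]q^d$ lies in $A^{g-1}(\barM_g)\otimes\Qmod$ it suffices to show that for every $\gamma$ in a spanning set of the complementary-codimension classes $A^{2g-2}(\barM_g)$, the pairing
\[
\int_{\barM_g}(\pi_{g/1,d})_{*}(1)\cdot\gamma \;=\; \int_{\Adm_{g/1,d}}\pi_{g/1,d}^{*}\gamma
\]
is the $q^d$-coefficient of a quasimodular form, \emph{provided} one knows that the classes $(\pi_{g/1,d})_{*}(1)$ lie in a subspace of $A^{g-1}(\barM_g)$ on which these pairings are jointly non-degenerate. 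For $g=2,3$ this holds because $A^{*}(\barM_g)$ is tautological with a perfect pairing; in general it is part of the difficulty, and I return to it below.

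The heart of the argument is to express the right-hand integral through the Gromov--Witten theory of a \emph{varying} genus $1$ target. I would realize $\Adm_{g/1,d}$ via its finite map $\psi_{g/1,d}$ to $\barM_{1,2g-2}$ and its branch morphism, comparing it to the moduli of stable maps to the universal elliptic curve over $\barM_{1,1}$ with the $2g-2$ simple branch points recorded as markings; the virtual dimension $2g-2$ matches $\dim\Adm_{g/1,d}$, and the marked ramification rigidifies the maps so that the source morphism is generically finite onto the codimension-$(g-1)$ $d$-elliptic locus. Pulling back a tautological generator $\gamma$ on $\barM_g$ along $\pi_{g/1,d}$ and rewriting it on the source universal curve converts the integral into a tautological integral over $\barM_{1,2g-2}$ weighted by the Hurwitz data. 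By the Gromov--Witten/Hurwitz correspondence these are precisely the stationary descendent series of $E$ with insertions determined by $\gamma$ and by the simple-branching conditions; such series are quasimodular by \cite{op, eskin} and, at the cycle level, by \cite{obpix18}. Summing over $d$ then yields a quasimodular form of weight bounded in terms of $g$, compatibly with the weight $6g-6$ of Theorem \ref{dijkgraaf} and with the weights visible in Theorems \ref{main_thm_g2} and \ref{main_thm_g3}.

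A complementary, more hands-on route is to compute $(\pi_{g/1,d})_{*}(1)$ by degenerating the target to a cycle of rational curves, as in the genus $2,3$ computations recovering \cite{faber, vanzelm_thesis}: the admissible-cover class decomposes as a sum over boundary strata of products of genus-$0$ and genus-$1$ building blocks, each carrying a generating series in $d$, and the gluing/TQFT structure of Hurwitz theory on the torus suggests that the total assembles into $\Qmod$. The main obstacle, in either approach, is genuinely at the level of cycles rather than numbers. First, one must upgrade the Gromov--Witten/Hurwitz correspondence so as to match the Chow class $(\pi_{g/1,d})_{*}(1)$ on the nose, controlling the excess and boundary corrections coming from degenerate covers (contracted components, non-reduced ramification) and showing that these corrections are themselves quasimodular. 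Second, the numerical-to-cycle reduction of the first paragraph requires that $(\pi_{g/1,d})_{*}(1)$ be tautological and detected by intersection numbers; for $g\ge 4$ this is not known, so a complete proof of Conjecture \ref{main_conj} would need either a direct cycle-level identification of the class with an Oberdieck--Pixton Gromov--Witten class, or independent input establishing that the $d$-elliptic classes are tautological. I expect this cycle-level matching to be the crux.
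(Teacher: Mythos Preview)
This statement is a \emph{conjecture} in the paper, not a theorem; the paper proves it only for $g=2,3$ and explicitly leaves the general case open. There is thus no ``paper's own proof'' to compare against, and your proposal must be assessed as an attack on an open problem.

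Your numerical-to-cycle reduction has a concrete and fatal gap. You say that for $g\ge 4$ it is ``not known'' whether $(\pi_{g/1,d})_{*}(1)$ is tautological; in fact it is known to be \emph{false}. As the paper itself recalls in the introduction, van Zelm \cite{vanzelm} (building on Graber--Pandharipande \cite{gp}) shows that $(\pi_{g/1,2})_{*}(1)\in H^{2g-2}(\barM_g)$ is non-tautological for all $g\ge 12$, and the paper even draws consequences from the conjecture precisely \emph{because} these classes escape the tautological ring. Hence any strategy that pairs against a spanning set of tautological test classes and appeals to a perfect pairing cannot succeed in general: the $d$-elliptic classes have a genuinely non-tautological component that such pairings cannot detect.

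Your Gromov--Witten route is likewise anticipated in the paper, which remarks that one might hope to relate the $d$-elliptic loci to the quasimodularity results of Oberdieck--Pixton for elliptic fibrations, but that ``the moduli spaces of stable maps in question typically fail to have the expected dimension, so such a relationship would necessarily be subtle.'' Your sketch passes over exactly this point: the comparison between $\Adm_{g/1,d}$ and stable maps to the universal elliptic curve is not an identification of fundamental with virtual classes, and you give no argument that the ``excess and boundary corrections'' you allude to are themselves quasimodular. You correctly flag the cycle-level matching as ``the crux,'' but as written the proposal is a heuristic outline rather than a proof, and Conjecture~\ref{main_conj} remains open.
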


In fact, our method, which we now outline, suggests a number of possible refinements of Conjecture \ref{main_conj}. For $g=2,3$, it suffices to intersect $\pi_{g/1,d}$ with test classes of complementary dimension, all of which lie in the boundary of $\barM_g$. The test classes may then be moved to general cycles in a boundary divisor of $\barM_g$. The intersection of general boundary cycles with the admissible locus can be expressed in terms of contributions from admissible covers of a small number of topological types, and we compute these contributions in terms of branched cover loci in lower genus. This leads naturally to a number of auxiliary situations in which quasimodularity phenomena also occur, for example:
\begin{itemize}
\item Loci of $d$-elliptic curves with marked points having equal image under the $d$-elliptic map, see \S\ref{m12_d-ell}
\item Loci of curves covering a \textit{fixed} elliptic curve, see \S\ref{m2_d-ell_fixed_target} and also \cite{op}
\item Correspondence maps $(\pi_{g/1,d})_{*}\circ\psi_{g/1,d}^{*}$, see \S\ref{m2_correspondence}
\item Loci of $d$-elliptic curves with marked ramification points, see \S\ref{m21_d-ell} and Appendix \ref{m22_appendix}
\end{itemize}

In addition, $d$-elliptic cycles with non-simple branching are studied using similar methods to ours in \cite{chen}; analogous quasi-modularity phenomena arise here as well.

Let us mention one consequence of Conjecture \ref{main_conj}. Building on work of Graber-Pandharipande \cite{gp}, van Zelm \cite{vanzelm} has shown that the class $(\pi_{g/1,2})_{*}(1)\in H^{2g-2}(\barM_g)$ is non-tautological for $g\ge12$, and that $(\pi_{g/1,2})_{*}(1)\in H^{2g-2}(\cM_g)$ is non-tautological for $g=12$. Assuming Conjecture \ref{main_conj}, we have that for these values of $g$, the generating functions 
\begin{equation*}
\sum_{d\ge1}(\pi_{g/1,d})_{*}(1)q^d
\end{equation*}
in the quotients of $A^{g-1}(\barM_g)$ and $A^{g-1}(\cM_g)$ by their tautological subgroups are nonzero quasimodular forms (note that the tautological part of $A^{g-1}(\cM_g)$ is zero by Looijenga's result \cite{looijenga}). In particular, we would get infinitely many non-tautological classes from $d$-elliptic loci on $\barM_g$ for $g\ge12$, and on $\cM_{12}$.

While our methods are too limited to approach Conjecture \ref{main_conj} in full generality, we remark on one possible approach relating to previous work. Let $g:X\to B$ be a non-isotrivial elliptic surface. Then, the $d$-elliptic loci may be expressed in terms of the space of stable maps with image equal to $d$ times the class of a fiber of $g$. One may then hope to relate quasi-modularity of $d$-elliptic loci to to the quasi-modularity results for Gromov-Witten invariants of elliptic fibrations of \cite{obpix19}. However, the moduli spaces of stable maps in question typically fail to have the expected dimension, so such a relationship would necessarily be subtle.

The structure of this paper is as follows. We collect preliminaries in \S\ref{prelim}, recording the needed facts about intersection theory on $\barM_{g,n}$ and recalling the definitions of admissible covers and quasimodular forms. In \S\ref{prelude}, we carry out some enumerative calculations for branched covers that we will need when considering $d$-elliptic loci. In \S\ref{m2_d-ell}, we prove Theorem \ref{main_thm_g2} on the $d$-elliptic loci in genus 2; it is here where we explain our method in the most detail. In \S\ref{g2_var}, we establish variants of Theorem \ref{main_thm_g2} suggesting possible variants of Conjecture \ref{main_conj}. Finally, we put together all of the previous results to prove Theorem \ref{main_thm_g3} on the $d$-elliptic loci in genus 3 in \S\ref{m3_d-ell}.

We remark in Appendix \ref{m22_appendix} that we have quasimodularity for $d$-elliptic loci on $\barM_{2,2}$, where the ramification points of a $d$-elliptic cover are marked. However, we explain a new feature: not all contributions to the classes of the $d$-elliptic loci from admissible covers of individual topological types are themselves quasimodular.

\subsection{Acknowledgments}
I am grateful to my advisor Johan de Jong, who suggested the initial research directions and offered invaluable guidance throughout. I also thank Jim Bryan and Georg Oberdieck for pointing out the quasi-modularity in the initial results, Izzet Coskun for encouraging me to work out the genus 3 case, and Dawei Chen, Raymond Cheng, Bong Lian, Nicola Pagani, Renata Picciotto, Michael Thaddeus, and the anonymous referees for helpful comments on drafts of this work. This work was completed in part with the support of an NSF Graduate Research Fellowship.


\section{Preliminaries}\label{prelim}

\subsection{Conventions}\label{conventions}

We work over $\bC$. Fiber products are over $\Spec(\bC)$ unless otherwise stated. All curves, unless otherwise stated, are assumed projective and connected with only nodes as singularities. The \textit{genus} of a curve $X$ refers to its arithmetic genus and is denoted $p_a(X)$. A \textit{rational} curve is an irreducible curve of geometric genus 0. All moduli spaces are understood to be moduli stacks, rather than coarse spaces. In all figures, unlabelled irreducible components of curves are rational, and all other components are labelled with their geometric genus.

If $X$ is a nodal curve, its \textit{stabilization}, obtained by contracting rational tails and bridges (that is, non-stable components), is denoted $X^s$. We use similar notation for pointed nodal curves.

All Chow rings are taken with rational coefficients and are denoted $A^{*}(X)$, where $X$ is a variety or Deligne-Mumford stack over $\bC$. When referring to Chow groups, we use subscripts (recording the dimensions of cycles) and superscripts (recording their codimensions) interchangeably when $X$ is smooth. We will frequently refer to the Chow class of a proper and generically finite morphism $f:Y\to X$, by which we mean $f_{*}([Y])$. The class of $f$ in $A^{*}(X)$ is denoted $[f]$. When there is no opportunity for confusion, we sometimes refer to the same class by ``the class of $Y$'' or $[Y]\in A^{*}(X)$.'' If $X$ is proper and $f:X\to\Spec(\bC)$ is the structure morphism, we denote the proper pushforward map $f_{*}$ by $\int_X$.

We deal throughout this paper with boundary classes on moduli spaces of curves. We will find it more convenient to carry out intersection-theoretic calculations using classes obtained as pushforwards of fundamental classes from (products of) moduli spaces of curves of lower genus, and label these classes using upper case Greek letters. For example, when $g\ge2$, we denote by $\Delta_0\in A^{*}(\barM_g)$ the class of the morphism $\barM_{g-1,2}\to\barM_{g}$ that glues together the two marked points. We reserve lower-case letters for substack classes (also known as \textit{$Q$-classes}): for example, $\delta_0\in A^{*}(\barM_g)$ is the class of the substack of curves with a non-separating node. We have
\begin{equation*}
\delta_0=\frac{1}{2}\Delta_0.
\end{equation*}

In general, the denominator is the order of the automorphism group of the stable graph associated to the boundary stratum, which in this case is the graph consisting of a single vertex and a self-loop.

\subsection{Intersection numbers on moduli spaces of curves}\label{int_numbers_mg}

Here, we collect notation for various classes on moduli spaces of curves, and intersection numbers of these classes. We will frequently abuse notation: for instance, $\Delta_0$ will always denote the class of the locus of irreducible nodal curves on any $\barM_{g,n}$, but the spaces on which these classes are defined will be clear from the context. The intersection numbers given here can be verified using the {\tt admcycles.sage} package, \cite{sage}.

\subsubsection{$\barM_{1,2}$}\label{int_numbers_m12}

The rational Picard group $A^1(\barM_{1,n})$ is freely generated by boundary divisors, see \cite{ac}. When $n=2$, we have the boundary divisors $\Delta_0$, parametrizing irreducible nodal curves, and $\Delta_1$, parametrizing reducible curves, see Figure \ref{Fig: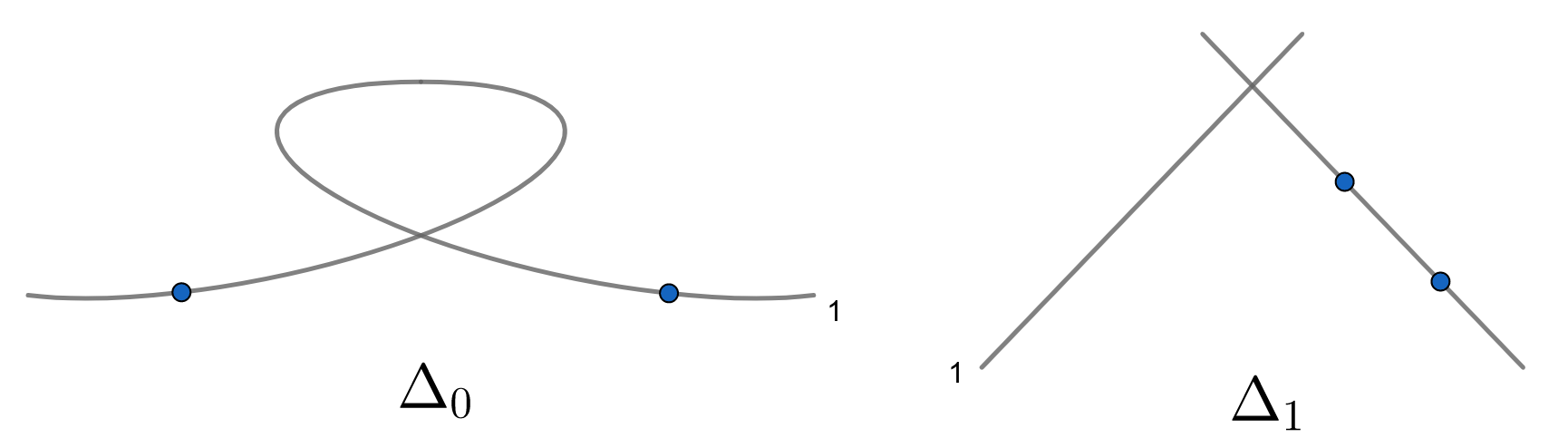}.
\begin{figure}
     \includegraphics[width=.75\linewidth]{classes_M12.png}
     \caption{Boundary classes in $A^1(\barM_{1,2})$}\label{Fig:classes_M12.png}
\end{figure}
The intersection pairing is as follows:
\begin{center}
\begin{tabular}{l | c c}
 & $\Delta_0$ & $\Delta_1$\\
 \hline
$\Delta_0$ & $0$ & $1$\\
$\Delta_1$ & $1$ & $-\frac{1}{24}$ \\
\end{tabular}
\end{center}

\subsubsection{$\barM_{1,3}$}\label{int_numbers_m13}

In $A^1(\barM_{1,3})$, we have the boundary divisor $\Delta_0$ parametrizing irreducible nodal curves, and the boundary divisors $\Delta_{1,S}$, where $S\subset\{1,2,3\}$, parametrizing reducible nodal curves with the marked points corresponding to elements of $S$ lying on the rational component, see Figure \ref{Fig:classes_M13_dim2}. (We require $|S|\ge2$.)

For the same $S$, let $\Delta_{01,S}\in A^2(\barM_{1,3})$ be the class of curves in the boundary divisor $\Delta_{1,S}$ whose genus 1 component is nodal; if $|S|=2$, let $\Delta_{11,S}\in A^2(\barM_{1,3})$ be the class of curves consisting of a chain of three components, where the rational tail contains the two marked points corresponding to the elements of $S$, see Figure \ref{Fig:classes_M13_dim1}.

\begin{figure}[!htb]
     \includegraphics[width=.7\linewidth]{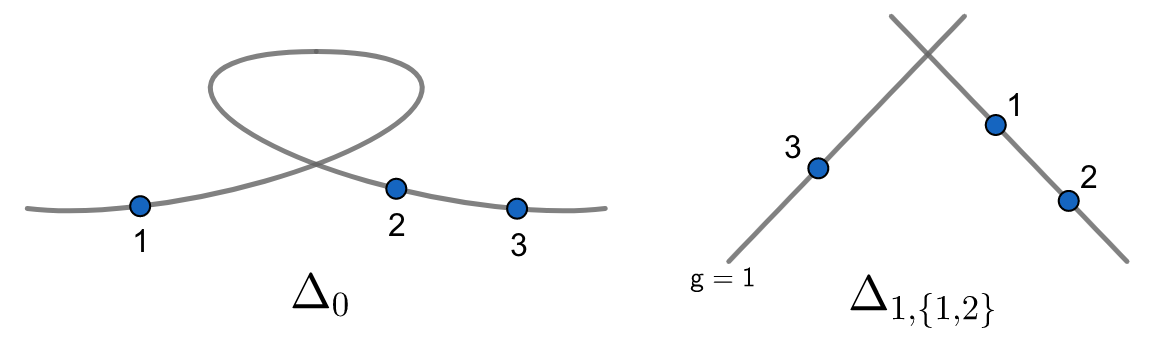}
     \caption{Some boundary classes in $A^1(\barM_{1,3})$}\label{Fig:classes_M13_dim2}
\end{figure}

\begin{figure}[!htb]
     \includegraphics[width=.7\linewidth]{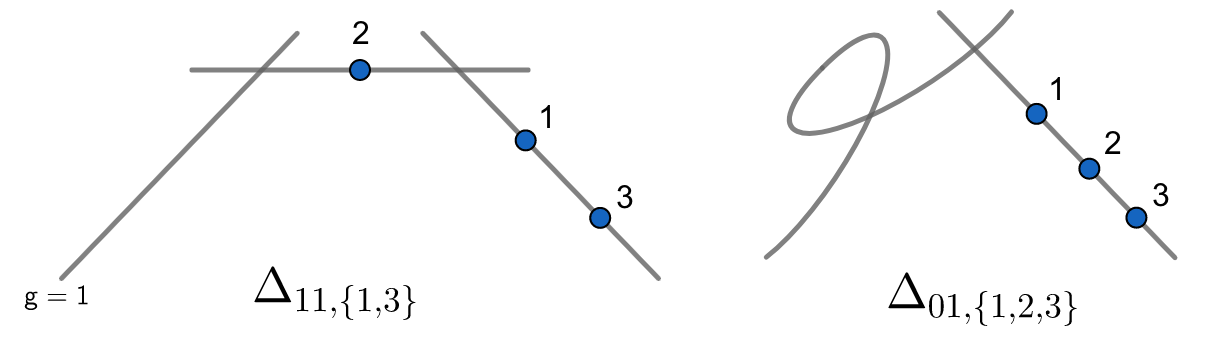}
     \caption{Some boundary classes in $A^2(\barM_{1,3})$}\label{Fig:classes_M13_dim1}
\end{figure}

We will not need the intersection numbers of the boundary divisors with all curve classes in $A^2(\barM_{1,3})$, but we record the intersections of the boundary divisors with those defined above:

\begin{center}
\begin{tabular}{l | c c c c c}
 & $\Delta_0$ & $\Delta_{1,\{1,2,3\}}$ & $\Delta_{1,\{2,3\}}$ & $\Delta_{1,\{1,3\}}$ & $\Delta_{1.\{1,2\}}$\\
 \hline
$\Delta_{01,\{1,2\}}$ & 0 & $1$ & 0 & 0 & $-1$ \\
$\Delta_{01,\{1,3\}}$ & 0 & $1$ & 0 & $-1$ & 0\\
$\Delta_{11,\{1,2\}}$ & $1$ & $-\frac{1}{24}$ & 0 & 0 & 0\\
$\Delta_{11,\{1,3\}}$ & $1$ & $-\frac{1}{24}$ & 0 & 0 & 0\\
\end{tabular}
\end{center}

\subsubsection{$\barM_2$}\label{int_numbers_m2}

Mumford has computed $A^{*}(\barM_2)$ in \cite{mumford}. $A^1(\barM_2)$ is generated by the boundary classes $\Delta_0,\Delta_1$, parametrizing irreducible nodal curves and reducible curves, respectively, see Figure \ref{Fig:classes_M2_dim2}. $A^2(\barM_2)$ is generated by the boundary classes $\Delta_{00},\Delta_{01}$, parametrizing irreducible binodal curves and reducible curves where one component is a rational nodal curve, respectively, see Figure \ref{Fig:classes_M2_dim1}.

\begin{figure}[!htb]
     \includegraphics[width=.75\linewidth]{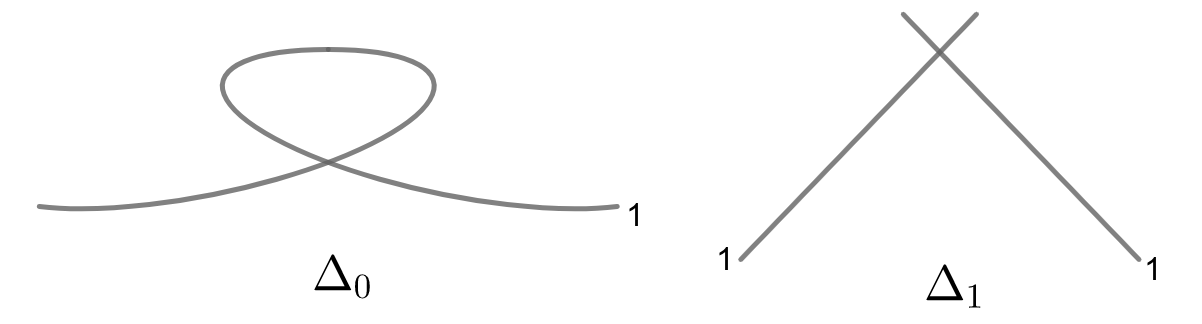}
     \caption{Boundary classes in $A^1(\barM_2)$}\label{Fig:classes_M2_dim2}
\end{figure}

\begin{figure}[!htb]
     \includegraphics[width=.75\linewidth]{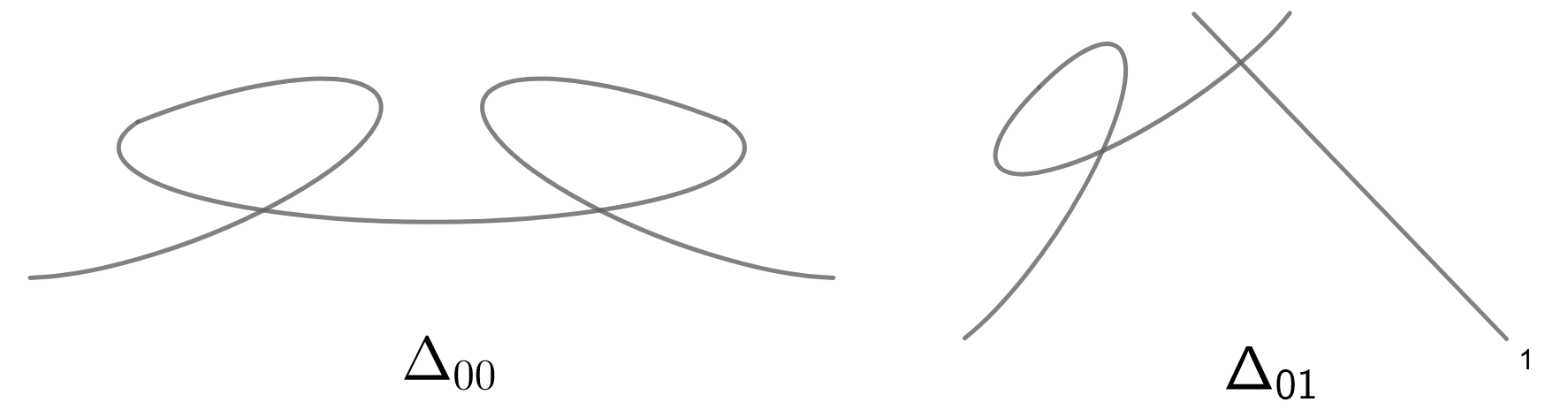}
     \caption{Boundary classes in $A^2(\barM_2)$}\label{Fig:classes_M2_dim1}
\end{figure}

The intersection pairing is as follows:
\begin{center}
\begin{tabular}{l | c c}
 & $\Delta_{0}$ & $\Delta_{1}$\\
 \hline
$\Delta_{00}$ & $-4$ & $2$\\
$\Delta_{01}$ & $1$ & $-\frac{1}{12}$ \\
\end{tabular}
\end{center}

\subsubsection{$\barM_{2,1}$}\label{int_numbers_m21}

It follows from \cite{faber_thesis} that $A^2(\barM_{2,1})$ has dimension 5, with a basis given by the boundary classes $\Delta_{00},\Delta_{01a},\Delta_{01b},\Xi_1,\Delta_{11}$, shown in Figure \ref{Fig:classes_M21_dim2}.

\begin{figure}
     \includegraphics[width=.85\linewidth]{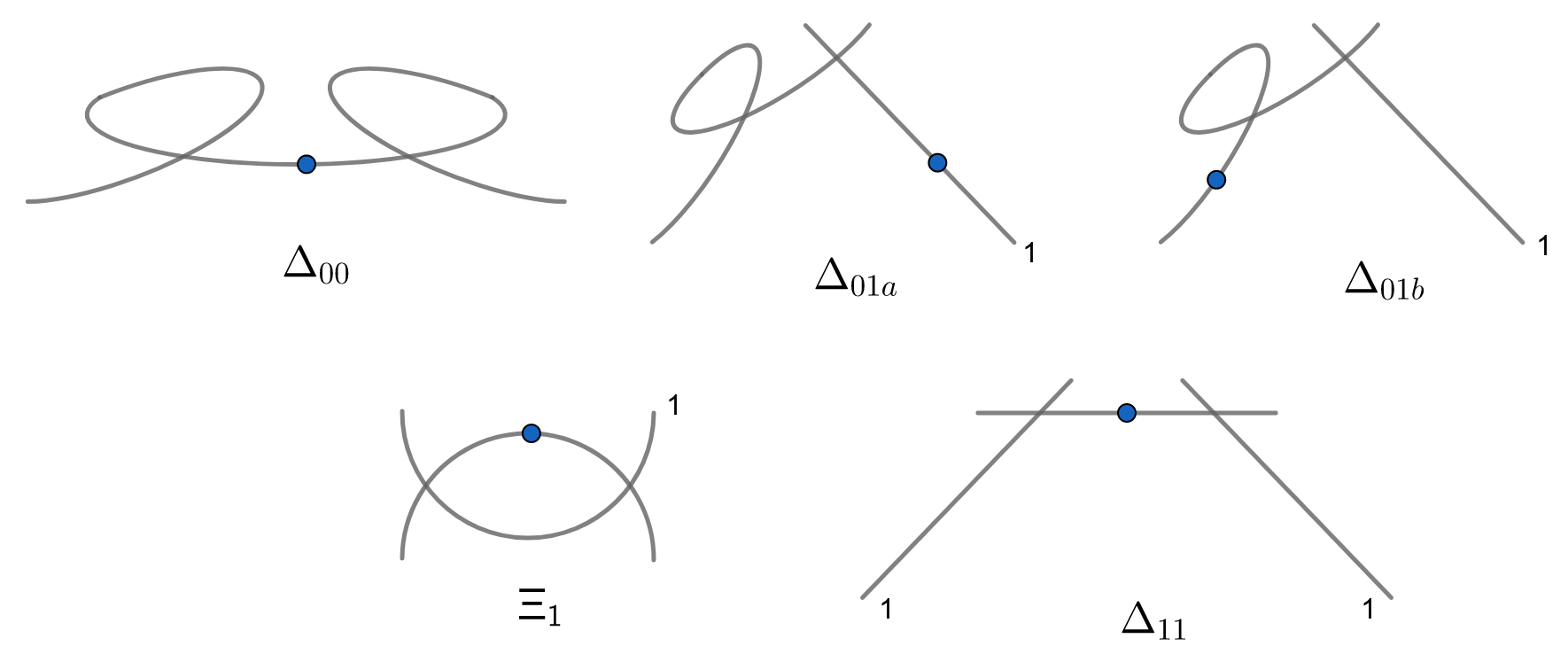}
     \caption{Boundary classes in $A^2(\barM_{2,1})$}\label{Fig:classes_M21_dim2}
\end{figure}

The intersection pairing is as follows:

\begin{center}
\begin{tabular}{l | c c c c c}
 & $\Delta_{00}$ & $\Delta_{01a}$ & $\Delta_{01b}$ & $\Xi_1$ & $\Delta_{11}$ \\
 \hline
$\Delta_{00}$ & 0 & 0 & 0 & $-4$ & 2\\
$\Delta_{01a}$ & 0 & 1 & $-1$ & 1 & 0\\
$\Delta_{01b}$ & 0 & $-1$ & 1 & 0 & $-\frac{1}{12}$ \\
$\Xi_1$ & $-4$ & 1 & 0 & $\frac{1}{12}$ & 0 \\
$\Delta_{11}$ & 2 & 0 & $-\frac{1}{12}$ & 0 & $\frac{1}{288}$
\end{tabular}
\end{center}

We will also need the classes $\Delta_1\in A^1(\barM_{2,1})$ and $\Gamma_{(5)},\Gamma_{(6)},\Gamma_{(11)}\in A^3(\barM_{2,1})$, shown in Figure \ref{Fig:classes_M21_dim13}.
\begin{figure}
     \includegraphics[width=.8\linewidth]{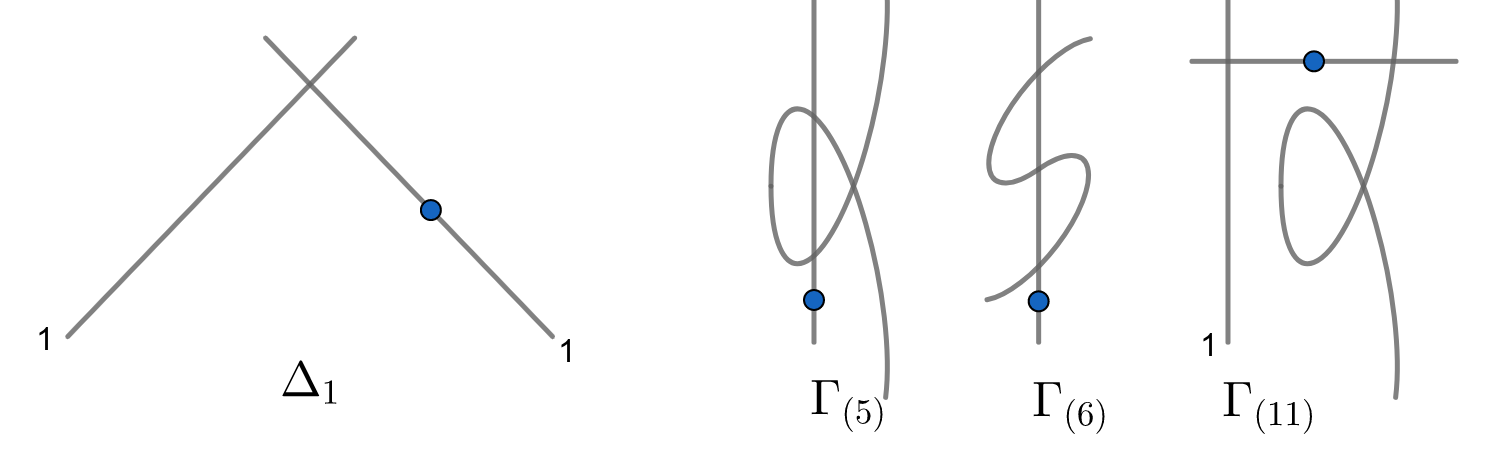}
     \caption{Some boundary classes in $A^1(\barM_{2,1})$ and $A^3(\barM_{2,1})$}\label{Fig:classes_M21_dim13}
\end{figure}

The subscripts in the classes $\Gamma_{(i)}\in A^3(\barM_{2,1})$ are chosen in such a way that $\Gamma_{(i)}\times\barM_{1,1}=\Delta_{[i]}\in A^4(\barM_{3})$, see \S\ref{int_numbers_m3}. We have the following intersection numbers:

\begin{center}
\begin{tabular}{l | c c c}
 & $\Gamma_{(5)}$ & $\Gamma_{(6)}$ & $\Gamma_{(11)}$ \\
 \hline
$\Delta_{1}$ & 1 & 0 & $-\frac{1}{24}$\\
\end{tabular}
\end{center}

\subsubsection{$\barM_{3}$}\label{int_numbers_m3}

Faber has computed $A^{*}(\barM_3)$ in \cite{faber_thesis}. We have that $A^2(\barM_3)$ and $A^4(\barM_3)$ both have dimension 7 and pair perfectly. To describe bases of these groups, we first recall the definitions of the $\lambda$ and $\kappa$ classes. Let $u:\overline{\cC}_g\to\barM_g$ be the universal curve, let $\omega_{\overline{\cC}_g/\barM_g}$ be the relative dualizing sheaf, and let $K\in A^{1}(\overline{\cC}_g)$ denote the divisor class of $\omega_{\overline{\cC}_g/\barM_g}$. Then, by definition:
\begin{align*}
\lambda_i&=c_i(u_{*}\omega_{\overline{\cC}_g/\barM_g})\in A^{i}(\barM_g)\\
\kappa_i&=u_{*}(K^{i+1})\in A^{i}(\barM_g)
\end{align*}

We will only need the class $\lambda_1$ in this paper, so we write $\lambda=\lambda_1$ in $A^1(\barM_3)$, with no risk of confusion. Then, a basis for $A^2(\barM_3)$ is given by the seven classes $\lambda^2,\lambda\delta_0,\lambda\delta_1,\delta_0^2,\delta_0\delta_1,\delta_1^2,\kappa_2$.

A basis for $A^4(\barM_3)$ is given by surface classes $\Delta_{[i]}$ for $i\in\{1,4,5,6,8,10,11\}$, retaining the indexing from \cite{faber_thesis}, see Figure \ref{Fig:classes_M3_dim2}.
\begin{figure}
     \includegraphics[width=.92\linewidth]{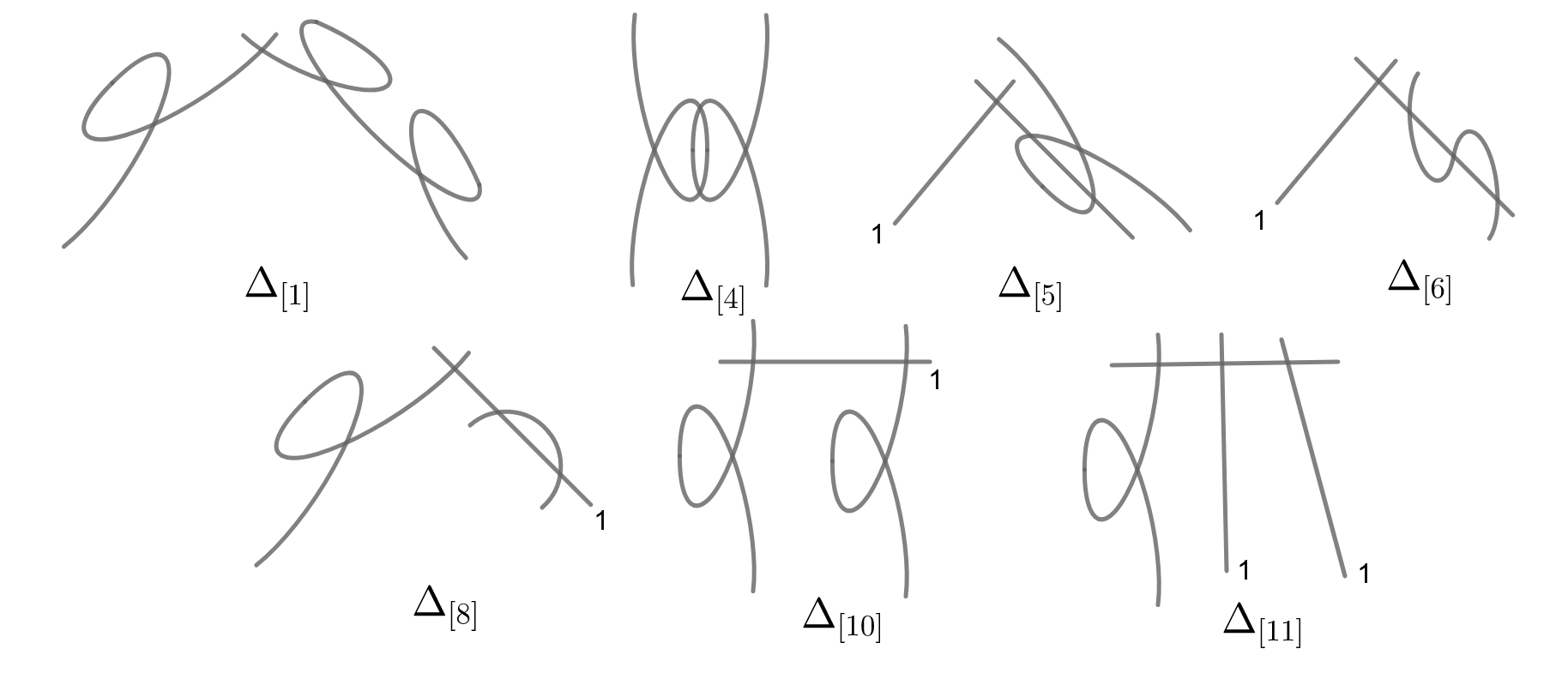}
     \caption{Boundary classes in $A^4(\barM_{3})$}\label{Fig:classes_M3_dim2}
\end{figure}

We have the following intersection numbers:

\begin{center}
\begin{tabular}{l | c c c c c c c}
 & $\lambda^2$ & $\lambda\delta_0$ & $\lambda\delta_1$ & $\delta_0^2$ & $\delta_0\delta_1$ & $\delta_1^2$ & $\kappa_2$\\
 \hline
$\Delta_{[1]}$ & 0 & 0 & 0 & 0 & 4 & $-3$ & 1\\
$\Delta_{[4]}$ & 0 & 0 & 0 & 8 & $-4$ & 2 & 0\\
$\Delta_{[5]}$ & 0 & $-\frac{1}{12}$ & $\frac{1}{24}$ & $-2$ & $\frac{7}{12}$ & $-\frac{1}{12}$ & 0\\
$\Delta_{[6]}$ & 0 & 0 & $-\frac{1}{24}$ & 0 & $-\frac{1}{2}$ & $\frac{1}{12}$ & 0\\
$\Delta_{[8]}$ & 0 & $-\frac{1}{12}$ & $\frac{1}{24}$ & $-\frac{11}{6}$ & $\frac{1}{2}$ & $-\frac{1}{24}$ & $\frac{1}{24}$\\
$\Delta_{[10]}$ & 0 & 0 & $-\frac{1}{24}$ & 0 & $-\frac{1}{2}$ & $\frac{1}{8}$ & $\frac{1}{24}$\\
$\Delta_{[11]}$ & $\frac{1}{288}$ & $\frac{1}{24}$ & $-\frac{1}{288}$ & $\frac{1}{2}$ & $-\frac{1}{24}$ & $\frac{1}{288}$ & 0\\
\end{tabular}
\end{center}

\subsection{Admissible covers}\label{admissible_covers_prelim}
We recall the definition of \cite{hm}:

\begin{defn}\label{admissible_covers_def}
Let $X,Y$ be nodal curves. Let $b=(2p_a(X)-2)-d(2p_a(Y)-2)$, and let $y_1,\ldots,y_b\in Y$ be such that $(Y,y_1,\ldots,y_b)$ is stable. Then, an \textbf{admissible cover} consists of the data of the stable marked curve $(Y,y_1,\ldots,y_b)$ and a finite morphism $f:X\to Y$ such that:
\begin{itemize}
\item $f(x)$ is a smooth point of $Y$ if and only if $x$ is a smooth point of $X$,
\item $f$ is simply branched over the $y_i$ and \'{e}tale over the rest of the smooth locus of $Y$, and
\item at each node of $X$, the ramification indices of $f$ restricted to the two branches are equal.
\end{itemize}
\end{defn}

\begin{rem}\label{preimage_adm_smooth}
It is clear that non-separating nodes of $X$ must map to non-separating nodes of $Y$, and self-nodes of $X$ (nodes at which both branches belong to the same component) map to self-nodes of $Y$. Hence, the pre-image of a smooth component of $Y$ must be a disjoint union of smooth components of $X$. 
\end{rem}

Isomorphism classes of admissible covers of degree $d$ from a genus $g$ curve to a genus $h$ curve are parametrized by a proper Deligne-Mumford stack $\Adm_{g/h,d}$, see \cite{hm,mochizuki,acv}. $\Adm_{g/h,d}$ contains the Hurwitz space $\cH_{g/h,d}$ parametrizing simply branched covers of smooth curves as a dense open substack. 

Let $b=2g-2-d(2h-2)$. Then, we have a forgetful map $\psi_{g/h,d}:\Adm_{g/h,d}\to\barM_{h,b}$ remembering the (marked) target, and another $\pi_{g/h,d}:\Adm_{g/h,d}\to\barM_{g,b}$ remembering the stabilization of the source, marked by the ramification points $x_1,\ldots,x_b$ mapping to $y_1,\ldots,y_b$. We will often abuse notation and write $\pi_{g/h,d}:\Adm_{g/h,d}\to\barM_{g,r}$ for $r<b$, obtained by post-composing with the map $\barM_{g,b}\to\barM_{g,r}$ forgetting the last $b-r$ points.

We will also allow the case $g=h=1$, where we agree that $\Adm_{1/1,d}$ parametrizes \textit{pointed} admissible covers of a \textit{marked} stable genus 1 curve $(Y,y_1)$ (i.e. a point of $\barM_{1,1}$) by a marked nodal curve $(X,x_1)$, also of genus 1. (We have in this case that $b=0$, so the target curve cannot be stable unless we include the data of the marked point.)

\begin{lem}\label{adm_to_mh_quasifinite}
The morphism $\psi_{g/h,d}:\Adm_{g/h,d}\to\barM_{h,b}$ is quasifinite (and hence finite).
\end{lem}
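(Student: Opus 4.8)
The plan is to show that the fiber of $\psi_{g/h,d}$ over a point $[(Y, y_1, \ldots, y_b)] \in \barM_{h,b}$ is finite, i.e.\ that there are only finitely many admissible covers $f: X \to Y$ of degree $d$ with branch points exactly $y_1, \ldots, y_b$. Since $\psi_{g/h,d}$ is a morphism of proper Deligne--Mumford stacks, quasifiniteness immediately gives finiteness, so the content is entirely in the finiteness of fibers.

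First I would fix the stable pointed target $(Y, y_1, \ldots, y_b)$ and analyze the discrete data determining $f$. The key point is that the combinatorial type of the cover is constrained in finitely many ways: $X$ maps to $Y$, so the dual graph of $X$ maps to the dual graph of $Y$ with bounded fibers (each vertex of $Y$ has at most $d$ preimages, each edge at most $d$ preimages, by degree considerations together with Remark~\ref{preimage_adm_smooth}), and the ramification profile over each node and each branch point $y_i$ is one of finitely many partitions of $d$. So there are finitely many topological types of admissible cover over the fixed $(Y, y_1,\ldots,y_b)$, and it suffices to bound the number of covers of each fixed type.

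Next, for a fixed topological type, I would argue componentwise. By Remark~\ref{preimage_adm_smooth}, over a smooth component $Y_v$ of $Y$ the preimage is a disjoint union of smooth curves, and the restriction of $f$ to each is a connected branched cover of the smooth curve $Y_v$ with prescribed branch points (among the $y_i$ lying on $Y_v$ together with the nodes of $Y$ on $Y_v$) and prescribed local monodromy (ramification profile) at each. By the Riemann existence theorem, such covers are classified up to isomorphism by certain tuples of permutations in $S_d$ satisfying a product-one relation and generating a transitive subgroup, modulo simultaneous conjugation --- a finite set. The gluing data along the nodes of $Y$ matching up the ramification points on the two branches is then a choice among finitely many bijections. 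Assembling these finitely many choices over all vertices and edges of the dual graph of $Y$ shows the fiber is finite.

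The main obstacle, and the place requiring the most care, is making precise that the \emph{scheme-theoretic} fiber of $\psi_{g/h,d}$ is finite --- that is, ruling out positive-dimensional families of admissible covers with fixed branch locus. One has to ensure there is no continuous modulus hiding either in the cover of a fixed smooth component (excluded by rigidity of branched covers of a fixed curve with fixed branch data, via Riemann existence) or in the isomorphism identifications. A clean way to organize this is to stratify $\Adm_{g/h,d}$ by topological type, observe that on each stratum $\psi_{g/h,d}$ factors through the corresponding stratum of $\barM_{h,b}$, and invoke the classical fact (going back to Harris--Mumford \cite{hm}) that the map from the open Hurwitz space $\cH_{g/h,d}$ to $\barM_{h,b}$ is finite étale of degree the relevant Hurwitz number; the boundary strata are products of such Hurwitz spaces for the components, glued along finite data, so the same finiteness propagates. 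I expect the verification that no modulus survives the gluing --- essentially a dimension count on each boundary stratum --- to be the technical heart of the argument.
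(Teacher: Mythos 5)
Your proposal is correct and follows essentially the same route as the paper: over the interior the fiber is counted by a Hurwitz number via monodromy (Riemann existence), and over a boundary stratum one observes there are finitely many ramification profiles over the nodes, finitely many covers of each smooth component of $Y$, and finitely many gluings. Your extra worry about a hidden positive-dimensional modulus is already resolved by the same rigidity statement (a branched cover of a fixed curve with fixed branch divisor has no deformations), so no further dimension count is needed.
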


\begin{proof}
Over the open locus $\cM_{h,b}$, this is classical: the number of points in any fiber is given by a Hurwitz number, counting isomorphism classes of monodromy actions of the finitely generated group $\pi_1(Y-\{y_1,\ldots,y_b\})$ on a general fiber of $f:X\to Y$. Over a general point of any boundary stratum of $\barM_{h,b}$ parametrizing admissible covers $f:X\to Y$, there are a finite number of possible collections of ramification profiles above the nodes of $Y$, each of which leads to finitely many collections of covers of the individual components of $Y$, which in turn can be glued together in finitely many ways.
\end{proof}

We also recall from \cite{hm} the explicit local description of $\Adm_{g/h,d}$. Let $[f:X\to Y]$ be a point of $\Adm_{g/h,d}$. Let $y'_1,\ldots,y'_n$ be the nodes of $Y$, and let $y_{1},\ldots,y_{b}\in Y$ be the branch points of $f$. Let $\mathbb{C}[[t_1,\ldots,t_{3h-3+b}]]$ be the coordinate ring of the deformation space of $(Y,y_1,\ldots,y_b)$, so that $t_1,\ldots,t_n$ are smoothing parameters for the nodes $y'_1,\ldots,y'_n$. Let $x_{i,1},\ldots,x_{i,r_i}$ be the nodes of $X$ mapping to $y'_i$, and denote the ramification index of $f$ at $x_{i,j}$ by $a_{i,j}$. 

\begin{prop}[\cite{hm}]\label{adm_defos}
The complete local ring of $\Adm_{g/h,d}$ at $[f]$ is
\begin{equation*}
\mathbb{C}\left[\left[t_1,\ldots,t_{3h-3+b},\{t_{i,j}\}^{1\le i\le n}_{1\le j\le r_i}\right]\right]/\left(t_1=t_{1,1}^{a_{1,1}}=\cdots=t_{1,r_1}^{a_{1,r_1}},\ldots,t_n=t_{n,1}^{a_{n,1}}=\cdots=t_{n,r_n}^{a_{n,r_n}}\right).
\end{equation*} 
\end{prop}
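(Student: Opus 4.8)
The plan is to identify the deformation functor $\mathrm{Def}_{[f]}$ of the admissible cover $[f\colon X\to Y]$---which the complete local ring in question (pro-)represents---with the functor attached to the displayed ring, by decomposing it into local contributions along the forgetful map $\psi_{g/h,d}$. Write $B=\Spec\mathbb{C}[[t_1,\ldots,t_{3h-3+b}]]$ for the base of the versal deformation of the pointed target $(Y,y_1,\ldots,y_b)$, so that $t_1,\ldots,t_n$ are the smoothing parameters of the nodes $y'_1,\ldots,y'_n$. By Remark \ref{preimage_adm_smooth}, every node of $X$ maps to some $y'_i$, while over $Y$ minus its nodes the map $f$ is finite and \'{e}tale except for a single simple ramification point over each branch point $y_j$. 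The two local models occurring there, $z\mapsto z$ and $z\mapsto z^2$, are rigid relative to the target together with its marked points: an \'{e}tale cover of a formal disk (resp.\ punctured disk) is determined by its monodromy, which is locally constant in families, and the point $y_j$ is pinned by the marking. Hence the restriction of $f$ to a formal neighborhood of the complement of the nodes of $Y$ has no deformations and no automorphisms relative to $B$.

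Next I would carry out the local analysis at a node $y'_i$. Formally locally the target is $\Spec\mathbb{C}[[s,t]]/(st)$, whose versal deformation is $\Spec\mathbb{C}[[s,t,\tau]]/(st-\tau)$ over $\Spec\mathbb{C}[[\tau]]$ with $\tau=t_i$. Over each node $x_{i,j}$ of $X$ above $y'_i$, the admissibility condition---that the ramification indices of the two branches agree, with common value $a_{i,j}$---forces $f$ to be, in suitable formal coordinates, the map $\Spec\mathbb{C}[[u,v]]/(uv)\to\Spec\mathbb{C}[[s,t]]/(st)$ given by $(s,t)\mapsto(u^{a_{i,j}},v^{a_{i,j}})$. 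I would then check that any deformation of this node-to-node map over an Artinian base lying above a given smoothing $st=\tau$ of the target node is of the form $uv=\sigma_{i,j}$ for a unique $\sigma_{i,j}$ with $\sigma_{i,j}^{a_{i,j}}=\tau$, and conversely each such $\sigma_{i,j}$ arises; this is precisely the identity $st=u^{a_{i,j}}v^{a_{i,j}}=(uv)^{a_{i,j}}$. Consequently the deformations of $f$ near $y'_i$, relative to $B$, are represented by $R_i=\mathbb{C}[[t_i,t_{i,1},\ldots,t_{i,r_i}]]/(t_i-t_{i,1}^{a_{i,1}},\ldots,t_i-t_{i,r_i}^{a_{i,r_i}})$.

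Finally I would assemble these pieces. A deformation of $[f]$ over an Artinian local $\mathbb{C}$-algebra $A$ is the data of a deformation of $(Y,y_1,\ldots,y_b)$ over $A$, i.e.\ a local homomorphism $\mathcal{O}_B\to A$, together with a deformation of $f$ near each $y'_i$ extending the prescribed one of the corresponding target node; by the rigidity established in the first step these local deformations glue uniquely over the overlapping punctured neighborhoods, and no further global data enters. This presents $\mathrm{Def}_{[f]}$ as the fibered product over $B$ of the local deformation spaces $\Spec R_i$, whose coordinate ring is exactly
\begin{equation*}
\mathbb{C}\left[\left[t_1,\ldots,t_{3h-3+b},\{t_{i,j}\}^{1\le i\le n}_{1\le j\le r_i}\right]\right]/\left(t_1=t_{1,1}^{a_{1,1}}=\cdots=t_{1,r_1}^{a_{1,r_1}},\ldots,t_n=t_{n,1}^{a_{n,1}}=\cdots=t_{n,r_n}^{a_{n,r_n}}\right).
\end{equation*}
The main obstacle is making this last decomposition rigorous: one must verify that the global deformation problem genuinely factors as this fiber product over $B$, with no hidden obstruction classes coupling the behavior at different nodes. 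This is the local-to-global input of Harris-Mumford, and what makes it go through is the relative rigidity of $f$ over the complement of the nodes of $Y$ together with the fact that the branching over the $y_j$ is pinned by the markings; I would follow the treatment in \cite{hm} (see also \cite{acv}).
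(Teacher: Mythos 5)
The paper offers no proof of this proposition: it is quoted directly from Harris--Mumford, and your argument is a faithful reconstruction of the one given in \cite{hm} (rigidity of the cover away from the nodes of $Y$ relative to the marked target, the local computation $st=\tau$, $uv=\sigma$, $\sigma^{a_{i,j}}=\tau$ at each node, and assembly of the deformation functor as a fiber product of the local rings $R_i$ over the versal base of $(Y,y_1,\ldots,y_b)$). The one point you rightly flag as needing care---that the global deformation problem localizes at the nodes with no coupling obstructions---is exactly the content of the Harris--Mumford local analysis, so your proposal takes essentially the same route as the cited source.
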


Here, the variable $t_{i,j}$ is the smoothing parameter for $X$ at $x_{i,j}$. In particular, $\Adm_{g/h,d}$ is Cohen-Macaulay of pure dimension $3h-3+b$. Moreover, if the $a_{i,j}$ are all equal to 1, that is, $f$ is unramified over the nodes of $Y$ (or if $Y$ is smooth to begin with), then $\Adm_{g/h,d}$ is smooth at $[f]$.

One can readily extend the theory to construct stacks of admissible covers with arbitrary ramification profiles; we use this in \S\ref{dd22_pencils}. Even in this more general setting, we always require the target curve, marked with branch points, to be stable.

In this paper, we primarily study the case $h=1$, that is, the moduli of covers of elliptic curves. The space $\Adm_{g/1,d}$ is reducible when $d$ is composite, due to the existence of covers $C\to E_1\to E_2$ factoring through a non-trivial isogeny. However, the open and closed substack $\Adm_{g/1,d}^{\text{prim}}$ parametrizing \emph{primitive} covers, that is, those that do not factor through a non-trivial isogeny (more generally, through a non-trivial admissible cover of genus 1 curves), is irreducible. In fact, this is already true for a fixed elliptic target, see \cite{gh} or \cite[Theorem 1.4]{bujokas}. For our enumerative results, however, the individual components of $\Adm_{g/1,d}$ play no essential role: we consider the entire moduli space, including the components parametrizing non-primitive covers.

\subsection{Quasimodular forms}\label{qmod_prelim}

A possible reference for this section is \cite{royer}. For positive integers $d,k$, define
\begin{equation*}
\sigma_k(d)=\sum_{a|d}a^k
\end{equation*}
Recall that the ring $\Qmod$ of quasimodular forms is generated over $\bC$ by the Eisenstein series
\begin{align*}
E_2&=1-24\sum_{d=1}^{\infty}\sigma_1(d)q^d\\
E_4&=1+240\sum_{d=1}^{\infty}\sigma_3(d)q^d\\
E_6&=1-504\sum_{d=1}^{\infty}\sigma_5(d)q^d,
\end{align*}
where we take $q$ to be a formal variable, and that more generally, for integers $k\ge1$, we have
\begin{equation*}
E_{2k}=1-\frac{4k}{B_{2k}}\sum_{n=1}^{\infty}\sigma_{2k-1}(n)q^n\in\Qmod,
\end{equation*}
where $B_{2k}$ is a Bernoulli number. The \emph{weight} of $E_{2k}$ is ${2k}$, and $\Qmod$ is a graded $\bC$-algebra by weight. 

We have the Ramanujan identities
\begin{align*}
q\frac{dE_2}{dq}&=\frac{E_2^2-E_4}{12}\\
q\frac{dE_4}{dq}&=\frac{E_2E_4-E_6}{3}\\
q\frac{dE_6}{dq}&=\frac{E_2E_6-E_4^2}{2},
\end{align*}
so in particular
\begin{equation*}
\sum_{d=1}^{\infty}P(d)\sigma_{2k-1}(d)q^d\in\Qmod
\end{equation*}
for any $P(d)\in\bC[d]$. Thus, Theorem \ref{modularity_statement} will be an immediate consequence of Theorems \ref{main_thm_g2} and \ref{main_thm_g3}.

The Ramanujan identities also give the convolution formulas
\begin{align*}
\sum_{d_1+d_2=d}\sigma_1(d_1)\sigma_1(d_2)&=\left(-\frac{1}{2}d+\frac{1}{12}\right)\sigma_1(d)+\frac{5}{12}\sigma_3(d)\\
\sum_{d_1+d_2=d}d_1\sigma_1(d_1)\sigma_1(d_2)&=\left(-\frac{1}{4}d^2+\frac{1}{24}d\right)\sigma_1(d)+\frac{5}{24}d\sigma_3(d)\\
\sum_{d_1+d_2+d_3=d}\sigma_1(d_1)\sigma_1(d_2)\sigma_1(d_3)&=\left(\frac{1}{8}d^2-\frac{1}{16}d+\frac{1}{192}\right)\sigma_1(d)+\left(-\frac{5}{32}d+\frac{5}{96}\right)\sigma_3(d)+\frac{7}{192}\sigma_5(d)
\end{align*}


\section{Auxiliary computations}\label{prelude}

In this section, we record a number of enumerative results for branched covers that we will take as inputs in the main computation.

\subsection{Counting isogenies}\label{m11_d-ell}

\begin{lem}\label{count_isogenies}
Let $(E,p)$ be an elliptic curve and $d$ be a positive integer. Then, the number of isomorphism classes of isogenies $E\to F$ of degree $d$ is $\sigma_1(d)$. Likewise, the number of isomorphism classes of isogenies $F\to E$ of degree $d$ is $\sigma_1(d)$.
\end{lem}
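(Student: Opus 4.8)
The plan is to count isogenies $E \to F$ of degree $d$ directly in terms of finite subgroup schemes of $E$, and then deduce the dual statement by duality of isogenies.

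\textbf{First step: reduce to subgroups.} An isogeny $\varphi : E \to F$ of degree $d$ is determined up to isomorphism of the target by its kernel $\Ker(\varphi) \subset E$, which is a finite subgroup scheme of order $d$; conversely, for any finite subgroup $G \subset E$ of order $d$, the quotient $E \to E/G$ is an isogeny of degree $d$, and two such quotients $E \to E/G$, $E \to E/G'$ are isomorphic (compatibly with the maps from $E$) if and only if $G = G'$. So the count reduces to counting order-$d$ subgroups of $E$. Since we work over $\bC$, every such subgroup is a subgroup of the $d$-torsion $E[d] \cong (\bZ/d\bZ)^2$, so I must count subgroups of $(\bZ/d\bZ)^2$ of order $d$.

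\textbf{Second step: the subgroup count.} By the structure theorem, any subgroup of $(\bZ/d\bZ)^2$ of order $d$ is cyclic of the form $\bZ/e\bZ \times \bZ/(d/e)\bZ$ sitting inside appropriately, but more cleanly: the number of subgroups of $(\bZ/d\bZ)^2$ of order $d$ is multiplicative in $d$, so it suffices to treat $d = p^k$ a prime power. For $(\bZ/p^k\bZ)^2$, a subgroup of order $p^k$ can be described as the image of a primitive vector together with a choice of ``slope,'' and a direct enumeration (counting lines in $(\bZ/p^k\bZ)^2$, or using the Hermite normal form of the index-$p^k$ sublattice of $\bZ^2$ containing $p^k \bZ^2$ with the right elementary divisors) gives $1 + p + \cdots + p^k = \sigma_1(p^k)$. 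By multiplicativity this yields $\sigma_1(d)$ in general. (Alternatively, I can phrase this lattice-theoretically: isogenies $E = \bC/\Lambda \to F$ of degree $d$ correspond to overlattices $\Lambda \subset \Lambda'$ with $[\Lambda' : \Lambda] = d$, equivalently to sublattices of index $d$ in $\Lambda \cong \bZ^2$, and the number of index-$d$ sublattices of $\bZ^2$ is the classical $\sigma_1(d)$, computed via Hermite normal form $\begin{pmatrix} a & b \\ 0 & d/a \end{pmatrix}$ with $a \mid d$ and $0 \le b < a$.)

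\textbf{Third step: the dual count.} For isogenies $F \to E$ of degree $d$: given such an isogeny $\psi$, the dual isogeny $\hat{\psi} : \hat{E} \to \hat{F}$ has degree $d$, and since an elliptic curve over $\bC$ is canonically isomorphic to its dual via the principal polarization, this sets up a bijection between isomorphism classes of degree-$d$ isogenies $F \to E$ and isomorphism classes of degree-$d$ isogenies $E \cong \hat{E} \to \hat{F} \cong F'$, i.e. with the count from the first two steps; hence this number is also $\sigma_1(d)$. (Equivalently, in lattice terms, isogenies $F \to E = \bC/\Lambda$ of degree $d$ correspond to sublattices $\Lambda'' \subset \Lambda$ of index $d$, which are again counted by $\sigma_1(d)$.)

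\textbf{Main obstacle.} The only real content is the combinatorial lemma that $\bZ^2$ has exactly $\sigma_1(d)$ sublattices of index $d$ (equivalently, that $(\bZ/d\bZ)^2$ has $\sigma_1(d)$ subgroups of order $d$); everything else is bookkeeping. I expect to handle this via the Hermite normal form parametrization, which makes the count transparent and simultaneously explains the appearance of the divisor sum $\sum_{a \mid d} a$. A minor subtlety to state carefully is the precise notion of ``isomorphism class'' of an isogeny being used — whether one remembers the map from/to the fixed curve $E$ — but with the natural convention (isomorphism of the varying curve commuting with the map to or from $(E,p)$) the bijection with sublattices is exact.
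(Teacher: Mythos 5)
Your proposal is correct and takes essentially the same route as the paper: reduce to counting index-$d$ sublattices of $\bZ^2$ (equivalently order-$d$ subgroups of $E$), count them via the Hermite-normal-form basis to get $\sigma_1(d)$, and handle the reverse direction by dualizing. The only blemish is the parenthetical claim that every order-$d$ subgroup of $(\bZ/d\bZ)^2$ is cyclic (false, e.g.\ $(\bZ/2)^2\subset(\bZ/4)^2$), but you discard that phrasing in favor of the lattice count, so the argument as completed is sound.
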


\begin{proof}
We see that these two numbers are equal by taking duals, so it suffices to count isogenies $E\to F$ of degree $d$, i.e., quotients of $E$ by a subgroup of order $d$, which is the number of index $d$ sublattices of $\bZ^2$. A sublattice of $\bZ^2$ is determined by a $\bZ$-basis $(a,0),(b,c)$, where $a,c$ are positive and as small as possible; $b$ is uniquely determined modulo $a$. As $ac=d$, the number of such sublattices is exactly $\sigma_1(d)$.
\end{proof}

\begin{cor}\label{adm_m11_m11}
The degrees of the morphisms $\psi_{1/1,d}:\Adm_{1/1,d}\to\barM_{1,1}$ and $\pi_{1/1,d}:\Adm_{1/1,d}\to\barM_{1,1}$ remembering the target and (contracted) source, respectively, of a cover, are both $\sigma_1(d)$. Moreover, both morphisms are unramified over $\cM_{1,1}$.
\end{cor}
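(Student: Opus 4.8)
The plan is to reduce the statement about admissible covers of (possibly nodal) genus 1 curves to the honest isogeny count of Lemma \ref{count_isogenies}. First I would identify the degree of $\psi_{1/1,d}$ over a general (hence smooth) point $[(E,p)]\in\cM_{1,1}$: a point of the fiber is a pointed admissible cover $f:(X,x_1)\to(E,p)$ of degree $d$, and since $E$ is smooth, $X$ is smooth, $f$ is \'etale everywhere (there are no branch points because $b=0$), and $f(x_1)=p$. Choosing a group structure on $E$ with identity $p$, the connected \'etale covers of $E$ of degree $d$ are exactly the isogenies $F\to E$ of degree $d$, and the choice of $x_1\in f^{-1}(p)$ together with the requirement that it be a marked point rigidifies the cover so that $(X,x_1)$ acquires a canonical group structure making $f$ an isogeny of elliptic curves. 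Thus fiber points of $\psi_{1/1,d}$ over $[(E,p)]$ biject with isomorphism classes of degree-$d$ isogenies $F\to E$, which is $\sigma_1(d)$ by Lemma \ref{count_isogenies}. For $\pi_{1/1,d}$, the same analysis over a general $[(X,x_1)]\in\cM_{1,1}$ gives covers $f:(X,x_1)\to(E,p)$, i.e. degree-$d$ isogenies $E\to X$ (after rigidifying), and this count is again $\sigma_1(d)$ by the other half of Lemma \ref{count_isogenies}.

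Next I would address why these generic degrees equal the (constant) degrees of the finite morphisms. By Lemma \ref{adm_to_mh_quasifinite}, $\psi_{1/1,d}$ is finite, and $\barM_{1,1}$ is a smooth proper connected Deligne--Mumford stack, so the degree of a finite morphism to it is well-defined and can be computed on the dense open $\cM_{1,1}$; the same applies to $\pi_{1/1,d}$ since $\Adm_{1/1,d}$ is proper and $\pi_{1/1,d}$ is generically finite (indeed, over $\cM_{1,1}$ the contracted source of an \'etale cover of a smooth genus 1 curve is already stable, so no contraction occurs and the map is finite there). Hence both degrees are $\sigma_1(d)$.

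Finally, for the unramifiedness claim over $\cM_{1,1}$: by Proposition \ref{adm_defos}, at a point $[f]$ lying over $\cM_{1,1}\subset\barM_{1,1}$ the target $E$ is smooth, so there are no nodal smoothing parameters to worry about, and the complete local ring of $\Adm_{1/1,d}$ at $[f]$ is simply $\bC[[t_1]]$ (with $h=1$, $b=0$, giving deformation dimension $3h-3+b=0$ for the target $\barM_{1,1}$, but here we instead use that $\psi$ pulls back the single deformation parameter of $(E,p)$ isomorphically); more directly, an \'etale cover of a smooth curve deforms uniquely with the base, so $\psi_{1/1,d}$ is \'etale over $\cM_{1,1}$, in particular unramified. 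For $\pi_{1/1,d}$, over $\cM_{1,1}$ the source is smooth and the cover $E\to X$ again deforms uniquely as $X$ varies (lift the subgroup defining the isogeny along the constant group scheme structure), so $\pi_{1/1,d}$ is likewise \'etale, hence unramified, over $\cM_{1,1}$.

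I expect the only real subtlety to be the bookkeeping in the first paragraph: carefully matching \emph{pointed} admissible covers $(X,x_1)\to(E,p)$ with \emph{isomorphism classes of isogenies}, making sure the marked point $x_1$ on the source is what breaks the translation ambiguity so that the count is $\sigma_1(d)$ rather than $d\cdot\sigma_1(d)$, and checking that automorphisms of the admissible cover correspond exactly to automorphisms of the isogeny fixing the marked point. Everything else is a routine consequence of properness of $\Adm_{1/1,d}$, connectedness and smoothness of $\barM_{1,1}$, and the local structure in Proposition \ref{adm_defos}.
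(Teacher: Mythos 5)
Your proof is correct and follows essentially the same route as the paper: the degree count reduces to Lemma \ref{count_isogenies} (with the pointed-cover/isogeny identification you spell out), and unramifiedness is checked over $\cM_{1,1}$. The paper deduces unramifiedness more tersely from the smoothness of $\cH_{1/1,d}$ together with the constancy of the set-theoretic fiber size over $\cM_{1,1}$, rather than from unique lifting of \'etale covers, but the two arguments are interchangeable here.
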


\begin{proof}
The first statement is exactly the content of Lemma \ref{count_isogenies}. To see that both morphisms are unramified over $\cM_{1,1}$, note that the open locus $\cH_{1/1,d}\subset\Adm_{1/1,d}$ parametrizing covers of smooth curves is smooth, and that the set-theoretic fibers of both morphisms over any point of $\cM_{1,1}$ all have the same size.
\end{proof}

\subsection{The 2-pointed $d$-elliptic locus on $\barM_{1,2}$}\label{m12_d-ell}

\begin{lem}\label{count_pointed_isogenies}
Let $(E,p)$ be an elliptic curve and $d$ a positive integer. Then, the number of pairs (up to isomorphism) $(f,q)$ where $f:E\to F$ is an isogeny and $q\neq p$ is a pre-image of the origin of $F$ is $(d-1)\sigma_1(d)$.
\end{lem}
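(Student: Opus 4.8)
The plan is to build on Lemma \ref{count_isogenies}, which counts isogenies $f\colon E\to F$ of degree $d$ by identifying them with index-$d$ sublattices $\Lambda\subset\bZ^2$, and to enumerate in addition the choice of a point $q\neq p$ in the kernel $\ker(f)$. Concretely, fixing an isomorphism $E\cong \bC/\bZ^2$ with $p$ the origin, an isogeny of degree $d$ is the quotient map $E\to\bC/\Lambda$ for a sublattice $\Lambda\supset\bZ^2$ of index $d$ (equivalently $\bZ^2\subset\Lambda$ with $[\Lambda:\bZ^2]=d$), and $\ker(f)=\Lambda/\bZ^2$, a group of order $d$. A preimage of the origin of $F$ other than $p$ is precisely a nonzero element of $\Lambda/\bZ^2$, so there are $d-1$ of them for each $f$. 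Hence, before taking isomorphism classes, the count of pairs $(f,q)$ is $(d-1)\sigma_1(d)$. The main point to check is then that passing to isomorphism classes of pairs does not change this number, i.e. that the naive count is already the count up to isomorphism.

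The key step is therefore to verify that a generic such pair $(f,q)$ has no nontrivial automorphisms, so that the groupoid count agrees with the set-theoretic count. An isomorphism of pairs $(f\colon E\to F,\,q)\to(f'\colon E\to F',\,q')$ is an isomorphism $\phi\colon F\to F'$ of elliptic curves (preserving origins) with $\phi\circ f=f'$ and $\phi(f(q))=f'(q')$, the latter being automatic since both sides are the origin; but we are counting pairs with a \emph{fixed} source $(E,p)$, so really we should say two pairs are isomorphic when there is such a $\phi$ identifying them. As in Lemma \ref{count_isogenies}, distinct sublattices $\Lambda\neq\Lambda'$ give non-isomorphic isogenies (the sublattice is recovered as $f^{-1}$ of the origin, scaled), so $f$ is rigid; and once $f$ is fixed, an automorphism of the pair is an automorphism of $F$ fixing the origin and commuting with $f$, which forces it to act on $\ker(f)=\Lambda/\bZ^2$ fixing $q$. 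For generic $(E,p)$ the only automorphism of $F$ fixing the origin is $\pm 1$, and $-1$ fixes $q\in\Lambda/\bZ^2$ only if $2q=0$; since we may choose $q$ of order $>2$ whenever $d>2$, and handle $d=1,2$ directly (for $d=1$ the count is $0$, for $d=2$ the unique nonzero $q$ satisfies $2q=0$ but the count $(d-1)\sigma_1(d)=3$ still works once one checks the three order-$2$ quotients are non-isomorphic as pairs), the stacky count equals $(d-1)\sigma_1(d)$.

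I expect the main obstacle to be the bookkeeping around automorphisms and the small cases $d=1,2$: one must be careful that "up to isomorphism" is interpreted consistently with the moduli stack $\Adm_{1/1,d}$ conventions set up in \S\ref{admissible_covers_prelim}, and that the factor $d-1$ is not accidentally over- or under-counted by a symmetry exchanging $q$ with another kernel point. A clean way to sidestep the case analysis is to argue directly on the moduli space: the locus of pairs $(f,q)$ is a degree-$(d-1)$ cover of $\Adm_{1/1,d}$ (choosing a nonzero point in the length-$d$ kernel scheme, a finite flat group scheme over the base away from the boundary), which has degree $\sigma_1(d)$ over $\cM_{1,1}$ by Corollary \ref{adm_m11_m11}; multiplying degrees gives $(d-1)\sigma_1(d)$, and the generic fiber over a point of $\cM_{1,1}$ with automorphism group exactly $\{\pm1\}$ then consists of exactly $(d-1)\sigma_1(d)$ reduced points, which is the asserted count. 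Either route reduces the lemma to the content of Lemma \ref{count_isogenies} together with an elementary lattice count of nonzero elements of $\Lambda/\bZ^2$.
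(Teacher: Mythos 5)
Your proof is correct and is essentially the paper's argument: both identify the pair $(f,q)$ with the data of an order-$d$ subgroup $G=\ker(f)\subset E[d]$ (equivalently an index-$d$ sublattice, counted by Lemma \ref{count_isogenies}) together with a nonzero element $q\in G$, giving $(d-1)\sigma_1(d)$. The lengthy automorphism discussion is unnecessary (and slightly off target): since $q$ is a point of the \emph{fixed} curve $E$ it cannot be moved by an isomorphism of targets, and any automorphism $\phi$ of $F$ with $\phi\circ f=f$ is forced to be the identity by surjectivity of $f$, so the groupoid count is trivially the set count with no case analysis for small $d$.
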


\begin{proof}
We give a bijection between the set of $(f,q)$ and the set of pairs $(G,g)$ where $G\subset E[d]$ is a subgroup of order $d$ and $0\neq g\in G$. The claim then follows from Lemma \ref{count_isogenies}. In one direction, given $(f,q)$, we take $G=\ker(f)$ and $g=q$. In the other, let $F=E/G$ and $f$ be the quotient map, and take $q=g$.
\end{proof}

Let $\Adm_{1/1,d,2}$ be the moduli space of triples $(f,x_1,x_2)$, where $f:X\to Y$ is a degree $d$ cover of a marked genus 1 curve $(Y,y)$ by a genus 1 curve $X$, and $x_1,x_2\in X$ are distinct points with $f(x_1)=f(x_2)=y$. We have a finite morphism $\psi_{1/1,d,2}:\Adm_{1/1,d,2}\to\barM_{1,1}$ remembering the target curve, and a morphism $\pi_{1/1,d,2}:\Adm_{1/1,d,2}\to\barM_{1,2}$ remembering the stabilized source curve.

\begin{prop}\label{m12_d-ell_class}
Adopting the notation of \S\ref{int_numbers_m12}, we have:
\begin{align*} 
\int_{\barM_{1,2}}[\pi_{1/1,d,2}]\cdot\Delta_0&=(d-1)\sigma_1(d),\\
\int_{\barM_{1,2}}[\pi_{1/1,d,2}]\cdot\Delta_1&=0.
\end{align*}
\end{prop}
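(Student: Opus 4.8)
The plan is to compute the two intersection numbers geometrically by identifying the curves in $\Adm_{1/1,d,2}$ lying over a general point of each boundary divisor $\Delta_0,\Delta_1\subset\barM_{1,2}$, using the quasifiniteness of $\psi_{1/1,d,2}$ (Lemma \ref{adm_to_mh_quasifinite}) together with the local description of admissible covers in Proposition \ref{adm_defos} to control multiplicities. Since $[\pi_{1/1,d,2}]=(\pi_{1/1,d,2})_*([\Adm_{1/1,d,2}])$ and both $\Delta_i$ are represented by effective cycles, the projection formula reduces each integral to counting, with multiplicity, the points of $\Adm_{1/1,d,2}$ whose stabilized source lies in the given boundary divisor, then checking that the target admissible cover is unramified over the nodes of its target so that $\Adm_{1/1,d,2}$ is smooth there and $\pi_{1/1,d,2}$ is (at worst) a local isomorphism onto its image. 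I would pick a general stable curve in each $\Delta_i$ and enumerate its admissible preimages explicitly.

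\textbf{The $\Delta_1$ computation.} A general point of $\Delta_1$ is a stable curve $X^s = E_0 \cup R$, where $E_0$ is a smooth elliptic curve and $R\cong\bP^1$ is a rational tail carrying the two marked points $x_1,x_2$, attached to $E_0$ at one node. For $(f,x_1,x_2)$ to stabilize to this curve, the source $X$ of the admissible cover must have a genus $1$ component dominating a genus $1$ component of the target $Y$; but then $x_1,x_2$ both map to the marked point $y$, which would have to be a smooth point of the target, yet it must lie on the rational component $R$ of the stabilization only after contracting unstable components. The key point is that $f(x_1)=f(x_2)=y$ forces $x_1$ and $x_2$ to lie on the \emph{same} component of $X$ over the component of $Y$ containing $y$; since that component is étale over $y$ except that $x_1,x_2$ map to the same point, and a rational tail of $X$ mapping to $Y$ must itself be a $\bP^1$ mapping with some degree to a rational component of $Y$ — but $Y$ genus $1$ and stable with no marked points other than the forced ones means $Y$ is irreducible, so $X$ is étale of degree $d$ over the smooth elliptic $Y$ away from $y$, hence $X$ is smooth (unramified cover of a smooth curve), contradicting the requirement that $X^s\in\Delta_1$. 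Therefore no admissible cover stabilizes into $\Delta_1$ at a general point, and $\int_{\barM_{1,2}}[\pi_{1/1,d,2}]\cdot\Delta_1 = 0$. I should double-check the edge case where $Y$ itself degenerates, but genericity of the point of $\Delta_1$ chosen, combined with quasifiniteness of $\psi_{1/1,d,2}$, rules this out.

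\textbf{The $\Delta_0$ computation.} A general point of $\Delta_0$ is an irreducible nodal curve of arithmetic genus $1$, i.e.\ a nodal rational curve $X^s$ with two smooth marked points $x_1,x_2$. Its admissible preimages: the target $(Y,y)$ must be a stable genus $1$ curve with one marked point, and genericity plus quasifiniteness forces $Y$ to be the irreducible nodal rational curve — equivalently the boundary point of $\barM_{1,1}$. Then $X$ is a connected degree $d$ admissible cover of $Y$ with arithmetic genus $1$, étale away from the node of $Y$ and the marked point $y$; by Remark \ref{preimage_adm_smooth} and a Riemann--Hurwitz count, $X$ is an irreducible nodal curve whose normalization is $\bP^1$ mapping to the normalization $\bP^1$ of $Y$ with total ramification $d$ over the two preimages of the node — i.e.\ this is exactly the data of a cyclic-type degree $d$ cover, and the pair $(f,x_1,x_2)$ corresponds, after passing to the generic nearby smooth cover as in Lemma \ref{count_pointed_isogenies}, to a pair $(G,g)$ with $G\subset E[d]$ of order $d$ and $0\neq g\in G$. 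By Proposition \ref{adm_defos}, since the relevant ramification indices over the node of $Y$ all equal $d$, one must verify that $\pi_{1/1,d,2}$ maps the corresponding local chart of $\Adm_{1/1,d,2}$ isomorphically (or with the correct multiplicity) onto its image in $\Delta_0$; this is where the bookkeeping with the smoothing parameters $t_{i,j}$ and the covering map $\pi_{1/1,d,2}$ matters, and it is the step I expect to require the most care. Granting that each such point contributes $1$, Lemma \ref{count_pointed_isogenies} gives exactly $(d-1)\sigma_1(d)$ of them, so $\int_{\barM_{1,2}}[\pi_{1/1,d,2}]\cdot\Delta_0 = (d-1)\sigma_1(d)$.

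\textbf{Main obstacle.} The principal difficulty is not the enumeration — Lemma \ref{count_pointed_isogenies} does that — but the multiplicity analysis along $\Delta_0$: confirming that $\Adm_{1/1,d,2}$ is reduced of the expected dimension at each relevant point, that $\pi_{1/1,d,2}$ contributes multiplicity exactly $1$ (in particular that no ramification index $a_{i,j}>1$ produces a nonreduced structure or that, if it does, it is exactly compensated by the ramification of $\pi_{1/1,d,2}$), and that the chosen general point of $\Delta_0$ is not in the image of any extraneous boundary component of $\Adm_{1/1,d,2}$. Handling this cleanly likely uses the local model of Proposition \ref{adm_defos} together with the fact, as in Corollary \ref{adm_m11_m11}, that over $\cM_{1,1}$ the relevant spaces are smooth and the fibers are constant in size, so the degenerate fibers behave as flat limits.
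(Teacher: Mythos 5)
Your strategy of computing $[\pi_{1/1,d,2}]\cdot\Delta_i$ by locating the points of $\Adm_{1/1,d,2}$ that map into each boundary divisor is sound in principle, but the execution of the $\Delta_0$ case has a genuine gap, and it is exactly the one you flag as ``the step I expect to require the most care.'' The space $\Adm_{1/1,d,2}$ is a \emph{curve} (it is finite over $\barM_{1,1}$, with the marked points chosen from the finite fiber $f^{-1}(y)$), so its image under $\pi_{1/1,d,2}$ meets the rigid divisor $\Delta_0$ only at the finitely many images of covers with \emph{singular} target $Y$; a general point of $\Delta_0$ has no admissible preimage at all, so one cannot ``count preimages of a general point of $\Delta_0$.'' The covers supported over the nodal cubic are cycles of $m$ rational curves (for each $m\mid d$), each component mapping by $x\mapsto x^{d/m}$, totally ramified over the node; at such points $\Adm_{1/1,d,2}$ is singular with local ring as in Proposition \ref{adm_defos}, the map $\pi_{1/1,d,2}$ contracts components, and the local intersection multiplicities are emphatically not $1$ (compare the multiplicities $ma^{m-1}$ computed in Proposition \ref{m2_delta0,delta0}). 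Nor is the number of such degenerate covers given by Lemma \ref{count_pointed_isogenies}. The paper avoids this entirely by a rational-equivalence move you do not make: since all points of $\barM_{1,1}$ are rationally equivalent, $\Delta_0$ (the fiber of $\barM_{1,2}\to\barM_{1,1}$ over the nodal point) may be replaced by the class of $t_E:E\to\barM_{1,2}$, $q\mapsto(E,p,q)$, for a \emph{general smooth} $E$. The intersection with $[t_E]$ is then supported on smooth pointed isogenies, is counted by Lemma \ref{count_pointed_isogenies} as $(d-1)\sigma_1(d)$, and is transverse by Corollary \ref{adm_m11_m11}. Your phrase ``passing to the generic nearby smooth cover'' gestures at this, but without actually moving the cycle you are left computing multiplicities at the degenerate covers, which you have not done and which would be substantially harder.

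There is a smaller gap in the $\Delta_1$ case. Your argument correctly disposes of covers with smooth target (then $X$ is smooth with two distinct marked points, so $[(X,x_1,x_2)]\in\cM_{1,2}$), but for singular target you appeal to ``genericity of the point of $\Delta_1$ chosen.'' Since $\Delta_1$ is a fixed divisor, genericity proves nothing: the intersection number could be supported at special points of $\Delta_1$. One must actually check, as the paper does, that when $Y$ is the nodal cubic the source $X$ is a cycle of rational curves whose stabilization (a $1$- or $2$-component cycle with the two marked points) lies in $\overline{\Delta}_0$ but not in $\Delta_1$. This check is easy, but it cannot be replaced by a genericity argument.
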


\begin{proof}
Let $(E,p)$ be a general elliptic curve. Any two geometric points of $\barM_{1,1}$ are equivalent, so the boundary class $\Delta_0$ is equivalent to the class of the morphism $t_E:E\to\barM_{1,2}$ sending $q\mapsto (E,p,q)$. Then, the first statement follows from Lemma \ref{count_pointed_isogenies} provided the intersection of $t_E$ and $\pi_{1/1,d,2}$ is transverse. This is easy to see: at an intersection point $(E,p,q)$, a tangent vector from $E$ fixes $(E,p)$ but moves $q$ to first order, while a tangent vector from $\Adm_{1/1,d,2}$ moves $(E,p)$ to first order, owing to Corollary \ref{adm_m11_m11}.

The second statement follows from the fact that no (pointed) admissible cover $f:X\to Y$ in $\Adm_{1/1,d,2}$ has the property that $X$ contracts to a curve in $\Delta_1$. Indeed, if $Y$ is singular, then it must be a nodal cubic. Then, one checks that $X$ must be a cycle of $m$ rational curves, each of which maps to the normalization of $Y$ via the map $x\mapsto x^{d/m}$, totally ramified at the nodes. (To see this, one can follow the method of 
\S\ref{adm_g2_delta0}.) The contraction of such a curve does not lie in $\Delta_1$.
\end{proof}

\begin{cor}\label{m12_d-ell_class_formula}
We have
\begin{equation*}
[\pi_{1/1,d,2}]=(d-1)\sigma_1(d)\left(\frac{1}{24}\Delta_0+\Delta_1\right)
\end{equation*}
in $A^1(\barM_{1,2})$.
\end{cor}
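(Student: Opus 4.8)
The plan is to recover the class $[\pi_{1/1,d,2}] \in A^1(\barM_{1,2})$ from its intersection numbers computed in Proposition \ref{m12_d-ell_class}, using the fact (recalled in \S\ref{int_numbers_m12}) that $A^1(\barM_{1,2})$ is freely generated by the boundary divisors $\Delta_0$ and $\Delta_1$. So first I would write
\begin{equation*}
[\pi_{1/1,d,2}] = a\,\Delta_0 + b\,\Delta_1
\end{equation*}
for unknown rational numbers $a, b$, and pin down $a$ and $b$ by pairing against $\Delta_0$ and $\Delta_1$ and invoking the intersection table
\begin{equation*}
\Delta_0\cdot\Delta_0 = 0,\quad \Delta_0\cdot\Delta_1 = 1,\quad \Delta_1\cdot\Delta_1 = -\tfrac{1}{24}.
\end{equation*}

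Concretely, intersecting with $\Delta_0$ gives
\begin{equation*}
(d-1)\sigma_1(d) = \int_{\barM_{1,2}}[\pi_{1/1,d,2}]\cdot\Delta_0 = a\cdot 0 + b\cdot 1 = b,
\end{equation*}
so $b = (d-1)\sigma_1(d)$. Intersecting with $\Delta_1$ gives
\begin{equation*}
0 = \int_{\barM_{1,2}}[\pi_{1/1,d,2}]\cdot\Delta_1 = a\cdot 1 + b\cdot\left(-\tfrac{1}{24}\right) = a - \tfrac{1}{24}(d-1)\sigma_1(d),
\end{equation*}
hence $a = \tfrac{1}{24}(d-1)\sigma_1(d)$. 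Substituting back yields exactly
\begin{equation*}
[\pi_{1/1,d,2}] = (d-1)\sigma_1(d)\left(\tfrac{1}{24}\Delta_0 + \Delta_1\right),
\end{equation*}
which is the claimed formula.

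There is essentially no obstacle here: the statement is a purely formal linear-algebra consequence of Proposition \ref{m12_d-ell_class} together with the non-degeneracy of the intersection pairing on $A^1(\barM_{1,2})$ (equivalently, the fact that $\Delta_0, \Delta_1$ form a basis and the Gram matrix $\left(\begin{smallmatrix} 0 & 1 \\ 1 & -1/24 \end{smallmatrix}\right)$ is invertible). The only thing worth a word of care is that one is allowed to detect a class in $A^1(\barM_{1,2})$ by its pairing with the two boundary divisors at all — this is exactly the perfectness of the pairing between $A^1$ and $A^1$ ($=A_1$) on this space, which is what \cite{ac} provides, so I would cite that and be done. If one wanted to avoid even mentioning perfectness of the pairing, one could instead argue directly: expand in the basis $\{\Delta_0,\Delta_1\}$, and the two displayed equations above already determine the coefficients uniquely.
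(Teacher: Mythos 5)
Your proposal is correct and is exactly the argument the paper intends: the paper's proof is simply ``Immediate from \S\ref{int_numbers_m12},'' i.e., expand in the basis $\{\Delta_0,\Delta_1\}$ and solve using the intersection table together with the numbers from Proposition \ref{m12_d-ell_class}. Your explicit solution for the coefficients $a=\tfrac{1}{24}(d-1)\sigma_1(d)$ and $b=(d-1)\sigma_1(d)$ matches the stated formula.
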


\begin{proof}
Immediate from \S\ref{int_numbers_m12}.
\end{proof}

\subsection{Doubly totally ramified covers of $\bP^1$}\label{dd22_pencils}

The following is an easy special case of \cite[Theorem 1.3]{lian} or \cite[Corollary 4.14]{lian} .

\begin{lem}\label{count_dd22}
Let $(E,x_1)$ be a general elliptic curve and $d$ a positive integer. Then, the number of tuples $(f,x_1,x_2,x_3,x_4)$, where $x_1,\ldots,x_4\in E$ are distinct and $f:E\to\bP^1$ is a degree $d$ morphism (considered up to automorphism of the target) totally ramified at $x_1,x_2$ and simply ramified at $x_3,x_4$, is $2(d^2-1)$.
\end{lem}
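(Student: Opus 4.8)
The plan is to convert the ramification conditions into a linear-equivalence statement on $E$ and thereby reduce the problem to counting torsion points. After using an automorphism of $\bP^1$ to put the two totally ramified values at $0$ and $\infty$, with $f(x_1)=0$ and $f(x_2)=\infty$ (the residual choices of such an automorphism being precisely the scalings $z\mapsto\lambda z$), the hypotheses that $f$ is totally ramified at $x_1$ and $x_2$ read $f^{-1}(0)=dx_1$ and $f^{-1}(\infty)=dx_2$, i.e.\ $\mathrm{div}(f)=dx_1-dx_2$. By Abel's theorem (equivalently Riemann--Roch on $E$), such an $f$ exists if and only if $d(x_1-x_2)\sim 0$, i.e.\ $x_2$ is a $d$-torsion point for the group law with origin $x_1$, and in that case $f$ is unique up to a scalar, hence unique up to the residual automorphism. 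Excluding $x_2=x_1$ (which forces $f$ constant), this gives exactly $d^2-1$ admissible $x_2$, each determining $f$ up to automorphism of the target.

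Next I would pin down $x_3$ and $x_4$ from $f$. Riemann--Hurwitz gives $\sum_p(e_p-1)=2d$, of which the two total ramification points use $2(d-1)$; so the remaining ramification has total index $2$, and is accounted for either by two simple ramification points over two branch values different from $0,\infty$, or by a single point of ramification index $3$. In the good case the two simple ramification points are automatically distinct from $x_1,x_2$ (which have ramification index $d$), and the ordered pair $(x_3,x_4)$ contributes a factor of $2$; summing over the $d^2-1$ choices of $x_2$ then yields $2(d^2-1)$.

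The substantive point, and where generality of $(E,x_1)$ is used, is to exclude the degenerate case: I must show that for general $(E,x_1)$, every one of the finitely many maps $f$ attached to a torsion point $x_2$ is branched over exactly four distinct points with ramification profile $(d,d,2,2)$. I expect this to be the main obstacle. One route: for each $e\mid d$ with $e\ge 2$, the pairs $(E,x_2)$ with $x_2$ of exact order $e$ form a finite cover of $\cM_{1,1}$, the map $f$ is then the $(d/e)$-th power of the degree-$e$ map $g$ with $\mathrm{div}(g)=ex_1-ex_2$, and the locus of such $(E,x_2)$ for which $g$ fails to have two distinct simple branch values away from $0,\infty$ is closed; it suffices to produce a single $(E,x_2)$ of order $e$ where $g$ behaves generically (e.g.\ by writing down an explicit model), and then intersect these finitely many dense open conditions over all $e\mid d$. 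Alternatively, as noted, the count is an instance of \cite[Theorem 1.3]{lian} (or \cite[Corollary 4.14]{lian}), which may simply be quoted. I would close by sanity-checking $d=2$: here $f$ is the quotient $E\to E/\langle -1\rangle$ with origin $x_1$, whose four ramification points are the $2$-torsion points of $(E,x_1)$; choosing $x_2$ among the three nontrivial ones and then ordering the remaining two gives $3\cdot 2=6=2(2^2-1)$, and $d=1$ gives $0$ as it must.
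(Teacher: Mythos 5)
Your counting argument is essentially the paper's: you parametrize the maps by the $d^2-1$ nonzero $d$-torsion points $x_2$ (you via $\mathrm{div}(f)=dx_1-dx_2$ and Abel's theorem, the paper via the pencil $W\subset H^0(E,\cO(d\cdot x_1))$ — the same computation in different clothing), observe uniqueness of $f$ up to the residual automorphisms of the target, and multiply by $2$ for the labelling of the simple ramification points. All of that is correct, including the Riemann--Hurwitz bookkeeping and the $d=2$ check.

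The one place your write-up is incomplete is exactly the step you flag: showing that for general $(E,x_1)$ none of the finitely many $f$ degenerates (triple ramification point, or two simple ramification points over a common branch value). Your proposed route — irreducibility of the modular curve of pairs $(E,x_2)$ with $x_2$ of exact order $e$, plus one explicit nondegenerate example for each $e\mid d$ — would work, but you never produce the examples, and finding them explicitly is more effort than the problem deserves. The paper's argument here is cleaner and worth adopting: in either degenerate case $f$ is branched over only \emph{three} points of $\bP^1$, and for fixed $d$ there are only finitely many degree-$d$ covers of $(\bP^1;0,1,\infty)$ up to isomorphism (they are determined by finite monodromy data), hence only finitely many $j$-invariants of elliptic curves admitting such a cover; a general $E$ avoids them all. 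With that substitution (or by simply citing \cite[Theorem 1.3]{lian}, as you note), your proof is complete.
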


\begin{proof}
The linear system defining $f$ must be a 2-dimensional subspace $W$ of $V=H^0(E,\cO(d\cdot x_1))$. In order for $W$ to be totally ramified at $x_1$, we need $\cO(d\cdot x_1)\cong\cO(d\cdot x_2)$, that is, $x_1\in E[d]-\{x_1\}$. For such an $x_2$, there are unique (up to scaling) sections in $V$ vanishing to maximal order at $x_1,x_2$; thus $W$ is uniquely determined by the $d^2-1$ possible choices of $x_2$. Moreover, $f$ will be simply branched over two distinct points $x_3,x_4$ of $\bP^1$ unless it has two simple ramification points over the same point of $\bP^1$ or a triple ramification point; however, this will only happen of $E$ admits a degree $d$ cover of $\bP^1$ branched over 3 points, which is impossible for $E$ general. There are two ways to label the simple ramification points, so the conclusion follows.
\end{proof}

\begin{cor}\label{count_dd22_translate}
Let $(E,x_1)$ be a general elliptic curve and $d$ a positive integer. Then, the number of tuples $(f,x_1,x_2,x_3,x_4)$, where $x_1,\ldots,x_4\in E$ are distinct and $f:E\to\bP^1$ is a degree $d$ morphism simply ramified at $x_1,x_2$ and totally ramified at $x_3,x_4$, is $2(d^2-1)$.
\end{cor}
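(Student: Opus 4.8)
The plan is to deduce the corollary directly from Lemma~\ref{count_dd22} by translating on the source. The only difference between the two statements is the role of the marked point: in Lemma~\ref{count_dd22} the point $x_1$ is required to be one of the two \emph{totally} ramified points of $f$, while in the corollary it must be one of the two \emph{simply} ramified points. Since a translation of $E$ is an automorphism of its underlying curve, this difference should be inessential.

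Concretely, fix the group law on $E$ with origin $x_1$, and for $t\in E$ let $\tau_t\colon E\to E$ be the automorphism $y\mapsto y+t$. I would first check that the assignment
\begin{equation*}
(f,x_1,x_2,x_3,x_4)\;\longmapsto\;\bigl(f\circ\tau_{x_3},\,x_1,\,x_4-x_3,\,x_1-x_3,\,x_2-x_3\bigr)
\end{equation*}
sends a tuple as in the corollary ($f$ simply ramified at $x_1,x_2$ and totally ramified at $x_3,x_4$) to a tuple as in Lemma~\ref{count_dd22} ($f\circ\tau_{x_3}$ totally ramified at $x_1$ and $x_4-x_3$, simply ramified at $x_1-x_3$ and $x_2-x_3$). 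This is immediate because $f\circ\tau_{x_3}$ is ramified exactly at the $\tau_{x_3}^{-1}$-images of the ramification points of $f$, with the same ramification indices. The same formula applied to a tuple as in Lemma~\ref{count_dd22} produces one as in the corollary, and the composite of the two maps is the identity; in fact the displayed map is an involution on the disjoint union of the two sets of tuples, interchanging the two pieces. Hence the two sets have equal cardinality.

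It then remains only to observe that the conditions cutting out the two counts --- degree $d$, considered up to automorphism of the target, and the four marked points being distinct --- are all preserved by $f\mapsto f\circ\tau_{x_3}$, since $\tau_{x_3}$ is an isomorphism of $E$ and does not touch $\bP^1$ (so the branch locus in $\bP^1$ and its ramification profile are unchanged), and the target automorphisms act on the left, commuting with precomposition by $\tau_{x_3}$. Thus the count in the corollary coincides with that in Lemma~\ref{count_dd22}, namely $2(d^2-1)$. There is no real obstacle here: the work is purely bookkeeping of the four labels and checking that the translation construction is a genuine bijection. I would only flag that this reduction is in fact valid for \emph{every} pointed elliptic curve $(E,x_1)$, the genericity hypothesis being needed solely to invoke Lemma~\ref{count_dd22}.
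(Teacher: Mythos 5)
Your proposal is correct and is exactly the paper's argument: the paper's proof is the one-line observation that pullback by translation by $x_3-x_1$ gives a bijection with the tuples of Lemma~\ref{count_dd22}, and your displayed map (with the involution check and the verification that distinctness and target-automorphism classes are preserved) is just that bijection written out in full. Your closing remark that the reduction holds for every pointed elliptic curve, with genericity needed only to invoke the lemma, is also accurate.
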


\begin{proof}
Pullback by translation by $x_3-x_1$ defines a bijection with the objects counted here and those in Lemma \ref{count_dd22}.
\end{proof}

Let $\Adm_{1/0,d}^{d,d,2,2}$ be the moduli space of degree $d$ admissible covers $f:X\to Y$, where $X$ has genus 1, $Y$ has genus 0, and $f$ is ramified at four points $x_1,x_2,x_3,x_4$ to orders $d,d,2,2$, respectively. We consider the map $\pi_{1/0,d}^{d,d,2,2}:\Adm_{1/0,d}^{d,d,2,2}\to\barM_{1,3}$ sending $f$ to the stabilization of $(X,x_1,x_2,x_3)$.

\begin{prop}\label{dd22_class_m13}
Adopting the notation of \S\ref{int_numbers_m13}, we have:
\begin{align*}
\int_{\barM_{1,3}}[\pi_{1/0,d}^{d,d,2,2}]\cdot\Delta_0&=2(d^2-1),\\
\int_{\barM_{1,3}}[\pi_{1/0,d}^{d,d,2,2}]\cdot\Delta_{1,S}&=0,
\end{align*}
for all $S\subset\{1,2,3\}$ with $|S|\ge2$.
\end{prop}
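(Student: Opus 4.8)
The plan is to mimic the proof strategy already used in Proposition~\ref{m12_d-ell_class}: reduce each intersection number to a transverse count of admissible covers and invoke the enumerative inputs just established. First I would handle the divisor $\Delta_0$. Since any two geometric points of $\barM_{1,1}$ are equivalent, we may replace the class $\psi_{1/0,d}^{d,d,2,2}{}^{*}[\text{pt}]$-type contribution by fixing a general elliptic curve $(E,x_1)$ and intersecting with the class of the map $E\to\barM_{1,3}$ sending a point $q$ to the stabilization of a nodal genus~$1$ curve; more precisely, $\Delta_0$ on $\barM_{1,3}$ parametrizes irreducible nodal curves, and a general such curve together with the three marked points $x_1,x_2,x_3$ is the normalization of $\bP^1$ (a rational curve glued at a pair of points) marked by three points, or—after tracking which component the node sits on—an elliptic curve with a node acquired in a limit. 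The cleanest incarnation: a general point of $\Delta_0\cap[\pi_{1/0,d}^{d,d,2,2}]$ corresponds to an admissible cover $f:X\to Y$ where $Y$ is a two-component genus~$0$ curve (a chain $\bP^1\cup\bP^1$) and the stabilization of $(X,x_1,x_2,x_3)$ lands in $\Delta_0$. I would analyze, exactly as in \S\ref{adm_g2_delta0}, the possible topological types of such $f$: $X$ must degenerate so that $x_1,x_2,x_3$ specialize appropriately while $X^{s}$ acquires a non-separating node, and one checks the only contributing type is the one where $X$ is a smooth genus~$1$ curve mapping to one component with the required ramification $d,d,2,2$ and nothing extra is contracted—so the count is governed by Lemma~\ref{count_dd22} (equivalently Corollary~\ref{count_dd22_translate}), giving $2(d^2-1)$.

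The second and more delicate point is transversality and the exact identification of which covers contribute. For $\Delta_0$: at an intersection point, a tangent vector from the one-parameter family moving $q$ fixes the source modulus but moves the configuration to first order, while a tangent vector from $\Adm_{1/0,d}^{d,d,2,2}$ deforms $(E,x_1)$ to first order (using that the Hurwitz space is smooth over $\cM_{1,3}$ and that $\Adm_{1/0,d}^{d,d,2,2}$ maps to the space of branched genus~$0$ targets with finite generic fiber); these two directions are complementary, so the intersection is transverse and each geometric point contributes $1$. I should also rule out contributions from boundary strata of $\Adm_{1/0,d}^{d,d,2,2}$ lying over singular targets that would otherwise inflate the count: when $Y$ is a nodal rational curve (or a chain), the ramification profiles $d,d,2,2$ force—by the local description of Proposition~\ref{adm_defos} and Remark~\ref{preimage_adm_smooth}—a very restricted list of $X$'s, and one checks by direct inspection (the genus and ramification bookkeeping via Riemann--Hurwitz on each component) that none of these stabilize into $\Delta_0$ in a way producing extra points, nor do any contribute to the $\Delta_{1,S}$ intersections.

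For the classes $\Delta_{1,S}$ I expect the answer $0$ to follow from a pure non-existence statement, as in the second half of Proposition~\ref{m12_d-ell_class}. A class in $\Delta_{1,S}$ is represented by a genus~$1$ curve with a rational tail carrying exactly the marked points indexed by $S$. For the stabilization of $(X,x_1,x_2,x_3)$ to lie in $\Delta_{1,S}$, the curve $X$ would need a rational tail containing precisely $\{x_i : i\in S\}$ and no other special points; but $x_1$ and $x_2$ are ramification points of order $d$, and at any node the ramification indices on the two branches must agree (Definition~\ref{admissible_covers_def}), which constrains how components of $X$ can be arranged over a degenerate $\bP^1$. Running through the possibilities—using Remark~\ref{preimage_adm_smooth} that pre-images of smooth components are disjoint unions of smooth components, and the local structure of Proposition~\ref{adm_defos}—one finds that any admissible cover whose target degenerates produces, after stabilization, a curve of the form handled under $\Delta_0$ (an irreducible nodal genus~$1$ curve) rather than one in any $\Delta_{1,S}$: there is simply no room for the genus to concentrate on a component carrying a proper nonempty subset of the marked ramification points. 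Hence every $\Delta_{1,S}$ intersection vanishes.

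The main obstacle will be step two: carefully enumerating the admissible covers over degenerate targets and confirming the transversality of the $\Delta_0$ intersection, since a miscount of the boundary strata of $\Adm_{1/0,d}^{d,d,2,2}$ (or an undetected tangency) would throw off the leading count. I would deal with this by the same explicit local-coordinate method flagged in \S\ref{adm_g2_delta0}, writing each branch's smoothing parameter as $t^{a_{i,j}}$ per Proposition~\ref{adm_defos} and reading off the induced map on deformation spaces; this makes both the list of topological types and the transversality assertion a finite mechanical check.
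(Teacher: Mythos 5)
Your treatment of the $\Delta_{1,S}$ vanishing is in the right spirit and matches the paper's: one classifies covers in $\Adm_{1/0,d}^{d,d,2,2}$ over a reducible target and observes that the stabilized $3$-pointed source is either smooth (elliptic component plus contracted rational tails) or a union of rational curves with no separating node, hence never lies in any $\Delta_{1,S}$; your stated criterion (``no room for the genus to concentrate on a component carrying $S$'') is vaguer than this dichotomy but the case analysis you propose would deliver it.

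The $\Delta_0$ computation, however, contains a genuine inconsistency. You begin correctly, as in Proposition \ref{m12_d-ell_class}: since all points of $\barM_{1,1}$ are rationally equivalent, $\Delta_0\in A^1(\barM_{1,3})$ may be replaced by the class of the \emph{fiber} $\overline{M}_{E,3}$ of $\barM_{1,3}\to\barM_{1,1}$ over a general $[E]$ (note this replacement is a divisor, not the image of a map $E\to\barM_{1,3}$, which would have the wrong dimension). In that picture the intersection points are covers $f:E\to\bP^1$ of \emph{smooth} curves with the fixed general source $E$, which is exactly what Lemma \ref{count_dd22} counts, and your transversality argument (tangent vectors from $\Adm_{1/0,d}^{d,d,2,2}$ move $[E]$, tangent vectors from the fiber fix it) is the paper's. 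But your ``cleanest incarnation'' then switches to intersecting with the boundary divisor $\Delta_0$ itself, claiming the intersection points are admissible covers over a two-component target whose source stabilizes into $\Delta_0$ and that ``the only contributing type'' has smooth genus-$1$ source mapping to one component with ramification $d,d,2,2$. This is self-contradictory: such an $X$ stabilizes to a smooth $3$-pointed genus-$1$ curve, which does not lie in $\Delta_0$; and Lemma \ref{count_dd22} only produces $2(d^2-1)$ when the source elliptic curve is fixed and general, which is not the situation at a boundary point of $\Adm_{1/0,d}^{d,d,2,2}$. Had you pursued the literal boundary intersection, the actual set-theoretic intersection points would be the degenerate covers whose stabilized source is a cycle of rational curves (a codimension-$2$ stratum inside $\Delta_0$), and you would face a nontrivial multiplicity/excess computation that your write-up does not address. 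The fix is simply to commit to the replacement $\Delta_0=[\overline{M}_{E,3}]$ throughout the first computation and reserve the classification of covers over reducible targets for the $\Delta_{1,S}$ vanishing, where it belongs.
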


\begin{proof}
Similarly to the proof of Proposition \ref{m12_d-ell_class}, we replace $\Delta_0$ with the class of $\overline{M}_{E,3}$, as defined by the Cartesian diagram
\begin{equation*}
\xymatrix{
\overline{M}_{E,3} \ar[r] \ar[d] & \barM_{1,3} \ar[d] \\
[E] \ar[r] & \barM_{1,1}
}
\end{equation*}
where $[E]$ is the geometric point corresponding to a general elliptic curve $E$. Then, to check that $[\overline{M}_{E,3}]$ and $\pi_{1/0,d}^{d,d,2,2}$ intersect transversely, note that $\Adm_{1/0,d}^{d,d,2,2}$ is unramified over a general point of $\cM_{1,1}$, so a tangent vector from $\Adm_{1/0,d}^{d,d,2,2}$ will deform $E$ to first order, whereas a tangent vector from $\overline{M}_{E,3}$ will fix $E$ and deform the marked points to first order. The first statement then follows from Lemma \ref{count_dd22}.

On the other hand, it is straightforward to check that the image of $\pi_{1/0,d}^{d,d,2,2}$ is disjoint from every $\Delta_{1,S}$: if $Y$ is singular and $[f:X\to Y]\in\Adm_{1/0,d}^{d,d,2,2}$, then $X$ is either a union of an elliptic curve and rational tails, all of which will be contracted, or a union of smooth rational curves. The second statement follows.
\end{proof}

\begin{prop}\label{dd2222_g3}
Let $C$ be a general curve of genus 2. Then, up to automorphisms of the target, there are $48(d^4-1)$ tuples $(f,x_1,x_2,x_3,x_4,x_5,x_6)$, where $x_1,\ldots,x_6\in C$ are distinct points, and $f:C\to\bP^1$ is a degree $d$ morphism totally ramified at $x_1,x_2$ and simply ramified at $x_3,x_4,x_5,x_6$.
\end{prop}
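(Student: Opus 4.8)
The plan is to imitate the proof of Lemma~\ref{count_dd22}, replacing the elliptic curve by the Jacobian of $C$ and the Riemann--Hurwitz and linear-series bookkeeping by a double-point computation on an abelian surface.

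First I would reduce to a count of pairs of points on $C$. After composing with an automorphism of $\bP^1$ moving the two branch points over which $f$ is totally ramified to $0$ and $\infty$, the cover $f$ becomes a rational function with divisor $dx_1-dx_2$; such a function exists, and is then unique up to scaling, exactly when $\cO_C(dx_1)\cong\cO_C(dx_2)$ in $\Pic^d(C)$. Thus the data of $f$ up to automorphism of the target together with the \emph{ordered} totally ramified pair $(x_1,x_2)$ is the same as an ordered pair of distinct points $(x_1,x_2)\in C\times C$ with $\cO_C(dx_1)\cong\cO_C(dx_2)$. By Riemann--Hurwitz such an $f$ carries ramification weight $2(d-1)$ over $\{0,\infty\}$ and exactly four further units of ramification weight, which for general $C$ sit at four distinct simple ramification points lying over four distinct points of $\bP^1\setminus\{0,\infty\}$ --- a cover for which this fails has at most five branch points and hence lies in a family of dimension at most $5-3=2<3=\dim\cM_2$ modulo $\mathrm{PGL}_2$, so does not arise over a general $C$. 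Consequently the number of tuples in the statement equals $4!\cdot N=24N$, where $N=\#\{(x_1,x_2):\,x_1\neq x_2,\ \cO_C(dx_1)\cong\cO_C(dx_2)\}$, and it remains to prove $N=2(d^4-1)$.

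To evaluate $N$ I would use the morphism $h=[d]\circ\iota\colon C\to J$, where $\iota\colon C\hookrightarrow J=\Pic^0(C)$ is an Abel--Jacobi embedding and $[d]$ is multiplication by $d$. Since $\iota$ is a closed embedding and $[d]$ is \'etale, $h$ is an immersion, and $h(x_1)=h(x_2)$ precisely when $\cO_C(dx_1)\cong\cO_C(dx_2)$; for general $C$ the map $h$ is birational onto its image $\overline C$ (otherwise $C$ would map with degree $\geq2$ onto a genus-$1$ curve, i.e.\ be $k$-elliptic, which is excluded), so $C\to\overline C$ is the normalization and $\sum_p\delta_p=p_a(\overline C)-2$, the sum over the singular points of $\overline C$. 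With $\Theta=\iota(C)$, a translate of the theta divisor (so $\Theta^2=2$ and $K_J=0$), one has $[\overline C]=h_*[C]=[d]_*[\Theta]=d^2[\Theta]$, using $[d]^*\Theta\equiv d^2\Theta$ numerically together with $[d]_*[d]^*=d^4$; hence $\overline C^2=2d^4$ and adjunction gives $p_a(\overline C)=1+\tfrac12\overline C^2=1+d^4$, so $\sum_p\delta_p=d^4-1$. If every singular point of $\overline C$ is an ordinary multiple point, then at a point with $m_p$ branches one has $\delta_p=\binom{m_p}{2}$ and $m_p(m_p-1)=2\delta_p$ ordered pairs of points of $C$ map there, whence $N=2\sum_p\delta_p=2(d^4-1)$ and the count is $24\cdot2(d^4-1)=48(d^4-1)$. (This vanishes for $d=1$, and for $d=2$ it gives $720=6!$, consistent with the fact that a genus-$2$ curve has a unique hyperelliptic map.)

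The hard part is the genericity claim in the last step: that for general $C$ the immersed curve $\overline C\subset J$ has only ordinary multiple points. Since $h$ is an immersion its branches are smooth, so the only possible failure is a tangency between two branches at some $h(x_1)=h(x_2)$; because the Gauss map of $\Theta$ is the hyperelliptic double cover $C\to\bP^1$, this happens exactly when $x_1$ and $x_2$ are swapped by the hyperelliptic involution, and then $\cO_C(2dx_1)\cong\cO_C(dK_C)$ with $x_1$ not a Weierstrass point. I would rule this out for general $C$ by a dimension count: the locus of such pairs $(C,x_1)$ maps with finite fibers to $\cM_2$, and I expect it not to dominate, the essential point being that on a general $C$ the curve $\{[2dx-dK_C]:x\in C\}\subset J$ meets the finite set of nonzero $d$-torsion points of $J$ only at the images of the six Weierstrass points. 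Making this precise --- in effect, controlling the torsion points lying on a theta-divisor translate of a general genus-$2$ Jacobian --- is the main obstacle; alternatively, this proposition, like Lemma~\ref{count_dd22}, can be deduced as a special case of the pencil-counting formulas of \cite{lian}.
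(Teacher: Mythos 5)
Your route is genuinely different from the paper's. The paper degenerates $C$ to a chain $E_1\cup\bP^1\cup E_2$ of two general elliptic curves and counts limit linear series, distributing the six ramification points over the two elliptic components and feeding in Lemma~\ref{count_dd22} and Corollary~\ref{count_dd22_translate}; this reduces everything to elliptic computations already done and never needs the global geometry of the image curve in the Jacobian. Your reduction of the count to $24N$ with $N$ the number of ordered pairs $(x_1,x_2)$, $x_1\neq x_2$, $dx_1\sim dx_2$, is correct (the dimension count ruling out worse behaviour of the four remaining ramification points is the same one the paper uses), and the adjunction computation $\sum_p\delta_p=p_a(\overline C)-2=d^4-1$ is also correct; your $d=2$ sanity check (a single ordinary $6$-fold point at the origin with $\delta=15$, so $N=30$ and the count is $720$) confirms the mechanism.

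However, the step you yourself flag as the main obstacle is a genuine gap, not a routine verification. The identity $N=2\sum_p\delta_p$ requires every singular point of $\overline C$ to be ordinary, and, as you correctly analyze via the Gauss map, a tangency between branches occurs exactly when the symmetric theta divisor $\Theta\subset J$ contains a $2d$-torsion point other than the six $2$-torsion points coming from the Weierstrass points, i.e.\ when $\Theta\cap J[2d]\neq\Theta\cap J[2]$. If such a point existed, the resulting tacnode would contribute $2$ to $N$ but $4$ to $2\sum_p\delta_p$, so the final answer genuinely changes; the claim cannot be absorbed into "general position" without proof. Establishing that a general genus-$2$ Jacobian has no extra $2d$-torsion on $\Theta$ is a nontrivial statement about torsion points on theta divisors: Manin--Mumford-type finiteness is not enough, one needs emptiness beyond the obvious points for the general member of the family (say by a monodromy argument on $J[2d]$ or by exhibiting, for each $d$, one curve where it holds), and your write-up does not supply this. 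As it stands the argument is therefore incomplete at exactly this point. The fallback of invoking the pencil-counting formulas of \cite{lian} (or Tarasca's formula \cite{tarasca}, as the paper remarks) is legitimate but replaces the proof by a citation; the paper's degeneration avoids the issue entirely because the analogous count on a general elliptic curve (Lemma~\ref{count_dd22}) is elementary.
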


\begin{proof}
The fact that such an $f$ is simply ramified at four other points follows from a dimension count: the dimension of the space of covers $C\to\bP^1$ branched over 5 points or fewer is 2, so a general point of $\cM_2$ admits no such covers.

We appeal to the degeneration technique described in \cite{lian}: it suffices to count limit linear series $V$ on the reducible curve $C_0$ formed by attaching general elliptic curves $E_1,E_2$ to a copy of $\bP^1$, where each elliptic component contains three of the $x_i$. Of the $\binom{6}{3}=20$ possible distributions of the $x_i$ onto the elliptic components, there are 12 ways for $x_1,x_2$ to lie on different components, and 8 for them to lie on the same component. In the first case, it follows from Lemma \ref{count_dd22} that there are $2(d^2-1)$ possible aspects of $V$ on each $E_i$, and the aspect of $V$ on $\bP^1$ must be the unique pencil with vanishing sequence $(0,d)$ at both marked points. In the second, we have, by Corollary \ref{count_dd22_translate}, $2(d^2-1)$ possible aspects on the component containing $x_1,x_2$ and $2(2^2-1)=6$ on the other, and the aspect of $V$ on $\bP^1$ must be the unique pencil with vanishing sequences $(0,2),(d-2,d)$ at the two marked points. 

Thus, our answer is
\begin{equation*}
12\cdot(2(d^2-1))^2+8\cdot6\cdot2(d^2-1))=48(d^4-1),
\end{equation*}
as desired.
\end{proof}

\begin{rem}
One can also recover Proposition \ref{dd2222_g3} using Tarasca's formula for the closure of the locus of $[(C,p,q)]\in\barM_{2,2}$ such that $C$ admits a cover of $\bP^1$ totally ramified at $p$ and $q$, see \cite{tarasca}.
\end{rem}


\section{The $d$-elliptic locus on $\barM_2$}\label{m2_d-ell}

In this section, we prove Theorem \ref{main_thm_g2}. It suffices to compute the intersection of the morphism $\pi_{2/1,d}:\Adm_{2/1,d}\to\barM_2$ with all curve classes in $A_1(\barM_2)$. It is possible to simplify the computation by specializing to particular test curves and computing these intersection numbers with these classes directly, but instead we explain a more abstract approach that we will employ later for pointed genus 2 curves and in genus 3.

It follows from \cite{mumford} that $A_1(\cM_2)=0$, and hence that any curve class on $\barM_2$ comes from the boundary. In particular, any curve class may be represented as a rational linear combination of morphisms from a smooth, connected scheme $C$ of dimension 1 to one of the two boundary divisors:
\begin{enumerate}
\item[($\Delta_0$)] $C\to\barM_{1,2}\to\barM_{2}$
\item[($\Delta_1$)] $C\to\barM_{1,1}\times\barM_{1,1}\to\barM_{2}$
\end{enumerate}

We may furthermore take $C$ to be general, in the sense that $C$ intersects any given finite collection of subvarieties of its associated boundary divisor as generically as possible. Now, consider the intersection of such a $C$ with the admissible locus $\pi_{2/1,d}:\Adm_{2/1,d}\to\barM_2$. Note that we get a stratification of $\Adm_{2/1,d}$ by pulling back the stratification of $\barM_{1,2}$ by boundary strata under the finite morphism $\psi_{2/1,d}$, see Lemma \ref{adm_to_mh_quasifinite}. Then, $C$ may be chosen to avoid the zero-dimensional strata of $\Adm_{2/1,d}$.

Thus, when intersecting $C$ with the admissible locus, we need only consider admissible covers $f:X\to Y$ where $Y$ has at most one node; in fact, because $X$ must be singular, $Y$ must have exactly one node. We classify such covers in \S\ref{admissible_classification_g2}, then  in \S\ref{adm_g2_intersect_delta1} and \S\ref{adm_g2_intersect_delta0} compute the contributions of the covers of each topological type to the intersection numbers.

\subsection{Classification of Admissible Covers}\label{admissible_classification_g2}

Let $f:X\to Y$ be a point of $\Adm_{2/1,d}$ where $Y$ has exactly one single node, so that $[Y]$ lies in exactly one of the boundary divisors $\Delta_i$. We consider the cases $i=0,1$ separately.

\subsubsection{$[Y]\in\Delta_1$}\label{adm_g2_delta1}

Let $Y_i$ be the component of $Y$ of genus $i$, and let $y=Y_0\cap Y_1$. By assumption, both $Y_i$ are smooth. The pre-image of $Y_i$ is then a union of smooth curves; we may ignore the case where one of the components has genus 2, by the assumption on $X^s$. Thus, $f^{-1}(Y_1)$ either consists of a single genus 1 curve $X_1$, or two disjoint elliptic curves $X_1,X'_1$.

In both cases, the map $f$ must be unramified over $y$, and simply ramified over two points of $Y_0$. Thus, the pre-image of $Y_0$ consists of smooth rational curves attached to the pre-image of $Y_1$ at the pre-images of $y$, all of which map isomorphically to $Y_0$ except one, which has degree 2 over $Y_0$. In the case that $f^{-1}(Y_1)$ has two components, the degree 2 component must be a bridge between $X_1$ and $X'_1$, in order for $X$ to be connected.

We thus get covers of two topological types, which we denote by $(\Delta_0,\Delta_1)$ (Figure \ref{Fig:g2_Delta0-Delta1}) and $(\Delta_1,\Delta_1)$ (Figure \ref{Fig:g2_Delta1-Delta1}); the coordinates are the boundary strata in which $X^s$ and $Y$ lie, respectively.

\begin{figure}[!htb]
   \begin{minipage}{0.48\textwidth}
     \centering
     \includegraphics[width=.7\linewidth]{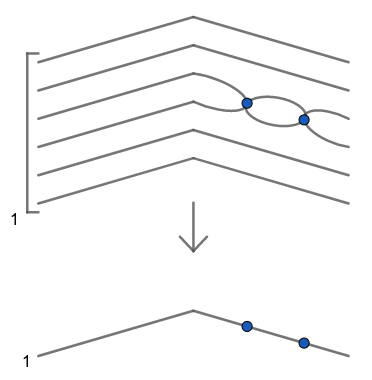}
     \caption{Cover of type $(\Delta_0,\Delta_1)$}\label{Fig:g2_Delta0-Delta1}
   \end{minipage}\hfill
   \begin{minipage}{0.48\textwidth}
     \centering
     \includegraphics[width=.7\linewidth]{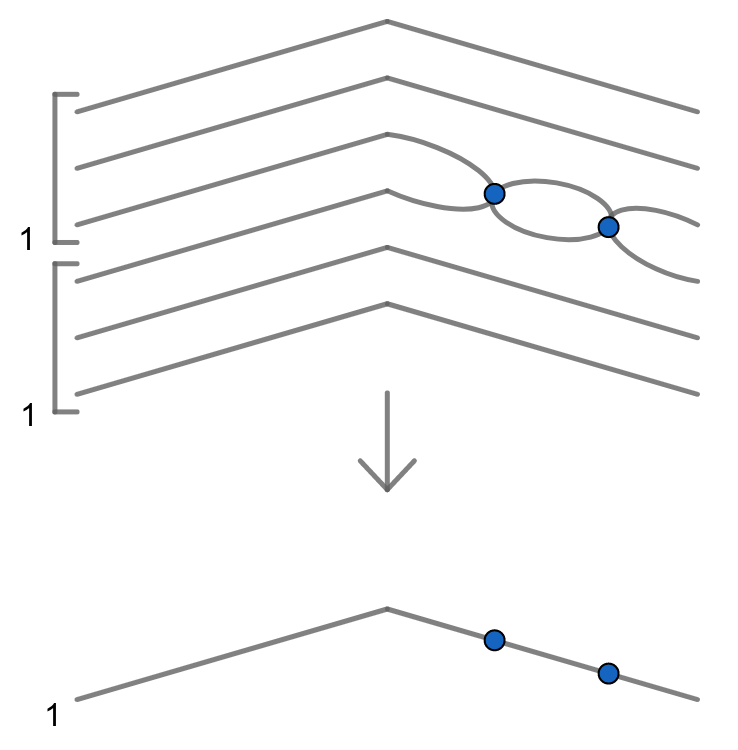}
     \caption{Cover of type $(\Delta_1,\Delta_1)$}\label{Fig:g2_Delta1-Delta1}
   \end{minipage}
\end{figure}

\subsubsection{$[Y]\in\Delta_0$}\label{adm_g2_delta0}

Let $[f:X\to Y]\in\Adm_{2/1,d}$ be an admissible cover, where $Y$ is an irreducible nodal curve of genus 1. We have a diagram
\begin{equation*}
\xymatrix{
\widetilde{X} \ar[r]^{\nu_X} \ar[d]^{\widetilde{f}} & X \ar[d]^{f} \\
\bP^1 \ar[r]^{\nu_Y} & Y
}
\end{equation*}
where the maps $\nu_X,\nu_Y$ are normalizations. Let $y_0\in Y$ denote the node, and let $y',y''\in\bP^1$ be its pre-images under $\nu_Y$. Let $y_1,y_2,\in Y$ be the branch points of $f$; by abuse of notation, we also let $y_1,y_2\in\bP^1$ denote their pre-images under $\nu_Y$. Then, $\widetilde{f}$ is simply branched over $y_1,y_2$, possibly branched over $y',y''$, and unramified everywhere else.

Let $\widetilde{X}_1,\ldots,\widetilde{X}_k$ be the components of $\widetilde{X}$, and let $d_i$ be the degree of $\widetilde{X}_i$ over $\bP^1$. Let $s_i$ be the total number of points of $\widetilde{X}_i$ lying over $y'$ and $y''$. Then, the number of points of $f^{-1}(y)$ is $t=\frac{1}{2}\sum s_i$. We have $p_a(\widetilde{X})\ge1-k$. On the other hand, $\widetilde{X}$ is the blowup of $X$ at $t$ nodes, so $p_a(\widetilde{X})=2-t$. We also have
\begin{equation*}
p_a(\widetilde{X})=1-h^0(\widetilde{X},\cO_{\widetilde{X}})+h^1(\widetilde{X},\cO_{\widetilde{X}})\ge 1-k,
\end{equation*}
hence $t\le k+1$. But note that $s_i\ge2$ for each $i$, so $t\ge k$.

Because $t$ is an integer, the three possibilities for the $s_i$ (up to re-indexing) are: $s_i=2$ for all $i$ (type $(\Delta_{0},\Delta_{0})$, Figure \ref{Fig:g2_Delta0-Delta0}), $s_1=4$ and $s_i=2$ for all $i\ge2$ (type $(\Delta_{00},\Delta_0)$, Figure \ref{Fig:g2_Delta00-Delta0}), and $s_1=s_2=3$ and $s_i=2$ for all $i\ge3$. In the last case, it is easy to check that $X^s$ will lie in one of the zero-dimensional boundary strata of $\barM_2$, so we may disregard covers of this type.

Suppose $f$ is a cover of type $(\Delta_{0},\Delta_{0})$. In this case, we will rename the number of components of $X$ by $m=k$. Then, $X$ must consist of a smooth genus 1 component $X_1$ attached at two points to a chain of $m-1$ rational curves, each of which maps to $Y$ via $x\mapsto x^a$. (Note that $a\ge2$, as $X_1$ has degree $a$ over $\bP^1$.) The map $f|_{X_1}:X_1\to\bP^1$ is totally ramified at the two nodes on $X_1$, and is simply ramified at two other points on $X_1$. We may also have $m=1$, in which case $X$ is irreducible with a single node, and its normalization $\wt{X}=X_1$ maps to $\bP^1$ as above.

Finally, suppose $f$ is a cover of type $(\Delta_{00},\Delta_{0})$. We have a single component $X_0\subset X$ with four points mapping to $y_0\in Y$; all other components of $X$ have two points mapping to $y_0$. As $X_0$ is connected, we see that $X$ must consist of two disjoint chains of curves $X_1,\ldots,X_{m-1}$ and $X'_1,\ldots,X'_{n-1}$ attached at two points to $X_0$. We allow one or both of $m,n$ to be equal to 1; in this case, $X_0$ has a non-separating node. We first assume $m,n>1$, in which case all components of $X$ have genus 0.

Each of the components of $X$ other than $X_0$ is unramified over $\bP^1$ away from the two nodes; thus, each $X_i\to\bP^1$ is of the form $x\mapsto x^a$ for some $a$ (independent of $i$), branched over $y',y''$. Similarly each $X'_j\to\bP^1$ is of the form $x\mapsto x^b$ for some $b$ (independent of $j$), branched over $y',y''$.

Now, $X_0$ has degree $a+b$ over $\bP^1$, and each of $y',y''$ has two points in its pre-image, of ramification indices $a$ and $b$. By Riemann-Hurwitz, there are two additional simple ramification points on $X_0$ mapping to $y_1,y_2\in Y$.

The situation is similar when at least one of $m,n$ is equal to 1: the chains of smooth rational curves attached to $X_0$ are replaced with a non-separating node on $X_0$, and the normalization of $X_0$ maps to $\bP^1$ as before.

\begin{figure}[!htb]
   \begin{minipage}{0.48\textwidth}
     \centering
     \includegraphics[width=.6\linewidth]{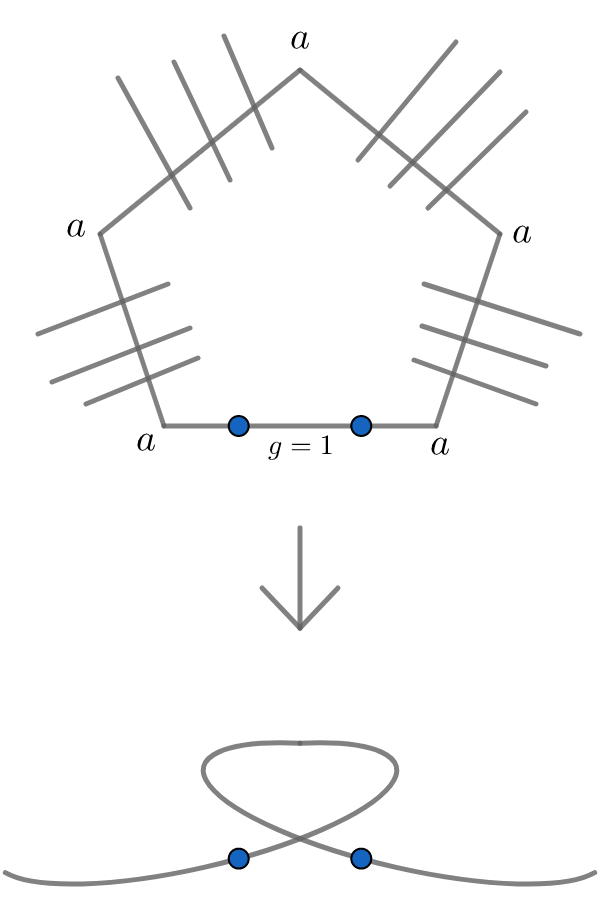}
     \caption{Cover of type $(\Delta_0,\Delta_0)$}\label{Fig:g2_Delta0-Delta0}
   \end{minipage}\hfill
   \begin{minipage}{0.48\textwidth}
     \centering
     \includegraphics[width=.9\linewidth]{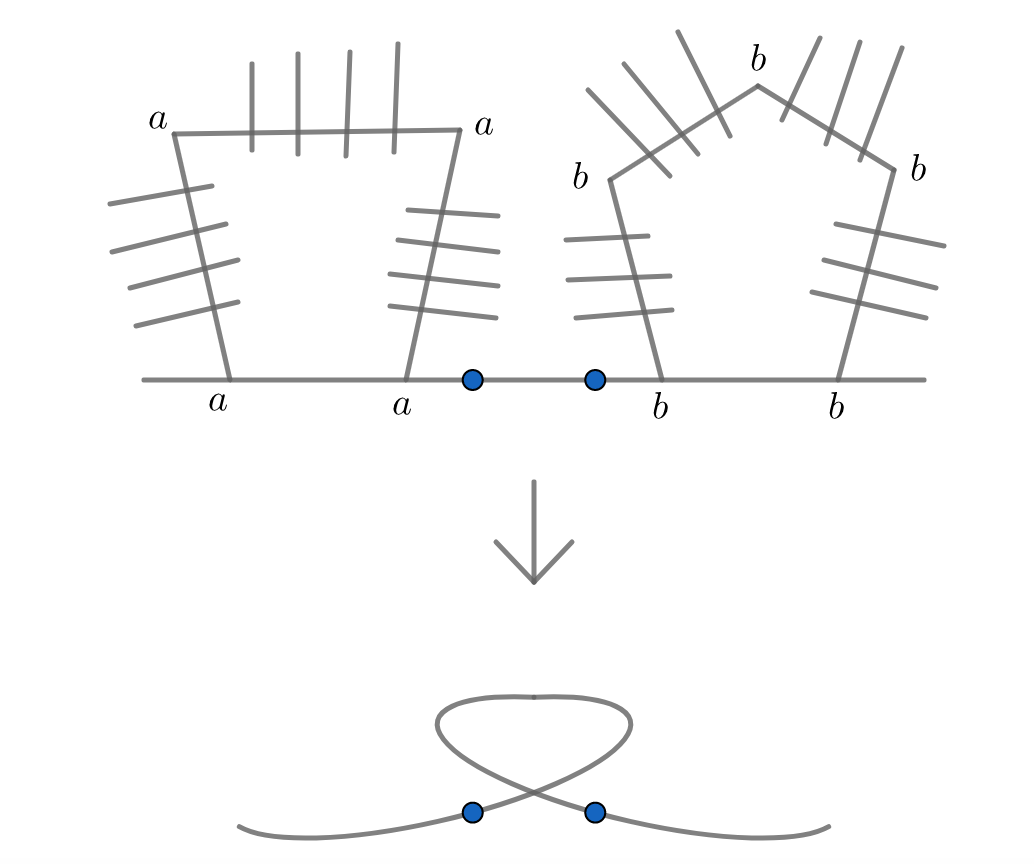}
     \caption{Cover of type $(\Delta_{00},\Delta_0)$}\label{Fig:g2_Delta00-Delta0}
   \end{minipage}
\end{figure}

\subsection{Intersection numbers: the case $[C]\in\Delta_{1}$}\label{adm_g2_intersect_delta1}

Suppose that we have a general curve class $C\to\barM_{1,1}\times\barM_{1,1}$. In the Cartesian diagram
\begin{equation*}
\xymatrix{
A_{C} \ar[rr] \ar[d] & & \Adm_{2/1,d} \ar[d]^{\pi_{2/1,d}}\\
C \ar[r] & \barM_{1,1}\times\barM_{1,1} \ar[r] & \barM_2
}
\end{equation*}
we wish to compute the degree of $A_C$ as a 0-cycle on $\barM_{2}$. 

Suppose $d_1,d_2$ are positive integers satisfying $d_1+d_2=d$. We also form the diagram

\begin{equation*}
\xymatrix{
A^{d_1,d_2}_{C} \ar[r] \ar[dd] & \Adm_{1/1,d_1}\times_{\Delta}\Adm_{1/1,d_2} \ar[rr] \ar[d] & &  \barM_{1,1}  \ar[d]^{\Delta} \\
 & \Adm_{1/1,d_1}\times\Adm_{1/1,d_2} \ar[rr]^(0.55){\psi_{1/1,d_1}\times\psi_{1/1,d_2}} \ar[d]^{\pi_{1/1,d_1}\times\pi_{1/1,d_2}} & &  \barM_{1,1}\times\barM_{1,1} \\\
C \ar[r] & \barM_{1,1}\times\barM_{1,1} & 
}
\end{equation*}
where both squares are Cartesian.

\begin{lem}
We have a bijection of sets
\begin{equation*}
A_C(\Spec(\bC))\cong\coprod_{d_1+d_2=d}A^{d_1,d_2}_C(\Spec(\bC))
\end{equation*}
In particular, the groupoids of geometric points of $A_C$ and $A^{d_1,d_2}_C$ are in fact sets, i.e., have no non-trivial automorphisms.
\end{lem}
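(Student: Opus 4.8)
The plan is to establish the bijection by unwinding what it means to be a geometric point of $A_C$, using the classification of admissible covers of type $(\Delta_0,\Delta_1)$ and $(\Delta_1,\Delta_1)$ from \S\ref{adm_g2_delta1}. A geometric point of $A_C$ is a pair consisting of a point $c\in C$ and an admissible cover $[f:X\to Y]\in\Adm_{2/1,d}$ such that the image of $c$ in $\barM_{1,1}\times\barM_{1,1}$ (and hence the pair of elliptic curves it parametrizes) agrees with $X^s$ viewed as a point of $\Delta_1\subset\barM_2$. Since $C$ is chosen general, it avoids the zero-dimensional strata of $\Adm_{2/1,d}$, so $Y$ has exactly one node; and since $X^s\in\Delta_1$, the cover is of type $(\Delta_0,\Delta_1)$ or $(\Delta_1,\Delta_1)$. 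In the first case $X^s$ is irreducible of genus $1$ with a non-separating node, which is \emph{not} in $\Delta_1$, so only type $(\Delta_1,\Delta_1)$ occurs. For such a cover, $f^{-1}(Y_1)$ is a disjoint union of two elliptic curves $X_1, X_1'$, joined through the pre-image of $Y_0$ by a degree-$2$ bridge, and $X^s$ is exactly $X_1\sqcup X_1'$ glued at a point — so the two elliptic curves that $c$ parametrizes are precisely $X_1, X_1'$ (as a \emph{pair}, i.e. with the ordering coming from the two factors of $C\to\barM_{1,1}\times\barM_{1,1}$).

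Next I would extract the combinatorial content. Restricting $f$ to $X_1$ and $X_1'$ gives admissible covers $f_i: X_i\to Y_1$ of degree $d_i := \deg(f|_{X_i})$, with $d_1+d_2 = d$, each unramified and hence (after remembering the marked point $y = Y_0\cap Y_1$ on $Y_1$, and its pre-image on $X_i$, which is the node of $X^s$) an element of $\Adm_{1/1,d_i}$ in the sense of \S\ref{admissible_covers_prelim}. Conversely, given $d_1+d_2=d$ and a pair $([f_1:X_1\to Y_1],[f_2:X_2\to Y_1])\in\Adm_{1/1,d_1}\times_\Delta\Adm_{1/1,d_2}$ lying over a common target $(Y_1,y)$, one reconstructs the genus-$2$ admissible cover uniquely: attach to $Y_1$ at $y$ a rational bridge $Y_0$ carrying the two branch points, and over it place the unique chain-of-rational-curves configuration forced by Remark \ref{preimage_adm_smooth} and the analysis in \S\ref{adm_g2_delta1} — namely the $d_1+d_2-2$ unramified $\bP^1$'s mapping isomorphically plus the one degree-$2$ bridge connecting the pre-image of $y$ on $X_1$ to the pre-image on $X_2$. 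The point $c\in C$ is then recovered from the target elliptic curve and the branch data via the defining Cartesian square for $A^{d_1,d_2}_C$. These two constructions are mutually inverse, giving the claimed bijection $A_C(\Spec\bC)\cong\coprod_{d_1+d_2=d}A^{d_1,d_2}_C(\Spec\bC)$.

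For the "no non-trivial automorphisms" clause, I would observe that an automorphism of a geometric point of $A_C$ is an automorphism of the admissible cover $[f:X\to Y]$ fixing the induced marked target and compatible with $c$, but since $c$ is a point of a scheme $C$ (not a stack), the automorphism must act trivially on the target side; genericity of $C$ places the target $(Y, y_1, y_2)$ at a point with trivial automorphisms in $\barM_{1,2}$ (a general $2$-pointed genus-$1$ curve is automorphism-free), so the automorphism is trivial on $Y$ and hence, being a morphism of admissible covers over a fixed target, is determined on each component of $X$ by where it sends the ramification and node points — forcing it to be the identity. The same argument applies to $A^{d_1,d_2}_C$, using that a general point of $\barM_{1,1}$ together with the extra structure rigidifies the $\Adm_{1/1,d_i}$ factors over $\cM_{1,1}$ by Corollary \ref{adm_m11_m11}. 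The main obstacle is the careful bookkeeping in the reconstruction step: one must check that the rational-curve configuration over $Y_0$ — in particular the choice of which pre-image of $y$ the degree-$2$ bridge connects to, and the lengths of the chains — is genuinely \emph{unique} given only the data on $X_1, X_1'$ and the ramification orders, so that no spurious points or automorphisms are introduced; this is exactly the content of the classification in \S\ref{adm_g2_delta1}, and the proof amounts to invoking it correctly rather than re-deriving it.
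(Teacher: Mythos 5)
Your proposal follows essentially the same route as the paper: restrict to covers of type $(\Delta_1,\Delta_1)$ via the classification in \S\ref{admissible_classification_g2}, split such a cover into an ordered pair of degree-$d_1$ and degree-$d_2$ covers of the elliptic component of the target, and observe that the inverse construction (reattaching the rational bridge configuration) is determined up to isomorphism. That part is correct, including your remark that the choice of which pre-images of $y$ the degree-$2$ bridge joins is immaterial, since deck transformations of an isogeny act transitively on a fibre.

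One justification in your automorphism argument is wrong, though the conclusion survives. You claim that genericity of $C$ makes the target $(Y,y_1,y_2)$ an automorphism-free point of $\barM_{1,2}$. But for a cover of type $(\Delta_1,\Delta_1)$ the target is forced to lie in the boundary divisor $\Delta_1\subset\barM_{1,2}$, i.e.\ $Y=Y_1\cup Y_0$ with both branch points on the rational component; such a curve always carries the elliptic involution of $(Y_1,y)$, so its automorphism group is nontrivial no matter how general $C$ is. The correct argument is the other way around: an automorphism of the fibre-product point consists of $(\sigma_X,\sigma_Y)$ inducing the identity on $X^s$, hence $\sigma_X$ is the identity on $X_1$ and $X_1'$; compatibility $f\circ\sigma_X=\sigma_Y\circ f$ and surjectivity of $f|_{X_1}$ then force $\sigma_Y|_{Y_1}=\id$, after which $\sigma_Y|_{Y_0}$ fixes three points of $\bP^1$ and is the identity, and $\sigma_X$ is determined (to be trivial) on each rational component. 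With that repair the proof is complete and agrees with the paper's.
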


\begin{proof}
By \S\ref{admissible_classification_g2}, a geometric point of $A_C$ consists of a point $x\in C$ and a cover $f:X\to Y$ of type $(\Delta_1,\Delta_1)$, along with the data of an isomorphism of $X^s$ with the curve corresponding to the image of $x$ in $\barM_2$. It is clear that this data has no non-trivial automorphisms.

From $f$, we may associate an \textit{ordered} pair of covers of the same elliptic curve, whose degrees are integers $d_1,d_2$ satisfying $d_1+d_2=d$. We thus get a geometric point of $A^{d_1,d_2}_C$, and it is again easy to see that such points have no non-trivial automorphisms.

The construction of the inverse map is clear: we need only note that $C$ may be chosen to avoid the point $([E_0],[E_0])\in\barM_{1,1}\times\barM_{1,1}$, where $E_0$ denotes a singular curve of genus 1; thus, all covers in $A^{d_1,d_2}_C$ are covers of (smooth) elliptic curves.
\end{proof}

\begin{lem}\label{g2_delta1,delta1_mult}
The intersection multiplicity at all points of $A^{d_1,d_2}_C$ is 1, and the intersection multiplicity at all points of $A_C$ is 2.
\end{lem}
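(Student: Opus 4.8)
The plan is to analyze the local structure of the relevant maps, using the deformation description of $\Adm_{g/1,d}$ from Proposition \ref{adm_defos} together with the transversality we already have from Corollary \ref{adm_m11_m11}. First I would deal with $A^{d_1,d_2}_C$. A geometric point is a pair of admissible covers $f_i : X_i \to Y$ of an elliptic curve $(Y,y)$ of degrees $d_1, d_2$, each in $\Adm_{1/1,d_i}$, together with a point of $C$ whose images in the two copies of $\barM_{1,1}$ are the two (necessarily isomorphic) elliptic tails; but since the cover of type $(\Delta_1,\Delta_1)$ records the data of how the bridge attaches, at a point of $A^{d_1,d_2}_C$ all the curves involved are smooth, so every space in sight (the Hurwitz space $\cH_{1/1,d_i}$, the curve $C$) is smooth, and the only thing to check is that the two tangent spaces meet transversally inside the tangent space of $\barM_{1,1}\times\barM_{1,1}$. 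By Corollary \ref{adm_m11_m11}, $\psi_{1/1,d_i}$ is unramified over $\cM_{1,1}$, so the composite $\Adm_{1/1,d_1}\times_\Delta\Adm_{1/1,d_2}\to\barM_{1,1}$ is unramified over $\cM_{1,1}$, i.e. the fibered product injects (infinitesimally) its tangent space into that of $\barM_{1,1}$ via $\psi$; meanwhile the general curve $C$ meets the small diagonal $\Delta(\barM_{1,1})\subset\barM_{1,1}\times\barM_{1,1}$ transversally and avoids the boundary point. Combining, the fiber product $A^{d_1,d_2}_C$ is reduced of dimension $0$, i.e. all intersection multiplicities are $1$.

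Next I would pass to $A_C$. The cover of type $(\Delta_1,\Delta_1)$ in $\Adm_{2/1,d}$ has, over the single node $y_0$ of $Y$, \emph{two} nodes of $X$ (the two preimages of $y_0$, lying on the degree-$2$ bridge component), each with ramification index $1$ since $f$ is unramified over $y_0$. By the last sentence of Proposition \ref{adm_defos}, $\Adm_{2/1,d}$ is therefore smooth at such a point, of dimension $3\cdot 1 - 3 + b = b$ with $b=2$; more precisely, the smoothing parameter $t_1$ of the node $y_0$ of $Y$ pulls back to the two independent smoothing parameters $t_{1,1}, t_{1,2}$ of the two nodes of $X$, subject to $t_{1,1}=t_{1,2}=t_1$. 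Under $\pi_{2/1,d}$, the target curve $X^s$ (a genus-$1$ curve glued to a genus-$1$ curve, possibly after contracting the bridge, i.e. a point of $\Delta_1\subset\barM_2$) has a single node whose smoothing parameter is the \emph{product} $t_{1,1}\,t_{1,2}$ of the two node-smoothings of $X$ coming from resolving the chain — this is the standard picture for contracting a rational bridge. Hence on the normal direction to $\Delta_1$, the map $\pi_{2/1,d}$ looks like $t\mapsto t^2$ near such a point, contributing a factor of $2$. Along the boundary direction of $\Delta_1$ (the two copies of $\barM_{1,1}$), the map is a local isomorphism by the $A^{d_1,d_2}_C$ analysis. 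So each point of $A_C$ (equivalently each point of the disjoint union over $d_1+d_2=d$ of the $A^{d_1,d_2}_C$) contributes intersection multiplicity $2$, as claimed.

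The main obstacle I expect is pinning down the factor $2$ cleanly, i.e. verifying that the normal direction to $\Delta_1\subset\barM_2$ pulls back under $\pi_{2/1,d}$ to the \emph{square} of a uniformizer on $\Adm_{2/1,d}$, rather than to a uniformizer. This requires being careful about which node of $X$ gets contracted (the bridge component $X$ has degree $2$ over $Y_0$ and is itself rational, so its two nodes are the two points glued to $X_1$ and $X_1'$, and contracting it to stabilize $X^s$ identifies its two boundary points), and matching the smoothing parameter of the resulting node of $X^s$ with the local coordinates of Proposition \ref{adm_defos}: the node of $X^s$ smooths with parameter $t_{1,1}t_{1,2}$, while the relation $t_{1,1}=t_{1,2}=t_1$ coming from $\Adm$ forces this to be $t_1^2$, whereas $t_1$ itself is (up to unit) the normal coordinate to $\Delta_1$ on the $\barM_{1,2}$-side that $C$ meets transversally. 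Everything else is a routine combination of the deformation theory already quoted and the transversality of a general $C$.
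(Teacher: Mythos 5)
Your argument is correct and follows essentially the same route as the paper's proof: smoothness of $\Adm_{2/1,d}$ at a cover of type $(\Delta_1,\Delta_1)$ because $f$ is unramified over the node of $Y$, transversality in the directions along $\Delta_1$ coming from Corollary \ref{adm_m11_m11} and the generality of $C$, and the factor of $2$ from contracting the rational bridge, which makes the smoothing parameter of the node of $X^s$ the product of the parameters of the two bridge nodes, each identified with the smoothing parameter $t_1$ of the node of $Y$. Two harmless slips to fix in the write-up: for $d>2$ there are $d$ (not two) nodes of $X$ over $y_0$, all with smoothing parameter identified to $t_1$, the extra ones lying on rational tails that contract without creating nodes of $X^s$; and the transversality needed for $A^{d_1,d_2}_C$ is that of $C$ with the image of $\Adm_{1/1,d_1}\times_\Delta\Adm_{1/1,d_2}$ under $\pi_{1/1,d_1}\times\pi_{1/1,d_2}$ (an immersed smooth curve in $\barM_{1,1}\times\barM_{1,1}$), not with the diagonal itself.
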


\begin{proof}
Analytically locally near a point of $A^{d_1,d_2}_C$, the map $\Adm_{1/1,d_1}\times_{\Delta}\Adm_{1/1,d_2} \to\barM_{1,1}\times\barM_{1,1}$ is the inclusion of a smooth curve in a smooth surface, by Corollary \ref{adm_m11_m11} and the smoothness of $\barM_{1,1}$. Thus, $C\subset\barM_{1,1}\times\barM_{1,1}$ may be chosen to intersect $\Adm_{1/1,(d_1,d_2)}$ transversely.

Now, let $f:X\to Y$ be an admissible cover of type $(\Delta_1,\Delta_1)$. The complete local ring of $\Adm_{2/1,d}$ at $[f]$ is isomorphic to $\bC[[s,t]]$, where $t$ is a smoothing parameter for the node of $Y$, and the quotient $\bC[[s,t]]\to\bC[[s]]$ corresponds to the universal deformation of the elliptic component of $Y$. Let $\bC[[x,y,z]]$ be the complete local ring of $\barM_{2}$ at $[X^s]$, where $z$ is a smoothing parameter for the node of $X^s$, and the variables $x,y$ are deformation parameters for the two elliptic components.

Consider the induced map $T:\bC[[x,y,z]]\to\bC[[s,t]]$. We have the following, up to harmless renormalizations of the coordinates:
\begin{itemize}
\item $T(z)\equiv t^2\mod{(s^2,st,t^3)}$. To see this, consider any 1-parameter deformation of $Y$ that smooths the node to first order. The corresponding deformation of $f$ smooths the nodes of $X$ to first order, and the induced deformation of $X^s$ is obtained by contracting the rational bridge of $X$ in the total space, introducing an ordinary double point. It follows that the node of $X^s$ is smoothed to order 2.
\item $T(x)\equiv T(y)\equiv s\mod{(t,s^2)}$. Indeed, varying the elliptic component of $Y$ to first order varies the elliptic components of $X$ to first order.
\end{itemize}

The map on complete local rings at $[X^s]$ induced by $C\mapsto\barM_{1,1}\times\barM_{1,1}\to\barM_2$ is of the form $\bC[[x,y,z]]\mapsto\bC[[x,y]]\mapsto\bC[[u]]$, where $x,y$ map to power series with generic linear leading terms. It is straightforward to check that the complete local ring of $A_C$ at $([f],[X^s])$ is isomorphic to $\bC[t]/(t^2)$.
\end{proof}

\begin{prop}\label{m2_delta1,delta1}
The degree of $A_C$ as a 0-cycle on $\barM_2$ is:
\begin{equation*}
2\left(\sum_{d_1+d_2=d}\sigma_1(d_1)\sigma_1(d_2)\right)\int_{\barM_{1,1}\times\barM_{1,1}}[C]\cdot[\Delta]
\end{equation*}
where $[\Delta]=[p\times\barM_{1,1}]+[\barM_{1,1}\times p]$ is the class of the diagonal in $\barM_{1,1}\times\barM_{1,1}$, and $p$ is the class of a geometric point in $\barM_{1,1}$.
\end{prop}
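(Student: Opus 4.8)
The plan is to assemble the two preceding lemmas with the degree count of Corollary~\ref{adm_m11_m11}. By Lemma~\ref{g2_delta1,delta1_mult} every geometric point of $A_C$ contributes with multiplicity $2$ to the $0$-cycle $[A_C]$, and by the preceding bijection lemma there are exactly $\sum_{d_1+d_2=d}\#A^{d_1,d_2}_C(\Spec(\bC))$ of these points, so $\deg[A_C]=2\sum_{d_1+d_2=d}\#A^{d_1,d_2}_C(\Spec(\bC))$. It therefore suffices to show
\begin{equation*}
\#A^{d_1,d_2}_C(\Spec(\bC))=\sigma_1(d_1)\sigma_1(d_2)\int_{\barM_{1,1}\times\barM_{1,1}}[C]\cdot[\Delta]
\end{equation*}
for each ordered pair $(d_1,d_2)$ with $d_1+d_2=d$.

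First I would reduce the left-hand side to an intersection number. Write $W:=\Adm_{1/1,d_1}\times_{\Delta}\Adm_{1/1,d_2}$, which by construction is the fiber product $\Adm_{1/1,d_1}\times_{\barM_{1,1}}\Adm_{1/1,d_2}$ formed along $\psi_{1/1,d_1}$ and $\psi_{1/1,d_2}$, and let $\pi\colon W\to\barM_{1,1}\times\barM_{1,1}$ be the restriction of $\pi_{1/1,d_1}\times\pi_{1/1,d_2}$. The defining diagram exhibits $A^{d_1,d_2}_C$ as $C\times_{\barM_{1,1}\times\barM_{1,1}}W$; since all intersection multiplicities at its points are $1$ by Lemma~\ref{g2_delta1,delta1_mult} and these points carry no automorphisms, for general $C$ we get $\#A^{d_1,d_2}_C(\Spec(\bC))=\deg[A^{d_1,d_2}_C]=\int_{\barM_{1,1}\times\barM_{1,1}}[C]\cdot\pi_{*}[W]$. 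So everything comes down to the identity $\pi_{*}[W]=\sigma_1(d_1)\sigma_1(d_2)[\Delta]$ in $A^1(\barM_{1,1}\times\barM_{1,1})$.

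To prove this identity I would use that $A^1(\barM_{1,1}\times\barM_{1,1})$ is spanned by $[p\times\barM_{1,1}]$ and $[\barM_{1,1}\times p]$, on which the intersection pairing is nondegenerate, so a class is pinned down by its intersection numbers against these two. By the projection formula, the intersection number of $\pi_{*}[W]$ with $[p\times\barM_{1,1}]$ is the degree of the composite $W\xrightarrow{\mathrm{pr}_1}\Adm_{1/1,d_1}\xrightarrow{\pi_{1/1,d_1}}\barM_{1,1}$; here $\mathrm{pr}_1$ is a base change of $\psi_{1/1,d_2}$ along $\psi_{1/1,d_1}$, hence of degree $\sigma_1(d_2)$, and $\pi_{1/1,d_1}$ has degree $\sigma_1(d_1)$, both by Corollary~\ref{adm_m11_m11}, so the composite has degree $\sigma_1(d_1)\sigma_1(d_2)$. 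Exchanging the roles of the two factors gives the same value for the intersection number with $[\barM_{1,1}\times p]$. Since the diagonal projects isomorphically onto each factor, its intersection numbers against these two classes are each $1$, so indeed $\pi_{*}[W]=\sigma_1(d_1)\sigma_1(d_2)[\Delta]$. Substituting back and summing over $(d_1,d_2)$ completes the argument.

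Since the two preceding lemmas already carry the geometric content, I do not expect a serious obstacle here; the one computation with any substance is the identification of $\pi_{*}[W]$, and the only subtlety is to keep track of the base-change behaviour of $\deg\psi_{1/1,d_i}$ inside the fiber product $W$ and to assign the two copies of $\barM_{1,1}$ — the source of the degree-$d_1$ cover versus that of the degree-$d_2$ cover — their correct roles.
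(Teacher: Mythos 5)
Your proposal is correct and follows essentially the same route as the paper: reduce via the two preceding lemmas to showing $\deg A^{d_1,d_2}_C=\sigma_1(d_1)\sigma_1(d_2)\int[C]\cdot[\Delta]$, and then identify the pushforward of $\Adm_{1/1,d_1}\times_{\Delta}\Adm_{1/1,d_2}$ to $\barM_{1,1}\times\barM_{1,1}$ as $\sigma_1(d_1)\sigma_1(d_2)[\Delta]$ using the degrees from Corollary~\ref{adm_m11_m11}. The only (cosmetic) difference is in that last identification: the paper decomposes $[\Delta]$ as $[p\times\barM_{1,1}]+[\barM_{1,1}\times p]$ and recognizes the corresponding pieces of the fiber product geometrically (e.g.\ as $\Adm_{1/1,d_1}(E)\times\Adm_{1/1,d_2}$), whereas you pair the pushforward against the same basis via the projection formula and nondegeneracy of the intersection pairing.
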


\begin{proof}
By the previous two lemmas, it suffices to show that the degree of $A^{d_1,d_2}_C$ is:
\begin{equation*}
\sigma_1(d_1)\sigma_1(d_2)\int_{\barM_{1,1}\times\barM_{1,1}}[C]\cdot[\Delta]
\end{equation*}
Let $E$ be any elliptic curve. Writing $[\Delta]=[p\times\barM_{1,1}]+[\barM_{1,1}\times p]$, where we take $p$ to be the class of $[E]\in\barM_{1,1}$, the contribution of the first summand to $ \Adm_{1/1,d_1}\times_{\Delta}\Adm_{1/1,d_2}$ is $\Adm_{1/1,d_1}(E)\times\Adm_{1/1,d_2}$, where the first factor is the moduli space of degree $d_1$ covers of the fixed curve $E$. By \S\ref{m11_d-ell}, this class pushes forward to $\sigma_1(d_1)\sigma_1(d_2)[p\times\barM_{1,1}]$ on $\barM_{1,1}\times\barM_{1,1}$. Similarly, the second summand gives the class $\sigma_1(d_1)\sigma_1(d_2)[\barM_{1,1}\times p]$, so adding these contributions and intersecting with $C$ yields the result.
\end{proof}

\begin{rem}
An alternative approach to this computation is to work on the space $\barM_{2,2}$, where the ramification points of the $d$-elliptic cover are marked, and then push forward at the end to $\barM_{2}$. This approach has the advantage that unstable components with ramification points are not contracted in the map $\pi_{2/1,d}:\Adm_{2/1,d}\to\barM_{2,2}$, and one does not see resulting intersection multiplicities as in Lemma \ref{g2_delta1,delta1_mult}. Indeed, this approach is carried out in \cite{schmittvanzelm}, where an algorithm is given to intersect loci of Galois admissible covers with tautological classes, and we will develop this method further in forthcoming work.
\end{rem}

\subsection{Intersection numbers: the case $[C]\in\Delta_0$}\label{adm_g2_intersect_delta0}

Given $C\to\barM_{1,2}$ general, we wish to compute the degree of $A_C$, as defined below:

\begin{equation*}
\xymatrix{
A_{C} \ar[rr] \ar[d] & & \Adm_{2/1,d} \ar[d]^{\pi_{2/1,d}}\\
C \ar[r] & \barM_{1,2} \ar[r] & \barM_2
}
\end{equation*}

By \S\ref{admissible_classification_g2}, we have three topological types of covers contributing to $A_C$: type $(\Delta_0,\Delta_1)$, type $(\Delta_0,\Delta_0)$, and type $(\Delta_{00},\Delta_0)$. We now consider each contribution separately.

\subsubsection{Contribution from type $(\Delta_0,\Delta_1)$}

Consider the Cartesian diagram
\begin{equation*}
\xymatrix{
A^{(\Delta_{0},\Delta_1)}_{C} \ar[r] \ar[d] & \Adm_{1/1,d,2} \ar[d]^{\pi_{1/1,d,2}}\\
C \ar[r] & \barM_{1,2}
}
\end{equation*}
where $\Adm_{1/1,d,2}$ and its forgetful map to $\pi_{1/1,d,2}:\Adm_{1/1,d,2}\to\barM_{1,2}$ are as defined in \S\ref{m12_d-ell}.

\begin{prop}\label{m2_delta0,delta1}
The contribution to $A_C$ from covers of type $(\Delta_0,\Delta_1)$ is: 
\begin{equation*}
2\int_{\barM_{1,2}}[C]\cdot[\pi_{1/1,d,2}].
\end{equation*}
\end{prop}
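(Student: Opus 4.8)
The plan is to match geometric points of $A^{(\Delta_0,\Delta_1)}_C$ with those of the fiber product defining $A_C$ over covers of type $(\Delta_0,\Delta_1)$, and then compare intersection multiplicities. First I would set up the bijection on geometric points: by the classification in \S\ref{admissible_classification_g2}, a cover $f:X\to Y$ of type $(\Delta_0,\Delta_1)$ has $Y=Y_0\cup Y_1$ with $Y_1$ elliptic, and $f^{-1}(Y_1)$ a single elliptic curve $X_1$ mapping to $Y_1$ with degree $d$; the component $X_1$ carries the two points $x'_1,x'_2$ lying over the node $y=Y_0\cap Y_1$, and $(X_1,x'_1,x'_2)$ together with the induced cover $X_1\to Y_1$ of the marked elliptic curve $(Y_1,y)$ is exactly a point of $\Adm_{1/1,d,2}$. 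The remaining data of $f$ (the chain of rational curves over $Y_0$, the degree-$2$ component, the labelling of the two branch points on $Y_0$) is essentially rigid once $X_1\to Y_1$ is fixed, except that there is a $2$-to-$1$ ambiguity: the stabilization $X^s$ remembers $(X_1,x'_1,x'_2)$ but the two ramification points $x_3,x_4$ of $f$ on the degree-$2$ rational component both get contracted, so swapping the two branch points on $Y_0$ gives the same point of $A_C$. This is the source of the factor of $2$; I would phrase it as: the map $A^{(\Delta_0,\Delta_1)}_C\to (\text{locus of type }(\Delta_0,\Delta_1)\text{ in }A_C)$ is $2$-to-$1$ on geometric points, or alternatively absorb the factor into an intersection-multiplicity computation.

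Next I would check transversality/multiplicity. As in Lemma \ref{g2_delta1,delta1_mult}, the relevant local computation is at a point $([f],[X^s])$: the complete local ring of $\Adm_{2/1,d}$ at $[f]$ is $\bC[[s,t]]$ with $t$ a smoothing parameter for the node of $Y$ (equivalently of $X_1$ over that node — but here $X_1\to Y_1$ is unramified over $y$ so there is no $a$-th power relation; wait, I should double-check: over the node of $Y$, the two branches of $X$ meet $Y$ with ramification index $1$ since $f$ is unramified over $y$, so indeed $t$ smooths both nodes of $X$ linearly). Under $\pi_{2/1,d}$, the node-smoothing parameter $z$ of $X^s\in\barM_2$ pulls back: contracting the rational bridge over $Y_0$ in the total family introduces the node of $X^s$, and since there are two nodes of $X$ being smoothed simultaneously by $t$, one finds $T(z)\equiv t^2$ up to higher order, while the two elliptic moduli parameters pull back transversally from $s$. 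So a general $C\to\barM_{1,2}$ meets $\pi_{1/1,d,2}$ transversally (this uses that $\Adm_{1/1,d,2}\to\barM_{1,2}$ is generically an immersion, or at least unramified, near such a point), and the induced point of $A_C$ acquires multiplicity $2$ — matching the $2$-to-$1$ count above if one instead used the transverse $\Adm_{1/1,d,2}$ picture. I would present it the cleaner way: the degree of $A^{(\Delta_0,\Delta_1)}_C$ as a $0$-cycle equals $\int_{\barM_{1,2}}[C]\cdot[\pi_{1/1,d,2}]$ by transversality, and each such point contributes with multiplicity $2$ to $A_C$ (the doubling coming from the contracted degree-$2$ component with its two labelled ramification points), giving the stated formula.

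The main obstacle is pinning down the factor of $2$ rigorously and making sure it is not, say, a factor of $4$ or $1$: one must carefully track what data the map $\pi_{2/1,d}$ forgets versus what $\pi_{1/1,d,2}$ forgets, and verify that the only ambiguity is the labelling of the two branch points on $Y_0$ (the ramification point on the degree-$2$ component is then determined, and the chain of rational curves mapping by $x\mapsto x$ is rigid). I would handle this by an explicit local-coordinate analysis of the contraction $X\to X^s$ exactly as in the proof of Lemma \ref{g2_delta1,delta1_mult}, identifying the complete local ring of $A^{(\Delta_0,\Delta_1)}_C$ at a point with $\bC[t]/(t^2)$, so that its length-$2$ contribution to the $0$-cycle $A_C$ is manifest, while the underlying reduced point maps isomorphically onto a transverse intersection point of $C$ with $\pi_{1/1,d,2}$ in $\barM_{1,2}$.
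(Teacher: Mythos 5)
Your approach is the same as the paper's, and the ``cleaner way'' you commit to at the end --- geometric points of $A^{(\Delta_0,\Delta_1)}_C$ in bijection with the type-$(\Delta_0,\Delta_1)$ points of $A_C$, transversality of $C$ with $\pi_{1/1,d,2}$ on $\barM_{1,2}$, and intersection multiplicity $2$ along $\pi_{2/1,d}$ coming from $T(z)\equiv t^2$ when the rational bridge is contracted --- is exactly the argument in the paper. The one point you must clean up is the alternative mechanism you float and never fully discard: the claim that relabelling the two branch points on $Y_0$ makes the comparison of geometric points $2$-to-$1$. That claim is false. The transposition of $y_1,y_2$ is induced by an automorphism of the admissible cover itself: take the involution of $Y_0\cong\bP^1$ fixing the node and swapping $y_1,y_2$, the identity on $Y_1$ and on $X_1$, and the lift to the degree-$2$ component that fixes both of its points over the node (of the two lifts, one does this); the trivial components admit unique compatible lifts. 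Hence the two labellings give the \emph{same} point of $\Adm_{2/1,d}$, the matching of geometric points is a genuine bijection, and the entire factor of $2$ is the scheme-theoretic multiplicity from the contraction, precisely as in Lemma \ref{g2_delta1,delta1_mult}. This matters because, as you yourself note, the whole content of the proposition is whether the factor is $1$, $2$, or $4$: if both of your proposed mechanisms were operative you would get $4$. So the proof should assert the bijection outright (not ``$2$-to-$1$''), and derive the $2$ solely from the local ring $\bC[t]/(t^2)$ of $A_C$ at such a point. (Minor slips: it is the local ring of $A_C$, not of $A^{(\Delta_0,\Delta_1)}_C$, that is $\bC[t]/(t^2)$; and the two transverse coordinates on $\Delta_0\subset\barM_2$ at $[X^s]$ are the modulus of $X_1$ and the relative position of the two glued points, not ``two elliptic moduli.'')
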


\begin{proof}
It is easy to check that $A^{(\Delta_0,\Delta_1)}_C(\Spec(\bC))$ is a set, and is isomorphic to the subgroupoid of $A_C(\Spec(\bC))$ consisting of covers of type $(\Delta_0,\Delta_1)$. If $C$ is general, it intersects $\pi_{1/1,d,2}$ transversely (see, for example, the proof of Proposition \ref{m12_d-ell_class}) on $\barM_{1,2}$. On the other hand, an argument analogous to the proof of Lemma \ref{g2_delta1,delta1_mult} shows that a general $C$ intersects $\pi_{2/1,d}$ with multiplicity 2 at any cover $f:X\to Y$ of type $(\Delta_0,\Delta_1)$, due to the contraction of the rational bridge of $X$ after applying $\pi_{2/1,d}$.
\end{proof}

\subsubsection{Contribution from type $(\Delta_0,\Delta_0)$}

Fix positive integers $a,m$ satisfying $am=d$. Consider the Cartesian diagram
\begin{equation*}
\xymatrix{
A^{(\Delta_{0},\Delta_0),a}_{C} \ar[r] \ar[d] & \Adm_{1/0,a}^{a,a,2,2} \ar[d]^{\pi_{1/0,a}^{a,a,2,2}}\\
C \ar[r] & \barM_{1,2}
}
\end{equation*}
where $\Adm_{1/0,a}^{a,a,2,2}$ is as defined in \S\ref{dd22_pencils}, and the map $\pi_{1/0,a}^{a,a,2,2}:\Adm_{1/0,a}^{a,a,2,2}\to\barM_{1,2}$ remembers the (stabilized) source curve with the two total ramification points. Thus, the points of $A_C^{(\Delta_0,\Delta_0),a}$ record the main data of a cover of type $(\Delta_{0},\Delta_0)$, namely the restriction to the genus 1 component, which is a cover of $\bP^1$ totally ramified at two points. For a general $C$, all points of $A_C^{(\Delta_0,\Delta_0),a}$ correspond to covers of smooth curves.

\begin{prop}\label{m2_delta0,delta0}
The contribution to $A_C$ from covers of type $(\Delta_0,\Delta_0)$ is:
\begin{equation*}
\sum_{am=d}\left(m\int_{\barM_{1,2}}[C]\cdot[\pi_{1/0,a}^{a,a,2,2}]\right).
\end{equation*}
\end{prop}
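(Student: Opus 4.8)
The plan is to mimic the structure of the proofs of Propositions \ref{m2_delta1,delta1} and \ref{m2_delta0,delta1}: identify the geometric points of $A_C$ coming from covers of type $(\Delta_0,\Delta_0)$ with those of the fiber products $A_C^{(\Delta_0,\Delta_0),a}$, determine the local intersection multiplicities, and assemble the answer. First I would check that for general $C$, the geometric points of $A_C$ lying over covers of type $(\Delta_0,\Delta_0)$ are in bijection with $\coprod_{am=d} (\text{something built from } A_C^{(\Delta_0,\Delta_0),a})$. Concretely, a cover $f:X\to Y$ of type $(\Delta_0,\Delta_0)$ is determined by the integer $a\ge 2$ (the degree of the genus $1$ component $X_1\to\bP^1$), the data of the cover $f|_{X_1}\colon X_1\to\bP^1$ totally ramified at two points and simply ramified at two others — this is exactly a point of $\Adm_{1/0,a}^{a,a,2,2}$ — together with a chain of $m-1$ rational curves attached at the two nodes of $X_1$, each mapping by $x\mapsto x^a$. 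The chain is rigid, so the only remaining discrete data is how the two total ramification points/two nodes of $X_1$ are distributed; I would argue that for general $C$ there is no extra factor here beyond what is already recorded in $\pi_{1/0,a}^{a,a,2,2}:\Adm_{1/0,a}^{a,a,2,2}\to\barM_{1,2}$, since the stabilization of $X$ is exactly $(X_1, \text{two nodes})$ viewed in $\barM_2$ via $\barM_{1,2}\to\barM_2$, and $C\subset\barM_{1,2}$ avoids the relevant bad loci. Also I must note $A_C^{(\Delta_0,\Delta_0),a}$ is empty for $a=1$, consistent with the constraint $a\ge 2$, but the sum is written over all $am=d$; the $a=1$ term contributes zero by Lemma \ref{count_dd22} applied with $d=1$ (or because $\Adm_{1/0,1}^{1,1,2,2}$ is empty), so including it is harmless.

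Next I would pin down the intersection multiplicity. By Proposition \ref{dd22_class_m13} and the transversality argument there (repeated via the proof of Proposition \ref{m12_d-ell_class}), $C$ meets $\pi_{1/0,a}^{a,a,2,2}$ transversely on $\barM_{1,2}$, giving $\int_{\barM_{1,2}}[C]\cdot[\pi_{1/0,a}^{a,a,2,2}]$ points, each counted once. The work is then to compare this with the intersection multiplicity of $C$ with $\pi_{2/1,d}$ at the corresponding point $([f],[X^s])$. Using Proposition \ref{adm_defos}, the complete local ring of $\Adm_{2/1,d}$ at $[f]$ is $\bC[[s,t]]$ with $t$ the smoothing parameter of the node of $Y$ and the nodes of $X$ lying over it having ramification index $a$ along each branch, so $t = t_{i}^{a}$ locally; meanwhile the single node of $X^s$ (the image under $\pi_{2/1,d}$ of the chain contracting) is smoothed — I claim — to order exactly $a$ times... but more carefully: the chain of $m-1$ rational curves, each a degree-$a$ cover of the component of $Y$, gets contracted, and the self-node of $X^s$ is smoothed to order $am=d$ in the family over the $t$-disk? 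No — I expect the correct bookkeeping to yield multiplicity $m$: the deformation of $Y$ smoothing its node to first order induces a deformation of $f$ smoothing each node of $X$ to order $1/a$ in the appropriate orbifold sense, and after contracting the length-$(m-1)$ chain the node of $X^s$ opens to order $m$. This is the genus-$2$ analogue of the bullet points in the proof of Lemma \ref{g2_delta1,delta1_mult}, and is exactly the source of the factor $m$ in the stated formula.

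The main obstacle is this last local computation: correctly tracking, via Proposition \ref{adm_defos} and the contraction map $\barM_{1,2}\to\barM_2$ (or rather the stabilization of the total space of the family), the exact order to which the persistent node of $X^s$ is smoothed, and confirming it equals $m$ rather than $a$ or $d$. The clean way is to write down a one-parameter smoothing of $Y$ over $\Spec\bC[[\epsilon]]$ smoothing the node to first order, pull back to get a family $\mathcal{X}\to\Spec\bC[[\epsilon]]$ of admissible covers, and observe that in $\mathcal{X}$ the $m-1$ rational bridges each acquire a node smoothing parameter comparable to $\epsilon^{1/a}$ while the curve $X_1$ deforms trivially to first order; contracting all unstable components in the total space then introduces an $A_{m-1}$-type local picture at the image of the chain, so the node of $X^s$ is smoothed to order $m$. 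Granting that, the degree of $A_C$ restricted to type $(\Delta_0,\Delta_0)$ is $\sum_{am=d} m\cdot\left(\int_{\barM_{1,2}}[C]\cdot[\pi_{1/0,a}^{a,a,2,2}]\right)$, as claimed.
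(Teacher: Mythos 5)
Your reduction of the count to the spaces $\Adm_{1/0,a}^{a,a,2,2}$ and the transversality on $\barM_{1,2}$ match the paper, and your final multiplicity $m$ is the right number, but the local analysis that produces it has a genuine gap — in fact two compensating errors. First, the complete local ring of $\Adm_{2/1,d}$ at a cover $f:X\to Y$ of type $(\Delta_0,\Delta_0)$ is \emph{not} $\bC[[s,t]]$: the source $X$ is a cycle of $m$ components with $m$ nodes over the node of $Y$, each with ramification index $a$, so by Proposition \ref{adm_defos} the local ring is $\bC[[s,t,t_1,\ldots,t_m]]/(t=t_1^a=\cdots=t_m^a)$, which for $m\ge 2$, $a\ge 2$ is singular with $a^{m-1}$ branches (one for each tuple of $a$-th roots of unity). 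Your statement ``the ring is $\bC[[s,t]]$ with $t=t_i^a$ locally'' is internally inconsistent, and your $A_{m-1}$-contraction picture only computes what happens along the diagonal branch $t_1=\cdots=t_m$; the scheme-theoretic intersection multiplicity of $C$ with $\pi_{2/1,d}$ at $[f]$ is the length of $\bC[t,t_1,\ldots,t_m]/(t-t_i^a,\,t_1\cdots t_m)$, namely $ma^{m-1}$ (length $m$ on each of the $a^{m-1}$ branches), not $m$.

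Second, you assert the chain is ``rigid'' and that there is ``no extra factor,'' but the cover $f$ has automorphism group $\mu_a^{m-1}$, the $a$-th roots of unity acting on each rational component mapping by $x\mapsto x^a$. Since $\Adm_{2/1,d}$ is a stack, the contribution of each such point to the degree of $A_C$ is the local length divided by the order of the automorphism group, i.e.\ $ma^{m-1}/a^{m-1}=m$. Your answer agrees with the paper's only because you simultaneously undercount the length (getting $m$ instead of $ma^{m-1}$) and ignore the automorphisms (omitting the factor $1/a^{m-1}$); these errors cancel here, but the argument as written is not a proof. To repair it you need both ingredients: the full local ring from Proposition \ref{adm_defos} together with the vanishing locus of $T(z)=(t_1\cdots t_m)\cdot(\text{unit})$, and the division by $|\mathrm{Aut}(f)|=a^{m-1}$.
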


\begin{proof}
It is easy to check that the geometric points of $A_C(\Spec(\bC))$ are in bijection with the geometric points of $A_C^{(\Delta_0,\Delta_0),a}$, where $a$ ranges over the positive integer factors of $d$. However, a cover $f:X\to Y$ of type $(\Delta_0,\Delta_0)$ has automorphism group of order $a^{m-1}$, as the group of $a$-th roots of unity acts on each rational component of $X$. On the other hand, each $A_C^{(\Delta_0,\Delta_0),a}(\Spec(\bC))$ is a set. If $C$ is general, it intersects $\pi_{1/0,a}^{a,a,2,2}$ transversely (see, for example, the proof of Proposition \ref{dd22_class_m13}).

It now suffices to show that the intersection multiplicity (on the level of complete local rings) of $C$ with $\pi_{2/1,d}$ at a cover $f:X\to Y$ of type $(\Delta_0,\Delta_0)$ is $ma^{m-1}$; after dividing by the order of the automorphism group, we get the factor of $m$ in the statement, and the conclusion follows.

By Proposition \ref{adm_defos}, the complete local ring at $[f]$ is
\begin{equation*}
\bC[[s,t,t_1,\ldots,t_m]]/(t=t_1^a=\cdots=t_m^a),
\end{equation*}
which is canonically a $\bC[[s,t]]$-algebra via $\psi_{2/1,d}:\Adm_{2/1,d}\to\barM_{1,2}$. Here, $t$ is a smoothing parameter for the node of $Y$ and $t_i$ is a smoothing parameter for the node $x_i\in X$. The quotient $\bC[[s]]$ corresponds to the deformation of the target that moves the marked points $y_1,y_2\in Y$ apart.

Let $\bC[[x,y,z]]$ be the complete local ring of $\barM_2$ at $[X^s]$, where the coordinates are chosen as follows. The coordinate $z$ is the smoothing parameter for the node, so that $\bC[[x,y]]$ is the deformation space of the marked normalization $(X_1,x_1,x_m)$ of $X^s$ (the points $x_1,x_m$ are the nodes along $X_1\subset X$). Then, $y$ is the deformation parameter of the elliptic curve $(X_1,x_1)$, and the quotient $\bC[[x]]$ corresponds to the deformation of $X$ moving $x_1$ and $x_m$ apart.

Consider the induced map on complete local rings
\begin{equation*}
T:\bC[[x,y,z]]\to \bC[[s,t,t_1,\ldots,t_m]]/(t=t_1^a=\cdots=t_m^a).
\end{equation*}

We have the following, up to harmless renormalizations of the coordinates:
\begin{itemize}
\item $T(z)\equiv t_1^m\mod{(s,t_1-t_2,\ldots,t_1-t_n)}$. Indeed, in the 1-parameter deformation of $f$ smoothing the nodes of $X$ to first order, consider the total space of the associated deformation of $X$. Contracting the rational components of $X$ produces an $A_m$-singularity, so the node of $X^s$ is smoothed to order $m$ in its induced deformation.
\item $T(z)\equiv 0\mod{(t_i)}$, as a deformation that keeps the node $x_i$ in $X$ also keeps the node in $X^s$.
\item $T(y)\equiv s\mod{(t_1,\ldots,t_n)}$. The content here is that first-order deformation of $[f]$ that moves $y_1,y_2$ apart varies the elliptic curve $(X_1,x_1)$ to first order. Indeed, the map $\pi_{1/0,a}^{a,a,2,2}$ is unramified over a general point of $M_{0,4}$, and $\psi_{1/0,a}^{a,a,2,2}$ is unramified over $\cM_{1,1}$, so we have the claim for $C$ general.
\end{itemize}

The first two claims imply that $T(z)=(t_1\cdots t_m)u$, where $u$ is a unit. Then, it is straightforward to check that the complete local ring of $A_C$ at $([f],[X^s])$ is
\begin{equation*}
\bC[t,t_1,\ldots,t_m]/(t-t_i^a,t_1\cdots t_m),
\end{equation*}
which has dimension $ma^{m-1}$ as a $\bC$-vector space (for example, a basis is given by monomials $t^et_1^{e_1}\cdots t_m^{e_m}$, where $0\le e\le m-1$ and $0\le e_i\le a-1$). This completes the proof.
\end{proof}

\subsubsection{Contribution from type $(\Delta_{00},\Delta_0)$}

\begin{lem}\label{aabb_maps_from_P1}
Let $x_1,x_2,x_3,x_4\in\bP^1$ be four distinct points, and let $a,b\ge1$ be integers. Then, up to scaling on the target, there is a unique cover $g:\bP^1\to\bP^1$ of degree $a+b$ such that $f$ has zeroes of orders $a,b$ at $x_1,x_2$, respectively, and poles of orders $a,b$ at $x_3,x_4$, respectively. If the $x_i$ are general, then $g$ is simply ramified over two other distinct points.
\end{lem}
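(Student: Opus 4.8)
The plan is to construct $g$ explicitly via its associated rational function and then verify the ramification statement by a dimension count. First I would observe that specifying $g:\bP^1\to\bP^1$ up to scaling on the target is the same as specifying a rational function $\phi\in\bC(\bP^1)$ with prescribed divisor of zeroes and poles: choosing a coordinate on the source with $x_i$ at specified points, we need $\mathrm{div}(\phi)=a[x_1]+b[x_2]-a[x_3]-b[x_4]$. Since this divisor has degree $0$ on $\bP^1$ and $\Pic^0(\bP^1)=0$, such a $\phi$ exists, and it is unique up to a nonzero scalar because the ratio of any two such functions is a global regular invertible function, hence constant. Rescaling on the target corresponds exactly to rescaling $\phi$, so $g$ is unique as claimed; concretely, if we put $x_1,x_2,x_3,x_4$ at $0,1,\infty$, and some fourth point, $\phi$ is a ratio of explicit products of linear factors. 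The degree of $g$ is $a+b$ since the polar divisor has degree $a+b$.

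Next I would address the generic ramification claim. By Riemann-Hurwitz applied to $g:\bP^1\to\bP^1$ of degree $a+b$, the total ramification is $2(a+b)-2$. The prescribed ramification at $x_1,x_2,x_3,x_4$ accounts for $(a-1)+(b-1)+(a-1)+(b-1)=2(a+b)-4$ of this, so exactly $2$ further ramification remains, located over $\bP^1\setminus\{0,\infty\}$ (the value set of $g$ at the remaining critical points avoids $0,\infty$ since those fibers are already accounted for). This leftover ramification is either two simple ramification points over two distinct values, or one simple ramification point over a single value with a second ramification point also there, or a single point of ramification index $3$. I would rule out the degenerate cases for general $x_i$: the locus of configurations $(x_1,\dots,x_4)$ for which $g$ acquires a triple ramification point, or two ramification points over the same value, is a proper closed subset of the configuration space $M_{0,4}$. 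This follows because each such coincidence is one algebraic condition on the cross-ratio of the $x_i$ (the critical points and critical values of $g$ depend algebraically on the $x_i$, and requiring a coincidence among critical values, or a higher-order vanishing of $g'$, cuts out a proper subvariety — proper because one can exhibit a single configuration, e.g. with $a$ or $b$ small, where the two extra branch points are genuinely distinct and simple).

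The main obstacle is the last point: making precise that the degenerate loci are \emph{proper} closed, i.e. exhibiting at least one configuration where the two extra branch points are distinct and simple, or arguing this abstractly. The cleanest route is probably to note that $\Adm^{a,a,b,b}_{0/0,a+b}$ (covers of $\bP^1$ with this ramification profile) is a smooth curve mapping to $\barM_{0,4}$ via the four totally/partially ramified points, that this map is dominant by the count above, and that over a general point of $\barM_{0,4}$ the fiber consists of covers whose remaining branching is as generic as possible — one can see this by a direct local computation of $g'$ near a chosen symmetric configuration, or by citing the irreducibility and known behavior of simple Hurwitz spaces. Since the statement is only needed generically, a short explicit check at one configuration (say with $x_1,x_2,x_3,x_4 = 0,1,\infty,\lambda$ and computing the discriminant of $g'$ as a polynomial in $\lambda$, observing it is not identically zero) suffices to conclude.
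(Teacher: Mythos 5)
Your construction of $g$ is correct and is the same as the paper's, which simply writes down $g(x)=\frac{(x-x_1)^a(x-x_2)^b}{(x-x_3)^a(x-x_4)^b}$; your $\Pic^0(\bP^1)=0$ argument is an equivalent packaging of existence and uniqueness up to scalar. The Riemann--Hurwitz count showing exactly two units of residual ramification is also as in the paper. The one place where you hedge --- proving that the degenerate locus in $M_{0,4}$ is \emph{proper} closed, for which you propose either an explicit discriminant computation or an appeal to Hurwitz spaces --- is closed in the paper by a cleaner observation that you circle around but do not quite state: if the two residual ramification points coincide or lie over the same branch point, then $g$ is branched over only three points of the target, and covers of $\bP^1$ branched over three points are rigid (finitely many up to automorphism of the target), so they account for only finitely many cross-ratios $(x_1,\dots,x_4)$ in the one-dimensional space $M_{0,4}$. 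This disposes of the genericity claim without exhibiting any particular configuration and is the ``dimension count'' the paper invokes; with that substitution your argument is complete.
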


\begin{proof}
The unique such map is the meromorphic function
\begin{equation*}
g(x)=\frac{(x-x_1)^a(x-x_2)^b}{(x-x_3)^a(x-x_4)^b}.
\end{equation*}
The second half of the statement follows from Riemann-Hurwitz and a dimension count, as there are finitely many covers of $\bP^1$ branched over 3 points.
\end{proof}

Consider the Cartesian diagram
\begin{equation*}
\xymatrix{
A^{(\Delta_{00},\Delta_0)}_{C} \ar[r] \ar[d] & \overline{M}_{0,4} \ar[d]\\
C \ar[r] & \barM_{1,2}
}
\end{equation*}
where the map $\overline{M}_{0,4}\to\barM_{1,2}$ glues the third and fourth marked points; its class is $\Delta_0\in A^1(\barM_{1,2})$. For generic $C$, the points of $A^{(\Delta_{00},\Delta_0)}_{C}$ correspond to the points on $C$ whose images in $\barM_{1,2}$ are irreducible nodal curves.

\begin{prop}\label{m2_delta00,delta0}
The contribution to $A_C$ from covers of type $(\Delta_{00},\Delta_0)$ is: 
\begin{equation*}
2\left(\sum_{d_1+d_2=d}\sigma_1(d_1)\sigma_1(d_2)\right)\int_{\barM_{1,2}}[C]\cdot\Delta_0
\end{equation*}
\end{prop}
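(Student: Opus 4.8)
The plan is to parallel the treatment of the types $(\Delta_0,\Delta_1)$ and $(\Delta_0,\Delta_0)$ above: isolate the essential data of a type-$(\Delta_{00},\Delta_0)$ cover, match it against $A^{(\Delta_{00},\Delta_0)}_C=C\times_{\barM_{1,2}}\overline{M}_{0,4}$, and reduce the residual count to branched covers of $\bP^1$ by $\bP^1$. First I would record the data of such a cover $[f\colon X\to Y]$ from the classification in \S\ref{adm_g2_delta0}: the target $Y$ is a nodal cubic, the source has a distinguished component $X_0$, and $g=\widetilde f|_{X_0}\colon X_0\cong\bP^1\to\widetilde Y\cong\bP^1$ has ramification profile $(a,b)$ over each of the two preimages of the node of $Y$ and is simply branched over the two branch points, for some $a,b\ge 1$; the remaining components of $X$ form two chains of rational curves attaching $X_0$ to itself along a ramification-$a$ pair and a ramification-$b$ pair of $g$, of lengths constrained by $ma+nb=d$ (so the $a$-chain loop has $m$ nodes over the node of $Y$, the $b$-chain loop has $n$). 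Contracting the two chains glues these pairs, so $X^s$ is a $\bP^1$ with two self-nodes depending only on the resulting $4$-point configuration; and by Lemma \ref{aabb_maps_from_P1}, $g$ is determined up to rescaling the target once this configuration is fixed, with the rescaling realized by an automorphism of the admissible cover (it is absorbed by rescaling $Y$). Thus, for fixed discrete data, $f$ is pinned down by $X^s$ up to the finite bookkeeping of which pair is the $a$-pair and which point of each pair is a zero of $g$.

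With this dictionary in hand, recall that $A^{(\Delta_{00},\Delta_0)}_C$ has degree $\int_{\barM_{1,2}}[C]\cdot\Delta_0$, since $[\overline{M}_{0,4}\to\barM_{1,2}]=\Delta_0$, and that each of its points determines a curve $\bar c\in\Delta_{00}\subset\barM_2$ (the image of the corresponding point of $C$ under the gluing map $\barM_{1,2}\to\barM_2$) together with an ordering of the branches at one of its two self-nodes. It therefore suffices to show that, for each such $\bar c$, the total contribution to $\deg A_C$ of type-$(\Delta_{00},\Delta_0)$ covers with $X^s\cong\bar c$ --- that is, $\sum_{ma+nb=d}\bigl(\#\{\text{such }f\}\bigr)\cdot(\text{local intersection multiplicity})/\#\mathrm{Aut}(f)$ --- equals $2\sum_{d_1+d_2=d}\sigma_1(d_1)\sigma_1(d_2)$. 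Writing $d_1=ma$, $d_2=nb$ and using $\sigma_1(d_1)\sigma_1(d_2)=\sum_{am=d_1,\,bn=d_2}ab$, this is equivalent to the statement that the weighted count attached to each quadruple $(a,b,m,n)$ with $ma+nb=d$ equals $2ab$; the two factors $\sigma_1(d_1)$ and $\sigma_1(d_2)$ then appear as the isogeny counts of Lemma \ref{count_isogenies}, one governing each chain, and the overall factor $2$ reflects the identity $\Delta_0=2\delta_0$ on $\barM_{1,2}$, i.e.\ the two branches at the node of $Y$.

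The main obstacle is the local intersection-multiplicity input, the analogue of the $ma^{m-1}$ computation in the proof of Proposition \ref{m2_delta0,delta0}. By Proposition \ref{adm_defos}, the complete local ring of $\Adm_{2/1,d}$ at $[f]$ is $\bC[[t_2,u_1,\dots,u_m,v_1,\dots,v_n]]$ modulo $u_1^{a}=\dots=u_m^{a}=v_1^{b}=\dots=v_n^{b}$, where the $u_i$ (resp.\ $v_j$) are smoothing parameters for the nodes of $X$ in the $a$-chain (resp.\ $b$-chain) loop over the single node of $Y$, their common power being the target smoothing parameter. One must then trace through $\pi_{2/1,d}$ how the two self-nodes of $X^s$ are smoothed: as in loc.\ cit., contracting the $a$-chain (resp.\ $b$-chain) in the total space of a smoothing produces an $A_m$- (resp.\ $A_n$-) singularity, so the corresponding self-node of $X^s$ is smoothed to order $m$ (resp.\ $n$). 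Evaluating the resulting local intersection number against a general curve $C\subset\Delta_0\subset\barM_2$ through $\bar c$, and checking that after dividing by $\#\mathrm{Aut}(f)=a^{m-1}b^{n-1}$ and multiplying by the number of covers realizing $\bar c$ one obtains $2ab$ uniformly in $(a,b,m,n)$, is the crux. Everything else is bookkeeping already carried out in \S\ref{adm_g2_intersect_delta1}--\S\ref{adm_g2_intersect_delta0}: checking that the relevant groupoids of geometric points are sets, and that a general $C$ meets all the auxiliary loci transversally and avoids the zero-dimensional strata of $\Adm_{2/1,d}$.
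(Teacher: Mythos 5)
Your overall architecture matches the paper's: fiber $C$ against $\overline{M}_{0,4}\to\barM_{1,2}$, use Lemma \ref{aabb_maps_from_P1} to pin down the restriction of the cover to $X_0$, divide by $\#\mathrm{Aut}(f)=a^{m-1}b^{n-1}$, and extract the multiplicity from Proposition \ref{adm_defos} together with the $A_m$/$A_n$ contractions. However, the reduction you propose to prove is false, and the bookkeeping that leads you to it betrays a source/target confusion. The actual per-quadruple contribution (per point of $C\times_{\barM_{1,2}}\overline{M}_{0,4}$, for fixed $(a,b,m,n)$ with $am+bn=d$) is $2mb$, not $2ab$: the two nodes of $X^s$ play asymmetric roles, because the node glued by $\barM_{1,2}\to\barM_2$ persists along all of $C$ (so $C$ imposes $t_1\cdots t_m=0$ on the corresponding $m$-link chain), while only the other node is smoothed transversally along $C$; the resulting local ring has length $ma^{m-1}b^{n}$, which after dividing by $a^{m-1}b^{n-1}$ gives $mb$ per cover and $2mb$ per quadruple. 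Of course $\sum_{am+bn=d}mb=\sum_{am+bn=d}ab$, so the totals agree, but the statement you explicitly flag as the crux and leave unverified --- that the weighted count is $2ab$ uniformly in $(a,b,m,n)$ --- already fails for $m=n=1$, where the local length is $b$ rather than $ab$. The value $2ab$ is what you would obtain if $C$ met the smoothing of the node of the \emph{target} $Y$ transversally, i.e.\ if you were intersecting with $\psi_{2/1,d}$ rather than $\pi_{2/1,d}$.

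There are two further misattributions. The factors $\sigma_1(d_1)\sigma_1(d_2)$ have nothing to do with isogeny counts --- Lemma \ref{count_isogenies} plays no role for this topological type, since every component of $X$ is rational; they arise purely as the divisor sums $\sum_{am=d_1}m$ and $\sum_{bn=d_2}b$ coming from the chain lengths and local multiplicities. And the overall factor of $2$ is not the identity $\Delta_0=2\Delta_0/2$ on $\barM_{1,2}$ (that factor is already absorbed into $\int_{\barM_{1,2}}[C]\cdot\Delta_0$, which is computed against the pushforward class $\Delta_0=[\overline{M}_{0,4}\to\barM_{1,2}]$); it comes from the two labellings of the two simple branch points of $g$, each of which yields a distinct admissible cover over the same point of $\overline{M}_{0,4}$. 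Since the decisive length computation is only sketched and is aimed at the wrong target value, the argument as written does not close.
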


\begin{proof}
It is clear that $A^{(\Delta_{00},\Delta_0)}_{C}(\Spec(\bC))$ is a set. Given positive integers $a,b,m,n$ satisfying $am+bn=d$, a geometric point of $A^{(\Delta_{00},\Delta_0)}_{C}$ gives rise to a unique cover $g$ as in Lemma \ref{aabb_maps_from_P1}, which, for general $C$, will be simply ramified over two points distinct from each other and from $0,\infty$. Then, $g$ gives rise to two admissible covers of type $(\Delta_{00},\Delta_0)$, distinguished by the labelling of the two simple ramification points; all such covers are obtained (uniquely) in this way.

Each cover $f:X\to Y$ of type $(\Delta_{00},\Delta_0)$ has automorphism group of order $a^{m-1}b^{n-1}$, coming from the actions of roots of unity on the rational components of $X$. Now, consider the complete local rings of $A_C$ at $([f],X^s)$. 

As in the proof of Proposition \ref{m2_delta0,delta0}, consider the map on complete local rings
\begin{equation*}
T:\bC[[x,y,z]]\to\bC[[s,t,t_1,\ldots,t_m,u_1,\ldots,u_n]]/(t-t_i^a,t-u_{j}^{b}),
\end{equation*}
induced by $\pi_{2/1,d}$. We take $y$ to be the smoothing parameter of the node obtained from the gluing map $\barM_{1,2}\to\barM_2$, $z$ to be that coming from the map $\overline{M}_{0,4}\to\barM_{1,2}$, and $x$ such that the quotient $\bC[[x]]$ corresponds to the deformation of $X^s$ along $\overline{M}_{0,4}$. We verify that, up to renormalizing coordinates,
\begin{itemize}
\item $T(x)=s\mod(t_1,\ldots,t_m,u_1,\ldots,u_n)$,
\item $T(y)=(t_1\ldots t_m)v$, and
\item $T(z)=(u_1\ldots u_n)v'$,
\end{itemize}
where $v,v'$ are units. The complete local ring of $A_{C}$ at $[f]$ is thus
\begin{equation*}
\bC[t,t_1,\ldots,t_m,u_1,\ldots,u_n]/(t-t_i^a,t-u_j^b,u_1\cdots u_n),
\end{equation*}
which has a $\bC$-basis given by monomials $t^e(t_1^{e_1}\cdots t_{m-1}^{e_{m-1}})(u_1^{f_1}\cdots u_n^{f_n})$, where $0\le e\le m-1$, $0\le e_i\le a-1$, and $0\le f_j\le b-1$. The total contribution to $A_C$ of each $f$ is therefore $(ma^{m-1}b^{n})/(a^{m-1}b^{n-1})=mb$.

Summing over all $a,m,b,n$, we obtain that each point of intersection of $C$ and $\Delta_0$ contributes
\begin{equation*}
2\left(\sum_{am+bn=d}mb\right)=2\left(\sum_{d_1+d_2=d}\sigma_1(d_1)\sigma_1(d_2)\right)
\end{equation*}
to $A_C$.
\end{proof}

\subsection{The class of the admissible locus}

We are now ready to compute the class of the $d$-elliptic locus in genus 2, that of $\pi_{2/1,d}:\Adm_{2/1,d}\to\barM_2$.

\begin{prop}\label{adm_m2_intersections}
We have:
\begin{align*}
\int_{\barM_{2}}[\pi_{2/1,d}]\cdot\Delta_{00}&=4(d-1)\sigma_1(d)\\
\int_{\barM_{2}}[\pi_{2/1,d}]\cdot\Delta_{01}&=2\left(\sum_{d_1+d_2=d}\sigma_1(d_1)\sigma_1(d_2)\right)
\end{align*}
\end{prop}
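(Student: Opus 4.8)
The plan is to compute these two intersection numbers by choosing suitable general test curves $C$ representing $\Delta_{00}$ and $\Delta_{01}$ in $A_1(\barM_2)$, and then assembling the contributions from the admissible cover types classified in \S\ref{admissible_classification_g2} using the propositions just proved.

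For the first number, $\int_{\barM_2}[\pi_{2/1,d}]\cdot\Delta_{00}$, I would represent the curve class $\Delta_{00}\in A_1(\barM_2)$ by a general one-dimensional family $C\to\barM_{1,2}\to\barM_2$: since $\Delta_{00}$ is a codimension-one class inside the divisor $\Delta_0$, it is the pushforward to $\barM_2$ of a curve in $\barM_{1,2}$ of class $\Delta_0$ (the boundary class parametrizing irreducible nodal genus $1$ curves). For a general such $C$, I decompose the intersection $A_C$ according to the three topological types of covers that can appear, namely $(\Delta_0,\Delta_1)$, $(\Delta_0,\Delta_0)$, and $(\Delta_{00},\Delta_0)$, and invoke Propositions \ref{m2_delta0,delta1}, \ref{m2_delta0,delta0}, and \ref{m2_delta00,delta0}. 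Each of these expresses the contribution as an integral over $\barM_{1,2}$ of $[C]$ against an explicit class; using the pairing table of \S\ref{int_numbers_m12} together with $\int_{\barM_{1,2}}[C]\cdot\Delta_0 = \Delta_0\cdot\Delta_0 = 0$ and $\int_{\barM_{1,2}}[C]\cdot\Delta_1 = \Delta_0\cdot\Delta_1 = 1$, I can evaluate. The type $(\Delta_0,\Delta_1)$ contribution is $2\int[C]\cdot[\pi_{1/1,d,2}]$, which by Corollary \ref{m12_d-ell_class_formula} equals $2(d-1)\sigma_1(d)(\tfrac{1}{24}\cdot 0 + 1) = 2(d-1)\sigma_1(d)$; the types $(\Delta_0,\Delta_0)$ and $(\Delta_{00},\Delta_0)$ contributions involve $[\pi_{1/0,a}^{a,a,2,2}]\cdot\Delta_0$ and $\Delta_0\cdot\Delta_0$ respectively, both of which vanish since by Proposition \ref{dd22_class_m13} these classes are supported in the directions paired trivially with $\Delta_0$ (more precisely, $[\pi_{1/0,a}^{a,a,2,2}]$ pairs to $2(a^2-1)$ with $\Delta_0$ and $0$ with each $\Delta_{1,S}$, so after pushing to $\barM_{1,2}$ and pairing with $\Delta_0$ one gets $0$). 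Hence the only surviving term is $2(d-1)\sigma_1(d)$; but one must be careful about an extra factor of $2$ coming from the discrepancy between the self-intersection normalization used in defining $\Delta_{00}$ versus the pushforward class, which accounts for the stated $4(d-1)\sigma_1(d)$.

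For the second number, $\int_{\barM_2}[\pi_{2/1,d}]\cdot\Delta_{01}$, I instead represent $\Delta_{01}\in A_1(\barM_2)$ by a general curve. Here I would use a test curve that meets $\Delta_0$ transversely in a controlled number of points; the cleanest choice is $C\to\barM_{1,2}\to\barM_2$ of class $\Delta_1$, so that $\int_{\barM_{1,2}}[C]\cdot\Delta_0 = \Delta_1\cdot\Delta_0 = 1$ and $\int_{\barM_{1,2}}[C]\cdot\Delta_1 = \Delta_1\cdot\Delta_1 = -\tfrac{1}{24}$. Again summing the three contributions: the $(\Delta_0,\Delta_1)$ term gives $2\int[C]\cdot[\pi_{1/1,d,2}] = 2(d-1)\sigma_1(d)(\tfrac{1}{24}\cdot 1 + (-\tfrac{1}{24})) = 0$; the $(\Delta_0,\Delta_0)$ term gives $\sum_{am=d}m\int[C]\cdot[\pi_{1/0,a}^{a,a,2,2}]$, whose contribution I evaluate using $\int_{\barM_{1,2}}[C]\cdot[\pi_{1/0,a}^{a,a,2,2}]$ computed by pushing forward from $\barM_{1,3}$ as in Proposition \ref{dd22_class_m13}; and the $(\Delta_{00},\Delta_0)$ term gives $2\bigl(\sum_{d_1+d_2=d}\sigma_1(d_1)\sigma_1(d_2)\bigr)\int_{\barM_{1,2}}[C]\cdot\Delta_0 = 2\bigl(\sum_{d_1+d_2=d}\sigma_1(d_1)\sigma_1(d_2)\bigr)$. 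The claim is that the first two contributions cancel or combine to leave only the last, yielding $2\sum_{d_1+d_2=d}\sigma_1(d_1)\sigma_1(d_2)$; this requires tracking the pushforward of $[\pi_{1/0,a}^{a,a,2,2}]$ to $\barM_{1,2}$ carefully, since that class lives on $\barM_{1,3}$ and the forgetful map to $\barM_{1,2}$ must be applied, after which pairing against $[C]$ of class $\Delta_1$ gives a contribution proportional to the intersection numbers recorded in \S\ref{int_numbers_m13}.

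\textbf{Main obstacle.} I expect the principal difficulty to be \emph{bookkeeping the combinatorial factors} rather than any deep geometric input: specifically, (i) correctly tracking the factors of $2$ arising from the self-intersection normalization $\delta_0 = \tfrac12\Delta_0$ and from the choice of test curve in the Chow group versus the literal boundary divisor, and (ii) correctly carrying out the pushforward of the class $[\pi_{1/0,a}^{a,a,2,2}]$ from $\barM_{1,3}$ down to $\barM_{1,2}$ (or directly evaluating its intersection with the relevant test curve) so that the $(\Delta_0,\Delta_0)$-type contributions combine correctly with the $(\Delta_{00},\Delta_0)$-type contributions. A secondary subtlety is ensuring that the chosen test curves are genuinely general enough to avoid the zero-dimensional boundary strata of $\Adm_{2/1,d}$ and to meet $\pi_{2/1,d}$ with the multiplicities computed in \S\ref{adm_g2_intersect_delta1}--\S\ref{adm_g2_intersect_delta0}; this is guaranteed by the dimension counts and transversality statements already established, so it is routine but must be invoked explicitly.
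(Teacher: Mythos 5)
Your overall strategy (representing $\Delta_{00}$ and $\Delta_{01}$ by test curves in the boundary divisors and summing the contributions from Propositions \ref{m2_delta1,delta1}, \ref{m2_delta0,delta1}, \ref{m2_delta0,delta0}, \ref{m2_delta00,delta0}) is exactly the paper's, but your evaluation of the $\Delta_{00}$ intersection contains a genuine error. You claim the type $(\Delta_0,\Delta_0)$ contribution vanishes because ``$[\pi_{1/0,a}^{a,a,2,2}]$ pairs to $2(a^2-1)$ with $\Delta_0$ and $0$ with each $\Delta_{1,S}$, so after pushing to $\barM_{1,2}$ and pairing with $\Delta_0$ one gets $0$.'' This is backwards: by the projection formula for the forgetful map $\barM_{1,3}\to\barM_{1,2}$, under which $\Delta_0$ pulls back to $\Delta_0$, the pushforward of $[\pi_{1/0,a}^{a,a,2,2}]$ pairs with $\Delta_0$ to exactly $2(a^2-1)$, \emph{not} $0$. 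Proposition \ref{m2_delta0,delta0} then gives a contribution of
\begin{equation*}
\sum_{am=d}2(a^2-1)\,m \;=\; 2\bigl(d\sigma_1(d)-\sigma_1(d)\bigr)\;=\;2(d-1)\sigma_1(d),
\end{equation*}
which added to the type $(\Delta_0,\Delta_1)$ contribution of $2(d-1)\sigma_1(d)$ (and the genuinely vanishing type $(\Delta_{00},\Delta_0)$ term) yields the stated $4(d-1)\sigma_1(d)$. The ``extra factor of $2$ coming from the self-intersection normalization'' that you invoke to bridge the gap does not exist: $\Delta_{00}$ is by definition the pushforward of $\Delta_0\in A_1(\barM_{1,2})$ under the gluing map, with no further rescaling (the rescaling by automorphisms of the dual graph only enters when converting to the $Q$-class $\delta_{00}$, which is not what the proposition computes). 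So you arrive at the right number only by introducing a fictitious factor to compensate for a dropped term.

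The $\Delta_{01}$ computation is closer to correct but left incomplete: the type $(\Delta_0,\Delta_0)$ contribution there is not something that ``cancels or combines'' with anything --- it is individually zero, because the pullback of $\Delta_1$ to $\barM_{1,3}$ is supported on the classes $\Delta_{1,S}$, all of which pair to $0$ with $[\pi_{1/0,a}^{a,a,2,2}]$ by Proposition \ref{dd22_class_m13}. Note also that the paper's primary computation of $\int_{\barM_2}[\pi_{2/1,d}]\cdot\Delta_{01}$ goes through the other boundary divisor, writing $\Delta_{01}$ as the pushforward of $[p\times\barM_{1,1}]$ from $\barM_{1,1}\times\barM_{1,1}$ and applying Proposition \ref{m2_delta1,delta1} in one line; your route via $\Delta_1\subset\barM_{1,2}$ is what the paper uses only as a consistency check, and it does work once the vanishing above is justified.
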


\begin{proof}
Note first that the formulas of Propositions \ref{m2_delta1,delta1}, \ref{m2_delta0,delta1}, \ref{m2_delta0,delta0}, and \ref{m2_delta00,delta0} hold for \textit{any} curve class defined on the relevant boundary divisor, as such a class may be written as a linear combination of general curves, and the formulas are all linear in $[C]$.

We first take $\Delta_{01}$ as the pushforward of $[p\times\barM_{1,1}]\in A_1(\barM_{1,1}\times\barM_{1,1})$ to $\barM_{2}$. We have $\int_{\barM_{1,1}\times\barM_{1,1}}\Delta_{01}\cdot[\Delta]=1$, so the formula for $\int_{\barM_{2}}[\pi_{2/1,d}]\cdot\Delta_{01}$ follows immediately from Proposition \ref{m2_delta1,delta1}.

As a check, $\Delta_{01}$ may also be expressed as the pushforward of $\Delta_1\in A_1(\barM_{1,2})$ to $\barM_2$, so we can apply Propositions \ref{m2_delta0,delta1}, \ref{m2_delta0,delta0}, and \ref{m2_delta00,delta0}. Combining these with Proposition \ref{m12_d-ell_class}, Proposition \ref{dd22_class_m13}, and \S\ref{int_numbers_m12}, respectively, we find that the first two contributions are zero, and the third is
\begin{equation*}
2\left(\sum_{d_1+d_2=d}\sigma_1(d_1)\sigma_1(d_2)\right),
\end{equation*}
so we obtain the same result.

Finally, $\Delta_{00}$ is the pushforward of $\Delta_0\in A_1(\barM_{1,2})$ to $\barM_2$. Applying the same formulas as above, we get a contribution of $2(d-1)\sigma_1(d)$ in type $(\Delta_0,\Delta_1)$, a contribution of
\begin{equation*}
\sum_{am=d}2(a^2-1)m=2(d-1)\sigma_1(d)
\end{equation*}
in type $(\Delta_0,\Delta_0)$ (where we have applied the projection formula for the forgetful morphism $\barM_{1,3}\to\barM_{1,2}$, under which $\Delta_0$ pulls back to $\Delta_0$), and zero in type $(\Delta_{00},\Delta_0)$.
\end{proof}

\begin{proof}[Proof of Theorem \ref{main_thm_g2}]
Immediate from of \S\ref{int_numbers_m2} and the fact that $\delta_i=\frac{1}{2}\Delta_i$ for $i=0,1$, along with the identity
\begin{equation*}
\sum_{d_1+d_2=d}\sigma_1(d_1)\sigma_1(d_2)=\left(-\frac{1}{2}d+\frac{1}{12}\right)\sigma_1(d)+\frac{5}{12}\sigma_3(d),
\end{equation*}
see \S\ref{qmod_prelim}.
\end{proof}


\section{Variants in genus 2}\label{g2_var}

\subsection{Covers of a fixed elliptic curve}\label{m2_d-ell_fixed_target}

Fix a general elliptic curve $E$; we consider genus 2 curves covering $E$. Define the space of such covers $\Adm_{2/1,d}(E)$ by the Cartesian diagram
\begin{equation*}
\xymatrix{
\Adm_{2/1,d}(E) \ar[r] \ar[d] & \Adm_{2/1,d} \ar[d] \\
[E] \ar[r] & \barM_{1,1},
}
\end{equation*}
where the map $\Adm_{2/1,d}\to\barM_{1,1}$ is the composition of $\psi_{2/1,d}:\Adm_{2/1,d}\to\barM_{1,2}$ with the map forgetting the second marked point.

By post-composing with $\pi_{2/1,d}$, we get a map $\pi_{2/1,d}(E):\Adm_{2/1,d}(E)\to\barM_{2}$; we wish to compute its class in $A^2(\barM_2)$. We do so by intersecting with the boundary divisors $\Delta_0$ and $\Delta_1$. If $f:X\to Y$ is an admissible cover appearing in one of these intersections, then $Y=E\cup\bP^1$, where both components are attached at a node and both marked points are on the rational component, and $f$ is of type $(\Delta_0,\Delta_1)$ or $(\Delta_1,\Delta_1)$.

\begin{prop}[{cf. \cite[Proposition 2.1]{chen}}]\label{adm_m2E_intersections}
We have
\begin{align*}
\int_{\barM_2}\pi_{2/1,d}(E)\cdot\Delta_0&=2(d-1)\sigma_1(d)\\
\int_{\barM_2}\pi_{2/1,d}(E)\cdot\Delta_1&=2\left(\sum_{d_1+d_2=d}\sigma_1(d_1)\sigma_1(d_2)\right)
\end{align*}
\end{prop}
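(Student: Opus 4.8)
The plan is to adapt the computation of Section~\ref{m2_d-ell} to the fixed-target setting, where things are simpler because $Y$ now has only two branch points at our disposal and the target elliptic component is rigid. First I would reduce, exactly as in \S\ref{adm_g2_intersect_delta1} and \S\ref{adm_g2_intersect_delta0}, to enumerating admissible covers $f\colon X\to Y$ in which $Y$ has a single node. Since $\psi_{2/1,d}(E)$ forces the genus~$1$ component of $Y$ to be $E$ itself and both branch points (hence both marked points of the target) to lie on the rational component, the only topological types that survive are $(\Delta_0,\Delta_1)$ and $(\Delta_1,\Delta_1)$; the types $(\Delta_0,\Delta_0)$ and $(\Delta_{00},\Delta_0)$, which require $[Y]\in\Delta_0$, never occur. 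So there are exactly two contributions to compute, one for each of $\Delta_0$ and $\Delta_1$.

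For the intersection with $\Delta_1$, I would take $\Delta_1\in A^1(\barM_2)$ represented by a general curve $C\to\barM_{1,1}\times\barM_{1,1}$, and set up the fiber-product diagram as in \S\ref{adm_g2_intersect_delta1}, but now with one of the two factors constrained to be $E$. A cover of type $(\Delta_1,\Delta_1)$ breaks into an ordered pair of covers of a common elliptic curve of degrees $d_1,d_2$ with $d_1+d_2=d$; the constraint is that this common curve is $E$. By Corollary~\ref{adm_m11_m11}, the number of degree-$d_i$ covers of the fixed curve $E$ is $\sigma_1(d_i)$, so there are $\sum_{d_1+d_2=d}\sigma_1(d_1)\sigma_1(d_2)$ such pairs. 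The same local analysis as in Lemma~\ref{g2_delta1,delta1_mult} shows each contributes with multiplicity~$2$ to the intersection on $\barM_2$ (the factor $2$ coming, as before, from the contraction of the rational bridge of $X$), which yields the stated $2\sum_{d_1+d_2=d}\sigma_1(d_1)\sigma_1(d_2)$; one checks the relevant copy of $C$ can be chosen to meet the admissible locus transversally before pushing forward, using that $\Adm_{1/1,d_i}(E)$ is a finite reduced set of points.

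For the intersection with $\Delta_0$, I would represent $\Delta_0\in A^1(\barM_2)$ by the pushforward of a general curve $C\to\barM_{1,2}$, and the only contributing covers are of type $(\Delta_0,\Delta_1)$. These are governed by the space $\Adm_{1/1,d,2}$ of \S\ref{m12_d-ell}, but again with the target elliptic curve fixed equal to $E$: that is, the data is a degree-$d$ isogeny $X\to E$ together with two distinct marked preimages of the origin. By Lemma~\ref{count_pointed_isogenies}, up to isomorphism there are $(d-1)\sigma_1(d)$ such data — or rather $2(d-1)\sigma_1(d)/2$; here it is cleanest to invoke Proposition~\ref{m12_d-ell_class} with the test curve $t_E\colon E\to\barM_{1,2}$, $q\mapsto (E,p,q)$, which represents $\Delta_0$, giving intersection number $(d-1)\sigma_1(d)$ directly (no extra factor of $2$, since in the fixed-target problem we do not label the two sheets over the relevant branch point in a way that doubles the count — the bridge-contraction multiplicity of $2$ does not appear because here $Y$ has a genus~$1$ component and the relevant node of $X$ maps to the node of $Y$, which is smoothed to first order). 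I would double-check this multiplicity bookkeeping carefully, since it is the one place the fixed-target computation diverges in a subtle way from the moving-target computation: one must confirm via Proposition~\ref{adm_defos} that the relevant complete local ring of $\Adm_{2/1,d}(E)$ maps to that of $\barM_2$ in a way that contributes multiplicity exactly~$1$ at each type-$(\Delta_0,\Delta_1)$ point. That multiplicity check is the main obstacle; the enumerative inputs are all already in hand from \S\ref{m11_d-ell} and \S\ref{m12_d-ell}.
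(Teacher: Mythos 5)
Your reduction to covers of types $(\Delta_0,\Delta_1)$ and $(\Delta_1,\Delta_1)$ is exactly the paper's, and your computation of $\int\pi_{2/1,d}(E)\cdot\Delta_1$ (ordered pairs of isogenies $E_1\to E$, $E_2\to E$ of degrees $d_1,d_2$, each counted with the bridge-contraction multiplicity $2$ of Lemma \ref{g2_delta1,delta1_mult}) is the paper's argument verbatim. The problem is in the $\Delta_0$ intersection, precisely at the "multiplicity bookkeeping" you flag as the main obstacle: your resolution of it is wrong, and the correct final number comes out only because two factor-of-$2$ errors cancel.

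Concretely: the bridge-contraction multiplicity of $2$ \emph{does} appear for covers of type $(\Delta_0,\Delta_1)$ in the fixed-target problem. The local picture is identical to that of Proposition \ref{m2_delta0,delta1}: the two nodes of $X$ lying over the node of $Y$ are joined by a rational bridge of degree $2$, and in the one-parameter smoothing of the node of $Y$ (which is the only direction left in the curve $\Adm_{2/1,d}(E)$, since the elliptic component of $Y$ is frozen) the contraction of that bridge in the total space produces an ordinary double point, so the node of $X^s$ is smoothed to order $2$, not $1$. Your stated reason for multiplicity $1$ --- that the node of $X$ maps to the node of $Y$ and is smoothed to first order --- is true of the nodes of $X$ but not of the node of the \emph{stabilization} $X^s$, which is what $\pi_{2/1,d}$ records. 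The compensating factor of $\tfrac12$ that you are missing is in the point count: the pointed isogenies $(E',0,x)$ and $(E',0,-x)$ define the same point of the fiber product with the gluing map $\barM_{1,2}\to\barM_2$ (via the elliptic involution, which is an automorphism of the marked target $(Y,y_1,y_2)$ and lifts to the cover; when $x$ is $2$-torsion it instead contributes an automorphism of order $2$). So there are $\tfrac12(d-1)\sigma_1(d)$ points, each of multiplicity $2$, giving $(d-1)\sigma_1(d)$. Relatedly, your appeal to Proposition \ref{m12_d-ell_class} with the test curve $t_E$ conflates two different moduli problems: that proposition intersects the \emph{moving-target} locus $\pi_{1/1,d,2}$ with the locus where the \emph{source} is $E$, whereas here the \emph{target} is fixed equal to $E$ and the source varies; the counts agree only because of the duality of Lemma \ref{count_isogenies}, which you would need to invoke explicitly.
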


\begin{proof}
The points of intersection of $\pi_{2/1,d}(E)$ and the gluing morphism $\barM_{1,2}\to\barM_2$ consist of admissible covers of type $(\Delta_0,\Delta_1)$ with target $Y$, which correspond to isogenies $E'\to E$ with a second marked point in the kernel. As in Proposition \ref{m2_delta0,delta1}, each cover appears with multiplicity 2, so the first claim follows from Lemma \ref{count_pointed_isogenies}.

The points of intersection of $\pi_{2/1,d}(E)$ and the gluing morphism $\barM_{1,1}\times\barM_{1,1}\to\barM_2$ consist of admissible covers of type $(\Delta_1,\Delta_1)$ with target $Y$, which correspond to ordered pairs of isogenies $E_1\to E,E_2\to E$. As in Lemma \ref{g2_delta1,delta1_mult}, each such admissible cover appears with multiplicity 2, and it is easy to check that the geometric points have no non-trivial automorphisms. We are now done by Lemma \ref{m11_d-ell}.
\end{proof}

Using the fact that $\delta_{00}=\frac{1}{8}\Delta_{00}$ and $\delta_{01}=\frac{1}{2}\Delta_{01}$, and applying again the identity 
\begin{equation*}
\sum_{d_1+d_2=d}\sigma_1(d_1)\sigma_1(d_2)=\left(-\frac{1}{2}d+\frac{1}{12}\right)\sigma_1(d)+\frac{5}{12}\sigma_3(d)
\end{equation*}
we conclude:
\begin{thm}\label{adm_m2E_formula}
The class of $\pi_{2/1,d}(E)$ in $A^2(\barM_2)$ is:
\begin{equation*}
\left(-4d\sigma_1(d)+4\sigma_3(d)\right)\delta_{00}+
\left(-4\sigma_1(d)+4\sigma_3(d)\right)\delta_{01}.
\end{equation*}
\end{thm}

\subsection{Interlude: quasimodularity for correspondences}\label{m2_correspondence}

\begin{thm}\label{correspondence_statement}
Consider the correspondence
\begin{equation*}
(\pi_{2/1,d})_{*}\circ\psi_{2/1,d}^{*}:A^{*}(\barM_{1,2})\to A^{*}(\barM_2).
\end{equation*}
Then, for a fixed $\alpha\in A^{*}(\barM_{1,2})$, we have
\begin{equation*}
(\pi_{2/1,d})_{*}\circ\psi_{2/1,d}^{*}(\alpha)\in A^{*}(\barM_2)\otimes\Qmod.
\end{equation*}
\end{thm}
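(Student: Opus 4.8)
The plan is to reduce the statement to the intersection-number computations already carried out in \S\ref{m2_d-ell} and \S\ref{m2_d-ell_fixed_target}. Since $A^{*}(\barM_{1,2})$ is spanned by $1$, the two boundary divisors $\Delta_0,\Delta_1$, and the top-dimensional boundary classes, and since pushforward and pullback are linear, it suffices to check that $(\pi_{2/1,d})_{*}\circ\psi_{2/1,d}^{*}(\alpha)$ is quasimodular for $\alpha$ running over a basis. For degree reasons the only nontrivial cases are $\alpha=1$ (giving a class in $A^1(\barM_2)$) and $\alpha$ a boundary divisor (giving a class in $A^2(\barM_2)$); the case $\alpha\in A^2(\barM_{1,2})$ produces a class in $A^3(\barM_2)=0$, hence is trivially quasimodular, as is the obvious vanishing in negative degrees.

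First I would dispose of $\alpha=1$: here $\psi_{2/1,d}^{*}(1)=1$, so $(\pi_{2/1,d})_{*}\circ\psi_{2/1,d}^{*}(1)=(\pi_{2/1,d})_{*}(1)$, which is exactly the class computed in Theorem \ref{main_thm_g2} and is quasimodular by the Ramanujan identities recalled in \S\ref{qmod_prelim}. Next I would treat $\alpha=\Delta_1$ (the class of $\barM_{1,1}\times\barM_{1,1}\to\barM_{1,2}$, or equivalently a point class times $\barM_{1,1}$ via the forgetful map). Pulling back under the finite morphism $\psi_{2/1,d}$ and pushing forward under $\pi_{2/1,d}$, one recognizes $(\pi_{2/1,d})_{*}\psi_{2/1,d}^{*}(\Delta_1)$ as (a multiple of) the class of $\pi_{2/1,d}(E)$ from \S\ref{m2_d-ell_fixed_target}: fixing the target elliptic curve $E$ is precisely restricting along a point of $\barM_{1,1}$. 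So this class is given by Theorem \ref{adm_m2E_formula}, again visibly quasimodular. Finally, for $\alpha=\Delta_0$, I would run the same boundary-stratum analysis as in \S\ref{adm_g2_intersect_delta0}: $\psi_{2/1,d}^{*}(\Delta_0)$ is supported on admissible covers whose target acquires a non-separating node, and pushing forward to $\barM_2$ and pairing with $\Delta_{00},\Delta_{01}$ reduces — by the same local-ring multiplicity computations (Propositions \ref{m2_delta0,delta1}, \ref{m2_delta0,delta0}, \ref{m2_delta00,delta0}) — to the auxiliary counts $[\pi_{1/1,d,2}]$, $[\pi_{1/0,a}^{a,a,2,2}]$, and isogeny counts, each of which contributes a polynomial-in-$d$ multiple of $\sigma_k(d)$ or a convolution $\sum_{d_1+d_2=d}\sigma_1(d_1)\sigma_1(d_2)$; all of these lie in $\Qmod$ by \S\ref{qmod_prelim}.

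The one point requiring genuine care — and the main obstacle — is the $\alpha=\Delta_0$ case: one must verify that pulling back $\Delta_0$ along $\psi_{2/1,d}$ and then pushing forward along $\pi_{2/1,d}$ really does decompose into exactly the topological types enumerated in \S\ref{admissible_classification_g2}, with the correct intersection multiplicities coming from the contraction of rational bridges and chains (the factors of $2$ and $ma^{m-1}$ that appear in Lemma \ref{g2_delta1,delta1_mult} and Proposition \ref{m2_delta0,delta0}). Once that bookkeeping is in place, quasimodularity is automatic because every building block — degrees of $\psi$ and $\pi$ over $\barM_{1,1}$, the two-pointed $d$-elliptic class on $\barM_{1,2}$, and the doubly-totally-ramified cover counts — has already been shown to assemble from $\sigma_k(d)$ and their convolutions, and the ring $\Qmod$ is closed under the relevant operations by the Ramanujan identities. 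I would therefore organize the proof as: (i) reduce to a basis of $A^{*}(\barM_{1,2})$; (ii) dispatch $\alpha=1$ via Theorem \ref{main_thm_g2}; (iii) identify the $\Delta_1$-contribution with Theorem \ref{adm_m2E_formula}; (iv) compute the $\Delta_0$-contribution by the method of \S\ref{adm_g2_intersect_delta0}; (v) observe each answer lies in $A^{*}(\barM_2)\otimes\Qmod$.
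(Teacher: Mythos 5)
Your overall strategy (reduce to a basis of $A^{*}(\barM_{1,2})$ and quote the earlier computations) is the paper's strategy, but two of the four basis cases are handled incorrectly. First, the claim that $A^{3}(\barM_2)=0$ is false: $A^{3}(\barM_2)=A_0(\barM_2)\cong\bQ$, detected by the degree map. So the case where $\alpha$ is a point class is not vacuous. Here $(\pi_{2/1,d})_{*}\psi_{2/1,d}^{*}[\mathrm{pt}]=\deg(\psi_{2/1,d})\cdot[\mathrm{pt}]$, and quasimodularity of $\sum_{d}\deg(\psi_{2/1,d})q^d$ is exactly Dijkgraaf's theorem (Theorem \ref{dijkgraaf}) with $g=2$ --- a genuine input that your argument omits.

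Second, you have swapped the roles of the two boundary divisors of $\barM_{1,2}$. The divisor that is rationally equivalent to the fiber of the forgetful map $\barM_{1,2}\to\barM_{1,1}$ over a general $[E]$ is $\Delta_0$ (irreducible nodal target), since all points of $\barM_{1,1}$ are rationally equivalent; hence it is the $\alpha=\Delta_0$ case, not $\alpha=\Delta_1$, that reduces to Theorem \ref{adm_m2E_formula}. The divisor $\Delta_1\subset\barM_{1,2}$ (reducible target, both branch points on a rational tail) is a section of the forgetful map, not a fiber, and is not the image of $\barM_{1,1}\times\barM_{1,1}$ --- you appear to be conflating it with $\Delta_1\subset\barM_2$. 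Its pullback under $\psi_{2/1,d}$ is supported on covers of types $(\Delta_0,\Delta_1)$ and $(\Delta_1,\Delta_1)$, and the correct argument pairs the pushforward with a general divisor $D$ on $\barM_2$ and invokes Proposition \ref{m12_d-ell_class} and Corollary \ref{adm_m11_m11} for the two contributions. Relatedly, your proposed treatment of $\Delta_0$ via Propositions \ref{m2_delta0,delta1}, \ref{m2_delta0,delta0}, \ref{m2_delta00,delta0} mixes in type $(\Delta_0,\Delta_1)$ covers, whose targets lie over $\Delta_1$, not $\Delta_0$; and one determines a class in $A^2(\barM_2)$ by pairing against the divisors $\Delta_0,\Delta_1$, not against the curve classes $\Delta_{00},\Delta_{01}$. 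With the labels corrected and Dijkgraaf's theorem supplied for the point class, the argument goes through as in the paper.
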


\begin{proof}
It suffices to check the claim on a basis of $A^{*}(\barM_{1,2})$. Note that all classes of geometric points on $\barM_{1,2}$ or $\barM_{2}$ are rationally equivalent, as both spaces are unirational. When $\alpha$ is the class of a point, we have Theorem \ref{dijkgraaf}. When $\alpha$ is the fundamental class, the claim follows from Theorem \ref{modularity_statement} (in genus 2). When $\alpha=\Delta_0$, we may replace the locus of covers of a nodal curve of genus 1 with that of covers of a fixed smooth curve, in which case we are done by Theorem \ref{adm_m2E_formula}.

It remains to consider $\alpha=\Delta_1$. Consider the intersection of a general divisor $D\to\barM_2$ with the cycle $(\pi_{2/1,d})_{*}\circ\psi_{2/1,d}^{*}\Delta_1$. The contribution from covers of type $(\Delta_0,\Delta_1)$ to the intersection of $D$ with the admissible locus is the intersection of $D$ with the pushforward of $\pi_{1/1,d,2}$ to $\barM_2$, which is quasimodular by Proposition \ref{m12_d-ell_class}. The contribution from type $(\Delta_1,\Delta_1)$ is the intersection of $D$ with $\Adm_{1/1,d_1}\times_{\Delta}\Adm_{1/1,d_2}$, where $d_1,d_2$ range over integers satisfying $d_1+d_2=d$; this is also quasimodular by Corollary \ref{adm_m11_m11}.
\end{proof}

\subsection{The $d$-elliptic locus on $\barM_{2,1}$}\label{m21_d-ell}

Here, we compute the class in $A^2(\barM_{2,1})$ of the morphism $\pi_{2/1,d}:\Adm_{2/1,d}\to\barM_{2,1}$, whose image is the closure of the locus of pointed curves $(C,p)$ admitting a degree $d$ cover of an elliptic curve, ramified at $p$. We do so by intersecting with test surfaces, following the same method as in \S\ref{m2_d-ell}. Because $A_2(\cM_{2,1})=0$ (see \S\ref{int_numbers_m21}), it suffices to consider test surfaces in the boundary of $\barM_{2,1}$, which is the union of the boundary divisors
\begin{enumerate}
\item[$(\Delta_0)$] $\barM_{1,3}\to\barM_{2,1}$
\item[$(\Delta_1)$] $\barM_{1,2}\times\barM_{1,1}\to\barM_{2,1}$
\end{enumerate}
Here, the map $\barM_{1,3}\to\barM_{2,1}$ glues together the second and third marked points, and the map $\barM_{1,2}\times\barM_{1,1}\to\barM_{2,1}$ glues the second marked point on the first component to the marked point of the second. As in the unpointed case, a general surface $S$ mapping to one of these boundary classes will intersect the admissible locus at covers of one of the four types described in \S\ref{admissible_classification_g2}.

\subsubsection{The case $[S]\in\Delta_1$}

All admissible covers $f:X\to Y$ in the intersection of $S\to\barM_{1,2}\times\barM_{1,1}\to\barM_{2,1}$ and $\pi_{2/1,d}:Adm_{2/1,d}\to\barM_{2,1}$ have type $(\Delta_1,\Delta_1)$. Let $s:\barM_{1,1}\to\barM_{1,2}$ be the map attaching a 2-pointed rational curve to an elliptic curve at its origin. Then, the 1-pointed curve $X$ is obtained by gluing a point of $\cM_{1,2}$ in the image of $s$, at its first marked point, to a point of $\cM_{1,1}$, at its origin.

For integers $d_1,d_2$ satisfying $d_1+d_2=d$, we have a diagram

\begin{equation*}
\xymatrix{
A^{d_1,d_2}_{S} \ar[r] \ar[dd] & \Adm_{1/1,d_1}\times_{\Delta}\Adm_{1/1,d_2} \ar[rr] \ar[d] & &  \barM_{1,1}  \ar[d]^{\Delta} \\
 & \Adm_{1/1,d_1}\times\Adm_{1/1,d_2} \ar[rr]^(0.57){\pi_{1/1,d_1}\times\pi_{1/1,d_2}}  & &  \barM_{1,1}\times\barM_{1,1} \ar[d]^{s\times\id}\\\
S \ar[rrr] & & & \barM_{1,2}\times\barM_{1,1} 
}
\end{equation*}
where both squares are Cartesian. Taking the union over all possible $(d_1,d_2)$, the geometric points of $A^{d_1,d_2}_{S}$ are in bijection with those of the intersection of $\Adm_{2/1,d}$ and $S$, and there are no non-trivial automorphisms on either side. In the map $\pi_{2/1,d}:\Adm_{2/1,d}\to\barM_{2,1}$, the rational bridge of $X$ in a cover $f:X\to Y$ of type $(\Delta_1,\Delta_1)$ does not get contracted, so in fact the argument of Lemma \ref{g2_delta1,delta1_mult} shows that both intersections are transverse for general $S$.

Using the fact that $\Delta=[p\times\barM_{1,1}]+[\barM_{1,1}\times p]$ and applying Corollary \ref{adm_m11_m11}, we find, as in Proposition \ref{m2_delta1,delta1}:

\begin{prop}\label{m21_delta1,delta1}
For $S\to\barM_{1,2}\times\barM_{1,1}$, we have:
\begin{equation*}
\int_{\barM_{2,1}}[S]\cdot[\pi_{2/1,d}]=\left(\int_{\barM_{1,2}\times\barM_{1,1}}([p\times\barM_{1,1}]+[\Delta_1\times p])\cdot[S]\right)\cdot\sum_{d_1+d_2=d}\sigma_1(d_1)\sigma_1(d_2).
\end{equation*}
\end{prop}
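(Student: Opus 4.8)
The plan is to follow the template established in Section~\ref{adm_g2_intersect_delta1}, transferred to the pointed setting. The first step is to set up the diagram given in the statement and to verify its formal properties: both squares are Cartesian by construction, and the composite on the right is the boundary gluing map $\barM_{1,2}\times\barM_{1,1}\to\barM_{2,1}$ of type $(\Delta_1)$. I would then identify the geometric points of $\bigsqcup_{d_1+d_2=d}A^{d_1,d_2}_S$ with the points of the fiber product $S\times_{\barM_{2,1}}\Adm_{2/1,d}$. By the classification of Section~\ref{admissible_classification_g2}, since $[S]\in\Delta_1$ and the source of any cover meeting $S$ must be singular, every relevant admissible cover $f:X\to Y$ has topological type $(\Delta_1,\Delta_1)$; such a cover restricts to an ordered pair of isogenies (more precisely, genus $1$ admissible covers) of the same elliptic target, of degrees $d_1,d_2$ with $d_1+d_2=d$, and this is exactly the data recorded by a point of $A^{d_1,d_2}_S$. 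One must also check that there are no nontrivial automorphisms on either side; this is routine, as in the corresponding lemma in Section~\ref{adm_g2_intersect_delta1}.

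The second step is the transversality/multiplicity computation. Here the key observation, already flagged in the paragraph preceding the statement, is that in the map $\pi_{2/1,d}:\Adm_{2/1,d}\to\barM_{2,1}$ the rational bridge of a cover of type $(\Delta_1,\Delta_1)$ does \emph{not} get contracted, because one of its two attaching points carries the marking $x_1$ (it is the degree-$2$ component over $Y_0$, which contains a ramification point). Consequently, unlike in Lemma~\ref{g2_delta1,delta1_mult}, no intersection multiplicity of $2$ appears: the node of $X$ maps to the node of $X^s$ to first order. Analytically locally the map $\Adm_{1/1,d_1}\times_\Delta\Adm_{1/1,d_2}\to\barM_{1,1}\times\barM_{1,1}$ is the inclusion of a smooth curve in a smooth surface (by Corollary~\ref{adm_m11_m11} and smoothness of $\barM_{1,1}$), so a general $S$ meets both $\Adm_{2/1,d}$ and each $\Adm_{1/1,d_1}\times_\Delta\Adm_{1/1,d_2}$ transversely; the argument is the one indicated already in the text.

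The third step is the enumerative count. Writing $[\Delta]=[p\times\barM_{1,1}]+[\barM_{1,1}\times p]$ in $A^1(\barM_{1,1}\times\barM_{1,1})$ and pulling back along $\pi_{1/1,d_1}\times\pi_{1/1,d_2}$, the first summand contributes $\Adm_{1/1,d_1}(E)\times\Adm_{1/1,d_2}$ and the second $\Adm_{1/1,d_1}\times\Adm_{1/1,d_2}(E)$ for a fixed general $E$; by Corollary~\ref{adm_m11_m11} each of these pushes forward with multiplicity $\sigma_1(d_1)\sigma_1(d_2)$ to the appropriate point-times-$\barM_{1,1}$ class. Applying $s\times\id$ sends $[p\times\barM_{1,1}]$ to $[p'\times\barM_{1,1}]$ (a point of $\barM_{1,2}$) and $[\barM_{1,1}\times p]$ to $[\Delta_1\times p]$, since $s(\barM_{1,1})$ is precisely the divisor $\Delta_1\subset\barM_{1,2}$. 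Summing over $d_1+d_2=d$ and intersecting with $[S]$ on $\barM_{1,2}\times\barM_{1,1}$ gives the stated formula.

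I expect the main obstacle to be a careful bookkeeping point rather than a deep one: namely pinning down exactly which boundary class of $\barM_{1,2}\times\barM_{1,1}$ the pushforward of the ``constant'' factor produces after applying $s\times\id$, and confirming that the component structure of a type $(\Delta_1,\Delta_1)$ cover over $\barM_{2,1}$ (in particular which attaching point carries $x_1$) really does forbid the extra factor of $2$. Both are handled by the same local analysis as in Section~\ref{adm_g2_intersect_delta1}, transported to the pointed moduli space, so no genuinely new input is needed beyond Corollary~\ref{adm_m11_m11} and the classification of Section~\ref{admissible_classification_g2}.
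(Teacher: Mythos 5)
Your proposal follows the paper's proof essentially verbatim: the same Cartesian diagram, the same identification of the intersection points with ordered pairs of isogenies of a common elliptic target via the classification of \S\ref{admissible_classification_g2}, the same key observation that the rational bridge is not contracted under $\pi_{2/1,d}:\Adm_{2/1,d}\to\barM_{2,1}$ (so the multiplicity $2$ of Lemma~\ref{g2_delta1,delta1_mult} disappears), and the same decomposition $[\Delta]=[p\times\barM_{1,1}]+[\barM_{1,1}\times p]$ combined with Corollary~\ref{adm_m11_m11} and the pushforward along $s\times\id$. One minor wording slip: the bridge is stable because it carries the ramification point $x_1$ \emph{in addition to} its two nodes, not because ``one of its attaching points carries the marking''---but your parenthetical states the correct reason, so nothing in the argument is affected.
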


\subsubsection{The case $[S]\in\Delta_0$.}

In the intersection
\begin{equation*}
\xymatrix{
A_S \ar[r] \ar[d] & \Adm_{2/1,d} \ar[d]^{\pi_{2/1,d}} \\
S \ar[r] & \barM_{2,1}
}
\end{equation*}
we have contributions from covers of types $(\Delta_0,\Delta_1)$, $(\Delta_0,\Delta_0)$, and $(\Delta_{00},\Delta_0)$.

First, consider a cover $f:X\to Y$ of type $(\Delta_0,\Delta_1)$. The 1-pointed curve $X$ may be obtained by identifying the points $x_2,x_3$ of $[(X_1,x_1,x_2,x_3)]\in\barM_{1,3}$ lying in either of the boundary divisors $\Delta_{1,\{1,2\}},\Delta_{1,\{1,3\}}$. Let $s_{\{1,2\}},s_{\{1,3\}}:\barM_{1,2}\to\barM_{1,3}$ be the maps defining these two boundary divisors, we have the Cartesian diagrams
\begin{equation*}
\xymatrix{
A_S^{(\Delta_1,\Delta_0),\{1,2\}} \ar[r] \ar[dd] & \Adm_{1/1,d,2} \ar[d]^{\pi_{1/1,d,2}} \\
 & \barM_{1,2} \ar[d]^{s_{\{1,2\}}} \\
S\ar[r] & \barM_{1,3}
}
\hspace{0.5in}
\xymatrix{
A_S^{(\Delta_1,\Delta_0),\{1,3\}} \ar[r] \ar[dd] & \Adm_{1/1,d,2} \ar[d]^{\pi_{1/1,d,2}} \\
 & \barM_{1,2} \ar[d]^{s_{\{1,3\}}} \\
S\ar[r] & \barM_{1,3}
}
\end{equation*}
Following the proof of Proposition \ref{m2_delta0,delta1}, both intersections above are seen to be transverse (note that we no longer contract the rational bridge of $X$) and have no non-trivial automorphisms. We conclude:

\begin{prop}\label{m21_delta0,delta1}
The contribution to $A_S$ from covers of type $(\Delta_0,\Delta_1)$ is:
\begin{equation*}
\int_{\barM_{1,3}}[S]\cdot\left([s_{\{1,2\}}\circ\pi_{1/1,d,2}]+[s_{\{1,3\}}\circ\pi_{1/1,d,2}]\right)
\end{equation*}
\end{prop}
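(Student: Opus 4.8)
The plan is to run exactly the argument template of Proposition \ref{m2_delta0,delta1}: match the type-$(\Delta_0,\Delta_1)$ contribution to $A_S$ with a sum of \emph{transverse} intersection numbers computed downstairs in $\barM_{1,3}$, the sum having two terms because of the two possible boundary divisors $\Delta_{1,\{1,2\}},\Delta_{1,\{1,3\}}$. First I would recall the anatomy of a type-$(\Delta_0,\Delta_1)$ cover $f\colon X\to Y$ from \S\ref{adm_g2_delta1}: the preimage of the elliptic component $Y_1$ is a single smooth genus $1$ curve $X_1$, while over the rational component $Y_0$ (which carries both branch points) one finds $d-2$ rational tails and one degree $2$ rational bridge $B$, attached to $X_1$ at two points $z_1,z_2$ over the node of $Y$ and carrying both ramification points $x_1,x_2$. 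Hence the restriction $X_1\to Y_1$ together with $\{z_1,z_2\}$ is canonically a point of $\Adm_{1/1,d,2}$, and conversely a point of $\Adm_{1/1,d,2}$ reconstructs $f$ uniquely: the remaining $d-2$ preimages of the origin receive tails, and $B$ with its marked points is the unique degree $2$ cover of $\bP^1$ branched at the two simple branch points. The decisive difference from the unpointed setting of Proposition \ref{m2_delta0,delta1} is that $X^s$, now marked at $x_1$, \emph{retains} $B$ as a stable rational component meeting $X_1$ at two nodes; thus $[X^s]\in\Delta_0\subset\barM_{2,1}$, and in the chart $\barM_{1,3}\xrightarrow{\text{glue }2,3}\barM_{2,1}$ it is obtained by gluing marked points $2,3$ of a configuration lying in $\Delta_{1,\{1,2\}}$ or $\Delta_{1,\{1,3\}}$, with label $1$ the point $x_1$ and labels $2,3$ the two half-edges of one of the two nodes.

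Next I would establish, for a general surface $S\to\barM_{1,3}$, a bijection between the geometric points of $A_S$ supported on type-$(\Delta_0,\Delta_1)$ covers and the geometric points of $A_S^{(\Delta_1,\Delta_0),\{1,2\}}\sqcup A_S^{(\Delta_1,\Delta_0),\{1,3\}}$, with no nontrivial automorphisms on either side. In one direction, a compatible pair (point of $S$, cover $f$) yields the genus $1$ restriction as a point of $\Adm_{1/1,d,2}$ whose image in $\barM_{1,3}$ matches that of the point of $S$, the summand being read off from which of the two divisors this image lies in; the reconstruction above supplies the inverse. For $S$ general one checks there is no double counting: a general $S$ meets each fibre of the degree $2$ map $\barM_{1,3}\dashrightarrow\Delta_0$ in at most one point, so each $f$ is hit at most once, and the symmetry interchanging $z_1$ and $z_2$ is absorbed by the deck transformation of $B$; general $S$ also avoids the loci (covers with extra automorphisms, $z_1-z_2$ torsion, singular $Y_1$, etc.) where these groupoids could fail to be discrete.

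Then I would verify the two multiplicity statements. That $S$ meets $s_{\{1,2\}}\circ\pi_{1/1,d,2}$ and $s_{\{1,3\}}\circ\pi_{1/1,d,2}$ transversally in $\barM_{1,3}$ follows from the smoothness of $\Adm_{1/1,d,2}$ along the relevant locus and the fact that $\pi_{1/1,d,2}$ is unramified there (Corollary \ref{adm_m11_m11} and the proof of Proposition \ref{m12_d-ell_class}), together with the fact that $s_{\{1,2\}}$ is a closed immersion. That $S$ meets $\pi_{2/1,d}$ transversally in $\barM_{2,1}$ at each such $[f]$ is the one point needing the local deformation analysis of Lemma \ref{g2_delta1,delta1_mult}: by Proposition \ref{adm_defos}, $\Adm_{2/1,d}$ is smooth at $[f]$ (all ramification indices over the node of $Y$ equal $1$), the smoothing parameter of the node of $Y$ maps isomorphically to the smoothing parameter of each node of $X$ — since $B$ is not contracted, there is no analogue of the relation $T(z)\equiv t^{2}$ that produced the factor $2$ in the unpointed case — and deforming $Y_1$ deforms $X_1$ to first order (Corollary \ref{adm_m11_m11}); hence $\Adm_{2/1,d}\to\barM_{2,1}$ is unramified at $[f]$ and a general $S$ can be taken transverse to it. Granting all of this, every geometric point of either side contributes $1$, and summing over the two summands yields $\int_{\barM_{1,3}}[S]\cdot\bigl([s_{\{1,2\}}\circ\pi_{1/1,d,2}]+[s_{\{1,3\}}\circ\pi_{1/1,d,2}]\bigr)$.

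I expect the main obstacle to be the second transversality statement — tracking, via the complete local rings of Proposition \ref{adm_defos}, that $\pi_{2/1,d}$ is unramified at a type-$(\Delta_0,\Delta_1)$ cover inside $\barM_{2,1}$ — together with the bookkeeping that makes the two summands each contribute exactly once (in particular ruling out a spurious factor of $2$ from the $z_1\leftrightarrow z_2$ symmetry). Both are routine given the techniques already in place, but they are the places where the count could slip.
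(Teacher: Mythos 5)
Your argument is correct and is essentially the paper's: the paper likewise reduces to the two fibre products over $s_{\{1,2\}}\circ\pi_{1/1,d,2}$ and $s_{\{1,3\}}\circ\pi_{1/1,d,2}$ and observes that, because the rational bridge $B$ is no longer contracted by $\pi_{2/1,d}$, the order-two tangency that produced the factor of $2$ in Proposition \ref{m2_delta0,delta1} disappears and all intersections become transverse with no nontrivial automorphisms. One auxiliary claim in your bookkeeping is false, though inessential: a general surface $S$ in the threefold $\barM_{1,3}$ \emph{does} meet fibres of the $2$-to-$1$ gluing map onto $\Delta_0$ in two points along the curve $S\cap\iota(S)$, where $\iota$ exchanges the labels $2$ and $3$. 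The correct reason there is no over- or under-counting is different: a geometric point of $A_S$ includes a point $s\in S$ whose image in $\barM_{1,3}$ is one specific labeled curve, which lies in exactly one of $\Delta_{1,\{1,2\}}$, $\Delta_{1,\{1,3\}}$ and determines a unique ordering of $(z_1,z_2)$ (namely the one making the branch identification label-preserving); thus the four lifts of $[X^s]$ to $\barM_{1,3}$ (two orderings of $(z_1,z_2)$ times two labelings of the branches of the normalized node) match bijectively the four possible points of $A_S^{(\Delta_1,\Delta_0),\{1,2\}}\sqcup A_S^{(\Delta_1,\Delta_0),\{1,3\}}$ lying over a given cover, and each point of $A_S$ selects exactly one of them.
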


The analysis of covers of type $(\Delta_0,\Delta_0)$ is essentially identical to that of Proposition \ref{m2_delta0,delta0}. We find:

\begin{prop}\label{m21_delta0,delta0}
The contribution to $A_S$ from covers of type $(\Delta_0,\Delta_0)$ is:
\begin{equation*}
\sum_{am=d}\left(m\int_{\barM_{1,3}}[S]\cdot[\pi_{1/0,a}^{a,a,2,2}]\right).
\end{equation*}
\end{prop}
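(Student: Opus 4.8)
The plan is to run the same argument as in the proof of Proposition \ref{m2_delta0,delta0}, tracking carefully the new ambient space $\barM_{2,1}$. Fix positive integers $a,m$ with $am=d$, and form the Cartesian diagram
\begin{equation*}
\xymatrix{
A_S^{(\Delta_0,\Delta_0),a} \ar[r] \ar[d] & \Adm_{1/0,a}^{a,a,2,2} \ar[d]^{\pi_{1/0,a}^{a,a,2,2}} \\
S \ar[r] & \barM_{1,3}
}
\end{equation*}
where now $\pi_{1/0,a}^{a,a,2,2}:\Adm_{1/0,a}^{a,a,2,2}\to\barM_{1,3}$ remembers the stabilized genus $1$ component with its two total ramification points \emph{and} a third point recording the position, along the elliptic component, of the node where the first rational bridge is attached; here $\barM_{1,3}$ is the boundary divisor $\Delta_0\to\barM_{2,1}$, embedded via gluing the second and third marked points. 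First I would check, exactly as in the proof of Proposition \ref{m2_delta00,delta0}, that the geometric points of $A_S$ of type $(\Delta_0,\Delta_0)$ are in bijection with $\coprod_{am=d}A_S^{(\Delta_0,\Delta_0),a}(\Spec(\bC))$, that each $A_S^{(\Delta_0,\Delta_0),a}(\Spec(\bC))$ is a set, and that for general $S$ the intersection of $S$ with $\pi_{1/0,a}^{a,a,2,2}$ is transverse (using that $\Adm_{1/0,a}^{a,a,2,2}$ is unramified over a general point of $\cM_{1,1}$, as in Proposition \ref{dd22_class_m13}). A cover $f:X\to Y$ of type $(\Delta_0,\Delta_0)$ still has automorphism group of order $a^{m-1}$ coming from the $a$-th roots of unity acting on the $m-1$ rational components.

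Next I would compute the intersection multiplicity of $S$ with $\pi_{2/1,d}$ at such a cover on the level of complete local rings. By Proposition \ref{adm_defos}, the complete local ring of $\Adm_{2/1,d}$ at $[f]$ is $\bC[[s_1,s_2,t,t_1,\ldots,t_m]]/(t=t_1^a=\cdots=t_m^a)$, where now there are \emph{two} extra deformation directions $s_1,s_2$ for the target (the marked genus $1$ curve in $\barM_{1,3}$ has $3$ moduli; one of the three boundary-smoothing parameters is $t$, leaving $s_1,s_2$), and $t_i$ is the smoothing parameter of the node $x_i$. Writing $\bC[[x_1,x_2,z]]$ for the complete local ring of $\barM_{2,1}$ at $[X^s]$ with $z$ the smoothing parameter of the node and $x_1,x_2$ transverse to $\Delta_0$, the key local claims are the same as in Proposition \ref{m2_delta0,delta0}: $T(z)\equiv t_1^m$ modulo the ideal identifying the $t_i$ (contracting the chain of $m-1$ rational curves produces an $A_m$-singularity, so the node is smoothed to order $m$), $T(z)\equiv 0 \bmod (t_i)$, and $T(x_j)\equiv s_j$ modulo $(t_1,\ldots,t_m)$ (the deformation moving the ramification/attaching points apart varies the elliptic component, using unramifiedness of $\pi_{1/0,a}^{a,a,2,2}$ over a general point of $\cM_{1,3}$ and of $\psi_{1/0,a}^{a,a,2,2}$ over $\cM_{1,1}$). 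These give $T(z)=(t_1\cdots t_m)u$ with $u$ a unit, so the complete local ring of $A_S$ at $([f],[X^s])$ is $\bC[t,t_1,\ldots,t_m]/(t-t_i^a, t_1\cdots t_m)$, of dimension $ma^{m-1}$ over $\bC$. Dividing by the order $a^{m-1}$ of the automorphism group yields the factor $m$, and summing over $am=d$ gives the stated formula.

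The main obstacle is a bookkeeping one rather than a conceptual one: one must correctly identify which point of $\barM_{1,3}$ the source datum of a type-$(\Delta_0,\Delta_0)$ cover determines after stabilization inside $\barM_{2,1}$ — in particular that the three marked points are the two total ramification points together with the node-attachment point, and that the full $3$-dimensional deformation space of the target matches up with the $3$ parameters $s_1,s_2,t$ so that the two ``transverse'' directions $x_1,x_2$ of $\Delta_0\subset\barM_{2,1}$ really do pull back to independent generic linear forms in $s_1,s_2$. Once this identification is pinned down, the local computation is formally the same as that of Proposition \ref{m2_delta0,delta0}.
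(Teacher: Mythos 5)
Your overall strategy is the right one, and it is in fact all the paper does here: the proof in the text is the single sentence that the analysis is ``essentially identical'' to that of Proposition \ref{m2_delta0,delta0}, so the skeleton you describe (bijection of geometric points, automorphism group of order $a^{m-1}$, local intersection multiplicity $ma^{m-1}$, net factor $m$) is exactly what is intended. However, two of the details you single out as the crux are resolved incorrectly.

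First, the identification of the third marked point. You assert that the three marked points on the genus $1$ component are ``the two total ramification points together with the node-attachment point.'' But the node-attachment points \emph{are} the two total ramification points, so your triple is not even a triple of distinct points. The correct identification: the marked point of $\barM_{2,1}$ is $x_1$, the simple ramification point of the $d$-elliptic cover lying over the first branch point $y_1$; for a cover of type $(\Delta_0,\Delta_0)$ this point lies on the elliptic component $X_1$ and is one of the two \emph{simple} ramification points of $X_1\to\bP^1$. Thus the point of $\barM_{1,3}$ is $(X_1,\,\text{simple ram.\ pt},\,\text{total ram.\ pt},\,\text{total ram.\ pt})$, with the two total ramification points (which are the nodes of $X$ along $X_1$) being the pair that gets glued by $\barM_{1,3}\to\barM_{2,1}$. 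This matches the map $\pi_{1/0,a}^{a,a,2,2}$ of \S\ref{dd22_pencils} up to a permutation of labels, which is harmless here because the class $[\pi_{1/0,a}^{a,a,2,2}]\in A_1(\barM_{1,3})$ is determined by its pairings with boundary divisors (Proposition \ref{dd22_class_m13}) and those pairings are $S_3$-invariant. With your identification the image would land in a boundary stratum of $\barM_{1,3}$ and the class would be wrong.

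Second, the local ring of $\Adm_{2/1,d}$ at $[f]$ does not acquire extra parameters when one maps to $\barM_{2,1}$ instead of $\barM_2$. The target of the cover is still a $2$-pointed genus $1$ curve, a point of the $2$-dimensional space $\barM_{1,2}$, so by Proposition \ref{adm_defos} the complete local ring is $\bC[[s,t,t_1,\ldots,t_m]]/(t=t_1^a=\cdots=t_m^a)$, of dimension $2$, exactly as in Proposition \ref{m2_delta0,delta0}. Your version with two parameters $s_1,s_2$ would make $\Adm_{2/1,d}$ three-dimensional, and then its intersection with a surface $S$ in the four-dimensional $\barM_{2,1}$ would be one-dimensional rather than zero-dimensional, breaking the count. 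What changes relative to the $\barM_2$ computation is only the receiving side: the complete local ring of $\barM_{2,1}$ at $[X^s]$ is four-dimensional and $S$ is a surface, so transversality of $S$ with the (unchanged) image of $T$ is what produces a finite scheme of the stated length. With these two corrections the rest of your argument goes through as written.
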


Finally, consider covers of type $(\Delta_{00},\Delta_0)$. Fix $a,b,m,n$ with $am+bn=d$, and let $\Adm_{0/0,a+b}^{(a,b),(a,b),2,2}$ be the space of tuples $(f,x_1,\ldots,x_6)$, where $f:X\to Y$ is a degree $a+b$ admissible cover of genus 0 curves, with six marked points $x_1,\ldots,x_6\in X$ such that $f(x_1)=f(x_2)$, $f(x_3)=f(x_4)$, and the ramification indices at $x_1,\ldots,x_6$ are $a,b,a,b,2,2$, respectively. As usual, there is a canonical morphism $\pi_{0/0,a+b}^{(a,b),(a,b),2,2}:\Adm_{0/0,a+b}^{(a,b),(a,b),2,2}\to\overline{M}_{0,6}$ remembering the pointed source curve. Let $r:\overline{M}_{0,6}\to\barM_{1,3}$ be the map sending
\begin{equation*}
[(X,x_1,\ldots,x_6)]\mapsto [(X/(x_2\sim x_4),x_5,x_1,x_3)^s]
\end{equation*}
We have a Cartesian diagram

\begin{equation*}
\xymatrix{
A_S^{(\Delta_{00},\Delta_0)} \ar[r] \ar[dd] & \Adm_{0/0,a+b}^{(a,b),(a,b),2,2} \ar[d]^{\pi_{0/0,a+b}^{(a,b),(a,b),2,2}} \\
 & \overline{M}_{0,6} \ar[d]^r \\
S \ar[r] & \barM_{1,3}
 }
 \end{equation*}
 
It is routine to check, following the proof of Proposition \ref{m2_delta00,delta0}:

\begin{prop}\label{m21_delta00,delta0}
The contribution to $A_S$ from covers of type $(\Delta_{00},\Delta_0)$ is 
\begin{equation*}
\sum_{am+bn=d}\left(mb\int_{\barM_{1,3}}[S]\cdot[r\circ\pi_{0/0,a+b}^{(a,b),(a,b),2,2}]\right).
\end{equation*}
\end{prop}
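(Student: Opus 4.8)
The plan is to follow the proof of Proposition \ref{m2_delta00,delta0} almost verbatim, the only new feature being the extra marked point. Recall from \S\ref{adm_g2_delta0} that a cover $f:X\to Y$ of type $(\Delta_{00},\Delta_0)$ is determined, up to the automorphisms induced by the actions of roots of unity on the rational bridges, by the following data: integers $a,b,m,n$ with $am+bn=d$; the restriction $g$ of $f$ to the component $X_0$ carrying the four points over the node of $Y$, which---after normalizing $Y$, and $X_0$ when $m$ or $n$ equals $1$---is by Lemma \ref{aabb_maps_from_P1} the unique degree $a+b$ cover of $\bP^1$ with ramification profile $(a,b)$ over each preimage of the node and simple ramification at two further points; and two chains of rational curves of lengths $m,n$ glued in at the two index-$a$, respectively index-$b$, points. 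Since $\pi_{2/1,d}:\Adm_{2/1,d}\to\barM_{2,1}$ remembers the first branch point, the corresponding ramification point $p$ lies on $X_0$; labelling it $x_5$ and its companion $x_6$, the cover $g$ together with its six special points is a point of $\Adm_{0/0,a+b}^{(a,b),(a,b),2,2}$, and contracting the two chains realizes $(X^s,p)$ as the image of that point under $\barM_{1,3}\to\barM_{2,1}$ precomposed with $r$: the index-$b$ self-node is recorded by $r$ via the gluing $x_2\sim x_4$, while the index-$a$ self-node is the one glued by $\barM_{1,3}\to\barM_{2,1}$. Note that no factor of $2$ arises as in Proposition \ref{m2_delta00,delta0}, since marking $p$ distinguishes the two simple ramification points. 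In this way one obtains, for each ordered tuple $(a,b,m,n)$ with $am+bn=d$, a bijection between the geometric points of $A_S^{(\Delta_{00},\Delta_0)}$ and the covers of type $(\Delta_{00},\Delta_0)$ with those invariants occurring in $A_S$; both sides have no nontrivial automorphisms, and for general $S$ the intersection of $S$ with $r\circ\pi_{0/0,a+b}^{(a,b),(a,b),2,2}$ in $\barM_{1,3}$ is transverse, by the arguments of Propositions \ref{dd22_class_m13} and \ref{m2_delta00,delta0}.

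The remaining, and principal, step is to compute the multiplicity with which $S$ meets $\pi_{2/1,d}$ in $\barM_{2,1}$ at such a cover $f$. This is word-for-word the computation in Proposition \ref{m2_delta00,delta0}: by Proposition \ref{adm_defos} the complete local ring of $\Adm_{2/1,d}$ at $[f]$ is $\bC[[s,t,t_1,\ldots,t_m,u_1,\ldots,u_n]]/(t-t_i^a,\,t-u_j^b)$, and the analysis of the contraction of the two chains of rational curves (which produce $A_m$- and $A_n$-singularities in the respective total spaces) shows, as before, that the smoothing parameters of the two self-nodes of $X^s$ pull back to $(t_1\cdots t_m)v$ and $(u_1\cdots u_n)v'$ for units $v,v'$. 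The marked point $p$ is an inert spectator here: it is a smooth point of $X^s$ away from both self-nodes and from the contracted chains, and merely supplies one additional deformation coordinate on $\barM_{2,1}$, which a general $S$ cuts out transversally. Consequently the complete local ring of $A_S$ at the point in question is
\begin{equation*}
\bC[t,t_1,\ldots,t_m,u_1,\ldots,u_n]/(t-t_i^a,\,t-u_j^b,\,u_1\cdots u_n),
\end{equation*}
of $\bC$-dimension $ma^{m-1}b^n$; dividing by the order $a^{m-1}b^{n-1}$ of the automorphism group of $f$ gives a contribution of $mb$ per cover, and summing over all $(a,b,m,n)$ with $am+bn=d$ yields the stated formula.

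I expect the only genuine obstacle to be this local multiplicity computation, but since it is identical to the one already performed in Proposition \ref{m2_delta00,delta0}, the real work is bookkeeping: matching the two self-node smoothing parameters on $\barM_{2,1}$ with the maps $r$ and $\barM_{1,3}\to\barM_{2,1}$, and confirming that inserting the marked point $p$ on $X_0$ leaves the contraction picture untouched.
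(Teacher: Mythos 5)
Your proof is correct and follows exactly the route the paper intends: the paper itself dismisses this as ``routine to check, following the proof of Proposition \ref{m2_delta00,delta0},'' and your write-up supplies precisely those details --- the identification of covers of type $(\Delta_{00},\Delta_0)$ with points of $\Adm_{0/0,a+b}^{(a,b),(a,b),2,2}$ via $r$, the disappearance of the factor of $2$ because the class $[r\circ\pi_{0/0,a+b}^{(a,b),(a,b),2,2}]$ already carries labelled simple ramification points, and the unchanged local multiplicity $ma^{m-1}b^{n}/(a^{m-1}b^{n-1})=mb$. No gaps.
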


In order to complete the calculation, we will need to compute the class $[r\circ \pi_{0/0,a+b}^{(a,b),(a,b),2,2}]\in A_1(\barM_{1,3})$.

\begin{prop}\label{double_(a,b)_class}
We have:
\begin{align*}
\int_{\barM_{1,3}}[r\circ \pi_{0/0,a+b}^{(a,b),(a,b),2,2}]\cdot\Delta_0&=0\\
\int_{\barM_{1,3}}[r\circ \pi_{0/0,a+b}^{(a,b),(a,b),2,2}]\cdot\Delta_{1,T}&=1\text{ for }T=\{2,3\},\{1,2,3\}\\
\int_{\barM_{1,3}}[r\circ \pi_{0/0,a+b}^{(a,b),(a,b),2,2}]\cdot\Delta_{1,T}&=0\text{ for }T=\{1,2\},\{1,3\}
\end{align*}
(Recall from the definition of $r:\overline{M}_{0,6}\to\barM_{1,3}$ that the three marked points of a curve in $\barM_{1,3}$ in the image of $\overline{M}_{0,6}$ come from the points $x_5,x_1,x_3$ of the marked genus 0 curve.)
\end{prop}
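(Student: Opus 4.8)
The plan is to compute the five intersection numbers of $[r\circ\pi_{0/0,a+b}^{(a,b),(a,b),2,2}]=\rho_{*}[\overline{B}]$ against the divisor generators $\Delta_0,\Delta_{1,\{1,2,3\}},\Delta_{1,\{1,2\}},\Delta_{1,\{1,3\}},\Delta_{1,\{2,3\}}$ of $A^1(\barM_{1,3})$ (see \S\ref{int_numbers_m13}), where $\overline{B}:=\Adm_{0/0,a+b}^{(a,b),(a,b),2,2}$ is the $1$-dimensional proper moduli stack of admissible covers and $\rho:=r\circ\pi_{0/0,a+b}^{(a,b),(a,b),2,2}\colon\overline{B}\to\barM_{1,3}$. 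We follow the method of Propositions \ref{m12_d-ell_class} and \ref{dd22_class_m13}: replace each boundary divisor by a convenient representative, reduce to a weighted count of admissible covers of a special topological type, and compute the local multiplicities via the complete local ring description of Proposition \ref{adm_defos}, dividing by automorphisms of covers exactly as in the proof of Proposition \ref{m2_delta0,delta0}.

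\textbf{The number $\int\cdot\Delta_0$.} As in the proof of Proposition \ref{dd22_class_m13}, replace $\Delta_0$ by the class $[\overline{M}_{E,3}]$ of the fiber of $\barM_{1,3}\to\barM_{1,1}$ over a general $[E]$. A point of the intersection would be a cover $[f\colon X\to Y]\in\overline{B}$ whose image $\rho(f)=(X/(x_2\sim x_4),x_5,x_1,x_3)^{s}$ has smooth elliptic component $E$. But $X$ has genus $0$ for every object of $\overline{B}$, so $X/(x_2\sim x_4)$, and hence its stabilization, has genus-one part which is always a nodal rational curve (or a cycle of rational curves), never a smooth elliptic curve. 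Thus $\rho(\overline{B})$ is disjoint from $\overline{M}_{E,3}$, so $\int_{\barM_{1,3}}[r\circ\pi_{0/0,a+b}^{(a,b),(a,b),2,2}]\cdot\Delta_0=0$. Equivalently, $\Delta_0=24\lambda$ on $\barM_{1,3}$, the class $\lambda$ is pulled back from $\barM_{1,1}$, and the composition $\overline{B}\to\barM_{1,3}\to\barM_{1,1}$ is constant (it records the nodal cubic), so $\rho^{*}\lambda=0$.

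\textbf{The numbers $\int\cdot\Delta_{1,S}$.} By Lemma \ref{aabb_maps_from_P1} the generic point of $\overline{B}$ is a cover of smooth curves, and $\rho$ sends it into the interior of $\delta_0$ (gluing the distinct points $x_2,x_4$ on the irreducible $X=\bP^1$ gives an irreducible nodal curve). Hence every intersection of $\rho(\overline{B})$ with a divisor $\Delta_{1,S}$ occurs at a point of $\overline{B}$ lying over the boundary of the target moduli $\overline{M}_{0,4}$ (parametrizing $(Y;y',y'',y_1,y_2)$). For each of the three boundary points of $\overline{M}_{0,4}$ one enumerates the finitely many admissible covers of the corresponding nodal target — short Hurwitz-existence bookkeeping over the ramification profiles above the node, in the spirit of \S\ref{adm_g2_delta0} and Lemma \ref{adm_to_mh_quasifinite} — and for each such cover one glues $x_2\sim x_4$, marks $(x_5,x_1,x_3)$, stabilizes, records which $\Delta_{1,S}$ (if any) the resulting curve lies in, and computes the local intersection multiplicity of $\rho(\overline{B})$ with $\Delta_{1,S}$ from Proposition \ref{adm_defos}. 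The symmetry exchanging the two order-$a$ ramification points $x_1\leftrightarrow x_3$ (compare Corollary \ref{count_dd22_translate}) forces $\int\cdot\Delta_{1,\{1,2\}}=\int\cdot\Delta_{1,\{1,3\}}$, and the point is that no degenerate source ever produces a rational tail carrying exactly $\{x_5,x_1\}$ (equivalently $\{x_5,x_3\}$), so both of these vanish; the surviving contributions land in $\Delta_{1,\{2,3\}}$ and in $\Delta_{1,\{1,2,3\}}$, each with total multiplicity $1$.

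\textbf{Main obstacle.} The crux is the boundary enumeration in the last step: over a single boundary point of $\overline{M}_{0,4}$ there may be several admissible covers, distinguished by the ramification profile over the node; their sources may acquire rational tails that must be contracted before one can read off the boundary stratum of $\barM_{1,3}$; and the non-primitive case $a=b$, where the cover factors through a degree-$2$ map, carries extra automorphisms that must be weighted correctly. Verifying that all local multiplicities sum to exactly $(0,1,1,0,0)$, uniformly in $a$ and $b$, is where the real work lies.
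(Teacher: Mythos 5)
Your treatment of $\int\cdot\Delta_0$ is correct and matches the paper: replace $\Delta_0$ by the fiber over a general $[E]\in\barM_{1,1}$ and observe that every curve in the image of $r$ has a rational (nodal or cyclic) genus-one core, never a smooth elliptic one; your alternative via $\rho^{*}\lambda=0$ is a clean shortcut. But for the four numbers $\int\cdot\Delta_{1,T}$ your argument stops exactly where the content of the proposition begins. You correctly reduce to classifying the covers in $\Adm_{0/0,a+b}^{(a,b),(a,b),2,2}$ lying over the three boundary points of $\overline{M}_{0,4}$ and deciding which of their stabilized images acquire a separating node, but you then merely assert the outcome (``no degenerate source ever produces a rational tail carrying exactly $\{x_5,x_1\}$; the surviving contributions land in $\Delta_{1,\{2,3\}}$ and $\Delta_{1,\{1,2,3\}}$, each with total multiplicity $1$'') and explicitly defer its verification as ``where the real work lies.'' That verification \emph{is} the proof. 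The $x_1\leftrightarrow x_3$ symmetry only yields $\int\cdot\Delta_{1,\{1,2\}}=\int\cdot\Delta_{1,\{1,3\}}$, not that either vanishes.

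Concretely, what is missing and what the paper supplies: (1) over the two boundary points of $\overline{M}_{0,4}$ at which $f(x_1)=f(x_2)$ and $f(x_3)=f(x_4)$ lie on different components of the target, a Riemann--Hurwitz analysis component by component shows that $x_2$ and $x_4$ always sit on components which the gluing $x_2\sim x_4$ joins into a cycle, so the stabilized image is a cycle of rational curves and lies in no $\Delta_{1,T}$; (2) over the remaining boundary point (with $f(x_1),f(x_3)$ on one component $Y_1$ and the two simple branch points on $Y_2$), the unique cover whose image acquires a separating node has two components over $Y_1$ given by $x\mapsto x^a$ (carrying $x_1,x_3$) and $x\mapsto x^b$ (carrying $x_2,x_4$), joined by a degree-$2$ component over $Y_2$ carrying $x_5,x_6$, together with contractible degree-$1$ tails; its stabilized image is the chain (nodal cubic)--(tail with point $1$)--(tail with points $2,3$), which lies in $\Delta_{1,\{2,3\}}$ and $\Delta_{1,\{1,2,3\}}$ but in neither $\Delta_{1,\{1,2\}}$ nor $\Delta_{1,\{1,3\}}$; and (3) this cover is automorphism-free and unramified over the node of $Y$, so by Proposition \ref{adm_defos} the moduli space is smooth there and each of the two local intersection multiplicities equals $1$. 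Without steps (1)--(3) your proposal establishes only the first of the five equalities.
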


\begin{proof}
For the first statement, we may replace $\Delta_0$ with the locus of pointed curves with a fixed underlying elliptic curve $(E,x_1)$, which clearly has empty intersection with $[r\circ \pi_{0/0,a+b}^{(a,b),(a,b),2,2}]$.

Now, consider a cover in $\Adm_{0/0,a+b}^{(a,b),(a,b),2,2}$ whose image in $\barM_{1,3}$ has a non-separating node. We claim that the only such cover, up to isomorphism, is constructed as follows. Let $Y$ be the union of two copies $Y_1,Y_2$ of $\bP^1$, attached at a node, with two marked points on each component. Then, $X$ contains two copies of $\bP^1$ mapping to $Y_1$ via the maps $x\mapsto x^a$ and $x\mapsto x^b$, respectively. These two components are connected by a copy of $\bP^1$ mapping to $Y_2$ via $x\mapsto x^2$, and the rest of the components of $X$ map to $Y_2$ isomorphically.

The cover $f:X\to Y$ constructed above gives a single point of intersection of the admissible locus with $\Delta_{1,\{2,3\}}$ and $\Delta_{1,\{1,2,3\}}$; it is now standard to check that the multiplicity is 1.
\end{proof}

\subsubsection{Final computation} We are now ready to intersect $[\pi_{2/1,d}]\in A_2(\barM_{2,1})$ with the boundary test surfaces.

\begin{prop}\label{adm_m21_intersections}
We have:
\begin{align*}
\int_{\barM_{2,1}}[\pi_{2/1,d}]\cdot\Delta_{00}&=4(d-1)\sigma_1(d)\\
\int_{\barM_{2,1}}[\pi_{2/1,d}]\cdot\Delta_{01a}&=\sum_{d_1+d_2=d}\sigma_1(d_1)\sigma_1(d_2)\\
\int_{\barM_{2,1}}[\pi_{2/1,d}]\cdot\Delta_{01b}&=\sum_{d_1+d_2=d}\sigma_1(d_1)\sigma_1(d_2)\\
\int_{\barM_{2,1}}[\pi_{2/1,d}]\cdot\Xi_1&=-\frac{1}{24}(d-1)\sigma_1(d)\\
\int_{\barM_{2,1}}[\pi_{2/1,d}]\cdot\Delta_{11}&=-\frac{1}{24}\left(\sum_{d_1+d_2=d}\sigma_1(d_1)\sigma_1(d_2)\right)
\end{align*}
\end{prop}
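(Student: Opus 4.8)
### Proof proposal

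The plan is to compute each of the five intersection numbers by choosing, for each boundary divisor of $\barM_{2,1}$, a convenient test surface $S$ representing it (or a class rationally equivalent to it), and then applying the contribution formulas already established in Propositions \ref{m21_delta1,delta1}, \ref{m21_delta0,delta1}, \ref{m21_delta0,delta0}, and \ref{m21_delta00,delta0}, together with the auxiliary class computations of \S\ref{m12_d-ell}, \S\ref{dd22_pencils}, Proposition \ref{double_(a,b)_class}, and the intersection tables in \S\ref{int_numbers_m21}, \S\ref{int_numbers_m12}, \S\ref{int_numbers_m13}. Since all the contribution formulas are linear in $[S]$, it suffices to express each of the five basis classes $\Delta_{00},\Delta_{01a},\Delta_{01b},\Xi_1,\Delta_{11}$ as a pushforward of a curve-times-point class (or a linear combination thereof) from one of the two boundary divisors $\barM_{1,3}$ or $\barM_{1,2}\times\barM_{1,1}$, and then evaluate.

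Concretely, I would proceed as follows. For $\Delta_{00}$: it is the pushforward under $\barM_{1,3}\to\barM_{2,1}$ of the class $\Delta_0\in A^1(\barM_{1,3})$ (the locus of irreducible nodal genus-$1$ curves), so I pair each contribution formula against $\Delta_0$. The type-$(\Delta_0,\Delta_1)$ contribution from Proposition \ref{m21_delta0,delta1} vanishes, since $\pi_{1/1,d,2}$ avoids the relevant boundary (Proposition \ref{m12_d-ell_class}) and the sections $s_{\{1,2\}},s_{\{1,3\}}$ land in $\Delta_{1,S}$, not $\Delta_0$; similarly type $(\Delta_{00},\Delta_0)$ contributes $0$ since $r\circ\pi_{0/0,a+b}^{(a,b),(a,b),2,2}$ has zero intersection with $\Delta_0$ by Proposition \ref{double_(a,b)_class}. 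Only type $(\Delta_0,\Delta_0)$ survives, and by Proposition \ref{m21_delta0,delta0} combined with Proposition \ref{dd22_class_m13} (and the projection formula for $\barM_{1,3}\to\barM_{1,2}$, under which $\Delta_0\mapsto\Delta_0$) it gives $\sum_{am=d}m\cdot 2(a^2-1)=2(d-1)\sigma_1(d)$. This accounts for only half the claimed $4(d-1)\sigma_1(d)$, so I expect I must also check that the type-$(\Delta_0,\Delta_1)$ contribution is in fact not zero here — indeed $s_{\{1,2\}}\circ\pi_{1/1,d,2}$ and $s_{\{1,3\}}\circ\pi_{1/1,d,2}$ paired with $\Delta_0\in A^1(\barM_{1,3})$ uses the intersection numbers $\Delta_{11,\{1,2\}}\cdot\Delta_0=\Delta_{11,\{1,3\}}\cdot\Delta_0=1$ from \S\ref{int_numbers_m13}, contributing another $2(d-1)\sigma_1(d)$ via Corollary \ref{m12_d-ell_class_formula}; these two add to $4(d-1)\sigma_1(d)$.

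For $\Delta_{01a}$ and $\Delta_{01b}$: using the intersection table of \S\ref{int_numbers_m21}, I would express these as suitable combinations pushed forward from $\barM_{1,2}\times\barM_{1,1}$ (for the part coming from type $(\Delta_1,\Delta_1)$) and from $\barM_{1,3}$; the type-$(\Delta_1,\Delta_1)$ contribution is controlled by Proposition \ref{m21_delta1,delta1}, giving a factor $\sum_{d_1+d_2=d}\sigma_1(d_1)\sigma_1(d_2)$ times a combinatorial diagonal-intersection factor on $\barM_{1,2}\times\barM_{1,1}$, which I must compute to be $1$ in each case; the contributions from types $(\Delta_0,\Delta_1),(\Delta_0,\Delta_0),(\Delta_{00},\Delta_0)$ need to be checked to vanish or cancel, using Propositions \ref{m12_d-ell_class}, \ref{dd22_class_m13}, \ref{double_(a,b)_class} against the relevant classes on $\barM_{1,3}$. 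For $\Xi_1$ and $\Delta_{11}$, which are the ``more degenerate'' boundary classes, I would again read off from the $\barM_{2,1}$ intersection table how to write each as a pushforward; the $\Xi_1$ answer $-\tfrac{1}{24}(d-1)\sigma_1(d)$ and the $\Delta_{11}$ answer $-\tfrac{1}{24}\sum\sigma_1(d_1)\sigma_1(d_2)$ each carry the tell-tale $-\tfrac{1}{24}$ self-intersection of an elliptic-tail class (cf. the $\Delta_1\cdot\Delta_1=-\tfrac{1}{24}$ entries and the pairings of $\Gamma_{(5)},\Gamma_{(11)}$ with $\Delta_1$ in \S\ref{int_numbers_m21}), so the computation amounts to tracking which auxiliary class ($\pi_{1/1,d,2}$ or $\Adm_{1/1,d_1}\times_\Delta\Adm_{1/1,d_2}$) contributes and multiplying by the appropriate $-\tfrac{1}{24}$.

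The main obstacle I anticipate is \emph{not} any single intersection number but the bookkeeping: for each of the five test surfaces one must correctly identify which of the (up to) three or four topological types of admissible covers actually contributes, verify transversality or compute the correct intersection multiplicity (the rational bridge of a type-$(\Delta_1,\Delta_1)$ or type-$(\Delta_0,\Delta_1)$ cover is \emph{not} contracted in $\pi_{2/1,d}:\Adm_{2/1,d}\to\barM_{2,1}$, so — unlike in \S\ref{adm_g2_intersect_delta1} — there is no extra factor of $2$, a point that must be invoked carefully), and then assemble the contributions using the correct entries of the intersection tables. The delicate part is ensuring the correspondence between the sections $s_{\{1,2\}},s_{\{1,3\}},r$ used to describe how the source curve of a boundary cover sits in $\barM_{1,3}$, and the specific $Q$-class one is intersecting against; once that dictionary is set up, each of the five evaluations is a short finite computation.
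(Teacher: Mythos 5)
Your overall strategy is the same as the paper's: represent each of the five boundary classes by a surface in one of the two boundary divisors of $\barM_{2,1}$, apply Propositions \ref{m21_delta1,delta1}--\ref{m21_delta00,delta0}, and evaluate against the auxiliary classes using \S\ref{int_numbers_m12}, \S\ref{int_numbers_m13}, \S\ref{int_numbers_m21}, Corollary \ref{m12_d-ell_class_formula}, and Propositions \ref{dd22_class_m13} and \ref{double_(a,b)_class}. Your $\Delta_{00}$ computation, after the self-correction, is exactly the paper's: $2(d-1)\sigma_1(d)$ from type $(\Delta_0,\Delta_0)$ plus $2(d-1)\sigma_1(d)$ from type $(\Delta_0,\Delta_1)$ via $\Delta_{11,\{1,2\}}\cdot\Delta_0=\Delta_{11,\{1,3\}}\cdot\Delta_0=1$. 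Your observation that the rational bridges are no longer contracted under $\pi_{2/1,d}:\Adm_{2/1,d}\to\barM_{2,1}$, so the multiplicity-$2$ factors of \S\ref{adm_g2_intersect_delta1} disappear, is also the key point and is correct.

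The one genuine problem is your treatment of $\Delta_{01a}$ and $\Delta_{01b}$. These classes lie in $\Delta_0\cap\Delta_1\subset\barM_{2,1}$ and can therefore be represented \emph{either} by a surface in $\barM_{1,2}\times\barM_{1,1}$ (namely $\barM_{1,2}\times p$, resp.\ $\Delta_0\times\barM_{1,1}$) \emph{or} by a surface in $\barM_{1,3}$ (namely $\Delta_{1,\{2,3\}}$, resp.\ $\Delta_{1,\{1,2,3\}}$) --- but you must pick one representative and use only the contribution formulas valid for that boundary divisor; the two computations are independent consistency checks, not summands. Your phrasing (``from $\barM_{1,2}\times\barM_{1,1}$ for the part coming from type $(\Delta_1,\Delta_1)$ and from $\barM_{1,3}$ \ldots need to be checked to vanish or cancel'') suggests adding both, and moreover the expectation of vanishing is false: with the $\barM_{1,3}$ representative, the type-$(\Delta_{00},\Delta_0)$ contribution does \emph{not} vanish --- by Propositions \ref{m21_delta00,delta0} and \ref{double_(a,b)_class} it equals the entire answer $\sum_{am+bn=d}mb=\sum_{d_1+d_2=d}\sigma_1(d_1)\sigma_1(d_2)$, while types $(\Delta_0,\Delta_1)$ and $(\Delta_0,\Delta_0)$ contribute zero (the former only after the cancellation $\tfrac{1}{24}\cdot 1+1\cdot(-\tfrac{1}{24})=0$ against $\Delta_{1,\{1,2,3\}}$). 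Executed literally, your plan would yield $2\sum\sigma_1(d_1)\sigma_1(d_2)$. Your sketches for $\Xi_1$ and $\Delta_{11}$ identify the right source of the $-\tfrac{1}{24}$ but still need to be pinned down: $\Xi_1$ is the pushforward of $\Delta_{1,\{1,3\}}$ from $\barM_{1,3}$ and receives its entire contribution from type $(\Delta_0,\Delta_1)$ via the pairing $\Delta_{01,\{1,3\}}\cdot\Delta_{1,\{1,3\}}=-1$ against the $\tfrac{1}{24}\Delta_0$ term of Corollary \ref{m12_d-ell_class_formula}, while $\Delta_{11}$ is the pushforward of $\Delta_1\times\barM_{1,1}$ and receives $\sum\sigma_1(d_1)\sigma_1(d_2)$ times $[\Delta_1\times p]\cdot[\Delta_1\times\barM_{1,1}]=-\tfrac{1}{24}$ from type $(\Delta_1,\Delta_1)$ alone.
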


\begin{proof}
First, we consider the classes $\Delta_{01a},\Delta_{01b}$, and $\Delta_{11}$ contained in $\barM_{1,2}\times\barM_{1,1}$; these are the push-forwards of the boundary divisors $\barM_{1,2}\times p$, $\Delta_0\times\barM_{1,1}$, and $\Delta_1\times\barM_{1,1}$, respectively.
Thus,
\begin{align*}
\int_{\barM_{1,2}\times\barM_{1,1}}([p\times\barM_{1,1}]+[\Delta_1\times p])\cdot[\Delta_{01a}]&=1\\
\int_{\barM_{1,2}\times\barM_{1,1}}([p\times\barM_{1,1}]+[\Delta_1\times p])\cdot[\Delta_{01b}]&=1\\
\int_{\barM_{1,2}\times\barM_{1,1}}([p\times\barM_{1,1}]+[\Delta_1\times p])\cdot[\Delta_{11}]&=-\frac{1}{24},
\end{align*}
applying \S\ref{int_numbers_m12}. The formulas for the intersections of $\pi_{2/1,d}$ with these three classes now follow from Proposition \ref{m21_delta1,delta1}.

Now, we consider the classes $\Delta_{00},\Delta_{01a},\Delta_{01b},\Xi_1$ contained in $\barM_{1,3}$; these are the push-forwards of the boundary divisors $\Delta_0$, $\Delta_{1,\{2,3\}}$, $\Delta_{1,\{1,2,3\}}$, and $\Delta_{1,\{1,3\}}$, respectively. (We have included the middle two classes, which arose earlier, as a check.) 

By Propositions \ref{dd22_class_m13} and \ref{m21_delta0,delta0}, the only class for which covers of type $(\Delta_0,\Delta_0)$ contribute is $\Delta_{00}$, in which we get a contribution of $2(d-1)\sigma_1(d)$. By Propositions \ref{m21_delta00,delta0} and \ref{double_(a,b)_class}, covers of type $(\Delta_{00},\Delta_{0})$ contribute $\sum_{d_1+d_2=d}\sigma_1(d_1)\sigma_1(d_2)$ to each of $\Delta_{01a},\Delta_{01b}$ and nothing to the others.

Finally, consider covers of type $(\Delta_0,\Delta_1)$. By Corollary \ref{m12_d-ell_class_formula}, we have
\begin{align*}
[s_{\{2,3\}}\circ\pi_{1/1,d,2}]&=(d-1)\sigma_1(d)\left(\frac{1}{24}\Delta_{01,\{1,2\}}+\Delta_{11,\{1,2\}}\right)\\
[s_{\{1,3\}}\circ\pi_{1/1,d,2}]&=(d-1)\sigma_1(d)\left(\frac{1}{24}\Delta_{01,\{1,3\}}+\Delta_{11,\{1,3\}}\right)
\end{align*}
Applying Proposition \ref{m21_delta0,delta1} and \S\ref{int_numbers_m13}, we get no contributions to $\Delta_{01a}$ and $\Delta_{01b}$, a contribution of $2(d-1)\sigma_1(d)$ to $\Delta_{00}$, and a contribution of $-\frac{1}{24}(d-1)\sigma_1(d)$ to $\Xi_1$. Combining all of the above yields the needed intersection numbers.
\end{proof}

\begin{thm}\label{m21_adm_formula}
The class of $\pi_{2/1,d}$ in $A^2(\barM_{2,1})$ is:
\begin{align*}
&\left(-\frac{1}{12}d\sigma_1(d)+\frac{1}{12}\sigma_3(d)\right)\delta_{00}+\left(\frac{1}{12}\sigma_1(d)-\frac{1}{12}\sigma_3(d)\right)\delta_{01a}\\
+&\left(\left(-d-\frac{1}{12}\right)\sigma_1(d)+\frac{13}{12}\sigma_3(d)\right)\delta_{01b}+\left(2\sigma_3(d)-2d\sigma_1(d)\right)\xi_{1}+\left(4\sigma_3(d)-4\sigma_1(d)\right)\delta_{11}.
\end{align*}
In particular,
\begin{equation*}
\sum_{d\ge1}[\pi_{2/1,d}]q^d\in\Qmod\otimes A^2(\barM_{2,1}).
\end{equation*}
\end{thm}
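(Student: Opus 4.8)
The plan is to assemble the class $[\pi_{2/1,d}]$ from the intersection numbers of Proposition \ref{adm_m21_intersections} by inverting the intersection pairing on $A^2(\barM_{2,1})$. Since $A_2(\cM_{2,1})=0$, the class $[\pi_{2/1,d}]\in A_2(\barM_{2,1})=A^2(\barM_{2,1})$ is supported on the boundary, hence lies in the span of the five boundary classes $\Delta_{00},\Delta_{01a},\Delta_{01b},\Xi_1,\Delta_{11}$, which by \S\ref{int_numbers_m21} form a basis and whose mutual intersection numbers are recorded there as a nondegenerate symmetric $5\times 5$ matrix $M$. Thus $[\pi_{2/1,d}]$ is determined by the five intersection numbers $\int_{\barM_{2,1}}[\pi_{2/1,d}]\cdot\Delta_{00},\ldots,\int_{\barM_{2,1}}[\pi_{2/1,d}]\cdot\Delta_{11}$ of Proposition \ref{adm_m21_intersections}.

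First I would write $[\pi_{2/1,d}]=a_{00}\Delta_{00}+a_{01a}\Delta_{01a}+a_{01b}\Delta_{01b}+a_{\Xi}\Xi_1+a_{11}\Delta_{11}$ and solve $M\cdot(a_{00},a_{01a},a_{01b},a_{\Xi},a_{11})^{\mathsf T}=b$, where $b$ collects the intersection numbers of Proposition \ref{adm_m21_intersections}; this is routine linear algebra with the explicit matrix of \S\ref{int_numbers_m21}. I would then rewrite the result in the $Q$-class basis $\delta_{00},\delta_{01a},\delta_{01b},\xi_1,\delta_{11}$ using the relations between the $\Delta$-classes and the $\delta$-classes dictated by the orders of the automorphism groups of the associated stable graphs (cf. \S\ref{conventions} and \S\ref{m2_d-ell_fixed_target}, e.g.\ $\delta_{00}=\tfrac18\Delta_{00}$). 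This yields the displayed formula.

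For the quasimodularity claim, I would observe that each coefficient obtained above is a $\bQ$-linear combination of $\sigma_1(d)$, $d\,\sigma_1(d)$, $\sigma_3(d)$, and $\sum_{d_1+d_2=d}\sigma_1(d_1)\sigma_1(d_2)$; by the convolution identity of \S\ref{qmod_prelim} the last term equals $(-\tfrac12 d+\tfrac1{12})\sigma_1(d)+\tfrac5{12}\sigma_3(d)$, so every coefficient has the shape $P(d)\sigma_1(d)+R(d)\sigma_3(d)$ for polynomials $P,R\in\bQ[d]$. Since $\sum_{d\ge1}P(d)\sigma_{2k-1}(d)q^d\in\Qmod$ for any polynomial $P$ by the Ramanujan identities of \S\ref{qmod_prelim}, summing the contributions of the five basis classes gives $\sum_{d\ge1}[\pi_{2/1,d}]q^d\in\Qmod\otimes A^2(\barM_{2,1})$.

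Once Proposition \ref{adm_m21_intersections} is available there is no real conceptual obstacle: the two points requiring attention are verifying that the pairing matrix $M$ of \S\ref{int_numbers_m21} is invertible (equivalently, that the intersection pairing on the middle-dimensional Chow group of $\barM_{2,1}$ is perfect on the chosen basis) and carefully tracking the automorphism factors when passing from $\Delta$-classes to $\delta$-classes. Both are bookkeeping rather than substance.
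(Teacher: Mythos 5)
Your proposal is correct and follows essentially the same route as the paper: the paper's own proof of this theorem is simply "immediate from \S\ref{int_numbers_m21}" together with the remark on automorphism orders ($8$ for $\Delta_{00}$, $2$ for the rest), i.e.\ exactly the inversion of the intersection pairing against the numbers of Proposition \ref{adm_m21_intersections} followed by the $\Delta$-to-$\delta$ conversion and the convolution/Ramanujan identities that you describe. The one point you flag — invertibility of the $5\times5$ pairing matrix — does hold (its determinant is $25/9$), so there is no gap.
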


\begin{proof}
Immediate from \S\ref{int_numbers_m21}; note that the dual graphs associated to all five boundary classes have automorphism group of order 2, except $\Delta_{00}$, which has automorphism group of order 8.
\end{proof}

The classes $\delta_{00},\delta_{01a},\delta_{01b}$ push forward to zero on $\barM_2$, and the classes $\xi_1,\delta_{11}$ push forward to $\delta_0,\delta_1$, respectively. Thus, Theorem \ref{m21_adm_formula} recovers Theorem \ref{main_thm_g2}. In addition, taking $d=2$ in Theorem \ref{m21_adm_formula} recovers \cite[Proposition 3.2.9]{vanzelm_thesis}.


\section{The $d$-elliptic locus on $\barM_3$}\label{m3_d-ell}

We now carry out the methods developed earlier and use the results above to compute the (unpointed) $d$-elliptic locus in genus 3, $[\pi_{3/1,d}]\in A^{2}(\barM_3)$. As $A_2(\cM_3)=0$ \cite[Theorem 1.9]{faber_thesis}, any test surface lies in one of the two boundary divisors:
\begin{enumerate}
\item[$(\Delta_0)$] $\barM_{2,2}\to\barM_{3}$
\item[$(\Delta_1)$] $\barM_{2,1}\times\barM_{1,1}\to\barM_{3}$
\end{enumerate} 

\subsection{Classification of Admissible Covers}

We will compute the intersection of the admissible locus with a general test surface $S$ in one of the two boundary divisors. The same arguments from before show that we need only consider the codimension 1 strata in $\Adm_{3/1,d}$, parametrizing covers whose targets have exactly one node. Moreover, the same dimension count shows that we may disregard the strata whose images in $\barM_3$ have dimension 2 or less, or equivalently whose general fiber under the map $\pi_{3/1,d}$ is positive-dimensional.

A similar analysis as in genus 2 yields seven topological types of covers in $\Adm_{3/1,d}$ that give nonzero contributions to general test surfaces, shown in Figures \ref{Fig:g3_Delta0-Delta12}, \ref{Fig:g3_Delta1-Delta12}, \ref{Fig:g3_Delta1-Delta13}, \ref{Fig:g3_Delta11-Delta14}, \ref{Fig:g3_Delta0-Delta0}, \ref{Fig:g3_Delta00-Delta0}, \ref{Fig:g3_Delta000-Delta0}.

\begin{figure}[!htb]
   \begin{minipage}{0.48\textwidth}
     \centering
     \includegraphics[width=.7\linewidth]{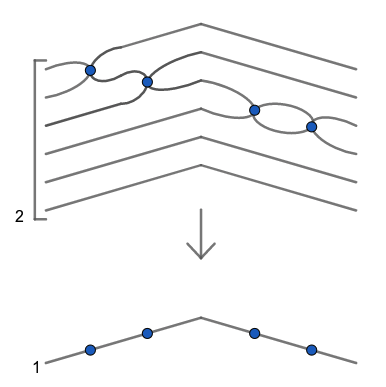}
     \caption{Cover of type $(\Delta_0,\Delta_{1,2})$}\label{Fig:g3_Delta0-Delta12}
   \end{minipage}\hfill
   \begin{minipage}{0.48\textwidth}
     \centering
     \includegraphics[width=.7\linewidth]{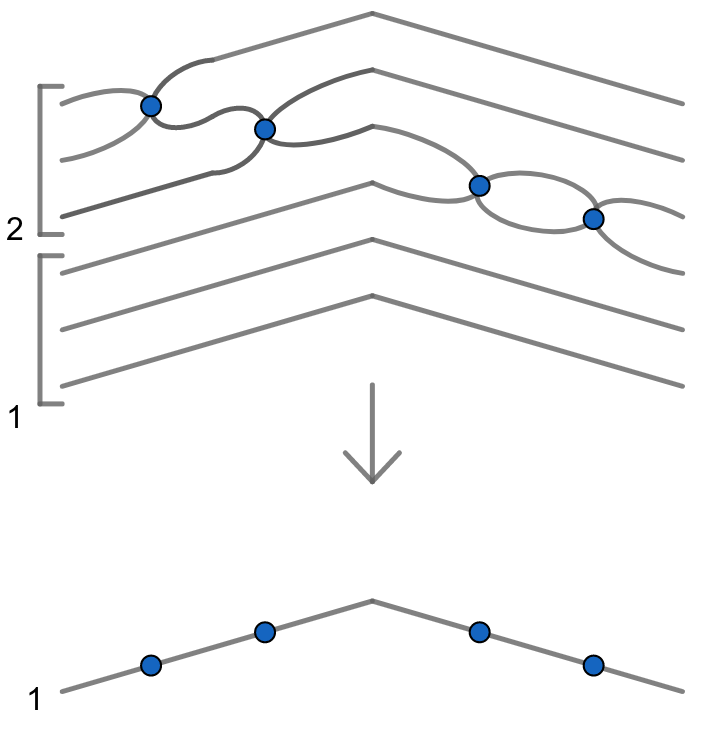}
     \caption{Cover of type $(\Delta_1,\Delta_{1,2})$}\label{Fig:g3_Delta1-Delta12}
   \end{minipage}
\end{figure}

\begin{figure}[!htb]
   \begin{minipage}{0.48\textwidth}
     \centering
     \includegraphics[width=.7\linewidth]{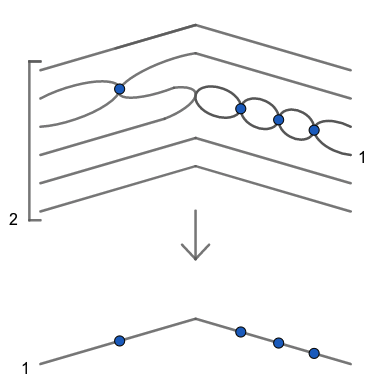}
     \caption{Cover of type $(\Delta_1,\Delta_{1,3})$}\label{Fig:g3_Delta1-Delta13}
   \end{minipage}\hfill
   \begin{minipage}{0.48\textwidth}
     \centering
     \includegraphics[width=.7\linewidth]{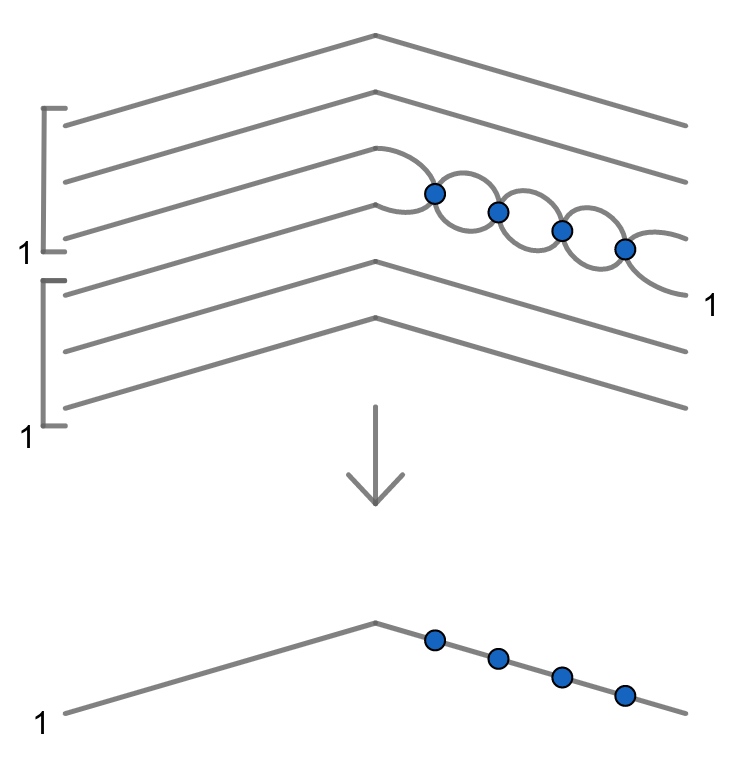}
     \caption{Cover of type $(\Delta_{11},\Delta_{1,4})$}\label{Fig:g3_Delta11-Delta14}
   \end{minipage}
\end{figure}

\begin{figure}[!htb]
   \begin{minipage}{0.48\textwidth}
     \centering
     \includegraphics[width=.55\linewidth]{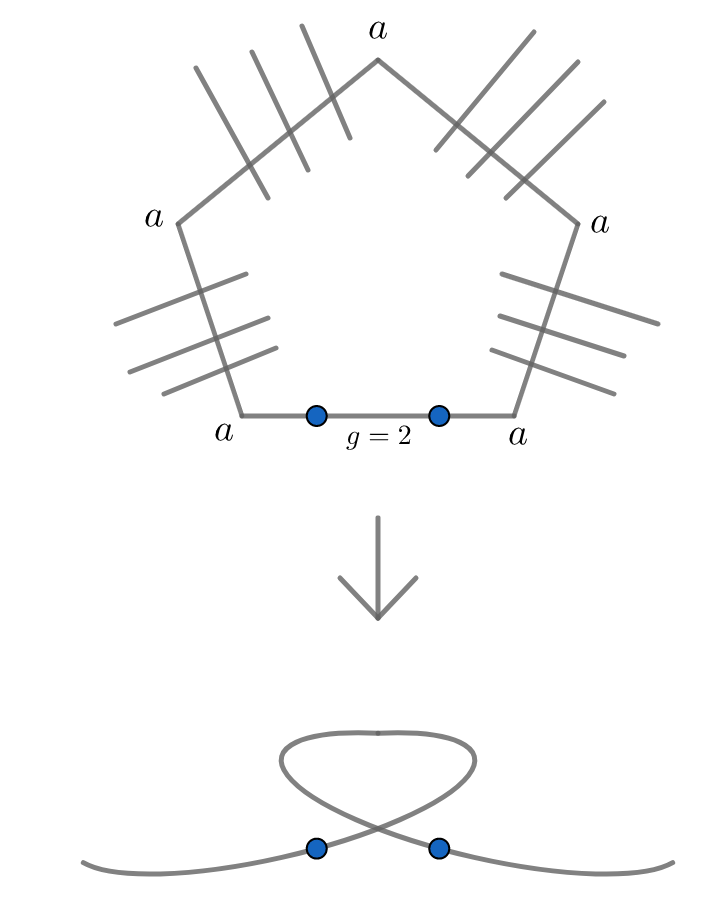}
     \caption{Cover of type $(\Delta_0,\Delta_0)$}\label{Fig:g3_Delta0-Delta0}
   \end{minipage}\hfill
   \begin{minipage}{0.48\textwidth}
     \centering
     \includegraphics[width=.85\linewidth]{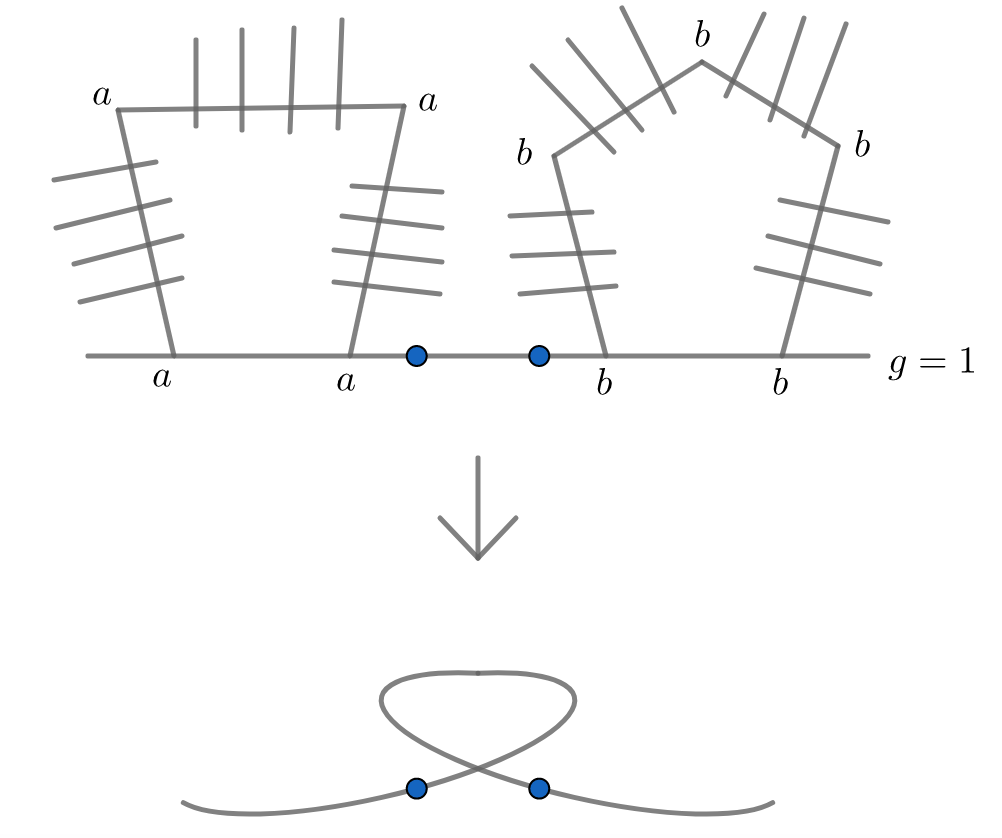}
     \caption{Cover of type $(\Delta_{00},\Delta_0)$}\label{Fig:g3_Delta00-Delta0}
   \end{minipage}
\end{figure}

\begin{figure}[!htb]
\includegraphics[width=.5\linewidth]{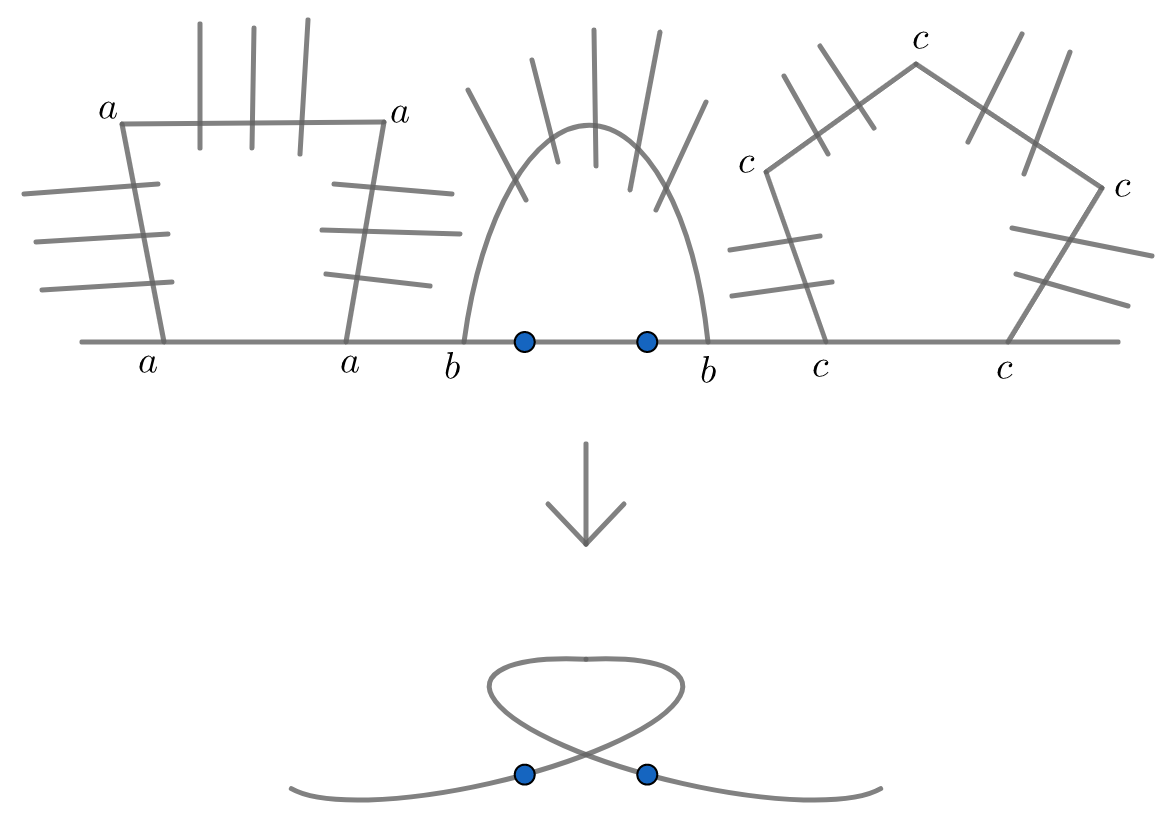}
\caption{Cover of type $(\Delta_{000},\Delta_0)$}\label{Fig:g3_Delta000-Delta0}
\end{figure}

\subsection{The case $[S]\in\Delta_1$}
Define the intersection $A_S$ by the Cartesian diagram
\begin{equation*}
\xymatrix{
A_S \ar[rr] \ar[d] & & \Adm_{3/1,d} \ar[d]^{\pi_{3/1,d}}\\
S \ar[r] & \barM_{2,1}\times\barM_{1,1} \ar[r] & \barM_{3}
}
\end{equation*}
We consider the contributions to $A_S$ from covers of the three possible types: $(\Delta_1,\Delta_{1,2})$, $(\Delta_1,\Delta_{1,3})$, and $(\Delta_{11},\Delta_{1,4})$.

\subsubsection{Type $(\Delta_1,\Delta_{1,2})$}

For any $d\ge1$, we define the space $\Adm_{2/1,d}^1$ as a functor by the Cartesian diagram
\begin{equation*}
\xymatrix{
\Adm_{2/1,d}^1 \ar[r] \ar[d]^{\pi^{1}_{2/1,d}} & \Adm_{2/1,d} \ar[d]^{\pi_{2/1,d}}\\
\barM_{2,1} \ar[r]^u & \barM_{2}
}
\end{equation*}
Here, $u:\barM_{2,1}\to\barM_{2}$ is the map forgetting the marked point. Generically, $\Adm_{2/1,d}^1$ parametrizes covers $f:X\to Y$ along with an arbitrary point of $X$. Define the map $\psi_{2/1,d}^{1}$ by the composition
\begin{equation*}
\Adm_{2/1,d}^{1}\to\Adm_{2/1,d}\to\barM_{1,2}\to\barM_{1,1}
\end{equation*}
where the middle map is $\psi_{2/1,d}$ and the last map forgets the second point.

As in \S\ref{adm_g2_intersect_delta1}, we have a diagram

\begin{equation*}
\xymatrix{
A^{(\Delta_1,\Delta_{1,2}),(d_1,d_2)}_{S} \ar[r] \ar[dd] & \Adm^1_{2/1,d_1}\times_{\Delta}\Adm_{1/1,d_2} \ar[rr] \ar[d] & &  \barM_{1,1}  \ar[d]^{\Delta} \\
 & \Adm^1_{2/1,d_1}\times\Adm_{1/1,d_2} \ar[rr]^(0.57){\psi_{2/1,d_1}^{1}\times\psi_{1/1,d_2}} \ar[d]^{\pi^1_{2/1,d_1}\times\pi_{1/1,d_2}} & & \barM_{1,1}\times\barM_{1,1} \\\
S \ar[r] & \barM_{2,1}\times\barM_{1,1} & 
}
\end{equation*}
where both squares are Cartesian. A geometric point of $A_S$ corresponding to a cover $f:X\to Y$ of type $(\Delta_1,\Delta_{1,2})$ gives rise to a geometric point of $A^{(\Delta_1,\Delta_{1,2}),(d_1,d_2)}_{S}$ (for some $d_1,d_2$) in an obvious way. Note, however, that the image of $[f]$ in $\barM_{1,1}\times\barM_{1,1}$ is of the form $([Y_1,q],[Y_1,y])$, where $Y_1\subset Y$ is the elliptic component, $q\in Y_1$ is one of the branch points of $f$, and $y\in Y_1$ is the node. While $q\neq y$, $[(Y_1,q)]$ and $[(Y_1,y)]$ are isomorphic via translation.

Owing to the contraction of the rational bridge of any admissible cover of type $(\Delta_1,\Delta_{1,2})$, each cover $f$ of type $(\Delta_1,\Delta_{1,2})$ appears with multiplicity 2 in $A_S$. In addition, each point $A^{(\Delta_1,\Delta_{1,2}),(d_1,d_2)}_{S}$ comes from $\binom{4}{2}=6$ points of $A_S$, due to the possible labelings of the branch points of the $d_1$-elliptic map.

Using the fact that $\Delta=[p\times\barM_{1,1}]+[\barM_{1,1}\times p]$, and applying the projection formula, we find:

\begin{prop}\label{m3_delta1,delta12}
The contribution to $A_S$ from covers of type $(\Delta_1,\Delta_{1,2})$ is:
\begin{equation*}
12\left(\int_{\barM_{2}\times\barM_{1,1}}u_{*}([S])\cdot\left(\sum_{d_1+d_2=d}\sigma_1(d_2)\left([[\pi_{2/1,d_1}(E)]\times\barM_{1,1}]+[[\pi_{2/1,d_1}]\times p]\right)\right)\right)
\end{equation*}
\end{prop}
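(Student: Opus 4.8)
The plan is to follow the pattern of \S\ref{adm_g2_intersect_delta1}: match the type-$(\Delta_1,\Delta_{1,2})$ geometric points of $A_S$ with those of the fiber products $A^{(\Delta_1,\Delta_{1,2}),(d_1,d_2)}_S$, keep track of the two multiplicities that appear (a purely combinatorial factor and a scheme-theoretic one), and then evaluate the resulting intersection number on $\barM_{2,1}\times\barM_{1,1}$ by pushing it forward along $u$ to $\barM_2\times\barM_{1,1}$.

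First I would unwind the geometry of a cover $f\colon X\to Y$ of type $(\Delta_1,\Delta_{1,2})$. Here $Y=Y_1\cup Y_0$ with $Y_1$ smooth of genus $1$ and $Y_0\cong\bP^1$ carrying two of the four branch points; $f$ is \'{e}tale over the node, over $Y_1$ it splits as a degree-$d_1$ cover onto $Y_1$ simply branched at the two branch points lying on $Y_1$ (this produces the genus-$2$ component $C_2$ of $X^s$ together with the point at which the rational bridge attaches) and a degree-$d_2$ isogeny onto $Y_1$ (this produces the elliptic component of $X^s$), while over $Y_0$ there is a single degree-$2$ rational bridge together with $d-2$ sections, which attach as contracted rational tails. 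Recording $(C_2\to Y_1,\text{attaching point})$ as a point of $\Adm^1_{2/1,d_1}$ and the isogeny as a point of $\Adm_{1/1,d_2}$, with the two occurrences of $Y_1$ identified, gives a point of $A^{(\Delta_1,\Delta_{1,2}),(d_1,d_2)}_S$; conversely each such point yields a cover of this type once one chooses which two of the four branch labels are to lie on $Y_1$, which is the source of the factor $\binom{4}{2}=6$. Exactly as in Lemma~\ref{g2_delta1,delta1_mult}, contracting the degree-$2$ rational bridge under $\pi_{3/1,d}$ smooths the resulting node of $X^s$ only to order $2$, so each cover of this type occurs in $A_S$ with a further multiplicity $2$; and since $\Adm_{1/1,d_2}$ is unramified over $\cM_{1,1}$ (Corollary~\ref{adm_m11_m11}) and the image of $\Adm^1_{2/1,d_1}$ in $\barM_{2,1}$ is the $u$-preimage of the $d_1$-elliptic locus, for general $S$ the schemes $A^{(\Delta_1,\Delta_{1,2}),(d_1,d_2)}_S$ are reduced of dimension $0$, so no other multiplicities enter. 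Hence the type-$(\Delta_1,\Delta_{1,2})$ contribution to $A_S$ is $12\sum_{d_1+d_2=d}\deg A^{(\Delta_1,\Delta_{1,2}),(d_1,d_2)}_S$.

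Next I would convert this degree count into the asserted integral. Since $\Adm^1_{2/1,d_1}=\Adm_{2/1,d_1}\times_{\barM_2}\barM_{2,1}$, the image of $\Adm^1_{2/1,d_1}\times_\Delta\Adm_{1/1,d_2}$ in $\barM_{2,1}\times\barM_{1,1}$ is the preimage under $u$ of the image $Z$ of $\Adm_{2/1,d_1}\times_\Delta\Adm_{1/1,d_2}$ in $\barM_2\times\barM_{1,1}$; by the projection formula, $\deg A^{(\Delta_1,\Delta_{1,2}),(d_1,d_2)}_S=\int_{\barM_2\times\barM_{1,1}}u_*[S]\cdot[Z]$. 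Finally I would compute $[Z]$ by substituting $\Delta=[p\times\barM_{1,1}]+[\barM_{1,1}\times p]$ with $p=[E]$: on the first summand the elliptic target $Y_1$ of the $d_1$-elliptic cover is fixed to be $E$ while the isogeny is unconstrained, which by Corollary~\ref{adm_m11_m11} and the definition of $\pi_{2/1,d_1}(E)$ pushes forward to $\sigma_1(d_2)\,[\,[\pi_{2/1,d_1}(E)]\times\barM_{1,1}\,]$; on the second summand the target of the isogeny is fixed to be $E$, so its source sweeps out $\sigma_1(d_2)$ points of $\barM_{1,1}$ while the $d_1$-elliptic cover is unconstrained, giving $\sigma_1(d_2)\,[\,[\pi_{2/1,d_1}]\times p\,]$. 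Summing over $d_1+d_2=d$ and multiplying by $12$ yields the stated formula; the explicit expression of Theorem~\ref{adm_m2E_formula} could be inserted for $[\pi_{2/1,d_1}(E)]$ if a closed form were wanted.

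The step I expect to be the main obstacle is the local analysis rather than the bookkeeping: one must check that the order-$2$ smoothing of the bridge node, established in Lemma~\ref{g2_delta1,delta1_mult} in the bare genus-$2$ situation, persists verbatim in the presence of the extra moduli here (the attaching point on $C_2$ and the disjoint isogeny sheet), and that a general test surface $S$ meets the relevant fibered strata transversally, with the expected reduced zero-dimensional scheme structure. Once these are granted, everything else is formal.
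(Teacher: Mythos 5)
Your proposal is correct and follows essentially the same route as the paper: the same fiber-product diagram with $\Adm^1_{2/1,d_1}\times_{\Delta}\Adm_{1/1,d_2}$, the same multiplicity $2$ from contracting the degree-$2$ rational bridge and the same combinatorial factor $\binom{4}{2}=6$ from distributing the branch labels, followed by the diagonal decomposition $\Delta=[p\times\barM_{1,1}]+[\barM_{1,1}\times p]$ and the projection formula along $u$. You in fact supply more of the geometric and transversality details than the paper's (very terse) argument does; the only small point the paper flags that you elide is that the two images of the elliptic target in $\barM_{1,1}$ (marked at a branch point versus at the node) agree only after a translation.
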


\subsubsection{Type $(\Delta_1,\Delta_{1,3})$}

We have a Cartesian diagram
\begin{equation*}
\xymatrix{
A_S^{(\Delta_1,\Delta_{1,3})} \ar[rr] \ar[d] & & \Adm_{2/1,d} \ar[d]^{\pi_{2/1,d}} \\
S \ar[r] & \barM_{2,1}\times\barM_{1,1} \ar[r]^(0.6){\pr_1} & \barM_{2,1}
}
\end{equation*}
where $\pr_1: \barM_{2,1}\times\barM_{1,1}\to\barM_{2,1}$ is the projection. Given a point $([(C,q)],[(E,p)])$ of $\barM_{2,1}\times\barM_{1,1}$, the curve $C\cup_{p\sim q} E$ is the image under $\pi_{3/1,d}$ of a cover of type $(\Delta_1,\Delta_{1,3})$ if and only if there exists a $d$-elliptic map $g:C\to E$ ramified at $q$, in which we may glue $g$ to the unique double cover $E\to\bP^1$ ramified at the origin (and attach additional rational tails) to form an admissible cover whose source contracts to $C\cup_{p\sim q} E$. The transversality is straightforward, and we find that:

\begin{prop}\label{m3_delta1,delta13}
The contribution to $A_S$ from covers of type $(\Delta_1,\Delta_{1,3})$ is:
\begin{equation*}
24\left(\int_{\barM_{2,1}}\pr_{1*}([S])\cdot[\pi_{2/1,d}]\right)
\end{equation*}
\end{prop}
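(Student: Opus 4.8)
The plan is to identify the part $A_S^{(\Delta_1,\Delta_{1,3})}$ of $A_S$ supported on covers of type $(\Delta_1,\Delta_{1,3})$ with the fibre product $S\times_{\barM_{2,1}}\Adm_{2/1,d}$ appearing in the statement (via $\pr_1$ and $\pi_{2/1,d}$), up to the combinatorial factor $24$. First I would unwind the structure of such a cover $f\colon X\to Y$ using the classification (Figure~\ref{Fig:g3_Delta1-Delta13}): $Y=Y_1\cup\bP^1$ with $Y_1$ elliptic, and $X^s=C\cup_{q\sim p}E$, where $C$ has genus $2$, $E$ is elliptic, $f$ restricts over $Y_1$ to a $d$-elliptic map $g\colon C\to Y_1$ ramified at the node $q$ (and, by Riemann--Hurwitz, at exactly one other point $x_2$, whose image carries the unique branch point of $f$ on $Y_1$), and $f$ restricts over $\bP^1$ to a degree-$2$ cover $E\to\bP^1$ ramified over the node, together with $d-2$ trivial sheets. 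The key point is that the $\bP^1$-part is completely rigid: given $(E,p)$ there is a unique involution of $E$ fixing $p$, hence a unique double cover $E\to E/\iota\cong\bP^1$ with $p$ over the node, whose other three branch points on $\bP^1$ are then determined. Thus the only modular input is the $d$-elliptic map $g$, i.e.\ a point of $\Adm_{2/1,d}$ lying over $(C,q)\in\barM_{2,1}$, which is exactly what the Cartesian square records.

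Next I would account for the factor $24=4!$. Once $g$ and the rigid $\bP^1$-datum are fixed, producing a point of $\Adm_{3/1,d}$ requires labelling the four branch points of $Y$: there are $\binom{4}{1}$ choices for the label on the elliptic component $Y_1$ (forced to lie at $g(x_2)$) and $3!$ choices for the remaining three on $\bP^1$, and the gluing of the $d-2$ trivial sheets to $C$ is then determined up to relabelling. So each geometric point of $S\times_{\barM_{2,1}}\Adm_{2/1,d}$ yields $24$ geometric points of $A_S$. No further correction is needed: although each such $f$ has automorphism group $\bZ/2$, generated by the deck transformation of $E\to\bP^1$ extended by the identity, this group maps isomorphically onto $\operatorname{Aut}(X^s)=\bZ/2$, so the points of the fibre product $A_S$---which remember the identification with the stabilized source---have trivial automorphisms. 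Moreover $X$ has no rational bridges: the $d-2$ trivial $\bP^1$'s are rational tails, so stabilization deletes them and creates no node (and $E$, a stable elliptic tail, is retained), in contrast to type $(\Delta_0,\Delta_1)$ in genus $2$; hence there is no extra intersection multiplicity.

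It remains to check transversality. Over the relevant open loci, $\pi_{2/1,d}\colon\Adm_{2/1,d}\to\barM_{2,1}$ and the maps recording the double cover are unramified (as in Corollary~\ref{adm_m11_m11} and generic smoothness of Hurwitz spaces), and by Proposition~\ref{adm_defos} the only ramification of $f$ over a node of $Y$ occurs at one node of $X$ with index $2$, so the complete local ring of $\Adm_{3/1,d}$ at $[f]$ is regular. Thus for general $S$ the fibre product $S\times_{\barM_{2,1}}\Adm_{2/1,d}$ is reduced of dimension $0$, and its degree equals $\int_{\barM_{2,1}}\pr_{1*}([S])\cdot[\pi_{2/1,d}]$ by the projection formula (using $\pr_1^{*}[\pi_{2/1,d}]=[\pi_{2/1,d}]\times\barM_{1,1}$). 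Multiplying by $24$ gives the claim. I expect the main obstacle to be the structural step---verifying that the $\bP^1$-part of the cover is rigidified by $(E,p)$, so that $A_S^{(\Delta_1,\Delta_{1,3})}$ is genuinely governed by the $d$-elliptic locus on $\barM_{2,1}$---together with the bookkeeping of the $24$ labellings and the cancelling automorphisms; the transversality itself is routine.
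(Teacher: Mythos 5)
Your argument is correct and follows essentially the same route as the paper: you identify covers of type $(\Delta_1,\Delta_{1,3})$ over a point of $S$ with $d$-elliptic maps $g\colon C\to Y_1$ ramified at the attaching point $q$ (i.e.\ with the fibre of $\pi_{2/1,d}$ over $\pr_1(S)$), observe that the degree-$2$ datum on the elliptic tail is rigid given $(E,p)$, and extract the factor $4!=24$ from the labellings of the branch points. The additional points you verify --- that the $d-2$ trivial sheets are tails rather than bridges (so no excess multiplicity arises under stabilization, unlike type $(\Delta_0,\Delta_1)$ in genus $2$) and that the automorphism of the elliptic tail cancels against $\operatorname{Aut}(X^s)$ --- are exactly what the paper's phrase ``the transversality is straightforward'' is suppressing.
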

The factor of 24 comes from the $4!$ ways to label the ramification points.

\subsubsection{Type $(\Delta_{11},\Delta_{1,4})$}

For $d_1,d_2$ with $d_1+d_2=d$, we have a diagram
\begin{equation*}
\xymatrix{
A_S^{(\Delta_{11},\Delta_{1,4})} \ar[r] \ar[dd] & \barM_{1,2} \times \Adm_{1/1,d_1}\times_{\Delta}\Adm_{1/1,d_2} \ar[d] \ar[r] & \barM_{1,2}\times \Adm_{1/1,d_1}\times\Adm_{1/1,d_2} \ar[d]^{\id\times\pi_{1/1,d_1}\times\pi_{1/1,d_2}} \\
 & \barM_{1,2}\times\barM_{1,1} \ar[r]^(0.45){\id\times\Delta} & \barM_{1,2}\times \barM_{1,1}\times\barM_{1,1} \ar[d]^{\xi_1\times\id} \\
S \ar[rr] & & \barM_{2,1}\times\barM_{1,1}
}
\end{equation*}
where $\xi_1:\barM_{1,2}\times\barM_{1,1}\to\barM_{2,1}$ is the map defining the boundary divisor $\Delta_1$ in $\barM_{2,1}$, and both squares above are Cartesian.

\begin{prop}\label{m3_delta11,delta14}
The contribution to $A_S$ from covers of type $(\Delta_{11},\Delta_{1,4})$ is:
\begin{equation*}
24\left(\sum_{d_1+d_2=d}\sigma_1(d_1)\sigma_1(d_2)\right)\cdot\left(\int_{\barM_{2,1}\times\barM_{1,1}}[S]\cdot([\Delta_{01a}\times\barM_{1,1}]+[\Delta_1\times p])\right)
\end{equation*}
\end{prop}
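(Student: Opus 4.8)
The plan is to follow the template of \S\ref{adm_g2_intersect_delta1} and of Propositions \ref{m3_delta1,delta12}--\ref{m3_delta1,delta13}: classify the covers of type $(\Delta_{11},\Delta_{1,4})$, match the $\bC$-points of $A_S^{(\Delta_{11},\Delta_{1,4})}$ with those of the fiber product $\barM_{1,2}\times\Adm_{1/1,d_1}\times_{\Delta}\Adm_{1/1,d_2}$ appearing in the displayed diagram, verify transversality, and then read off the contribution by a pushforward computation. For such a cover $f\colon X\to Y$ we have $Y=E\cup_{y_0}\bP^1$ with all four branch points on the rational component, so $f$ is \'etale over $E$ and $f^{-1}(E)$ is a disjoint union of elliptic curves; requiring $X^s$ to be a chain of three elliptic curves (and not a deeper boundary stratum) forces $f^{-1}(E)$ to consist of exactly two isogenies $F_1\to E$ and $F_2\to E$ of degrees $d_1,d_2$ with $d_1+d_2=d$. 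Over the rational component, the four simple branch points force a single genus-$1$ double cover $E'\to\bP^1$ branched exactly at $y_1,\dots,y_4$ (and unramified over $y_0$), together with $d-2$ further components mapping isomorphically to $\bP^1$; the two points of $E'$ over $y_0$ bridge $F_1$ and $F_2$, the degree-one components appear as rational tails, and stabilizing yields $X^s$ equal to the chain of three elliptic curves with $E'$ in the middle, attached to $F_1$ and to $F_2$. This is precisely the data organized by the Cartesian diagram: the $\barM_{1,2}$ factor records $E'$ together with its two nodes, the $\Adm_{1/1,d_1}\times_{\Delta}\Adm_{1/1,d_2}$ factor records the two isogenies (the diagonal $\Delta$ imposing a common underlying elliptic curve), the maps $\xi_1\times\id$ and $\id\times\Delta$ recover the image of the test surface in $\barM_3$, and the labelling of the four branch points accounts for the factor $4!=24$.

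I would then check that this groupoid is a set and that the intersection is transverse. The rational tails map isomorphically to $\bP^1$ and hence carry no automorphisms, while the deck involution of $E'\to\bP^1$ does not extend to $X$ for general moduli, since it would have to interchange $F_1$ and $F_2$; the choice of which point of $F_i$ over $y_0$ carries the bridge to $E'$ is absorbed by a translation automorphism of the isogeny $F_i\to E$. For transversality, observe that $f$ is unramified over the unique node of $Y$, so $\Adm_{3/1,d}$ is smooth at $[f]$ by Proposition \ref{adm_defos}, and a general $S$ meets $\pi_{3/1,d}$ transversely; in contrast to the analysis of types $(\Delta_0,\Delta_0)$ and $(\Delta_{00},\Delta_0)$, the contracted components here are rational tails rather than bridges, so no excess multiplicities arise. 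As in Proposition \ref{m3_delta1,delta12}, the projection $\barM_{1,2}\times\Adm_{1/1,d_1}\times_{\Delta}\Adm_{1/1,d_2}\to\barM_{1,2}\times\barM_{1,1}$ remembering $E'$ and the common elliptic curve has degree $\sigma_1(d_1)\sigma_1(d_2)$ by Corollary \ref{adm_m11_m11}.

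It then remains to push the fundamental class of $\barM_{1,2}\times\barM_{1,1}$ forward along $(\xi_1\times\id)\circ(\id\times\Delta)$ and apply the projection formula. Using $[\Delta]=[p\times\barM_{1,1}]+[\barM_{1,1}\times p]$ in $A^1(\barM_{1,1}\times\barM_{1,1})$, the fact that $\xi_1\colon\barM_{1,2}\times\barM_{1,1}\to\barM_{2,1}$ is the normalization of the divisor $\Delta_1$, so that $(\xi_1)_*[\barM_{1,2}\times\barM_{1,1}]=[\Delta_1]$, and the fact that $(\xi_1)_*([\barM_{1,2}]\times[p])=[\Delta_{01a}]$ (taking $p$ to be the boundary point of $\barM_{1,1}$, so that $\xi_1$ restricts to a degree-one parametrization of $\Delta_{01a}$), one obtains
\begin{equation*}
\bigl((\xi_1\times\id)\circ(\id\times\Delta)\bigr)_{*}[\barM_{1,2}\times\barM_{1,1}]=[\Delta_{01a}\times\barM_{1,1}]+[\Delta_1\times p].
\end{equation*}
Multiplying by $\sigma_1(d_1)\sigma_1(d_2)$ and by $24$, intersecting with $[S]$, and summing over $d_1+d_2=d$ then gives the stated formula. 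I expect the main obstacle to be the bookkeeping in the second paragraph: confirming that the contracted rational tails, the deck involution of $E'$, and the ordering implicit in the sum $\sum_{d_1+d_2=d}$ conspire to produce exactly the factor $24\,\sigma_1(d_1)\sigma_1(d_2)$ at each relevant point of $S$, with no stray powers of $2$ or of the $d_i$. The classification, transversality, and pushforward are routine given the machinery already developed in \S\ref{m2_d-ell} and \S\ref{m21_d-ell}.
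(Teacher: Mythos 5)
Your argument is correct and follows essentially the same route as the paper: build the covers from a pair of isogenies to a common elliptic curve together with the double cover $E'\to\bP^1$ given by $|\cO(p_1+p_2)|$, account for the $4!$ labellings of the branch points, observe that the only contracted components are rational tails (so no excess intersection multiplicities, unlike the $(\Delta_0,\Delta_0)$ and $(\Delta_{00},\Delta_0)$ analyses), and decompose the diagonal to arrive at $[\Delta_{01a}\times\barM_{1,1}]+[\Delta_1\times p]$. One small quibble: requiring $X^s$ to be a chain of three elliptic curves does not by itself force $f^{-1}(E)$ to have exactly two components --- three \'etale components joined by two rational bridges over $\bP^1$ also stabilizes to such a chain --- but those covers sweep out only a one-dimensional locus in $\barM_3$ and are discarded by the paper's dimension count, so this does not affect the contribution of the type $(\Delta_{11},\Delta_{1,4})$ computed here.
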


\begin{proof}
Given covers $E_1\to E$ and $E_2\to E$ of degrees $d_1,d_2$, respectively, and a 2-pointed curve $(E',p_1,p_2)$ of genus 1, we construct an admissible cover of type $(\Delta_{11},\Delta_{1,4})$ by attaching $E'$ the $E_i$ at their origins along the $p_i$, mapping $E'\to\bP^1$ via the complete linear series $|\cO(p_1+p_2)|$, and labelling the ramification points in one of $4!=24$ ways. The transversality is straightforward. Decomposing the class of $\Delta$ as usual, we get the desired result; here the class $\Delta_{01a}\in A_2(\barM_{2,1})$ arises as the pushforward of $\barM_{1,2}\times p$ under $\xi_1$.
\end{proof}

\subsection{The case $[S]\in\Delta_0$}\label{m3_test_surface_nonsep}

The only such $S$ we will need is defined as follows. Let $C$ be a general curve of genus 2, and take $S=C\times C$. The map $S\mapsto\barM_{2,2}$ is defined by $(x,y)\mapsto [(C,x,y)]$, where if $x=y$, we interpret the image as the reducible curve with a 2-pointed rational curve attached at $x$. Clearly, covers of types $(\Delta_{00},\Delta_0)$ and $(\Delta_{000},\Delta_0)$ do not appear along $S$, so we do not give a general formula for contributions from such covers. Moreover, if $C$ is general, then it is not $d$-elliptic, so we also do not see covers of type $(\Delta_{0},\Delta_{1,2})$. Therefore, the only contributions to the intersection of $S$ with the admissible locus come from covers of type $(\Delta_0,\Delta_0)$.

\begin{prop}\label{CxC_int}
We have
\begin{equation*}
\int_{\barM_3}[C\times C]\cdot[\pi_{3/1,d}]=48(d\sigma_3(d)-\sigma_1(d)).
\end{equation*}
\end{prop}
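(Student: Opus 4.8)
The plan is to run the same method as in Proposition \ref{m2_delta0,delta0}, now with a test surface in place of a test curve. By \S\ref{m3_test_surface_nonsep}, the only admissible covers meeting $S=C\times C$ are of type $(\Delta_0,\Delta_0)$, so I first recall the structure of such a cover: the target $Y$ is an irreducible nodal curve of genus $1$, and, writing $am=d$, the restriction of $f$ to the genus $2$ component $X_2$ of $X$ is a degree $a$ morphism $X_2\to\bP^1$ that is totally ramified at the two points $p,q$ lying over the node of $Y$ and simply ramified at the four points lying over the branch points $y_1,\dots,y_4$; the remaining components of $X$ form a chain of $m-1$ rational curves joining $p$ and $q$, each mapping by $z\mapsto z^a$, and $X^s=X_2/(p\sim q)$. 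Since $C$ is general it is not $a$-elliptic for any $a\mid d$, so every such cover indeed has $X_2\cong C$.

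Next I would organize the geometric points of $A_S=(C\times C)\times_{\barM_3}\Adm_{3/1,d}$ according to the factorization $am=d$. A geometric point of $A_S$ of type $(\Delta_0,\Delta_0)$ and factor $a$ consists of a point $(x,y)\in C\times C$, a cover $f$ as above with $X^s\cong C/(x\sim y)$, and an isomorphism; the isomorphism induces a unique isomorphism $C\xrightarrow{\ \sim\ }X_2$ of normalizations, and transporting $f|_{X_2}$ through it identifies this datum with a degree $a$ morphism $g\colon C\to\bP^1$ (up to automorphism of the target) totally ramified at the ordered pair $(x,y)$ and simply ramified at the four labelled points over $y_1,\dots,y_4$. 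By Proposition \ref{dd2222_g3}, applied with $d$ replaced by $a$, there are exactly $48(a^4-1)$ such tuples, so $A_S$ has exactly $48(a^4-1)$ geometric points with factor $a$; each carries an automorphism group of order $a^{m-1}$ coming from the roots of unity acting on the rational chain.

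For the local multiplicities I would repeat the computation from the proof of Proposition \ref{m2_delta0,delta0}. By Proposition \ref{adm_defos} the complete local ring of $\Adm_{3/1,d}$ at such an $f$ is $\bC[[s_1,s_2,s_3,t,t_1,\dots,t_m]]/(t-t_1^a,\dots,t-t_m^a)$, where $s_1,s_2,s_3$ deform the cover $g$ (hence also $C$), $t$ smooths the node of $Y$, and $t_1,\dots,t_m$ smooth the $m$ nodes of $X$. Under $\pi_{3/1,d}$ the node of $X^s$ is smoothed to order $m$, so its smoothing parameter pulls back to $(t_1\cdots t_m)\cdot(\text{unit})$; on the other hand the three conditions cutting out $S$ inside the boundary divisor $\Delta_0\subset\barM_3$ say precisely that $C$ is held fixed, and these eliminate $s_1,s_2,s_3$ because the map sending a cover of this type to the class of $X_2$ in $\cM_2$ is quasifinite (by the finiteness in Proposition \ref{dd2222_g3}). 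Hence the complete local ring of $A_S$ at each such point is $\bC[t,t_1,\dots,t_m]/(t-t_i^a,\ t_1\cdots t_m)$, of length $ma^{m-1}$, and dividing by the order $a^{m-1}$ of the automorphism group, each of the $48(a^4-1)$ points contributes $m$. Summing over $a\mid d$ with $m=d/a$ then gives
\[
\int_{\barM_3}[C\times C]\cdot[\pi_{3/1,d}]=\sum_{am=d}48\,m\,(a^4-1)=48\Big(d\sum_{a\mid d}a^3-d\sum_{a\mid d}\tfrac{1}{a}\Big)=48\big(d\sigma_3(d)-\sigma_1(d)\big).
\]
I expect the main obstacle to be the local computation in the third paragraph: one must verify that the three equations defining $S$ inside $\Delta_0$ really do eliminate $s_1,s_2,s_3$ transversally, so that no excess contribution appears, and that contracting the rational chain produces exactly an $A_m$-singularity as in the genus $2$ case; both rest on the generality of $C$ — in particular on its not being $a$-elliptic and on the finiteness asserted in Proposition \ref{dd2222_g3}.
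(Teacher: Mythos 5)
Your proposal is correct and follows essentially the same route as the paper: the paper likewise observes that only type $(\Delta_0,\Delta_0)$ covers meet $C\times C$, counts them via Proposition \ref{dd2222_g3} with $d$ replaced by $a$, and invokes a local computation ``identical to that of Proposition \ref{m2_delta0,delta0}'' to get the factor $m$, ending with the same divisor sum $\sum_{am=d}48(a^4-1)m=48(d\sigma_3(d)-\sigma_1(d))$. You have merely written out explicitly the local-ring and automorphism bookkeeping that the paper leaves implicit.
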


\begin{proof}
This amounts to enumerating covers of type $(\Delta_0,\Delta_0)$ where the genus 2 component in the source is isomorphic to $C$; the result is then immediate from Proposition \ref{dd2222_g3} and a local computation identical to that of Proposition \ref{m2_delta0,delta0}. Indeed, we have
\begin{equation*}
\sum_{am=d}48(a^4-1)m=48d\left(\sum_{am=d}a^3\right)-48\left(\sum_{am=d}m\right)=48(d\sigma_3(d)-\sigma_1(d)).
\end{equation*}
\end{proof}

In the final computation, we will use the following:

\begin{prop}\label{class_of_CxC}
We have $[C\times C]=2(\Delta_{[1]}+\Delta_{[4]})$ in $A_2(\barM_3)$, where the classes on the right hand side are defined as in \S\ref{int_numbers_m3}.
\end{prop}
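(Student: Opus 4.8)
The plan is to pair both sides against the basis $\lambda^2,\lambda\delta_0,\lambda\delta_1,\delta_0^2,\delta_0\delta_1,\delta_1^2,\kappa_2$ of $A^2(\barM_3)$, which by \S\ref{int_numbers_m3} pairs perfectly with $A_2(\barM_3)$; it then suffices to match the seven intersection numbers of $[C\times C]$ with these classes against those of $2(\Delta_{[1]}+\Delta_{[4]})$, the latter being read off from the table in \S\ref{int_numbers_m3} (they are $0,0,0,16,0,-2,2$). Write $\phi\colon C\times C\to\barM_{2,2}$ for the map $(x,y)\mapsto[(C,x,y)]$ and $\xi\colon\barM_{2,2}\to\barM_3$ for the gluing of the two marked points, so that $[C\times C]\in A_2(\barM_3)$ is by definition $(\xi\circ\phi)_{*}$ of the fundamental class of $C\times C$, and, by the projection formula, $\int_{\barM_3}[C\times C]\cdot\gamma=\int_{C\times C}(\xi\circ\phi)^{*}\gamma$. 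I will evaluate the right-hand side using the standard intersection numbers $F_i^2=0$, $F_1\cdot F_2=\Delta\cdot F_i=1$, $\Delta^2=2-2g(C)=-2$ on $C\times C$, where $F_i$ is a fiber of the $i$-th projection $p_i$ and $\Delta$ is the diagonal, together with $p_i^{*}\omega_C\equiv 2F_i$.

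First, the three classes involving $\lambda$ will contribute zero: $\xi^{*}\lambda$ is the class $\lambda$ on $\barM_{2,2}$, which is pulled back from $\barM_2$, while $C\times C\to\barM_2$ is constant, so $(\xi\circ\phi)^{*}\lambda=0$. For the remaining four I will use the pullback formulas $\xi^{*}\delta_0=-\psi_1-\psi_2+(\text{boundary correction})$, $\xi^{*}\delta_1=D_{12}+(\text{boundary correction})$, and $\xi^{*}\kappa_2=\kappa_2$, where $D_{12}\subset\barM_{2,2}$ is the divisor on which the two marked points sit on a rational bridge. The correction terms are supported on boundary divisors of $\barM_{2,2}$ other than $D_{12}$ --- for $\delta_0$ this is because $\xi(D_{12})$ has a single non-separating node and so does not meet the self-intersection locus of the boundary. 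Since $C$ is general, $\phi(C\times C)$ avoids every boundary divisor of $\barM_{2,2}$ except $D_{12}$, so all correction terms die under $\phi^{*}$. Feeding in the standard identities $\phi^{*}\psi_i=p_i^{*}\omega_C+\Delta$ and $\phi^{*}D_{12}=\Delta$ (valid for a fixed curve with two varying markings), together with the Arbarello--Cornalba formula $\kappa_2^{\barM_{2,2}}=\pi^{*}\kappa_2^{\barM_2}+(\psi_1-D_{12})^2+\psi_2^2$, I obtain $(\xi\circ\phi)^{*}\delta_0\equiv-2(F_1+F_2+\Delta)$, $(\xi\circ\phi)^{*}\delta_1=\Delta$, and $(\xi\circ\phi)^{*}\kappa_2=(p_1^{*}\omega_C)^2+(p_2^{*}\omega_C+\Delta)^2$. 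A short computation on $C\times C$ then gives the values $16$, $0$, $-2$, $2$ for $\delta_0^2,\delta_0\delta_1,\delta_1^2,\kappa_2$ respectively, so $[C\times C]$ pairs with the seven basis classes to give $(0,0,0,16,0,-2,2)$, matching $2(\Delta_{[1]}+\Delta_{[4]})$; since the pairing is perfect, this proves the proposition.

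I expect the main obstacle to be justifying the pullback formulas for $\xi^{*}\delta_0$ and $\xi^{*}\delta_1$ precisely enough to be certain the correction terms vanish after restriction to $C\times C$; this rests on the standard local description of the gluing maps near the boundary of $\barM_{g,n}$, combined with the observation about the topological type of $\xi(D_{12})$. Everything else is a routine Chow computation on $\barM_{2,2}$ and on $C\times C$, and could alternatively be checked with {\tt admcycles.sage}.
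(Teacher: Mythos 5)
Your proof is correct, but it takes a genuinely different route from the paper. The paper replaces the general curve $C$ by a maximally degenerate genus $2$ curve $C_0$ (two nodal arithmetic-genus-$1$ curves glued at a separating point), decomposes the fiber $\overline{M}_{C_0,2}$ of $\barM_{2,2}\to\barM_2$ into its four components according to where the two markings lie, and identifies the resulting surfaces in $\barM_3$ directly with $\Delta_{[1]}$ and $\Delta_{[4]}$ (two components each) --- a short, purely geometric argument with no intersection numbers computed. You instead exploit the perfect pairing between $A^2(\barM_3)$ and $A_2(\barM_3)$ and compute all seven pairings of $[C\times C]$ against $\lambda^2,\lambda\delta_0,\lambda\delta_1,\delta_0^2,\delta_0\delta_1,\delta_1^2,\kappa_2$ via pullback to $C\times C$; I checked your four nontrivial numbers ($16,0,-2,2$) and they do match $2(\Delta_{[1]}+\Delta_{[4]})$ read off from the table in \S\ref{int_numbers_m3}, and your pullback formulas ($\phi^{*}\psi_i=p_i^{*}\omega_C+\Delta$, $\phi^{*}D_{12}=\Delta$, the excess term $-\psi_1-\psi_2$ in $\xi^{*}\delta_0$ with corrections supported away from $D_{12}$, $\xi^{*}\kappa_2=\kappa_2$ together with the Arbarello--Cornalba comparison) are all standard and correctly applied; the observation that $\xi(D_{12})$ has only one non-separating node, so $D_{12}$ contributes no correction to $\xi^{*}\delta_0$, is exactly the point that needs making. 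The trade-off: your argument is longer and leans on the self-intersection formula for the boundary and on the perfect pairing, but it is self-checking (four independent numbers must match) and requires no identification of degenerate components with named boundary surfaces; the paper's degeneration is slicker but silently requires knowing that each component of $\overline{M}_{C_0,2}$ pushes forward with multiplicity one to the stated $\Delta_{[i]}$. Either proof is acceptable.
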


\begin{proof}
Because any two geometric points of $\barM_2$ are rationally equivalent, we may replace the general genus 2 curve $C$ by the reducible genus 2 curve $C_0$ obtained by gluing two nodal curves of arithmetic genus 1 together at a separating node. Then, the space $\overline{M}_{C_0,2}$ parametrizing two points on $C_0$ (that is, the fiber over $[C_0]$ of the forgetful map $\barM_{2,2}\to\barM_2$) has four components, corresponding to the choices of components of $C_0$ on which the marked points can lie. The two components of $\overline{M}_{C_0,2}$ for which the marked points lie on the same component of $C_0$ each contribute $\Delta_{[1]}$ to the class of $\overline{M}_{C_0,2}$, and the two components for which the marked points lie on opposite components each contribute $\Delta_{[4]}$.
\end{proof}

\subsection{The class of the admissible locus}

\begin{prop}\label{adm_m3_intersections}
We have:
\begin{align*}
\int_{\barM_3}[\pi_{3/1,d}]\cdot\Delta_{[1]}&=96(d-1)\sigma_1(d)\\
\int_{\barM_3}[\pi_{3/1,d}]\cdot\Delta_{[4]}&=24(d\sigma_{3}(d)-\sigma_1(d))-96(d-1)\sigma_1(d)\\
\int_{\barM_3}[\pi_{3/1,d}]\cdot\Delta_{[5]}&=24\left(\sum_{d_1+d_2=d}(2d_1-1)\sigma_1(d_1)\sigma_1(d_2)\right)\\
\int_{\barM_3}[\pi_{3/1,d}]\cdot\Delta_{[6]}&=0\\
\int_{\barM_3}[\pi_{3/1,d}]\cdot\Delta_{[8]}&=24\left(\sum_{d_1+d_2=d}d_1\sigma_1(d_1)\sigma_1(d_2)\right)-(d-1)\sigma_1(d)\\
\int_{\barM_3}[\pi_{3/1,d}]\cdot\Delta_{[10]}&=48\left(\sum_{d_1+d_2=d}\sigma_1(d_1)\sigma_1(d_2)\right)
\\
\int_{\barM_3}[\pi_{3/1,d}]\cdot\Delta_{[11]}&=24\left(\sum_{d_1+d_2+d_3=d}\sigma_1(d_1)\sigma_1(d_2)\sigma_1(d_3)\right)-\left(\sum_{d_1+d_2=d}\sigma_1(d_1)\sigma_1(d_2)\right)
\end{align*}
\end{prop}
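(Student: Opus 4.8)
The plan is to pin down the seven numbers $\int_{\barM_3}[\pi_{3/1,d}]\cdot\Delta_{[i]}$ by pairing $[\pi_{3/1,d}]$ against a spanning collection of test surfaces $S$ lying in the two boundary divisors $\Delta_0$ (the image of $\barM_{2,2}\to\barM_3$) and $\Delta_1$ (the image of $\barM_{2,1}\times\barM_{1,1}\to\barM_3$), exactly as in \S\ref{m2_d-ell} and \S\ref{m21_d-ell}; since $A_2(\cM_3)=0$, every class in $A_2(\barM_3)$ is supported on the boundary, so such surfaces span. For a general $S$, the intersection with $\Adm_{3/1,d}$ meets only the codimension-one strata lying over a one-nodal target and only those whose image in $\barM_3$ is a surface, i.e. the seven topological types of Figures \ref{Fig:g3_Delta0-Delta12} through \ref{Fig:g3_Delta000-Delta0}, so $\int_{\barM_3}[S]\cdot[\pi_{3/1,d}]$ is the sum of the contributions of the types that actually occur. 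Finally one expresses each $[S]$ in the basis $\{\Delta_{[i]}\}$ using the intersection table of \S\ref{int_numbers_m3} and inverts the resulting linear system.

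For the surfaces in $\Delta_1=\barM_{2,1}\times\barM_{1,1}$ — these include $\Gamma_{(i)}\times\barM_{1,1}=\Delta_{[i]}$ for $i=5,6,11$, and together with the $\Delta_0$-surfaces they span $A_2(\barM_3)$ — the contributing cover types are $(\Delta_1,\Delta_{1,2})$, $(\Delta_1,\Delta_{1,3})$ and $(\Delta_{11},\Delta_{1,4})$, whose contributions are given by Propositions \ref{m3_delta1,delta12}, \ref{m3_delta1,delta13} and \ref{m3_delta11,delta14}. I would evaluate Proposition \ref{m3_delta1,delta12} by substituting $[\pi_{2/1,d_1}(E)]\in A^2(\barM_2)$ from Theorem \ref{adm_m2E_formula} and $[\pi_{2/1,d_1}]\in A^1(\barM_2)$ from Theorem \ref{main_thm_g2}, then pairing against $u_{*}[S]$ via \S\ref{int_numbers_m2}; Proposition \ref{m3_delta1,delta13} by substituting $[\pi_{2/1,d}]\in A^2(\barM_{2,1})$ from Theorem \ref{m21_adm_formula} and pairing against $\pr_{1*}[S]$ via \S\ref{int_numbers_m21}; and Proposition \ref{m3_delta11,delta14} directly from \S\ref{int_numbers_m21} and \S\ref{int_numbers_m3}. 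For surfaces of the form $\Gamma_{(i)}\times\barM_{1,1}$ the $(\Delta_1,\Delta_{1,3})$-term drops out, since $\pr_{1*}[\Gamma_{(i)}\times\barM_{1,1}]=0$ for dimension reasons, and several further terms vanish similarly; this is why some answers are pure double- or triple-convolutions in $\sigma_1$. The triple convolution in the $\Delta_{[11]}$ entry arises because $[\pi_{2/1,d_1}(E)]$ is itself the one-variable convolution $\sum_{e_1+e_2=d_1}\sigma_1(e_1)\sigma_1(e_2)$ of Theorem \ref{adm_m2E_formula}, which is convolved once more against $\sigma_1(d_2)$ in Proposition \ref{m3_delta1,delta12}; the convolution identities of \S\ref{qmod_prelim} then place everything in $\Qmod$.

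For the surfaces in $\Delta_0=\barM_{2,2}$ (carrying $\Delta_{[1]}$ and $\Delta_{[4]}$) the contributing covers are of type $(\Delta_0,\Delta_0)$, together with $(\Delta_0,\Delta_{1,2})$, $(\Delta_{00},\Delta_0)$ and $(\Delta_{000},\Delta_0)$ for surfaces containing $d$-elliptic or degenerate genus-$2$ curves. The workhorse is $S=C\times C$ for $C$ a general genus-$2$ curve: only type $(\Delta_0,\Delta_0)$ occurs, the cover count is Proposition \ref{dd2222_g3}, and the local multiplicities are as in Proposition \ref{m2_delta0,delta0}, giving $\int_{\barM_3}[C\times C]\cdot[\pi_{3/1,d}]=48(d\sigma_3(d)-\sigma_1(d))$ (Proposition \ref{CxC_int}); with $[C\times C]=2(\Delta_{[1]}+\Delta_{[4]})$ (Proposition \ref{class_of_CxC}) this is one equation. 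A second, independent equation comes from degenerating $C$ so that one component of the fiber of $\barM_{2,2}\to\barM_2$ becomes a test surface representing $\Delta_{[1]}$ on the nose; on such a surface the genus-$2$ source component of a $(\Delta_0,\Delta_0)$-cover breaks up and the limit-linear-series count decomposes over the elliptic pieces as in the proof of Proposition \ref{dd2222_g3}, reducing to Lemma \ref{count_dd22} and Corollary \ref{count_dd22_translate}. Either way one solves for $\int_{\barM_3}[\pi_{3/1,d}]\cdot\Delta_{[1]}$ and then for $\int_{\barM_3}[\pi_{3/1,d}]\cdot\Delta_{[4]}$ by subtraction.

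The main obstacle is bookkeeping rather than any single hard step: for each test surface one must (i) determine precisely which of the seven topological types meet it and with which labelling of branch and ramification points, (ii) divide by the cover automorphism orders (of the shape $a^{m-1}b^{n-1}$) while multiplying by the correct local intersection multiplicity — the factor $2$ from a contracted rational bridge, the factor $ma^{m-1}$ from an $A_m$-singularity, and the factor $4!=24$ from labelling the ramification points of a genus-$2$ cover — and (iii) transport classes consistently among $\barM_2$, $\barM_{2,1}$, $\barM_{2,1}\times\barM_{1,1}$ and $\barM_{1,3}$ along the relevant forgetful and gluing maps via the projection formula. Transversality in each case is checked as in \S\ref{m2_d-ell}, using that the Hurwitz spaces are unramified over the relevant open loci (Corollary \ref{adm_m11_m11} and its analogues). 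The overdetermined nature of the linear system provides built-in consistency checks — e.g. agreement of a number computed from a $\Delta_0$- and from a $\Delta_1$-surface, or the identity $\int_{\barM_3}\Delta_{[1]}\cdot[\pi_{3/1,d}]+\int_{\barM_3}\Delta_{[4]}\cdot[\pi_{3/1,d}]=\frac{1}{2}\int_{\barM_3}[C\times C]\cdot[\pi_{3/1,d}]$ — and one verifies the sanity check that all seven numbers vanish at $d=1$.
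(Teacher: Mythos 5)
Your overall strategy is the paper's: span $A_2(\barM_3)$ by boundary test surfaces, restrict to the seven one-nodal topological types, evaluate the $\Delta_1$-surfaces via Propositions \ref{m3_delta1,delta12}, \ref{m3_delta1,delta13}, \ref{m3_delta11,delta14} together with the genus-2 inputs (Propositions \ref{adm_m2_intersections}, \ref{adm_m2E_intersections}, \ref{adm_m21_intersections}), use $S=C\times C$ with Propositions \ref{CxC_int} and \ref{class_of_CxC} on the $\Delta_0$ side, and invert the linear system. Your observations that $\pr_{1*}[\Gamma_{(i)}\times\barM_{1,1}]=0$ kills the $(\Delta_1,\Delta_{1,3})$ term for $i=5,6,11$, and that the triple convolution in the $\Delta_{[11]}$ entry comes from convolving $[\pi_{2/1,d_1}(E)]$ once more against $\sigma_1(d_2)$, are both correct and match the paper's table.

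The one genuine gap is your proposed second equation on the $\Delta_0$ side for isolating $\Delta_{[1]}$. Degenerating $C$ so that a component of the fiber of $\barM_{2,2}\to\barM_2$ represents $\Delta_{[1]}$ ``on the nose'' destroys exactly the genericity that underlies the whole method: the classification into seven types and all the transversality and local-multiplicity statements assume the test surface meets $\psi_{3/1,d}^{-1}$ of the boundary stratification of $\barM_{1,2}$ only in its open, one-nodal strata, whereas a surface sitting over a fixed reducible or multi-nodal genus-2 curve meets $\Adm_{3/1,d}$ in covers whose targets have several nodes, with non-reduced and excess intersections that your limit-linear-series sketch does not control. (The paper uses the degeneration of $C$ only inside Proposition \ref{class_of_CxC}, to compute the \emph{class} $[C\times C]\in A_2(\barM_3)$, never to intersect a degenerate surface with the admissible locus.) The fix is what the paper does: $\Delta_{[1]}$ is also the pushforward of $\Delta_{00}\times p$ from $\barM_{2,1}\times\barM_{1,1}$, so it is reached from the $\Delta_1$ boundary, where only type $(\Delta_1,\Delta_{1,3})$ contributes and Proposition \ref{m3_delta1,delta13} gives $24\cdot 4(d-1)\sigma_1(d)=96(d-1)\sigma_1(d)$ directly from $\int_{\barM_{2,1}}[\pi_{2/1,d}]\cdot\Delta_{00}$; the value for $\Delta_{[4]}$ then follows by subtraction from $\tfrac12\int_{\barM_3}[C\times C]\cdot[\pi_{3/1,d}]$ exactly as you indicate.
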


\begin{proof}
We first deal with the surface classes $\Delta_{[i]}$ for $i=1,5,6,8,10,11$, which factor through $\barM_{2,1}\times\barM_{1,1}$. These are pushed forward from the following classes:
\begin{enumerate}
\item[($\Delta_{[1]}$)] $\Delta_{00}\times p$
\item[($\Delta_{[5]}$)] $\Gamma_{(5)}\times\barM_{1,1}$
\item[($\Delta_{[6]}$)] $\Gamma_{(6)}\times\barM_{1,1}$
\item[($\Delta_{[8]}$)] $\Xi_1\times p$
\item[($\Delta_{[10]}$)] $\Delta_{01a}\times p$ 
\item[($\Delta_{[11]}$)] (a) $\Delta_{[11]}\times p$ or (b) $\Gamma_{(11)}\times\barM_{1,1}$
\end{enumerate}

We summarize the contributions from covers of the three possible types in the table below, where we have applied Propositions \ref{m3_delta1,delta12}, \ref{m3_delta1,delta13}, and \ref{m3_delta11,delta14}, along with the intersection numbers of \S\ref{int_numbers_m21} and the intersection numbers with $d$-elliptic loci in genus 2 from Propositions \ref{adm_m2_intersections}, \ref{adm_m2E_intersections}, and \ref{adm_m21_intersections}. The last two rows correspond to the computation of the intersection of the admissible locus with $\Delta_{[11]}$, computed as the pushforwards of the two classes labelled (a) and (b) above.
\begin{center}
\begin{tabular}{| c | c | c | c |}
\hline
 & Type $(\Delta_1,\Delta_{1,2})$ & Type $(\Delta_1,\Delta_{1,3})$ & Type $(\Delta_{11},\Delta_{1,4})$ \\ \hline
$\Delta_{[1]}$ & 0 & $96(d-1)\sigma_1(d)$ & $0$ \\ \hline
$\Delta_{[5]}$ & \scalebox{0.75}{$48\left(\displaystyle\sum_{d_1+d_2=d}(d_1-1)\sigma_1(d_1)\sigma_1(d_2)\right)$} & 0 & \scalebox{0.75}{$24\left(\displaystyle\sum_{d_1+d_2=d}\sigma_1(d_1)\sigma_1(d_2)\right)$} \\ \hline
$\Delta_{[6]}$ & 0 & 0 & 0 \\ \hline
$\Delta_{[8]}$ & \scalebox{0.75}{$24\left(\displaystyle\sum_{d_1+d_2=d}(d_1-1)\sigma_1(d_1)\sigma_1(d_2)\right)$} & $-(d-1)\sigma_1(d)$ & \scalebox{0.75}{$24\left(\displaystyle\sum_{d_1+d_2=d}\sigma_1(d_1)\sigma_1(d_2)\right)$}\\ \hline
$\Delta_{[10]}$ & 0 & \scalebox{0.75}{$24\left(\displaystyle\sum_{d_1+d_2=d}\sigma_1(d_1)\sigma_1(d_2)\right)$} & \scalebox{0.75}{$24\left(\displaystyle\sum_{d_1+d_2=d}\sigma_1(d_1)\sigma_1(d_2)\right)$} \\ \hline
$\Delta_{[11]}$(a) & \scalebox{0.75}{$24\left(\displaystyle\sum_{d_1+d_2+d_3=d}\sigma_1(d_1)\sigma_1(d_2)\sigma_1(d_3)\right)$} & \scalebox{0.75}{$-\left(\displaystyle\sum_{d_1+d_2=d}\sigma_1(d_1)\sigma_1(d_2)\right)$} & 0\\ \hline
$\Delta_{[11]}$(b) & \scalebox{0.75}{$24\left(\displaystyle\sum_{d_1+d_2+d_3=d}\sigma_1(d_1)\sigma_1(d_2)\sigma_1(d_3)\right)$}  & 0 & \scalebox{0.75}{$-\left(\displaystyle\sum_{d_1+d_2=d}\sigma_1(d_1)\sigma_1(d_2)\right)$} \\ \hline
\end{tabular}
\end{center}
Combining the above yields six of the intersection numbers claimed; the seventh, of $[\pi_{3/1,d}]$ with $\Delta_{[4]}$, now follows from Propositions \ref{CxC_int} and \ref{class_of_CxC}.
\end{proof}

\begin{rem}
One can also implement the following check: the class $\Delta_{[7]}\in A_2(\barM_{3})$ is rationally equivalent to $\Delta_{[6]}$, so its intersection with the admissible locus should be zero. Using the fact that $\Delta_{[7]}$ is the pushforward of $\Delta_{01b}\times p$ from $\barM_{2,1}\times\barM_{1,1}$, we indeed find a contribution of 0 from type $(\Delta_1,\Delta_{1,2})$, a contribution of $\sum_{d_1+d_2=d}\sigma_1(d_1)\sigma_1(d_2)$ from type $(\Delta_1,\Delta_{1,3})$, and a contribution of $-\sum_{d_1+d_2=d}\sigma_1(d_1)\sigma_1(d_2)$ from type $(\Delta_{11},\Delta_{1,4})$.
\end{rem}

\begin{proof}[Proof of Theorem \ref{main_thm_g3}]
The result now follows from Proposition \ref{adm_m3_intersections}, along with the intersection numbers of \S\ref{int_numbers_m3} and the convolution formulas of \S\ref{qmod_prelim}.
\end{proof}


\appendix
\section{Quasi-modularity on $\barM_{2,2}$}\label{m22_appendix}

The quasimodularity for $d$-elliptic loci in genus 2 found in Theorems \ref{main_thm_g2} (forgetting marked branch points) and \ref{m21_adm_formula} (with one branch point) can in fact be upgraded to $\barM_{2,2}$, remembering both branch points of a $d$-elliptic cover. That is:

\begin{thm}\label{modularity_m22}
We have
\begin{equation*}
\sum_{d\ge1}[\pi_{2/1,d}]q^d\in\Qmod\otimes A^3(\barM_{2,2}).
\end{equation*}
\end{thm}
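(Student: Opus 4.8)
The plan is to adapt the test-cycle method of \S\ref{m2_d-ell} and \S\ref{m21_d-ell}. Since the $d$-elliptic locus has dimension $2$ while $\dim\barM_{2,2}=5$, the class $[\pi_{2/1,d}]$ lies in $A^3(\barM_{2,2})$, and to identify its generating series it suffices to compute $\int_{\barM_{2,2}}[\pi_{2/1,d}]\cdot B$ for $B$ ranging over a spanning set of $A_3(\barM_{2,2})$. As in the lower-dimensional cases, where one uses $A_1(\cM_2)=0$ and $A_2(\cM_{2,1})=0$, one has $A_3(\cM_{2,2})=0$, so this spanning set may be taken to consist of threefolds pushed forward from the four boundary divisors of $\barM_{2,2}$: the irreducible one (glued from $\barM_{1,4}$), the two of compact type with a genus $1$ tail (glued from $\barM_{1,3}\times\barM_{1,1}$ and from $\barM_{1,2}\times\barM_{1,2}$, according to how the two branch markings distribute onto the components), and the one with a rational tail carrying both markings (glued from $\barM_{2,1}$). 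For a general test threefold $S$ in one of these divisors, the dimension count of \S\ref{m2_d-ell} together with the local model of Proposition \ref{adm_defos} shows that $S$ meets $\Adm_{2/1,d}$ only at covers $f\colon X\to Y$ with $Y$ having exactly one node; the combinatorial types of such $(X,x_1,x_2)\to (Y,y_1,y_2)$ --- now remembering the branch markings, so that rational bridges and tails through them are no longer contracted --- refine the four types $(\Delta_0,\Delta_1)$, $(\Delta_1,\Delta_1)$, $(\Delta_0,\Delta_0)$, $(\Delta_{00},\Delta_0)$ of \S\ref{admissible_classification_g2}.

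For each boundary divisor of $\barM_{2,2}$ and each contributing type, I would compute the contribution to the intersection number exactly as in \S\ref{adm_g2_intersect_delta1}, \S\ref{adm_g2_intersect_delta0}, and \S\ref{m21_d-ell}: realize it as an intersection on the appropriate product of moduli spaces against the class of one of the auxiliary branched-cover loci --- $\Adm_{1/1,d,2}$, $\Adm_{1/0,a}^{a,a,2,2}$, $\Adm_{0/0,a+b}^{(a,b),(a,b),2,2}$, $\Adm_{1/1,d_1}\times_{\Delta}\Adm_{1/1,d_2}$, and their variants --- weighted by the intersection multiplicity read off from Proposition \ref{adm_defos} (contractions of rational chains still produce the multiplicities $ma^{m-1}$ encountered before, but rational components carrying a branch point now survive and enter differently). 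The genuinely new inputs are the Chow classes of these auxiliary loci on the four-dimensional spaces $\barM_{1,4}$, $\barM_{1,3}\times\barM_{1,1}$, $\barM_{1,2}\times\barM_{1,2}$ and $\barM_{2,1}$; these I would compute against explicit bases of the relevant Chow groups using the same enumerative inputs as in \S\ref{prelude} --- Lemmas \ref{count_isogenies}, \ref{count_pointed_isogenies}, \ref{count_dd22}, and Corollary \ref{count_dd22_translate} --- keeping track of the positions of both branch points, together with the already-known genus $2$ classes of Theorems \ref{main_thm_g2} and \ref{m21_adm_formula} wherever a genus $2$ subcover with a marked ramification point occurs. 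Summing the contributions and rewriting the resulting divisor sums and convolutions via the Ramanujan identities and the convolution formulas of \S\ref{qmod_prelim} produces a class lying in $A^3(\barM_{2,2})\otimes\Qmod$.

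The main obstacle --- and the reason for including this appendix --- is that, unlike in every previous computation, the contribution of an \emph{individual} topological type need not be quasimodular on its own: the sums over divisors $am=d$ (and over $am+bn=d$) coming from chains of rational curves produce, for some types, generating functions that are not of the form $\sum_{d}P(d)\sigma_k(d)q^d$. The genuine content of the theorem is therefore the cancellation: one must show that the non-quasimodular parts of the several type-contributions sum to zero in the total class. I would establish this by writing out each contribution as an explicit $q$-series, isolating the non-quasimodular pieces, and verifying their cancellation coefficient by coefficient against a fixed basis of $A^3(\barM_{2,2})$. As consistency checks, pushing the resulting class forward along $\barM_{2,2}\to\barM_{2,1}\to\barM_2$ must recover Theorems \ref{m21_adm_formula} and \ref{main_thm_g2}, and the specialization $d=2$ must agree (up to the usual labelling factor) with the class of the bielliptic locus on $\barM_{2,2}$; low-degree numerical verification provides an additional safeguard on the bookkeeping.
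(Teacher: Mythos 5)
Your overall strategy coincides with the one intended here (the appendix itself only sketches it): move dimension‑$3$ test classes into the boundary, stratify the intersection with $\Adm_{2/1,d}$ by the topological type of the cover, express each contribution through auxiliary branched-cover loci, and verify that the non-quasimodular pieces of the individual contributions cancel in the sum. You correctly identify that last phenomenon as the substantive content of the theorem. There is, however, a concrete gap in your classification of the contributing covers.

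You assert that the covers met by a general test threefold are refinements of the four types $(\Delta_0,\Delta_1)$, $(\Delta_1,\Delta_1)$, $(\Delta_0,\Delta_0)$, $(\Delta_{00},\Delta_0)$ of \S\ref{admissible_classification_g2}. That classification was carried out under the standing assumption that the \emph{unmarked} stabilization $X^s$ lies in the boundary of $\barM_2$; in particular, in \S\ref{adm_g2_delta1} all covers ramified over the separating node of $Y$ were discarded because for them $f^{-1}(Y_1)$ is a smooth curve of genus $2$ and $X^s\in\cM_2$. On $\barM_{2,2}$ this discard is no longer legitimate for your fourth boundary divisor, the one glued from $\barM_{2,1}\times\barM_{0,3}$ with both markings on the rational tail. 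Indeed, take $Y=Y_1\cup Y_0$ with both branch points on $Y_0$ and let $f$ have ramification profile $(3,1^{d-3})$ over the node: then $f^{-1}(Y_1)$ is a connected genus $2$ curve covering the elliptic curve $Y_1$ with a single point of triple ramification, and $f^{-1}(Y_0)$ contains a degree $3$ component, attached at one point and carrying both $x_1$ and $x_2$. Since that tail has one node and two markings it survives stabilization, so $(X,x_1,x_2)^s$ is a \emph{generic} point of the rational-tail divisor; the locus of such covers has $1$-dimensional image there, hence meets a general test threefold in that ($4$-dimensional) divisor in finitely many points. These covers are not refinements of any of your four types, and the auxiliary input they require --- the class in $A^{*}(\barM_{2,1})$ of the locus of genus $2$ curves admitting a degree $d$ cover of an elliptic curve with one triple ramification point, marked at that point --- is absent from your list of inputs (it is a non-simple-branching $d$-elliptic locus of the kind treated in \cite{chen}, as flagged in the introduction). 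Without this type the computation at the rational-tail divisor, and hence the determination of the class, is incomplete.

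Two smaller points. First, the vanishing needed to push the test classes into the boundary concerns $A_3(\cM_{2,2})=A^{2}(\cM_{2,2})$, and one must also know that boundary classes in $A^{2}(\barM_{2,2})$ pair nondegenerately with $A^{3}(\barM_{2,2})$ (or at least detect $[\pi_{2/1,d}]$); neither is automatic and both need input from \cite{faber_thesis}. Second, your consistency checks (pushing forward to $\barM_{2,1}$ and $\barM_2$, and comparing with $d=2$) are good safeguards, but note that they are blind to exactly the classes killed by forgetting a marking, which is where the missing type lives.
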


This result will propagate to quasimodular contributions to the $d$-elliptic locus on $\barM_4$, providing further evidence for Conjecture \ref{main_conj}. The method of proof is the same as above, using the fact that $A_2(\cM_{2,2})=0$ \cite[Lemma 1.14]{faber_thesis}; we do not carry out the full calculation. However, we point out one new aspect, that the contributions from admissible covers of certain topological types are not individually quasimodular, but the non-quasimodular contributions cancel in the sum.

Let $T\to\barM_{1,4}$ be a general boundary cycle of dimension 3. Consider the contributions to the intersection of $T$ with $\pi_{2/1,d}:\Adm_{2/1,d}\to\barM_{2,2}$ from covers of types $(\Delta_0,\Delta_0)$ and $(\Delta_{00},\Delta_0)$.

We have the following purely combinatorial lemma:

\begin{lem}\label{hurwitz_numbers}
Let $H(d,\lambda_1,\lambda_2,\lambda_3)$ denote the Hurwitz number counting degree $d$ covers (weighted by automorphisms) $f:C\to\bP^1$ branched over 3 points with ramification profiles $(\lambda_1,\lambda_2,\lambda_3)$, where we require $C$ connected. We have:
\begin{itemize}
\item[(a)] 
\begin{equation*}
H(d;(d),(d),(3,1^{d-3}))=\frac{(d-1)(d-2)}{6}
\end{equation*}
\item[(b)]
\begin{equation*}
H(d;(a,b),(a,b),(3,1^{d-3}))=
\begin{cases}
1\text{ if }a\neq b\\
0\text{ if }a=b
\end{cases}
\end{equation*}
\end{itemize}
\end{lem}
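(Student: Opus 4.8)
The plan is to recast both parts via monodromy. By the Riemann existence theorem, connected degree $d$ covers $f:C\to\bP^1$ branched over three fixed points, with ramification profiles $\lambda_1,\lambda_2,\lambda_3$ over those points, correspond bijectively to triples $(\sigma_1,\sigma_2,\sigma_3)$ of permutations in $S_d$ with $\sigma_i$ of cycle type $\lambda_i$, satisfying $\sigma_1\sigma_2\sigma_3=1$ and generating a transitive subgroup, taken up to simultaneous conjugation; the automorphism-weighted count is
\begin{equation*}
H(d;\lambda_1,\lambda_2,\lambda_3)=\frac{1}{d!}\,\#\bigl\{(\sigma_1,\sigma_2,\sigma_3):\ \sigma_i\in C_{\lambda_i},\ \sigma_1\sigma_2\sigma_3=1,\ \langle\sigma_1,\sigma_2,\sigma_3\rangle\ \text{transitive}\bigr\},
\end{equation*}
where $C_\lambda\subset S_d$ denotes the conjugacy class of cycle type $\lambda$.

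For part (a), since two of the three permutations are $d$-cycles, transitivity is automatic, and given two of them the third is determined by $\sigma_1\sigma_2\sigma_3=1$. Fixing one $d$-cycle equal to $c:=(1\,2\,\cdots\,d)$ contributes a factor $|C_{(d)}|=(d-1)!$, and unwinding the relation reduces the problem to $H=\tfrac1d\,N$, where $N$ is the number of $3$-cycles $\tau$ with $c\tau$ a $d$-cycle (for a fixed $d$-cycle $c$). The heart of the matter is then the claim that for each $3$-element subset of $\{1,\dots,d\}$, exactly one of the two $3$-cycles supported on it satisfies this, the other yielding a permutation with three cycles. I would prove it via the cut-and-join identity $(1\,\cdots\,d)(p\,q)=(1\,\cdots\,p\ q{+}1\,\cdots\,d)(p{+}1\,\cdots\,q)$ for $p<q$: writing $\tau=(a\,b\,c)$ with $a<b<c$ as $(a\,b)(b\,c)$, multiplication by $(a\,b)$ breaks the cycle $c$ into two cycles with $b$ and $c$ in different ones, so multiplying further by $(b\,c)$ re-merges them, whereas for the reversed $3$-cycle $(a\,c\,b)=(a\,c)(c\,b)$ the points $b,c$ lie in the same cycle and the second multiplication splits it again. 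Hence $N=\binom d3$ and $H=\tfrac1d\binom d3=\tfrac{(d-1)(d-2)}{6}$.

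For part (b), I would instead exploit the geometry of the source. Riemann--Hurwitz gives $2g-2=-2d+(d-2)+(d-2)+2=-2$, so $C\cong\bP^1$ and $f$ is a degree $d$ rational function. Normalizing with source automorphisms --- sending the order-$a$ and order-$b$ preimages of $0$ to $0$ and $1$, and the order-$b$ preimage of $\infty$ to $\infty$ --- every such $f$ with profile $(a,b)$ over $0$ and $\infty$ is \emph{uniquely} of the form $f(z)=C\,z^a(z-1)^b/(z-s)^a$ with $s\notin\{0,1\}$ and $C\in\bC^*$, a refinement of Lemma \ref{aabb_maps_from_P1}. A logarithmic-derivative computation shows that the ramification of $f$ away from $\{0,1,s,\infty\}$ lies exactly at the zeros of $Q(z)=bz^2-sd\,z+as$, and one checks $Q$ has no zero in $\{0,1,s\}$; hence the profile over the third branch point is $(3,1^{d-3})$ precisely when $Q$ has a \emph{double} root, i.e.\ when its discriminant $s(sd^2-4ab)$ vanishes, i.e.\ $s=4ab/d^2$. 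This value lies in $\bC^*\setminus\{1\}$ if and only if $a\neq b$, since $(a+b)^2-4ab=(a-b)^2$. When $a\neq b$ this forces $s$, and then $C$ is pinned down by requiring $f(2a/d)=1$ at the double root; the resulting cover has no nontrivial automorphisms, so it contributes $1$ and $H=1$. When $a=b$ no admissible $s$ exists --- equivalently $f=C\bigl(z(z-1)/(z-s)\bigr)^a$ factors through a degree $2$ map and so has only ramification indices $1$ and $2$ over $\bC^*$ --- and $H=0$.

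The step I expect to be most delicate is the automorphism bookkeeping in part (b): one must verify that the normal form $f(z)=Cz^a(z-1)^b/(z-s)^a$ really is unique, so that counting admissible pairs $(s,C)$ counts covers without over- or under-counting, and that the single cover produced when $a\neq b$ carries no nontrivial automorphisms, so that it contributes $1$ rather than a fraction to the weighted Hurwitz number (with the parallel check that no automorphism-carrying covers appear when $a=b$). In part (a) the only care needed is to fix conventions --- the composition order of permutations and the role of each of the three factors --- consistently; the final count $\binom d3$ is insensitive to those choices.
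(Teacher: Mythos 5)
Your proof is correct. Part (a) is essentially the paper's own argument: both reduce to counting $3$-cycles $\tau$ for which the product with a fixed $d$-cycle is again a $d$-cycle, and the ``exactly one of the two $3$-cycles on each $3$-element support works'' claim is the same dichotomy the paper phrases as ``$\beta\alpha$ is a $d$-cycle iff $j<k$''; you normalize by counting all $\binom{d}{3}$ such $\tau$ and dividing by $d$, whereas the paper counts the $\binom{d-1}{2}$ containing the symbol $1$ and divides by $3$ for the residual conjugation — same count, different bookkeeping. Part (b) is where you genuinely diverge: the paper stays combinatorial, taking $\alpha=(1\cdots a)(a{+}1\cdots d)$ and exhibiting (after using the centralizer to pin down the support) the unique $3$-cycle $\beta=(1\,(a{-}b{+}1)\,(a{+}1))$ with $\beta\alpha$ of type $(a,b)$ when $a\neq b$, and none when $a=b$. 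You instead note via Riemann--Hurwitz that the source is rational, put every cover in the normal form $Cz^a(z-1)^b/(z-s)^a$ (a pointed refinement of Lemma \ref{aabb_maps_from_P1}), and reduce the triple-point condition to the vanishing of the discriminant $s(sd^2-4ab)$, with $s=4ab/d^2=1$ precisely when $a=b$ explaining the vanishing in that case. Your route is longer and requires the automorphism/normal-form bookkeeping you flag (which does check out: for $a\neq b$ the ramification indices distinguish the three normalized points, so the normal form is canonical and the cover is automorphism-free; for $a=b$ the factorization through a degree-$2$ map rules out index-$3$ ramification for every cover, not just a normalized one), but it is more self-contained and gives a geometric explanation for the $a=b$ degeneration, whereas the paper's verification is a two-line permutation computation.
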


\begin{proof}
Let $\alpha\in S_d$ denote the cycle $(12\cdots d)$, and let $\beta=(1jk)$, for $j,k\in\{2,3,\ldots,d\}$ distinct. One readily checks that the product $\beta\alpha$ is a $d$-cycle if and only if $j<k$. There are $\binom{d-1}{2}$ choices of such $\beta$, but each Hurwitz factorization is then triple-counted owing to the simultaneous conjugations by powers of $\alpha$ sending 1 to $j,k$. The first formula follows.

For the second, let $\alpha$ be the permutation $(12\cdots a)(a+1\cdots d)$. We seek a 3-cycle $\beta$ for which $\beta\alpha$ also has cycle type $(a,b)$. In order for the corresponding branched cover to be a map of connected curves, $\beta$ cannot act trivially on either orbit of $\alpha$, so we may assume that $\beta$ acts nontrivially on $1,a+1$. If $a>b$, then we find $\beta=(1(a-b+1)(a+1))$, but if $a=b$, then no such $\beta$ exists.
\end{proof}

First, consider covers of type $(\Delta_0,\Delta_0)$. As in Propositions \ref{m2_delta0,delta0} and \ref{m21_delta0,delta0}, we have:

\begin{prop}\label{m22_delta0,delta0}
The contribution to the intersection of $T$ and $\pi_{2/1,d}$ from covers of type $(\Delta_0,\Delta_0)$ is 
\begin{equation*}
\sum_{am=d}\left(m\int_{\barM_{1,4}}[T]\cdot[\pi_{1/0,a}^{a,a,2,2}]\right).
\end{equation*}
\end{prop}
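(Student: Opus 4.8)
The plan is to transcribe the proofs of Propositions~\ref{m2_delta0,delta0} and~\ref{m21_delta0,delta0} almost verbatim, the only new feature being that both branch points of the $d$-elliptic cover are now retained, so the forgetful map out of $\Adm_{1/0,a}^{a,a,2,2}$ lands in $\barM_{1,4}$ rather than $\barM_{1,3}$. Fix a factorization $am=d$ and form the Cartesian square
\begin{equation*}
\xymatrix{
A^{(\Delta_0,\Delta_0),a}_T \ar[r]\ar[d] & \Adm_{1/0,a}^{a,a,2,2} \ar[d]^{\pi_{1/0,a}^{a,a,2,2}} \\
T \ar[r] & \barM_{1,4}
}
\end{equation*}
where $\pi_{1/0,a}^{a,a,2,2}$ now records the stabilized genus~$1$ source together with all four of its ramification points (the two total, then the two simple), and where $\barM_{1,4}\to\barM_{2,2}$ glues the two points carrying total ramification while leaving the two simple ramification points as the marked branch points; the class of this gluing map is $\Delta_0\in A^1(\barM_{2,2})$. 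As in the genus~$2$ cases, a geometric point of the intersection of $T$ with $\pi_{2/1,d}$ arising from a cover $f\colon X\to Y$ of type $(\Delta_0,\Delta_0)$ is recorded, for the unique $a$ equal to the degree over $\bP^1$ of the genus~$1$ component $X_1\subset X$, by a geometric point of $A^{(\Delta_0,\Delta_0),a}_T$, and conversely; moreover $A^{(\Delta_0,\Delta_0),a}_T(\Spec\bC)$ is a set, whereas $f$ itself has automorphism group of order $a^{m-1}$, coming from the $\mu_a$-actions on the $m-1$ contracted rational components of $X$.

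Next I would verify transversality away from the contraction direction exactly as in the proof of Proposition~\ref{m2_delta0,delta0}: for general $T$ the map $\pi_{1/0,a}^{a,a,2,2}$ is unramified over a general point of the relevant stratum of $\barM_{1,4}$, so that $T$ meets $\pi_{1/0,a}^{a,a,2,2}$ transversally there. By Proposition~\ref{adm_defos}, the complete local ring of $\Adm_{2/1,d}$ at $[f]$ is $\bC[[\,s,t,t_1,\dots,t_m\,]]/(t=t_1^{a}=\cdots=t_m^{a})$ with $t$ the smoothing parameter of the node of $Y$; since contracting the rational chain of $X$ in the universal family smooths the node of $X^s$ to order $m$, the pullback of the corresponding smoothing parameter equals $(t_1\cdots t_m)\cdot u$ for a unit $u$, while the remaining coordinates on $\barM_{2,2}$ pull back as generically as possible (in particular the two marked branch points of $X^s$ are \emph{not} contracted). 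One then reads off, just as in Proposition~\ref{m2_delta0,delta0}, that the complete local ring of $A_T$ at $([f],[X^s])$ is isomorphic to $\bC[t,t_1,\dots,t_m]/(t-t_i^{a},\,t_1\cdots t_m)$, which has dimension $ma^{m-1}$ over $\bC$. Dividing by $|\mathrm{Aut}(f)|=a^{m-1}$ gives a contribution of $m$ per point of $A^{(\Delta_0,\Delta_0),a}_T$, hence a contribution of $m\int_{\barM_{1,4}}[T]\cdot[\pi_{1/0,a}^{a,a,2,2}]$ for each $am=d$; summing over all factorizations yields the asserted formula.

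I do not anticipate a genuine obstacle, since the argument is a direct reprise of the genus~$2$ computations; the only point requiring care is the bookkeeping in the passage to $\barM_{2,2}$, namely confirming that the two marked branch points lie on the stabilized genus~$1$ component of $X^s$ and are therefore not contracted, so that the chain of rational curves remains the sole source of intersection multiplicity, and that the gluing map $\barM_{1,4}\to\barM_{2,2}$ used above really does represent $\Delta_0$ with the intended distribution of marked points.
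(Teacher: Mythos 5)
Your proposal is correct and is exactly the argument the paper intends: the paper states this proposition with the remark ``as in Propositions \ref{m2_delta0,delta0} and \ref{m21_delta0,delta0},'' and you have simply written out that adaptation, correctly noting that the only new bookkeeping is that $\pi_{1/0,a}^{a,a,2,2}$ now lands in $\barM_{1,4}$ and that the two simple ramification points lie on the genus~$1$ component and hence survive stabilization, so the local multiplicity $ma^{m-1}$ and the division by $|\mathrm{Aut}(f)|=a^{m-1}$ go through unchanged.
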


We now apply Proposition \ref{m22_delta0,delta0} with $T=\Delta_{3,4}$. The intersection $[T]\cdot[\pi_{1/0,a}^{a,a,2,2}]$ includes admissible covers formed by gluing a degree $d$ map $E\to\bP^1$ branched over three points with ramification indices $d,d,3$ to a degree 3 map $\bP^1\to\bP^1$ with ramification indices $3,2,2$, at the triple points in the source and target. Applying Lemma \ref{hurwitz_numbers}(a) and Proposition \ref{m22_delta0,delta0}, we find a contribution to $\int_{\barM_{2,2}}[T]\cdot[\pi_{2/1,d}]$ of
\begin{equation*}
\sum_{am=d}\frac{(a-1)(a-2)}{6}\cdot m=\left(\frac{1}{6}d+\frac{1}{3}\right)\sigma_1(d)-\frac{1}{2}d\tau(d),
\end{equation*}
where $\tau(d)$ denotes the number of divisors of $d$; the generating function for $d\tau(d)$ is not quasimodular.

On the other hand, consider contributions along $T$ from covers of type $(\Delta_{00},\Delta_0)$. We get a quasimodular contribution analogous to that of Proposition \ref{m21_delta00,delta0}, but we get a new contribution from admissible covers formed by gluing a degree $d$ map $\bP^1\to\bP^1$ branched over three points with ramification profiles $(a,b),(a,b),(3,1^{d-3})$ to a degree 3 map $\bP^1\to\bP^1$ with ramification indices $3,2,2$, at the triple points in the source and target. By Lemma \ref{hurwitz_numbers}(b) and the usual local computation, we get an additional contribution of 
\begin{align*}
&\sum_{am+bn=d}mb-\sum_{b(m+n)=d}mb\\
=&\sum_{d_1+d_2=d}\sigma_1(d_1)\sigma_1(d_2)-\sum_{bn'=d}\sum_{i=1}^{n'-1}ib\\
=&\sum_{d_1+d_2=d}\sigma_1(d_1)\sigma_1(d_2)-\sum_{bn'=d}b\left(\frac{n'(n'-1)}{2}\right)\\
=&\sum_{d_1+d_2=d}\sigma_1(d_1)\sigma_1(d_2)-\frac{1}{2}d\sigma_1(d)+\frac{1}{2}d\tau(d)
\end{align*}
In particular, the last term cancels out the non-quasimodular term from type $(\Delta_0,\Delta_0)$.

%
%
%
%

\end{document}